\newcommand\shorttitle{Six-functor formalism on rigid-analytic varieties.}
\newcommand\authors{Arun Soor}
\ifodd\value{page}
\authors
\shorttitle
\newtheorem{thm}{Theorem}[section]
\newtheorem{cor}[thm]{Corollary}
\newtheorem{lem}[thm]{Lemma}
\newtheorem{defn}[thm]{Definition}
\newtheorem{rmk}[thm]{Remark}
\newtheorem{example}[thm]{Example}
\newtheorem{prop}[thm]{Proposition}
\newtheorem{notations}[thm]{Notations}
\newtheorem{scholium}[thm]{Scholium}
\newtheorem{assum}[thm]{Assumptions}
\newtheorem*{claim*}{Claim}
\newcommand*{\sheafhom}{\mathcal{H}\kern -.5pt om}
\DeclareMathOperator{\indlim}{``lim''}
\DeclareMathOperator{\prolim}{``lim''}
\def\keywords{\xdef\@thefnmark{}\@footnotetext}
\newcommand{\Addresses}{{% additional braces for segregating \footnotesize
  \bigskip
  \footnotesize

  Arun Soor, \textsc{Department of Mathematics, University of Michigan, 4831 East Hall, 530 Church Street, Ann Arbor, MI 48109}\par\nopagebreak
  \textit{E-mail address}, Arun Soor, \texttt{\href{mailto:soor@umich.edu}{soor@umich.edu}}}}
\title{\large \bf A six-functor formalism for quasi-coherent sheaves and stratifications on rigid-analytic varieties}
\author{Arun Soor}
\date{\today}
\begin{document}
\maketitle
 \keywords{2020 \emph{Mathematics Subject Classification.} Primary 14G22; Secondary 14F10, 32C35, 32C38, 14F06, 14F08, 18N60.}%
\abstract{We develop a theory of derived rigid spaces and quasi-coherent sheaves and analytic ``stratifications" on them. Amongst other things, we obtain a six-functor formalism for these quasi-coherent sheaves and analytic stratifications. We provide evidence that the category of analytic stratifications is related to the theory of $\wideparen{\mathcal{D}}$-modules introduced by Ardakov--Wadsley.}
\tableofcontents
\section{Introduction}
The purpose of this article is to produce a six-functor formalism for quasi-coherent sheaves and analytic stratifications or $\wideparen{\mathcal{D}}$-modules on derived rigid-analytic varieties. 

The idea of a \emph{six-functor formalism} is that we start with some ($\infty$-)category $\mathscr{C}$ of geometric objects $X$, admitting all fiber products, and associate to each $X \in \mathscr{C}$ a closed symmetric monoidal ($\infty$-)category $(Q(X), \otimes)$ in a manner which satisfies an enormous number of functorial properties. That is, to each morphism $f$ of $\mathscr{C}$ we associate a \emph{pullback} and a \emph{pushforwards} functor, and for morphisms in a certain special class $E$ we associate a \emph{compactly supported pushforwards} and an \emph{exceptional pullback}. These assignments should all be compatible with composition, and satisfy the \emph{base-change} and \emph{projection} formulas\footnote{For a more detailed exposition we direct the reader to \S\ref{subsec:sixf1} and \cite{mann_p-adic_2022,ScholzeSixFunctors}.}. The idea of the ``yoga of the six operations" was initiated by Grothendieck in his study of the functorial properties of $\ell$-adic cohomology. Besides Grothendieck, the yoga of the six operations also found early adopters in the theory of $\mathcal{D}$-modules on smooth schemes over algebraically-closed fields of characteristic $0$, and complex-analytic varieties, in the work of Bernstein \cite{BernsteinDmodules} and Mebkhout \cite{MebkhoutSixOperations}, and Kashiwara--Schapira \cite{SheavesOnManifolds}, respectively. Without much explanation, I will say that in these approaches one cuts down the category of coherent $\mathcal{D}$-modules, to obtain the category of \emph{holonomic $\mathcal{D}$-modules}, which is stable under the six operations, and which still contains all vector bundles with connection.

In recent years, due to the breakthrough works of Liu--Zheng \cite{liu_enhanced_2017}, Gaitsgory--Rozenblyum \cite{GaitsgoryStudy1}, and Mann \cite{mann_p-adic_2022}, our collective understanding of the six operations has developed from a \emph{yoga} to a rigorously defined notion.  Roughly speaking, on associates to the pair $(\mathscr{C},E)$ the \emph{category of correspondences} $\operatorname{Corr}(\mathscr{C},E)$, whose objects are the same as $\mathscr{C}$ and whose morphisms are given by spans. A six-functor formalism is then a lax-symmetric monoidal functor $Q: \operatorname{Corr}(\mathscr{C},E) \to \mathsf{Cat}_\infty$, where the latter is the $\infty$-category of $\infty$-categories\footnote{Once again, for a more detailed exposition we direct the reader to \S\ref{subsec:sixf1} and \cite{mann_p-adic_2022,ScholzeSixFunctors}.}. This amazingly succinct definition provides a viable way to manipulate six-functor formalisms and produce new ones out of old ones. As one of the myriad of applications, one can obtain streamlined proofs of Poincar\'e duality \cite{zavyalov_poincare_2023}. One can also use six-functor formalisms to understand what the ``correct" definitions of some objects or functors should be. For instance, one might understand ULA-sheaves as ``$f$-smooth objects" \cite[Lecture VI]{ScholzeSixFunctors}. One could argue that the formal properties encoded in a six-functor formalism are so good, that in some way the existence of a six-functor formalism becomes an organising principle. 

With the rigorous definition of a six-functor formalism in hand, Gaitsgory--Rozenblyum \cite{GaitsgoryStudy1} and Scholze \cite[Lecture VIII]{ScholzeSixFunctors} have developed another approach to the six-functor formalism for algebraic $\mathcal{D}$-modules, which one might say lies at the opposite extreme\footnote{That is, they adopt the ``Grothendieck philosophy" that categories, rather than objects, should be well-behaved.} to the approach based on holonomicity. In this approach one makes extensive use of higher category theory as developed by Lurie \cite{HigherToposTheory,HigherAlgebra}, and derived algebraic geometry, and obtain a six-functor formalism for all ``quasi-coherent" $\mathcal{D}$-modules. 
The \emph{raison d'\^etre} for derived geometry in this context, is to obtain \emph{base-change}. It may seem extreme to invoke derived geometry just to get base-change, but when working in this area one realises how powerful and useful the technique of base-change is. It sometimes feels like ``everything is base change". 

In our approach, we will follow in some way the path laid out by Gaitsgory--Rozenblyum, Scholze, and Camargo \cite{Camargo_deRham} to obtain a six-functor formalism on derived rigid spaces which is related to the theory of $\wideparen{\mathcal{D}}$-modules defined by Ardakov--Wadsley \cite{Dcap1}. These \emph{infinite order differential operators} are analogous to the sheaves $\mathcal{D}^\infty$ appearing in complex-analytic geometry \cite[Chapter III.4]{BjorkAnalytic}. 

For instance, a similarity between this work and that of \cite{GaitsgoryStudy1,Camargo_deRham,SimpsonHodge}, is the following. Let $X$ be a (derived) rigid-analytic space. Rather than working with ``quasi-coherent $\wideparen{\mathcal{D}}_X$-modules" directly, we prefer to reintepret these objects as quasi-coherent sheaves on a prestack $X_{\mathrm{str}}$ associated to $X$, which we call the \emph{stratifying stack}. These quasi-coherent sheaves on $X_{\mathrm{str}}$ are what we call \emph{(analytic) stratifications}. We note, however, that the precise definition of $X_{\mathrm{str}}$ in this article is slightly different from, but very closely related to, the \emph{de Rham stack} used in \cite{GaitsgoryStudy1,Camargo_deRham,SimpsonHodge}. Whilst they define their de Rham stacks via a notion of reduction on a category of algebras, we prefer the homotopy-theoretic perspective, where we view stacks as certain kinds of simplicial objects\footnote{More precisely, we use that the $\infty$-category of simplicial presheaves has the structure of a \emph{model $\infty$-category} in the sense of Mazel-Gee \cite{mazel-gee_model_2015}, in which the weak equivalences are precisely those morphisms which are sent to equivalences under the geometric realization functor. The work of \emph{loc. cit.} provides a systematic way to understand how fiber products interact with geometric realization, and is therefore very useful for understanding derived stacks from the simplicial perspective.}. That is, we define $X_{\mathrm{str}}$ directly via the infinitesimal groupoid. This groupoid object is obtained from the \emph{analytic germ} of the diagonal. In smooth geometry, a similar construction appears in the work of Borisov--Kremnizer \cite[\S 3.4]{BorisovKremnizerdeRham}.

One reason to use the stratifying stack or the de Rham stack, is that the classical formulas for the six-operations as in \cite{BernsteinDmodules} take some time to appreciate, and, in the quasi-coherent setting, it is perhaps unclear that they are the right thing from the perspective of category theory. Following \cite{GaitsgoryCrystals}, we recall that differential operators can be recovered as the monad controlling codescent along $X \to X_{\mathrm{str}}$. In fact there is also a \emph{comonad of jets} which controls descent along $X \to X_{\mathrm{str}}$. By using that $(-)_{\mathrm{str}}$ is a functor, and the more familiar functoriality of quasi-coherent sheaves on prestacks, we can then chase the Barr--Beck--Lurie equivalence to obtain formulas for the six operations, in terms of jets and/or differential operators.

\subsection{The content of this paper.}
Let $K/\mathbf{Q}_p$ be a complete field extension. In order to construct the six-functor formalism for stratifications or $\mathcal{D}$-modules, it stands to reason that we should first construct the six-functor formalism for quasi-coherent sheaves on derived rigid spaces. In order to do this, we must first develop a theory of derived rigid geometry. This is what we do in \S \ref{sec:homotopical}-\ref{sec:derivedrigidgeometry}. The material of \S\ref{sec:homotopical}-\ref{sec:derivedrigidgeometry} owes an overwhelming intellectual debt to \cite{DAnG} and also takes much inspiration from \cite[\S 2]{mann_p-adic_2022}. In \cite{DAnG}, Ben-Bassat--Kelly--Kremnizer develop a theory of derived analytic geometry following the homotopical algebraic geometry of To\"en--Vezzosi \cite{toen_homotopical_2008}. The overarching idea of \S\ref{sec:homotopical}-\ref{sec:derivedrigidgeometry} is that derived algebraic geometry can be developed relative to any ``nice enough" symmetric monoidal $\infty$-category, where ``nice enough" is captured in the definition of an \emph{$(\infty,1)$-algebra context} \cite[\S 2.1]{DAnG}. By taking this symmetric monoidal $\infty$-category to be the derived category $D(\mathsf{CBorn}_K)$ of complete bornological spaces, one obtains a theory of derived analytic geometry, which contains a theory of derived rigid geometry\footnote{This is a difference between our work and that of \cite{Camargo_deRham,LecturesOnAnalyticGeometry}: we prefer the use of bornological spaces, instead of condensed mathematics.}. The properties of the $\infty$-category $D(\mathsf{CBorn}_K)$ are thoroughly investigated in \S\ref{sec:homotopical}.  

\emph{In \S\ref{subsec:derivedaffinoid}} we define the category $\mathsf{dAfndAlg}$ as a certain full subcategory of the monoids in $D_{\geqslant 0}(\mathsf{CBorn}_K)$. For any $A \in \mathsf{dAfndAlg}$, its truncation $\pi_0A$ is an affinoid algebra in the classical sense. We define $\mathsf{dAfnd}$ to be the opposite $\infty$-category to $\mathsf{dAfndAlg}$. We denote the object of $\mathsf{dAfnd}$ corresponding to $A \in \mathsf{dAfndAlg}$ by the formal expression $\operatorname{dSp}(A)$. We define the \emph{weak Grothendieck topology} on $\mathsf{dAfnd}$ whose covers are essentially given by finite jointly-surjective collections of \emph{derived rational subspaces}. We prove that this topology is subcanonical and that the prestack sending 
\begin{equation}
    \operatorname{dSp}(A) \mapsto \operatorname{QCoh}( \operatorname{dSp}(A)) := \operatorname{Mod}_A(D(\mathsf{CBorn}_K))
\end{equation}
is a sheaf in the weak topology.

\emph{In \S\ref{subsec:gluing}} we define the category $\mathsf{dRig}$ of derived rigid spaces as a certain full subcategory of $\operatorname{Shv}_{\mathrm{weak}}(\mathsf{dAfnd}, \infty \mathsf{Grpd})$. The full subcategory $\mathsf{dRig}$ is closed under all coproducts and fiber products, and $\mathsf{dRig}$ is equipped with the \emph{strong Grothendieck topology} whose covers are given by jointly-surjective families of \emph{analytic subspaces}. By right Kan extension along $\mathsf{dAfnd}^\mathsf{op} \to \mathsf{dRig}^\mathsf{op}$, the functor $\operatorname{QCoh}$ becomes a sheaf in the analytic topology.

\emph{In \S\ref{subsec:topological}} we explain how Hoffman-Lawson duality \cite[VII \S 4]{JohnstoneStone} can be used to associate a locally spectral topological space  $|X|$ to any $X \in \mathsf{dRig}$, whose locale of open subsets is canonically isomorphic to the locale of analytic subspaces of $X$. We also define a functor $X \mapsto X_0$ sending $X$ to its \emph{classical truncation}, which extends $\operatorname{dSp}(A) \mapsto \operatorname{dSp}(\pi_0A)$, and prove the topological invariance property $|X| \cong |X_0|$. Thus, derived rigid geometry obeys one of the principles of derived geometry, that ``all the geometry happens on $X_0$".  

\emph{In \S\ref{sec:derivedrigidsix}} we develop the six-functor formalism for quasi-coherent sheaves on derived rigid spaces. For any morphism $f: X \to Y$ in $\mathsf{dRig}$, we write $f^*$ for the symmetric-monoidal pullback functor from $\operatorname{QCoh}(Y)$ to $\operatorname{QCoh}(X)$ and $f_*$ for its right adjoint. In \S\ref{subsubsec:qcqs}, we prove that, for any qcqs morphism $f: X \to Y$ in $\mathsf{dRig}$, the functor $f_*$ satisfies base-change and the projection formula. This allows for the construction of a basic six-functor formalism in which the $!$-able morphisms are the qcqs ones. However, in analytic geometry, many interesting morphisms are not quasi-compact, and so we should enlarge the class of $!$-able morphisms. In \S\ref{subsec:sixf2} we develop an \emph{extension formalism} for abstract six-functor formalisms, which in \S\ref{sec:derivedrigidsix} allows for the class of $!$-able morphisms to be extended to a larger class $E$. The content of \S\ref{subsec:sixf2} is mostly a re-hashing of \cite[Theorem 4.20]{ScholzeSixFunctors} and we include it to convince the reader that the result of \emph{loc. cit.} is true in greater generality\footnote{Please see also Remark \ref{rmk:contentofsixf2}.}. The class $E$ has \emph{good stability properties}, c.f. Definition \ref{defn:Estabilityproperties}. 

\emph{In \S\ref{sec:univ!descent}} we establish that various interesting (non quasi-compact) morphisms in rigid geometry, really do belong to the class $E$ of $!$-able morphisms. Using that the class $E$ is $!$-local on the source, this boils down to showing that certain infinite covers are of universal $!$-descent (Corollary \ref{cor:countableshriekdescent}), so that for instance the morphism $\mathbf{A}^1_K \to *$ is $!$-able (Example \ref{ex:Affineline!able}). A notable feature of our approach is that we do not use compactifications, only the notion of ``compact supports" provided to us by \S\ref{subsec:sixf2}.

\emph{In \S\ref{sec:localcoh}} we develop a theory of local (co)homology in derived rigid geometry. Let $X \in \mathsf{dRig}$ and $S \subseteq |X|$ be a closed subset. Under the hypothesis that the complementary open $j: U \hookrightarrow X$ satisfies $j^! \simeq j^*$, we obtain various recollement sequences (Proposition \ref{prop:localcohformalproperties}). In Proposition \ref{prop:localcohincreasing} we use the results of \S\ref{sec:univ!descent} to provide a criterion for when $j^! \simeq j^*$. As a by-product we also obtain formulas for the local (co)homology functors in terms of sequential limits or colimits. 

\emph{In \S\ref{subsec:Zariskiclosed} and \S\ref{subsec:Zariskiopen}} we define Zariski-closed and Zariski-open immersions as the complements of Zariski-closed immersions. We show in Proposition \ref{prop:Zariskiopen} that these fit in to the formalism of \S\ref{sec:localcoh}: in particular, if $j: U \to X$ is a Zariski-open immersion, there is an equivalence $j^! \simeq j^*$.

\emph{In \S\ref{subsec:germs}} we introduce germs along Zariski-closed immersions\footnote{I was originally inspired to define differential operators via local cohomology, after reading the paper of Jiang \cite{jiang_derived_2023}. It seems quite plausible that Jiang's techniques generalize to the analytic setting. In his paper, Jiang makes extensive use of the
\emph{categorical K\"unneth formula}, that is, that $\operatorname{QCoh}(X \times_Y Z) \simeq \operatorname{QCoh}(X) \otimes_{\operatorname{QCoh}(Y)} \operatorname{QCoh}(Z)$, where the latter is Lurie's tensor product on presentable $\infty$-categories \cite{HigherToposTheory}. In the author's PhD thesis \cite[\S2.3.3]{SoorThesis} we obtain such a formula in analytic geometry, allowing for Jiang's work to be adapted.}. The definition is quite simple. Let $i: Z = \operatorname{dSp}(B) \to \operatorname{dSp}(A) = X$ be a Zariski-closed immersion of derived affinoids, induced by a morphism $A \to B$ surjective on $\pi_0$. Then the \emph{germ along $Z$} is 
\begin{equation}
    A_Z^\dagger := \underset{U \supseteq |Z|}{\operatorname{colim}}A_U 
\end{equation}
where the colimit is taken in $\mathsf{dAlg}:= \mathsf{CAlg}(D_{\geqslant 0}(\mathsf{CBorn}_K))^\mathsf{op}$ and runs over all affinoid subdomains $U \supseteq |Z|$. We denote the object of the opposite category $\mathsf{dAff} := \mathsf{dAlg}^\mathsf{op}$ corresponding to $A_Z^\dagger$ by the formal expression $(Z \subseteq X)^\dagger$.  In Proposition \ref{prop:germequivalence} we show that there is a natural equivalence of $\infty$-categories 
\begin{equation}
    \Gamma_Z \operatorname{QCoh}(X) \simeq \operatorname{QCoh}((Z \subseteq X)^\dagger) = \operatorname{Mod}_{A^\dagger_Z}D(\mathsf{CBorn}_K), 
\end{equation}
in algebra, we are familiar with the identification between ``sheaves with support" and sheaves on the formal completion, and this is nothing but an analytic counterpart to that. Indeed, a recurring theme of this article is the following: everywhere where one would see a ``formal neighbourhood" in algebraic geometry, in our analytic geometry we instead replace it by an \emph{analytic germ}, and see what we get. 

\emph{In the brief \S\ref{sec:sixfPStk}} we investigate a six-functor formalism which incorporates the above ``germs". In this we prefer a na\"ive approach. That is, we take the ``trivial" six-functor formalism on $\mathsf{dAff} = \mathsf{CAlg}(D_{\geqslant 0}(\mathsf{CBorn}_K))$ which sends $X = \operatorname{dSp}(A)$ to $\operatorname{QCoh}(\operatorname{dSp}(A)) := \operatorname{Mod}_AD(\mathsf{CBorn}_K)$ and in which every morphism $f$ satisfies $f_! = f_*$. The utility is that our ``germs" naturally belong to $\mathsf{dAff}$. Then we apply the formalism of \S\ref{subsec:sixf2} to obtain a six-functor formalism on $\mathsf{PStk} := \operatorname{Psh}(\mathsf{dAff}, \infty\mathsf{Grpd})$, with a class of $!$-able edges which has the good stability properties.

The theory developed in \S\ref{subsec:germs} and \S\ref{sec:sixfPStk} allows us in \S\ref{sec:stratification} to contemplate the following. For any morphism $f: X \to Y$ in $\mathsf{dAfnd}$ and any $n \geqslant 0$ we can consider the germ $(X \subseteq X^{n+1/Y})^\dagger$ along the diagonal. Letting $n$ vary, these can be arranged into a simplicial object in $\mathsf{PStk}$, which is in fact an \emph{internal groupoid object}, called the \emph{infinitesimal groupoid} and denoted $\operatorname{Inf}(X/Y)$. We define the \emph{stratifying stack} of $f$ as the geometric realization
\begin{equation}
    (X/Y)_{\mathrm{str}} := \underset{[n] \in \Delta^{\mathsf{op}}}{\operatorname{colim}} \operatorname{Inf}(X/Y)_n,
\end{equation}
where the colimit is taken in $\mathsf{PStk}$. When $Y = \operatorname{dSp}(K) = *$ is the terminal object, we just write $\operatorname{Inf}(X)$ and $X_{\mathrm{str}}$. As in \cite[Lecture VIII]{ScholzeSixFunctors}, the idea is that the six-functor formalism for stratifications already lives in the six-functor formalism on $\mathsf{PStk}$. More precisely, we identify a class of \emph{good} morphisms, c.f. Definition \ref{defn:goodmorphism}, which is stable under base-change and composition, and is such that  $(-)_{\mathrm{str}}$ induces a functor 
\begin{equation}
    (-)_{\mathrm{str}}: \operatorname{Corr}(\mathsf{dAfnd}, \mathrm{good}) \to \operatorname{Corr}(\mathsf{PStk},\widetilde{E}), 
\end{equation}
where we have denoted the class of $!$-able edges in the six-functor formalism on $\mathsf{PStk}$ by $\widetilde{E}$. By post-composition with the six-functor formalism $\operatorname{QCoh}$ on $(\mathsf{PStk}, \widetilde{E})$ gives a basic six-functor formalism 
\begin{equation}
    \operatorname{Strat} := \operatorname{QCoh} \circ (-)_{\mathrm{str}}
\end{equation}
for \emph{(analytic) stratifications}. By Kan extension, we can lift $\operatorname{Strat}$ to a six-functor formalism on all of $\mathsf{dRig}$, in which the class $E_{\mathrm{str}}$ of $!$-able morphisms contains all those which are representable in the class \emph{good}. Unfortunately, it does not seem that the extension formalism of \S\ref{subsec:sixf2} applies here because the class of \emph{good} morphisms does not have the right-cancellative property, so that the extension principles of \cite{mann_p-adic_2022} have to be applied in a more ad-hoc way. 

\emph{In \S\ref{subsec:DescentKashiwara}}, we prove that $\operatorname{Strat}$, when viewed as a prestack via the upper-star functors, is a sheaf in the analytic topology (Lemma \ref{lem:Crysdescent}). We prove a version of Kashiwara's equivalence for a class of Zariski-closed immersions $i: Z \to X$ in $\mathsf{dRig}$ which we call \emph{stratifying}: this means that $i$ locally admits a retraction.

\emph{In \S\ref{subsec:DmodformulasnStuff}}, we investigate the relation between $\operatorname{Strat}(X)$ and ``$\mathcal{D}$-modules". Let $X \in \mathsf{dAfnd}$. The key to understanding the relation is looking at the canonical morphism $p: X \to X_{\mathrm{str}}$ and the induced adjunctions $p^* \dashv p_*$ and $p_! \dashv p^!$ on quasi-coherent sheaves. We define the \emph{comonad of (analytic) jets} to be $\mathcal{J}^\infty_X:= p^*p_*$ and the \emph{monad of (infinite-order) differential operators} to be $\mathcal{D}^\infty_X := p^!p_!$. It is always the case that $p^* \dashv p_*$ is comonadic: that is, there is an equivalence between $\operatorname{Strat}(X)$ and comodules in $\operatorname{QCoh}(X)$ over the comonad $\mathcal{J}^\infty_X$. If $X \to X_{\mathrm{str}}$ is \emph{of $!$-descent} then the adjunction $p_! \dashv p^!$ is monadic so that there is an equivalence between $\operatorname{Strat}(X)$ and the category of modules over the monad $\mathcal{D}^\infty_X$. It turns out that there is a canonical equivalence $p_! \simeq p_*$ and that this can be used to pass between these descriptions in terms of $\mathcal{J}^\infty_X$-comodules and $\mathcal{D}^\infty_X$-modules. By base-change, the underlying endofunctors of $\mathcal{J}^\infty_X$ and $\mathcal{D}^\infty_X$ are given by the simple formulas
\begin{equation*}
\begin{aligned}
  \mathcal{J}_X^\infty \simeq \tilde{\pi}_{1,*}\tilde{\pi}_2^*  \simeq (A \widehat\otimes_K A)^\dagger_\Delta \widehat{\otimes}_A(-)&& \text{ and } && \mathcal{D}^\infty_X \simeq \tilde{\pi}_{2,*}\tilde{\pi}_1^! \simeq R \underline{\operatorname{Hom}}_A((A \widehat\otimes_K A)^\dagger_\Delta, -). 
\end{aligned}
\end{equation*}
Here $\tilde{\pi}_1, \tilde{\pi}_2: (X \subseteq X \times X)^\dagger \to X$ are the two projections. Moreover, under suitable hypotheses, we can chase the explicit equivalence of categories implicit in the Barr--Beck--Lurie theorem to give formulas for the six operations in Theorem \ref{thm:formulasforsix}. For some reason, it turns out to be convenient to use \emph{both} the descriptions to give these formulas, in terms of jets \emph{and} differential operators.

\emph{In Theorem \ref{thm:StratMonadicity}}, we give a partial answer to the question of when the morphism $p: X \to X_{\mathrm{str}}$ is of (universal) $!$-descent. From the previous discussion, this is clearly important for knowing when $\operatorname{Strat}(X)$ is related to $\mathcal{D}^\infty_X$-modules. We prove that $p$ is of universal $!$-descent whenever $X$ is a classical affinoid equipped with an \'etale morphism $X \to \mathbf{D}^r_K$.

\emph{In \S\ref{subsec:AWrelation}}, we compute the action of the monad $\mathcal{D}^\infty_X$ on the unit object $1_X \in \operatorname{QCoh}(X)$ when $X$ is a classical affinoid equipped with an \'etale morphism $X \to \mathbf{D}^r_K$. We show that it agrees with the infinite-order differential operators introduced by Ardakov-Wadsley \cite{Dcap1}. In our follow-up work \cite{soor_relation_2025} we show that the category of $\mathcal{C}$-complexes introduced by Bode in \cite[\S 6]{bode_six_2021}, embeds fully-faithfully in $\operatorname{Strat}(X)$, for any smooth classical rigid variety $X$. 

\paragraph{Acknowledgements.} Firstly, I'd like to thank my supervisor Konstantin Ardakov for his continued interest and support for this project. I would like to thank  Andreas Bode, Lukas Brantner, Jack Kelly, Ken Lee, James Taylor, and Finn Wiersig for their interest and many helpful discussions, meetings and email exchanges and conversations related to this work. I would also like to thank Guillermo Cortiñas, Kobi Kremnizer, and Devarshi Mukherjee for their interest in this work.

\section{Homotopical algebra in quasi-abelian categories}\label{sec:homotopical}
We make extensive use of the theory of homotopical algebra in quasi-abelian categories as developed in \cite{kelly_homotopy_2021} and \cite{schneiders_quasi-abelian_1999}. We assume familiarity with the basics of higher algebra and model categories, as these topics are too vast to give a proper summary; we will give an indication of any non-standard or particular notions.

Let $\mathscr{A}$ be a quasi-abelian category. Recall \cite[\S 1]{schneiders_quasi-abelian_1999} that this means that $\mathscr{A}$ is an additive category which has all kernels and cokernels, and strict\footnote{Recall that a morphism $f$ is called strict if the natural morphism $\operatorname{coker} \operatorname{ker}f \xrightarrow[]{ }\operatorname{ker}\operatorname{coker}f$ is an isomorphism.} epimorphisms (resp. monomorphisms) are stable under pullbacks (resp. pushouts). We recall the following properties.
\begin{defn}
\begin{enumerate}[(i)]
    \item A functor $F: \mathscr{A} \to \mathscr{B}$ between quasi-abelian categories is called \emph{left exact} (resp. \emph{strongly left exact}) if it preserves the kernels of strict morphisms (resp. all morphisms).
    \item A functor $F: \mathscr{A} \to \mathscr{B}$ between quasi-abelian categories is called \emph{right exact} (resp. \emph{strongly right exact}) if it preserves the cokernels of strict morphisms (resp. all morphisms).
    \item A functor $F: \mathscr{A} \to \mathscr{B}$ between quasi-abelian categories is called \emph{exact} (resp. \emph{strongly exact}) if it is left and right exact (resp. strongly left and right exact). 
\end{enumerate}
  
\end{defn}
\begin{defn}\cite[Definition 2.47]{kelly_homotopy_2021}
\begin{enumerate}[(i)]
    \item An object $P \in \mathscr{A}$ is called \emph{projective} if the functor $\operatorname{Hom}(P,-): \mathscr{A} \to \mathsf{Ab}$ (valued in abelian groups), takes strict epimorphisms to surjections.
    \item We say that $\mathscr{A}$ has \emph{enough projectives} if for each object $M \in \mathscr{A}$ there exists a projective object $P$ together with a strict epimorphism $P \twoheadrightarrow M$. 
\end{enumerate}
\end{defn}
\begin{defn}\cite[Definition 2.92]{kelly_homotopy_2021}
Assume that $\mathscr{A}$ admits small coproducts. A small subcategory $\mathscr{P}$ of objects in $\mathscr{A}$ is called \emph{generating} if for each object $M \in \mathscr{A}$ there exists a small collection $\{P_i\}_{i \in \mathcal{I}}$ of objects of $\mathscr{P}$ together with a strict epimorphism $\bigoplus_{i \in \mathcal{I}} P_i \twoheadrightarrow M$.
\end{defn}
\begin{defn}\cite[Definition 2.1.10]{schneiders_quasi-abelian_1999}. Let $\mathscr{A}$ be a quasi-abelian category.
\begin{enumerate}[(i)]
    \item Assume that $\mathscr{A}$ admits (small) coproducts. An object $C \in \mathscr{A}$ is called \emph{small} if $\operatorname{Hom}_{\mathscr{A}}(C,-): \mathscr{A} \to \mathsf{Ab}$ commutes with (small) coproducts.
    \item The category $\mathscr{A}$ is called \emph{quasi-elementary} if it is cocomplete and has a small generating subcategory $\mathscr{P} \subseteq \mathscr{A}$ of small projective objects. 
\end{enumerate}
\end{defn}
\begin{defn}\cite[Definition 2.97]{kelly_homotopy_2021} Let $\mathcal{S}$ be a collection of morphisms in a cocomplete quasi-abelian category $\mathscr{A}$ stable under composition.
\begin{enumerate}[(i)]
        \item Let $\mathcal{I}$ be a filtered category. An object $C \in \mathscr{A}$ is called \emph{$(\mathcal{I},\mathcal{S})$-tiny} if the functor $\operatorname{Hom}(C,-): \mathscr{A} \to \mathsf{Ab}$ commutes with colimits of diagrams in $\operatorname{Fun}_\mathcal{S}(\mathcal{I},\mathscr{A})$. Here $\operatorname{Fun}_\mathcal{S}(\mathcal{I},\mathscr{A}) \subseteq \operatorname{Fun}(\mathcal{I},\mathscr{A})$ denotes the sub-class of those functors which take morphisms in $\mathcal{I}$ into $\mathcal{S}$.
        \item The category $\mathscr{A}$ is called \emph{$(\mathcal{I},\mathcal{S})$-elementary} if $\mathscr{A}$ is generated by a subcategory $\mathscr{P} \subseteq \mathscr{A}$ consisting of $(\mathcal{I},\mathcal{S})$-tiny projective objects. 
        \item An object $C \in \mathscr{A}$ is called \emph{$\mathcal{S}$-tiny} if the functor $\operatorname{Hom}(C,-): \mathscr{A} \to \mathsf{Ab}$ commutes with colimits of diagrams in $\operatorname{Fun}_\mathcal{S}(\mathcal{I},\mathscr{A})$, for any filtered category $\mathcal{I}$. 
    \item The category $\mathscr{A}$ is called \emph{$\mathcal{S}$-elementary} if $\mathscr{A}$ is generated by a subcategory $\mathscr{P} \subseteq \mathscr{A}$ consisting of $\mathcal{S}$-tiny projective objects. 
\end{enumerate}
\end{defn}
In what follows we will often take $(\mathcal{I}, \mathcal{S}):= (\mathbf{N}, \mathrm{SplitMon})$, where $\mathrm{SplitMon}$ is the class of split monomorphisms in $\mathscr{A}$, or $\mathcal{S} := \mathrm{AdMon}$ to be the class of strict monomorphisms in $\mathscr{A}$, or $\mathcal{S}:= \mathrm{all}$. In each of these cases we say that $\mathscr{A}$ is \emph{$(\mathbf{N}, \mathrm{SplitMon})$-elementary}, \emph{$\mathrm{AdMon}$-elementary}, and \emph{elementary}, respectively. Of course, we have the following chain of implications:
\begin{equation}
% https://q.uiver.app/#q=WzAsNSxbMCwwLCJcXHRleHR7ZWxlbWVudGFyeX0iXSxbMCwxLCJcXHRleHR7QWRNb24tZWxlbWVudGFyeX0iXSxbMCwyLCJcXHRleHR7cXVhc2ktZWxlbWVudGFyeX0iXSxbMCwzLCIoXFxtYXRoYmZ7Tn0sXFxtYXRocm17U3BsaXRNb259KVxcdGV4dHstZWxlbWVudGFyeX0iXSxbMCw0LCJcXHRleHR7ZW5vdWdoIHByb2plY3RpdmVzfS4iXSxbMCwxLCIiLDAseyJsZXZlbCI6Mn1dLFsxLDIsIiIsMCx7ImxldmVsIjoyfV0sWzIsMywiIiwwLHsibGV2ZWwiOjJ9XSxbMyw0LCIiLDAseyJsZXZlbCI6Mn1dXQ==
\begin{tikzcd}[row sep=small]
	{\text{elementary}} \\
	{\text{AdMon-elementary}} \\
	{\text{quasi-elementary}} \\
	{(\mathbf{N},\mathrm{SplitMon})\text{-elementary}} \\
	{\text{enough projectives}.}
	\arrow[Rightarrow, from=1-1, to=2-1]
	\arrow[Rightarrow, from=2-1, to=3-1]
	\arrow[Rightarrow, from=3-1, to=4-1]
	\arrow[Rightarrow, from=4-1, to=5-1]
\end{tikzcd}
\end{equation}
%\begin{example}
%Let $R$ be a Banach ring. Then the category $\mathscr{A} = \mathsf{CBorn}_R$ of complete bornological $R$-modules is an $\mathrm{AdMon}$-elementary quasi-abelian category. A small generating subcategory of $\operatorname{AdMon}$-tiny projective objects is given by $\mathscr{P} = \{c_0(S)\}_S$ for $S$ ranging over (small) sets.
%\end{example}
\begin{notations}
Let $\mathscr{A}$ be an additive category and let $\operatorname{Ch}(\mathscr{A})$ denote the category of cochain complexes. In this article we always use \emph{superscripts} to denote \emph{cohomological} indexing convention and \emph{subscripts} for \emph{homological} indexing. These conventions are related by $M^{i} = M_{-i}$ for $i \in \mathbf{Z}$. 
\end{notations}
\begin{thm}\cite[Theorem 4.59, Theorem 4.65]{kelly_homotopy_2021}\label{thm:kelly}
\begin{enumerate}[(i)]
    \item Let $\mathscr{A}$ be a quasi-abelian category with enough projectives. Then the \emph{projective model structure} on $\operatorname{Ch}^{\leqslant 0}(\mathscr{A})$ exists. The weak equivalences, fibrations and cofibrations may be described as follows:
    \begin{enumerate}
    \item[(W)] A morphism is a weak equivalence if it is a strict quasi-isomorphism, i.e., its cone is strictly exact. 
    \item[(F)] A morphism is a fibration if the its components are strict epimorphisms in positive degrees. 
    \item[(C)] A morphism is a cofibration if it is a degreewise strict monomorphism with degreewise projective cokernel.
    \end{enumerate}
    Further, this is a simplicial model structure. 
    \item Assume that $\mathscr{A}$ is a $(\mathbf{N},\mathrm{SplitMon})$-elementary quasi-abelian category. Then the \emph{projective model structure} on $\operatorname{Ch}(\mathscr{A})$ exists. The weak equivalences and fibrations may be described as follows:
    \begin{enumerate}
    \item[(W)] A morphism is a weak equivalence if it is a strict quasi-isomorphism, i.e., its cone is strictly exact. 
    \item[(F)] A morphism is a fibration if it is a degreewise strict epimorphism.
    \end{enumerate}
    Further, this is a stable and simplicial model structure. 
\end{enumerate}
\end{thm}
This permits us to make the following definition.
\begin{defn}
Let $\mathscr{A}$ be a $(\mathbf{N},\mathrm{SplitMon})$-elementary quasi-abelian category. The \emph{derived $\infty$-category} of $\mathscr{A}$ is defined to be the underlying $\infty$-category of the simplicial model category $\operatorname{Ch}(\mathscr{A})$. That is, it is the $\infty$-categorical localization
\begin{equation}
    D(\mathscr{A}) := N(\operatorname{Ch}(\mathscr{A}))[W^{-1}].
\end{equation}
This is a stable $\infty$-category.
\end{defn}
We recall \cite[\S 1.2.2]{schneiders_quasi-abelian_1999} that $D(\mathscr{A})$ is equipped with two canonical $t$-structures. Of these, it is conventional to prefer the \emph{left $t$-structure} which may be described as follows.\footnote{We recall that a $t$-structure on a stable $\infty$-category, is \emph{by definition} given by a $t$-structure on its homotopy category.} 
\begin{prop}\cite[\S 1.2.2]{schneiders_quasi-abelian_1999}
Let $D^{\leqslant 0}(\mathscr{A})$ (resp. $D^{\geqslant 0}(\mathscr{A})$) denote the full sub $\infty$-category of complexes which are strictly exact in positive (resp. negative) degrees. Then the pair
\begin{equation}
    (D^{\leqslant 0}(\mathscr{A}), D^{\geqslant 0}(\mathscr{A})) 
\end{equation}
determines a $t$-structure on $D(\mathscr{A})$. 
\end{prop}
The heart of this $t$-structure is called the \emph{left heart} of $\mathscr{A}$ \cite[\S 1.2.3]{schneiders_quasi-abelian_1999} and denoted by $LH(\mathscr{A})$. Consequently, we get cohomology functors 
\begin{equation}
    H^i: D(\mathscr{A}) \to LH(\mathscr{A})
\end{equation}
for each $i \in \mathbf{Z}$. The category $LH(\mathscr{A})$ admits the following very explicit description. Let $K(\mathscr{A})$ be the category with $\operatorname{Ob}(K(\mathscr{A})) = \operatorname{Ob}(\operatorname{Ch}(\mathscr{A}))$ and whose morphisms are chain-homotopy classes of morphisms in $\operatorname{Ch}(\mathscr{A})$. 
\begin{prop}\cite[Corollary 1.2.21]{schneiders_quasi-abelian_1999}
The category $LH(\mathscr{A})$ is equivalent to the full subcategory of $K(\mathscr{A})$ on two-term complexes
\begin{equation}
 0\to    M^{-1} \xrightarrow[]{d} M^0 \to 0
\end{equation}
with $d$ a monomorphism, \emph{localized} at the class of morphisms $f: [M^{-1} \to M^0] \to [N^{-1} \to N^0]$ such that
\begin{equation}
    % https://q.uiver.app/#q=WzAsNCxbMCwwLCJFXnstMX0iXSxbMSwwLCJFXjAiXSxbMCwxLCJGXnstMX0iXSxbMSwxLCJGXjAiXSxbMCwxXSxbMCwyLCJmXnsoLTEpfSIsMl0sWzEsMywiZl4wIl0sWzIsM11d
\begin{tikzcd}
	{M^{-1}} & {M^0} \\
	{N^{-1}} & {N^0}
	\arrow[from=1-1, to=1-2]
	\arrow["{f^{(-1)}}"', from=1-1, to=2-1]
	\arrow["{f^0}", from=1-2, to=2-2]
	\arrow[from=2-1, to=2-2]
\end{tikzcd}
\end{equation}
is a Cartesian and coCartesian square in $\mathscr{A}$. 
\end{prop}
With respect to this description, we obtain the following:
\begin{prop}\cite[\S 1.2]{schneiders_quasi-abelian_1999}, see also \cite[\S3.1]{bode_six_2021}. 
\begin{enumerate}[(i)]
    \item The canonical functor $I: \mathscr{A} \to LH(\mathscr{A})$ is induced by the functor given by 
    \begin{equation}
        M \mapsto [0 \to M].
    \end{equation}
    This is fully-faithful and admits a left adjoint $C: LH(\mathscr{A}) \to \mathscr{A}$ which is induced by the functor
    \begin{equation}
     [M^{-1} \xrightarrow[]{d} M^0] \mapsto \operatorname{coker}d.
    \end{equation}
    on two-term complexes, so that $\mathscr{A}$ is a reflective subcategory of $LH(\mathscr{A})$. 
    \item The cohomology functor $H^i : D(\mathscr{A}) \to LH(\mathscr{A})$ is given on objects by 
    \begin{equation}
       H^i: M^\bullet \mapsto [\operatorname{coker} \operatorname{ker} d^{i-1} \to \operatorname{ker} d^i].
    \end{equation}
    In particular, a complex $M^\bullet \in D(\mathscr{A})$ is strict (meaning that its differentials are strict morphisms) if and only if the cohomology objects $H^iM^\bullet \in LH(\mathscr{A})$ factor through (the essential image of) $\mathscr{A}$. A complex $M^\bullet$ is strictly exact if and only if $H^iM^\bullet = 0$ for all $i \in \mathbf{Z}$. 
\end{enumerate}
\end{prop}
\subsection{Banach rings and complete bornological modules}\label{sec:Banach}
\begin{rmk}[Important remark]
In this article we only consider \emph{non-Archimedean} Banach algebras and \emph{non-Archimedean} Banach modules. However, it seems quite likely, or perhaps even obvious, that much of the content of this article carries over to the Archimedean setting. 
\end{rmk}
Our conventions on Banach rings and modules follows Berkovich \cite[Chapter 1]{BerkovichSpectral}. 
\begin{defn}
\begin{enumerate}[(i)]
    \item Let $V$ be an abelian group. A (non-Archimedean) \emph{seminorm} on $V$ is a function $\|\cdot\|: V \to \mathbf{R}^{\geqslant 0}$ such that $\|0\|= 0$ and $\|v-w\| \leqslant \operatorname{max}\{ \|v\|,\|w\|\}$ for all $v,w \in V$. It is called a \emph{norm} if $\|v\| =0$ implies $v =0 $. $V$ is called \emph{complete} if it is complete as a metric space. We write $\widehat{V}$ for the completion of a seminormed abelian group $V$. Seminorms $\|\cdot\|$ and $\|\cdot\|^\prime$ are called \emph{equivalent} if there exists $C, C^\prime \in \mathbf{R}^{>0}$ such that $C \|\cdot \| \leqslant  \|\cdot \|^\prime \leqslant C^\prime \|\cdot \|$. 
    \item Let $R$ be a ring. A \emph{seminorm} on $R$ is a seminorm on the abelian group $(R,+)$ such that $\|1\| = 1$ and $\|fg \| \leqslant \|f\|\cdot \|g\|$ for all $f, g \in R$. It is called \emph{multiplicative} if $\|fg\| = \|f\|\cdot \|g\|$ is satisfied for all $f,g \in R$. A \emph{Banach ring} is a normed ring which is complete as a metric space.  
    \item Let $R$ be a seminormed ring. A \emph{seminormed $R$-module} is a $R$-module $V$ equipped with a seminorm $\|\cdot\|_V$ such that there exists $C \in \mathbf{R}^{>0}$ such that $\|fv\|_V \leqslant C\|f\|_R \|v\|_V$ for all $f \in R$ and all $v \in V$. If $R$ is a Banach ring, then such $V$ is called a \emph{Banach $R$-module} if it is complete as a metric space.
    \item A \emph{non-Archimedean field} is a field which is complete with respect to a multiplicative seminorm. 
    \item Let $R$ be a Banach ring and let $V, W$ be Banach $R$-modules. The \emph{completed tensor product} $V \widehat{\otimes}_R W$ is defined to be the completion of $V \otimes_RW $ with respect to the norm
    \begin{equation}
        \|x\| := \operatorname{inf}\left\{ \max_i \|v_i\|\|w_i\|: x = \sum v_i \otimes w_i\right\}.
    \end{equation}
    The \emph{internal Hom}, written  $\underline{\operatorname{Hom}}_R(V,W)$ is the $R$-module of bounded $R$-linear maps $\operatorname{Hom}_R(V,W)$ equipped with the \emph{operator norm}. 
   % \item If $M$ is a seminormed abelian group with subgroup $N$ then $M/N$ acquires the residue seminorm. The residue seminorm is a norm if and only if $N$ is closed. If $M$ is complete and $N$ is closed then $M/N$ is complete. 
\end{enumerate}
\end{defn}

\begin{defn}
Let $K$ be a non-trivially valued non-Archimedean field with unit ball $o \subseteq K$. Let $V$ be a $K$-vector space. A \emph{bornology} on $V$ is a collection of $\mathscr{B}$ of \emph{bounded subsets} of $V$ satifying the following properties:
\begin{enumerate}[(i)]
    \item If $B \in \mathscr{B}$ and $B^\prime \subseteq B$ then $B^\prime \in \mathscr{B}$;
    \item If $v \in V$ then $\{v\} \in \mathscr{B}$;
    \item $\mathscr{B}$ is closed under finite unions;
    \item If $B \in \mathscr{B}$ and $r \in K$ then $rB \in \mathscr{B}$;
    \item If $B \in \mathscr{B}$ then the $o$-submodule $o \cdot B \in \mathscr{B}$.
\end{enumerate}
The pair $V = (V,\mathscr{B})$ is called a \emph{(convex) bornological $K$-vector space}. A morphism $T: V \to W$ of $K$-vector spaces is called \emph{bounded} if $T(B) \subseteq W$ is bounded for every bounded subset $B \subseteq V$. In this way we obtain the category $\mathsf{Born}_K$ of bornological $K$-vector spaces.
\end{defn}
\begin{prop}\cite{HogbeNlendThesis,ProsmansHomological, BambozziDaggerBanach}.
Let $K$ be a non-trivially valued non-Archimedean field with unit ball $o \subseteq K$. 
\begin{enumerate}[(i)]
    \item The category $\mathsf{Born}_K$ is closed symmetric monoidal. The \emph{bornological tensor product} is defined to be $V \otimes _KW$ endowed with the bornology generated by the collection of $B \otimes_o B^\prime$ for $B, B^\prime$ bounded $o$-submodules of $V, W$. The \emph{internal Hom} $\underline{\operatorname{Hom}}_K(V,W)$ is the $K$-vector space $\operatorname{Hom}_K(V,W)$ of bounded linear maps equipped with the bornology generated by equibounded subsets.
    \item $\mathsf{Born}_K$ is a complete and cocomplete $\mathrm{AdMon}$-elementary quasi-abelian category.
    \item A generating family of $\mathrm{AdMon}$-tiny projective objects is given by $\{ \coprod^{\leqslant 1}_S K\}_S$ for $S$ ranging over (small) sets. Here $\coprod^{\leqslant1}_S K$ is the normed $K$-vector space with underlying $K$-vector space $\coprod_S K$ and norm $\|(r_s)_s\| := \operatorname{sup}_{s} \| r_s\|$.
    \item A morphism $\varphi: V \to W$ is strict if and only if the subspace bornology on $\operatorname{im}\varphi$ coincides with the quotient bornology on $V/\operatorname{ker}\varphi$. 
    \end{enumerate}
\end{prop}
Every seminormed $K$-vector space acquires a bornology in an obvious way. Therefore we may make the following definition. The point is that the norm should not be the part of the data of a $K$-Banach space, only the bornology. 
\begin{defn}
Let $K$ be a non-trivially valued non-Archimedean field. The category of \emph{seminormed $K$-vector spaces}, (resp. \emph{normed $K$-vector spaces}, resp. \emph{$K$-Banach spaces}), is defined to be the full subcategory of $\mathsf{Born}_K$ on objects $V$ whose bornology is induced by a seminorm (resp. a norm, resp. a norm making $V$ into a Banach space). We denote these categories by $\mathsf{SNrm}_K, \mathsf{Nrm}_K$, and $\mathsf{Ban}_K$, respectively.
\end{defn}
\begin{defn}
Let $K$ be a non-trivially valued non-Archimedean field and let $V \in \mathsf{Born}_K$. Given a bounded $o$-submodule $B \subseteq V$ we define $V_B := \operatorname{span}_K B \subseteq V$ equipped with the bornology defined by the gauge seminorm:
    \begin{equation}
        \|x\|_B := \operatorname{inf}\{|\lambda|: x \in \lambda B\}.  
    \end{equation}
\end{defn}
\begin{prop}\cite{HogbeNlendThesis, ProsmansHomological}.
Let $K$ be a non-trivially valued non-Archimedean field. There is an adjunction 
\begin{equation}
    \operatorname{diss}: \mathsf{Born}_K \leftrightarrows \mathsf{Ind}(\mathsf{SNrm}_K) : \operatorname{colim}
\end{equation}
in which the right adjoint $\operatorname{diss}: V \mapsto \indlim V_B$ is fully faithful. The essential image is given by the \emph{essentially monomorphic $\mathrm{Ind}$-objects}, i.e., those $\mathrm{Ind}$-objects which are equivalent to $\mathrm{Ind}$-systems of monomorphisms. Consequently there is an equivalence of categories 
\begin{equation}
\operatorname{colim}:\mathsf{Ind}^m(\mathsf{SNrm}_K) \xrightarrow[]{\sim} \mathsf{Born}_K.
\end{equation}
\end{prop}
\begin{defn}\cite{HogbeNlendThesis, ProsmansHomological}.
Let $K$ be a non-trivially valued non-Archimedean field and let $V \in \mathsf{Born}_K$. 
\begin{enumerate}[(i)]
    \item $V$ is called \emph{separated} if, for every $B \in \mathscr{B}$ there exists a bounded $o$-submodule $B^\prime  \supseteq B$ such that the gauge seminorm on $V_{B^\prime}$ is a norm. We let $\mathsf{SBorn}_K \subseteq \mathsf{Born}_K$ denote the full subcategory on separated bornological $K$-vector spaces. 
    \item $V$ is called \emph{complete} if, for every $B \in \mathscr{B}$ there exists a bounded $o$-submodule $B^\prime  \supseteq B$ such that $V_{B^\prime}$ is a $K$-Banach space. We let $\mathsf{CBorn}_K \subseteq \mathsf{Born}_K$ denote the full subcategory on complete bornological $K$-vector spaces.
\end{enumerate}
\end{defn}
\begin{prop}\cite{HogbeNlendThesis, ProsmansHomological}.
Let $K$ be a non-trivially valued non-Archimedean field.
\begin{enumerate}[(i)]
\item The inclusion $\mathsf{SBorn}_K \subseteq \mathsf{Born}_K$ admits a left adjoint $\operatorname{sep}: \mathsf{Born}_K \to \mathsf{SBorn}_K$.
\item The category $\mathsf{SBorn}_K$ is closed symmetric monoidal. The tensor product is given by $\operatorname{sep}(V \otimes_K W)$ and the internal Hom is the same as in $\mathsf{Born}_K$. 
\item $\mathsf{SBorn}_K$ is a complete and cocomplete $\operatorname{AdMon}$-elementary quasi-abelian category. 
\item A generating family of $\mathrm{AdMon}$-tiny projective objects is given by $\{ \coprod^{\leqslant1}_S K\}_S$ for $S$ ranging over (small) sets. 
\item A morphism $\varphi: V \to W$ is strict if and only if $\operatorname{im}\varphi$ is bornologically closed and the bornology on $\operatorname{im}\varphi$ coincides with the quotient bornology on $V/\operatorname{ker} \varphi$. 
\item There is an adjunction
\begin{equation}
    \operatorname{diss}: \mathsf{SBorn}_K \leftrightarrows \mathsf{Ind}(\mathsf{Nrm}_K) : \operatorname{colim}
\end{equation}
in which the right adjoint $\operatorname{diss}: V \mapsto \indlim \operatorname{sep}(V_B)$ is fully-faithful and whose essential image is given by the the essentially monomorphic $\operatorname{Ind}$-objects, so that there is an equivalence of categories
\begin{equation}
    \operatorname{colim} : \mathsf{Ind}^m(\mathsf{Nrm}_K) \xrightarrow[]{\sim} \mathsf{SBorn}_K . 
\end{equation}
\end{enumerate}
\end{prop}
\begin{defn}
Let $K$ be a non-trivially valued non-Archimdean field and let $V$ be a $K$-Banach space. Let $S$ be a (small) set. We define the \emph{space of $V$-valued zero sequences} to be 
    \begin{equation}
        c_0(S,V):= \{ \phi: S \to V : \forall \varepsilon>0,  \exists \text{ at most finitely many }s\in S: \|\phi(s)\| > \varepsilon\},
    \end{equation}
with the norm $\|\phi\| := \sup_{s \in S}\|\phi(s)\|$. When $V=K$ we will just write $c_0(S) := c_0(S,K)$. 
\end{defn}
\begin{lem} Let $K$ be a non-trivially valued non-Archimedean field and let $V \in \mathsf{CBorn}_K$. 
\begin{enumerate}[(i)]
    \item There is a natural isomorphism
    \begin{equation}
        \operatorname{Hom}_K(c_0(S),V) \cong \left\{ \text{functions }f:S \to V: f(S) \subseteq V \text{ is bounded} \right\}. 
    \end{equation}
    \item For every $S, S^\prime$, there are natural isomorphisms
    \begin{equation}
       c_0(S) \widehat{\otimes}_K c_0(S^\prime) \cong   c_0(S \times S^\prime) \cong c_0(S, c_0(S^\prime)), 
    \end{equation}
    of $K$-Banach spaces. 
\end{enumerate}
\end{lem}
\begin{proof}
We omit the proof of (i). We only mention that (ii) can be proved using (i) together with currying and Yoneda's lemma. 
\end{proof}
\begin{prop}\cite{HogbeNlendThesis, ProsmansHomological} \label{prop:CBornK}
Let $K$ be a non-trivially valued non-Archimedean field. 
\begin{enumerate}[(i)]
\item The inclusion $\mathsf{CBorn}_K \subseteq \mathsf{Born}_K$ admits a left adjoint $\widehat{(\cdot)}: \mathsf{Born}_K \to \mathsf{CBorn}_K$.
\item The category $\mathsf{CBorn}_K$ is closed symmetric monoidal. The tensor product is given by the \emph{completed tensor product} 
\begin{equation}
    V \widehat{\otimes}_K W := \widehat{V \otimes_K W}, 
\end{equation}
and the internal Hom is the same as in $\mathsf{Born}_K$. 
\item $\mathsf{CBorn}_K$ is a complete and cocomplete $\operatorname{AdMon}$-elementary quasi-abelian category. A generating family of $\mathrm{AdMon}$-tiny projective objects is given by $\{c_0(S)\}_S$ for $S$ ranging over (small) sets. A morphism $\varphi: V \to W$ is strict if and only if $\operatorname{im}\varphi$ is bornologically closed and the bornology on $\operatorname{im}\varphi$ coincides with the quotient bornology on $V/\operatorname{ker} \varphi$. 
\item There is an adjunction
\begin{equation}
    \operatorname{diss}: \mathsf{CBorn}_K \leftrightarrows \mathsf{Ind}(\mathsf{Ban}_K) : \operatorname{colim}
\end{equation}
in which the right adjoint $\operatorname{diss}: V \mapsto \indlim (\widehat{V}_B)$ is fully-faithful and whose essential image is given by the the essentially monomorphic $\operatorname{Ind}$-objects, so that there is an equivalence of categories
\begin{equation}
    \operatorname{colim} : \mathsf{Ind}^m(\mathsf{Ban}_K) \xrightarrow[]{\sim} \mathsf{CBorn}_K . 
\end{equation}
\end{enumerate}
\end{prop}
\begin{cor}\label{cor:essentiallyMono}
Let $K$ be a non-trivially valued non-Archimedean field. In any of the categories $\mathsf{Born}_K$, $\mathsf{SBorn}_K$ and $\mathsf{CBorn}_K$, colimits of (essentially) monomorphic filtered systems are strongly exact.
\end{cor}

\begin{rmk}\label{rmk:CBornRgeneral}
It is natural to ask how the definition of $\mathsf{CBorn}_K$ generalizes when we replace $K$ by a general Banach ring $R$. It seems like the good definition is $\mathsf{CBorn}_R := \mathsf{Ind}^m\mathsf{Ban}_R$. By Proposition \ref{prop:CBornK}(iv) this is consistent with our previous definition in the case when $R = K$ is a non-trivially valued non-Archimedean field. 
\end{rmk}

\subsection{Universal property of the derived category}\label{sec:universalpropertyofderived}
%Let $\mathscr{A}$ be a cocomplete quasi-abelian category with enough projectives. Let $\mathscr{P} \subseteq \mathscr{A}$ a strictly generating family of small projective objects.
In this section we assume that 
\begin{itemize}
    \item[$\star$] $\mathscr{A}$ is a quasi-elementary quasi-abelian category.
\end{itemize}
We fix a small generating set of small projective objects $\mathscr{P} \subseteq \mathscr{A}$, and we assume that $\mathscr{P}$ is closed under finite products in $\mathscr{A}$ (there is no harm in this). We always view $\mathscr{P}$ as a full subcategory of $\mathscr{A}$. 
%\begin{lem}
%With notations as above. There is an equivalence of $1$-categories
%\begin{equation}
    %\operatorname{Fun}^\Pi(\mathscr{P}^\mathsf{op}, \mathsf{Set}) = \operatorname{Add}(\mathscr{P}^\mathsf{op}, \mathsf{Ab}),
%\end{equation}
%where the latter denotes the \emph{left heart} of $\mathscr{A}$.
%\item The functor  
%\begin{equation}
  % \mathscr{A} \to  \operatorname{Add}(\mathscr{P}^\mathsf{op}, \mathsf{Ab}) : A \mapsto \operatorname{Hom}(-,A)
%\end{equation}
%is fully-faithful and induces an equivalence 
%\begin{equation}
   % LH(\mathscr{A}) =  \operatorname{Add}(\mathscr{P}^\mathsf{op}, \mathsf{Ab}). 
%\end{equation}
%\end{enumerate}
%\end{lem}
%\begin{proof}
%(i): This follows from an Eckmann-Hilton type argument which shows that the abelian group structure on $F(\mathscr{P})$ is uniquely determined by the additivity of $F$. 
%(ii): This is \cite[Proposition 2.1.14]{schneiders_quasi-abelian_1999}.
%\end{proof}
There is a functor
\begin{equation}
\mathscr{A} \to   \operatorname{Fun}^\Pi(\mathscr{P}^\mathsf{op}, \mathsf{Set})
\end{equation}
which sends $A \mapsto h_A := \operatorname{Hom}(-,A)$. (Here and elsewhere, $\operatorname{Fun}^\Pi$ denotes the \emph{finite} product-preserving functors). Passing to simplicial objects, we obtain
\begin{equation}\label{eq:passtosimplicial}
s\mathscr{A} \to \operatorname{Fun}^\Pi(\mathscr{P}^\mathsf{op}, s\mathsf{Set})
\end{equation}
sending $A_\bullet \mapsto h_{A_\bullet} := \operatorname{Hom}(-,A_\bullet)$.
\begin{prop}\cite[\S II.4]{quillen_homotopical_1967}
With notations as above. We may define a model structure (called the \emph{standard model structure}) on $s\mathscr{A}$ as follows: A morphism $A_\bullet \to B_\bullet$ is a weak equivalence (resp. fibration) if $\operatorname{Hom}(P,A_\bullet) \to \operatorname{Hom}(P, B_\bullet)$ is a weak homotopy equivalence (resp. Kan fibration) in $s\mathsf{Set}$, for all $P \in \mathscr{P}$.
\end{prop}
\begin{prop}[Quillen, Bergner, Lurie]\label{prop:simplicialmodelstructure}
\begin{enumerate}[(i)]
    \item We may define a model structure on $\operatorname{Fun}^\Pi(\mathscr{P}^\mathsf{op},s \mathsf{Set})$ as follows: A morphism $\alpha : F \to F^\prime $ is a weak equivalence (resp. fibration) if $\alpha_P: F(P) \to F^\prime(P)$ is a weak homotopy equivalence (resp. Kan fibration) in $s\mathsf{Set}$, for all $P \in \mathscr{P}$.
    \item This model structure presents the $\infty$-category 
    \begin{equation}
    \operatorname{Fun}^\Pi(N(\mathscr{P})^\mathsf{op},\infty\mathsf{Grpd}) = \operatorname{sInd}(N(\mathscr{P})). 
    \end{equation}
    \item By transport of structure via \eqref{eq:passtosimplicial}, the model structure of (i) gives the standard model structure on $s\mathscr{A}$. 
\end{enumerate} 
\end{prop}
\begin{proof}
(i) This is \cite[Proposition 5.5.9.1]{HigherToposTheory}. (ii): This is \cite[Corollary 5.5.9.3]{HigherToposTheory}. (iii): Obvious.
\end{proof}
\begin{prop}\label{prop:DoldKan}
Under the equivalence of categories 
\begin{equation}
N :s\mathscr{A} \simeq  \operatorname{Ch}^{\leqslant 0}(\mathscr{A})  : \Gamma
\end{equation}
furnished by the Dold-Kan correspondence, the model structure on $s\mathscr{A}$ induces the model structure on $\operatorname{Ch}^{\leqslant 0}(\mathscr{A})$ and vice-versa. 
\end{prop}
\begin{proof}
The functors $N$ and $\Gamma$ furnish an equivalence of categories by, for instance, \cite[Corollary 4.73]{kelly_homotopy_2021}. For each $P \in \mathscr{P}$ we have a commutative diagram 
\begin{equation}
    % https://q.uiver.app/#q=WzAsNCxbMCwwLCJzXFxtYXRoc2Nye0F9Il0sWzEsMCwiXFxvcGVyYXRvcm5hbWV7Q2h9XntcXGxlcXNsYW50IDB9KFxcbWF0aHNjcntBfSkiXSxbMCwxLCJzXFxtYXRoc2Z7QWJ9Il0sWzEsMSwiXFxvcGVyYXRvcm5hbWV7Q2h9XntcXGxlcXNsYW50IDB9KFxcbWF0aHNme0FifSkiXSxbMCwxLCJOIl0sWzAsMiwiXFxvcGVyYXRvcm5hbWV7SG9tfShQLC0pIiwyXSxbMSwzLCJcXG9wZXJhdG9ybmFtZXtIb219KFAsLSkiXSxbMiwzLCJOIl1d
\begin{tikzcd}
	{s\mathscr{A}} & {\operatorname{Ch}^{\leqslant 0}(\mathscr{A})} \\
	{s\mathsf{Ab}} & {\operatorname{Ch}^{\leqslant 0}(\mathsf{Ab})}
	\arrow["N", from=1-1, to=1-2]
	\arrow["{\operatorname{Hom}(P,-)}"', from=1-1, to=2-1]
	\arrow["{\operatorname{Hom}(P,-)}", from=1-2, to=2-2]
	\arrow["N", from=2-1, to=2-2]
\end{tikzcd}
\end{equation}
By definition, $f: X_\bullet \to Y_\bullet$ is a fibration if and only if $\operatorname{Hom}(P,X_\bullet) \to \operatorname{Hom}(P,Y_\bullet)$ is a fibration for all $P \in \mathscr{P}$. By (the generalization to abelian groups of) \cite[Proposition 2.3.1]{quillen_homotopical_1967}, together with commutativity of the above square, this holds if and only if $\operatorname{Hom}(P, N(X_\bullet)) \to \operatorname{Hom}(P, N(Y_\bullet))$ is an epimorphism (of chain complexes) in positive degrees. By \cite[Proposition 2.4.2]{quillen_homotopical_1967} this implies that $N(X_\bullet) \to N(Y_\bullet)$ is a degreewise strict epimorphism (we recall that in additive categories, the notion of strict and effective epimorphism coincide). Hence $f$ is a fibration if and only if $N(f)$ is. 

Similarly, a morphism $f: X_\bullet \to Y_\bullet$ is a weak equivalence if and only if $
\operatorname{Hom}(P, N(X_\bullet)) \to \operatorname{Hom}(P, N(Y_\bullet))$ is a quasi-isomorphism of chain complexes, for each $P \in \mathscr{P}$. Looking at the cone of $N(X_\bullet) \to N(Y_\bullet)$ and applying \cite[Corollary 2.95]{kelly_homotopy_2021} this holds if and only if $N(X_\bullet) \to N(Y_\bullet)$ is a strict quasi-isomorphism. Hence $f$ is a weak equivalence if and only if $N(f)$ is.
\end{proof}
\begin{cor}\label{cor:simplicialmodelstructure}
The model structure on $s\mathscr{A}$ may be equivalently described as follows: A morphism $A_\bullet \to B_\bullet$ is a weak equivalence (resp. fibration, resp. cofibration), if and only if it is a strict weak homotopy equivalence (resp. strict epimorphism in positive degrees, resp. degreewise strict monomorphism with degreewise projective cokernel). 
\end{cor}
\begin{proof}
It should be possible to argue directly, looking at the proof of \cite[\S II.4]{quillen_homotopical_1967}. In any case, we can use Proposition \ref{prop:DoldKan}, whence the claims about weak equivalences and fibrations are clear. The statement about cofibrations can be deduced from the corresponding statement for $\operatorname{Ch}^{\leqslant 0}(\mathscr{A})$, which is contained in \cite[Theorem 4.65]{kelly_homotopy_2021}.
\end{proof}
\begin{thm}
With notation as above. The functor \eqref{eq:passtosimplicial} induces an equivalence of $\infty$-categories
\begin{equation}
 N(s\mathscr{A})[W^{-1}] \xrightarrow[]{\sim}   \operatorname{sInd}(N(\mathscr{P})).
\end{equation}
\end{thm}
\begin{proof}
The following argument is quite similar to the proof of \cite[Proposition 2.1.14]{schneiders_quasi-abelian_1999}. Let us write $\mathscr{A}^\prime := \operatorname{Fun}^\Pi(\mathscr{P}^\mathsf{op}, \mathsf{Set})$. This is an abelian category (in fact it is equal to the left heart of $\mathscr{A}$). The image of $\mathscr{P}$ under the Yoneda embedding gives a generating family of small (strongly) projective objects for $\mathscr{A}^\prime$. Hence we may consider the standard model structure on $s\mathscr{A}^\prime$. It is clear that there is an equivalence of categories
\begin{equation}
    s \mathscr{A}^\prime = \operatorname{Fun}^\Pi(\mathscr{P}^\mathsf{op},s \mathsf{Set})
\end{equation}
which identifies this model structure with the one from Proposition \ref{prop:simplicialmodelstructure}(i). Let $\mathscr{L}$ be the category obtained from $\mathscr{P}$ by freely adjoining (small) direct sums. By the smallness assumption on objects of $\mathscr{P}$, both functors $\mathscr{L} \to \mathscr{A}$ and $\mathscr{L} \to \mathscr{A}^\prime$ are fully-faithful, factor through the projective objects, and induce fully-faithful functors $s\mathscr{L} \to s\mathscr{A}$ and $s\mathscr{L} \to s\mathscr{A}^\prime$. By Corollary \ref{cor:simplicialmodelstructure}, the image of $s\mathscr{L}$ in $s\mathscr{A}$ or $s\mathscr{A}^\prime$ consists of fibrant-cofibrant objects. In this way we obtain a diagram
\begin{equation}
%https://q.uiver.app/#q=WzAsMyxbMCwwLCJOKHNcXG1hdGhzY3J7QX0pW1deey0xfV0gIl0sWzIsMCwiTihzXFxtYXRoc2Nye0F9XlxccHJpbWUpW1deey0xfV0gICJdLFsxLDEsIk4oc1xcbWF0aHNjcntMfSlbSF57LTF9XSJdLFswLDFdLFsyLDAsIlxcc2ltZXEgIl0sWzIsMSwiXFxzaW1lcSIsMl1d
\begin{tikzcd}
	{N(s\mathscr{A})[W^{-1}] } && {N(s\mathscr{A}^\prime)[W^{-1}]  } \\
	& {N(s\mathscr{L})[H^{-1}]}
	\arrow[from=1-1, to=1-3]
	\arrow["{\simeq }", from=2-2, to=1-1]
	\arrow["\simeq"', from=2-2, to=1-3]
\end{tikzcd}
\end{equation}
in which $[H^{-1}]$ denotes the localization at simplicial homotopy equivalences. This completes the proof.
\end{proof}
\begin{cor}[Universal property of $D^{\leqslant 0}(\mathscr{A)}$]\label{cor:upDleq}
\begin{enumerate}[(i)]
    \item There is an equivalence
\begin{equation}
    D^{\leqslant 0}(\mathscr{A}) \simeq \operatorname{Fun}^\Pi(\mathscr{P}^\mathsf{op}, \infty\mathsf{Grpd}) = \operatorname{sInd}(N(\mathscr{P})). 
\end{equation}
    \item Let $\mathscr{D}$ be any cocomplete $\infty$-category. Composition with $N(\mathscr{P}) \to D^{\leqslant 0}(\mathscr{A})$ induces an equivalence of $\infty$-categories
\begin{equation}
    \operatorname{Fun}^L(D^{\leqslant 0}(\mathscr{A}), \mathscr{D}) \xrightarrow[]{\sim} \operatorname{Fun}^\Pi(N(\mathscr{P}), \mathscr{D}),
\end{equation}
where $\operatorname{Fun}^L$ denotes the full subcategory spanned by colimit-preserving functors.
\end{enumerate}
\end{cor}
\begin{proof}
(i): Combine Proposition \ref{prop:simplicialmodelstructure} and Proposition \ref{prop:DoldKan}. (ii): Combine (i) and \cite[Corollary 5.5.8.15(c)]{HigherToposTheory}.
\end{proof}
\begin{defn}\cite[Definition 5.5.7.1]{HigherAlgebra}\begin{enumerate}[(i)]
    \item Let $\mathscr{C}$ be an $\infty$-category admitting filtered colimits. An object $C \in \mathscr{C}$ is called \emph{compact} if $\operatorname{Hom}(C,-)$ commutes with filtered colimits. We let $\mathscr{C}^\omega \subseteq \mathscr{C}$ be the full subcategory spanned by compact objects.  
    \item An $\infty$-category $\mathscr{C}$ is called \emph{compactly generated} if there exists a small category $\mathscr{C}_0$ admitting finite colimits, and an equivalence of $\infty$-categories $\operatorname{Ind}(\mathscr{C}_0) \simeq \mathscr{C}$.
\end{enumerate}
\end{defn}
\begin{prop}\cite[\S 5.5.7]{HigherToposTheory}
Let $\mathscr{C}$ be a compactly generated $\infty$-category. Then the full subcategory $\mathscr{C}^\omega \subseteq \mathscr{C}$ is essentially small, admits finite colimits, and the inclusion induces an equivalence of $\infty$-categories $\operatorname{Ind}(\mathscr{C}^\omega) \xrightarrow[]{\sim} \mathscr{C}$. 
\end{prop}
\begin{lem}\label{lem:kappafiltered}
Let $\mathscr{C}$ be a compactly generated $\infty$-category. Then for any regular cardinal $\kappa$, one has that $\kappa$-small limits commute with $\kappa$-filtered colimits.
\end{lem}
\begin{proof}
This is well-known, but we include the proof for completeness. Let $p: I \times J\to \mathscr{C}$ be a diagram where $I$ is $\kappa$-filtered and $J$ is $\kappa$-small. We need to check that the canonical morphism
\begin{equation}
    \underset{I}{\operatorname{colim}} \, \underset{J}{\operatorname{lim}} p \to \underset{J}{\operatorname{lim}} \, \underset{I}{\operatorname{colim}}p
\end{equation}
is an equivalence. This can be checked after applying $\operatorname{Map}(C,-)$ for each $C \in \mathscr{C}^\omega$, reducing the proof of the Lemma to the case when $\mathscr{C}= \infty\mathsf{Grpd}$, which is \cite[Proposition 5.3.3.3]{HigherToposTheory}.
\end{proof}
\begin{lem}\label{lem:compactrightleft}
Let $F:\mathscr{C} \leftrightarrows \mathscr{D}:G$ be an adjunction between compactly generated $\infty$-categories. Then the right adjoint $G$ preserves filtered colimits if and only if the left adjoint $F$ preserves compact objects.
\end{lem}
\begin{proof}
This is again well-known, but we include the proof for completeness. The \emph{only if} direction is clear by adjunction. For the \emph{if} direction, suppose $F$ preserves compact objects. Let $p: I \to \mathscr{D}$ be a filtered diagram. We need to check that the canonical morphism
\begin{equation}
   \varphi: \underset{I}{\operatorname{colim}} G\circ p \to G \underset{I}{\operatorname{colim}} p
\end{equation}
is an equivalence in $\mathscr{C}$. This can be checked after applying $\operatorname{Map}(C,-)$ for each $C \in \mathscr{C}^\omega$. For such $C$ then $\operatorname{Map}(C,\varphi)$ is seen to be an equivalence, by adjunction and the fact that $F$ preserves compact objects. 
\end{proof}
\begin{prop}[Compact generation of $D^{\leqslant 0}(\mathscr{A)}$]\label{prop:compactgeneration}
\begin{enumerate}[(i)]
    \item The category $D^{\leqslant 0}(\mathscr{A)}$ is compactly generated.
    \item Let $j : N(\mathscr{P}) \to D^{\leqslant 0}(\mathscr{A}) $ be the inclusion. An object $C \in D^{\leqslant 0}(\mathscr{A})$ is compact if and only if the following holds: There exists a finite diagram $p: K \to \mathscr{P}$ such that $C$ is a retract of $\operatorname{colim} j\circ p $. 
\end{enumerate} 
\end{prop}
\begin{proof}
By \cite[Proposition 5.3.5.12]{HigherToposTheory}, the $\infty$-category $\operatorname{PSh}(N(\mathscr{P))}$ is compactly generated and the compact objects admit precisely the description as in (ii), c.f. \cite[Proposition 5.3.4.17]{HigherToposTheory}.
Now, $\operatorname{sInd}(N(\mathscr{P}))$ is a localization of $\operatorname{PSh}(N(\mathscr{P}))$, and the inclusion $\operatorname{sInd}(N(\mathscr{P})) \hookrightarrow \operatorname{PSh}(N(\mathscr{P}))$ preserves sifted colimits\footnote{This is obvious since $\operatorname{sInd}$ is formed by freely adjoining sifted colimits, whereas $\operatorname{PSh}$ is formed by freely adjoining all colimits.}, in particular filtered colimits. This implies that the left adjoint (the localization) preserves compact objects, giving both (i) and (ii). 
\end{proof}
\begin{rmk}\label{rmk:connectivecpct}
Because compact objects are stable under finite colimits and retracts, we arrive at the following alternative description of $D^{\leqslant 0}(\mathscr{A})^{\omega}$: it is the full subcategory generated under cones, suspensions and retracts by $N(\mathscr{P}) \subseteq D^{\leqslant 0}(\mathscr{A})$. One might call these ``connective $\mathscr{P}$-perfect complexes". 
\end{rmk}

\begin{example}
Let $K$ be a non-trivially valued non-Archimdedean field, let $\mathscr{A} =\mathsf{CBorn}_K$ and $\mathscr{P} = \{c_0(S)\}_S$ for $S$ ranging over (small) sets. We obtain
\begin{equation}
    D^{\leqslant 0}(\mathsf{CBorn}_K) \simeq \operatorname{sInd}(N(\{c_0(S)\}_S)) . 
\end{equation}
\end{example} 
\begin{defn} Let $\mathscr{C}$ be a pointed $\infty$-category with finite limits. We let $\operatorname{Sp}(\mathscr{C})$ denote the \emph{category of spectrum objects} of $\mathscr{C}$. It is the limit
\begin{equation}
    \operatorname{Sp}(\mathscr{C}) := \operatorname{lim}\left(\mathscr{C} \xleftarrow[]{\Omega} \mathscr{C} \xleftarrow[]{\Omega} \mathscr{C} \leftarrow \cdots \right)
\end{equation}
computed in $\mathsf{Cat}_\infty$. The category $\mathsf{Sp}$ of \emph{spectra} is defined as $ \mathsf{Sp} := \operatorname{Sp}(\infty\mathsf{Grpd}_*)$ where $\infty\mathsf{Grpd}_* := \infty\mathsf{Grpd}_{*/}$ denotes \emph{pointed} $\infty$-groupoids. 
\end{defn}
\begin{lem}\label{lem:DAtensorSp}
\begin{enumerate}[(i)]
\item The $t$-structure on $D(\mathscr{A})$ is both left and right complete;
\item There are $t$-exact equivalences of $\infty$-categories 
\begin{equation}
    D(\mathscr{A}) \simeq \operatorname{Sp}(D^{\leqslant0}(\mathscr{A})) \simeq \operatorname{Sp}(\operatorname{sInd}(N(\mathscr{P}))).
\end{equation}
\item There is a $t$-exact equivalence of $\infty$-categories 
\begin{equation}
    \operatorname{Sp}(\operatorname{sInd}(N(\mathscr{P}))) \simeq \operatorname{Fun}^\Pi(N(\mathscr{P})^\mathsf{op}, \mathsf{Sp}). 
\end{equation}
\item There is an equivalence of $\infty$-categories
\begin{equation}
    D^{\leqslant0}(\mathscr{A}) \otimes \mathsf{Sp} \simeq D(\mathscr{A}),
\end{equation}
where the tensor product on the left is the Lurie tensor product on $\mathsf{Pr}^L$. 
\end{enumerate}
\end{lem}
\begin{proof}
(i): Our working assumption that $\mathscr{A}$ is quasi-elementary implies that products in $\mathscr{A}$ are exact and coproducts in $\mathscr{A}$ are (strongly) exact, c.f \cite[Proposition 2.1.15]{schneiders_quasi-abelian_1999}. Hence both products and coproducts are $t$-exact in $D(\mathscr{A})$. From this it follows by \cite[Proposition 1.2.1.19]{HigherAlgebra} and its dual that $D(\mathscr{A})$ is left and right $t$-complete. 

(ii): The equivalence $D(\mathscr{A}) \simeq \operatorname{Sp}(D^{\leqslant0}(\mathscr{A}))$ is an immediate consequence of right-completeness of the $t$-structure, established in (i). 

(iii): This is essentially \cite[Remark 1.2]{LurieSpectralSchemes}, but let us reproduce the proof here for convenience. Because the endofunctor $\Omega$ of $\infty\mathsf{Grpd}_*$ commutes with (finite) products, we can regard $\mathsf{Sp}$ as the limit of the tower 
\begin{equation}
    \infty\mathsf{Grpd}_* \xleftarrow{\Omega} \infty\mathsf{Grpd}_* \xleftarrow{\Omega} \infty\mathsf{Grpd}_* \xleftarrow{\Omega}  \dots 
\end{equation}
computed in the $\infty$-category $\mathsf{Cat}_\infty^\Pi$ of $\infty$-categories admitting finite products, with finite-product preserving functors. Consequently we  obtain an equivalence 
\begin{equation}
\operatorname{Fun}^\Pi(N(\mathscr{P})^\mathsf{op}, \mathsf{Sp}) \simeq  \underset{\Omega}{\operatorname{lim}}\operatorname{Fun}^\Pi(N(\mathscr{P})^\mathsf{op}, \infty\mathsf{Grpd}_*)
\end{equation}
and we note that there is a canonical equivalence 
\begin{equation}
\operatorname{Fun}^\Pi(N(\mathscr{P})^\mathsf{op}, \infty\mathsf{Grpd}_*) \simeq \operatorname{Fun}^\Pi(N(\mathscr{P})^\mathsf{op}, \infty\mathsf{Grpd})_{*/}
\end{equation}
Hence the conclusion will follow if we can show that $\operatorname{sInd}(N(\mathscr{P}))$ was already pointed. This follows from the fact that the Yoneda embedding $j: N(\mathscr{P}) \hookrightarrow \operatorname{sInd}(N(\mathscr{P}))$ preserves finite coproducts and all limits \cite[Proposition 5.5.8.10]{HigherToposTheory}, and $N(\mathscr{P})$ has a zero object. 

(iv): Follows from (i), c.f. \cite[Example 4.8.1.23]{HigherAlgebra} (note that $D^{\leqslant 0}(\mathscr{A})$ is already pointed). 
\end{proof}
\begin{prop}[Compact generation of $D(\mathscr{A})$]
\label{prop:compactDA}\begin{enumerate}[(i)]
    \item The $\infty$-category $D(\mathscr{A})$ is compactly generated. 
    \item Let $j: N(\mathscr{P}) \hookrightarrow D(\mathscr{A})$ be the inclusion of $N(\mathscr{P})$ in degree $0$. An object $C \in D(\mathscr{A})$ is compact if and only if the following holds: There exists $n \geqslant 0$ and a finite diagram $p :K \to \mathscr{P}$ such that $C$ is a retract of $\operatorname{colim}\Omega^n  j \circ p$.  
\end{enumerate}
\end{prop}
\begin{proof}
Let us temporarily set $\mathscr{C} := D^{\leqslant 0}(\mathscr{A})$. By Proposition \ref{prop:compactgeneration}, $\mathscr{C}$ is compactly generated, so by Lemma \ref{lem:kappafiltered} finite limits commute with filtered colimits in $\mathscr{C}$. In particular the loops functor $\Omega: \mathscr{C} \to \mathscr{C}$ commutes with filtered colimits. Therefore we can view 
\begin{equation}
    \mathscr{C} \xleftarrow[]{\Omega} \mathscr{C} \xleftarrow[]{\Omega} \mathscr{C} \xleftarrow[]{\Omega} \cdots 
\end{equation}
as a diagram in the $\infty$-category $\mathsf{Pr}^R_\omega$ of compactly generated $\infty$-categories, with filtered-colimit preserving, right adjoint functors \cite[Definition 5.5.7.5]{HigherToposTheory}. 
Now \cite[Proposition 5.5.7.6]{HigherToposTheory} says that $\mathsf{Pr}^R_\omega$ is closed under limits in $\mathsf{Cat}_\infty$, giving (i).

(ii): We examine the proof of \emph{loc. cit.}. Let $\Omega^{\infty-n}: \operatorname{Sp}(\mathscr{C}) \to \mathscr{C}$ be the projection to the $n$\textsuperscript{th} component, which by (i) is a morphism in $\mathsf{Pr}^R_\omega$. Hence the left adjoint $\Sigma^{\infty-n}: \mathscr{C} \to \operatorname{Sp}(\mathscr{C})$ preserves compact objects. Together with Proposition \ref{prop:compactgeneration} this implies that all objects as in (ii) are indeed compact. Moreover, \cite[Lemma 6.3.3.6]{HigherToposTheory} implies that the identity functor on $\operatorname{Sp}(\mathscr{C})$ can be written as 
\begin{equation}
  \operatorname{id} \simeq \underset{n}{\operatorname{colim}} \Sigma^{\infty-n} \Omega^{\infty-n},
\end{equation}
so $\operatorname{Sp}(\mathscr{C})$ is generated under filtered colimits by the essential images of the $\Sigma^{\infty-n}$. Since $\mathscr{C}$ is generated under filtered colimits by objects as in Proposition \ref{prop:compactgeneration}(ii), we see that objects as in (ii) generate $\operatorname{Sp}(\mathscr{C})$ under filtered colimits. A retract argument then shows that every compact object has the form as in (ii).  
\end{proof}
\begin{prop}[Universal property of $D(\mathscr{A})$]\label{prop:DAprop} \begin{enumerate}[(i)]
    \item Let $\mathscr{D}$ be any stable presentable $\infty$-category. There is an equivalence of $\infty$-categories 
\begin{equation}
    \operatorname{Fun}^L(D(\mathscr{A}), \mathscr{D}) \simeq \operatorname{Fun}^\amalg(N(\mathscr{P}), \mathscr{D}). 
\end{equation}
where $\operatorname{Fun}^L$ denotes the colimit-preserving functors and $\operatorname{Fun}^\amalg$ denotes the finite-coproduct preserving functors.
\item Let $\mathscr{D}$ be any stable presentable $\infty$-category with $t$-structure $(\mathscr{D}^{\leqslant 0}, \mathscr{D}^{\geqslant 0})$. There is an equivalence of $\infty$-categories 
\begin{equation}
    \operatorname{Fun}^\prime(D(\mathscr{A}), \mathscr{D}) \simeq \operatorname{Fun}^\amalg(N(\mathscr{P}), \mathscr{D}^{\leqslant 0}). 
\end{equation}
where $\operatorname{Fun}^\prime$ denotes the colimit-preserving and right $t$-exact functors.
\end{enumerate}
\end{prop}
\begin{proof}
(i): By Proposition \ref{prop:DAprop} we have $D^{\leqslant 0}(\mathscr{A}) \otimes \mathsf{Sp} \simeq D(\mathscr{A})$ computed in the symmetric monoidal $\infty$-category $\mathsf{Pr}^L$. Now $\operatorname{Fun}^L(-,-)$ is the internal Hom in $\mathsf{Pr}^L$. Hence 
\begin{equation}
    \operatorname{Fun}^L(D^{\leqslant 0}(\mathscr{A}) \otimes \mathsf{Sp},\mathscr{D}) \simeq \operatorname{Fun}^L(D^{\leqslant 0}(\mathscr{A}) , \operatorname{Fun}^L(\mathsf{Sp},\mathscr{D})),
\end{equation}
but since $\mathscr{D}$ is stable and presentable we have $\operatorname{Fun}^L(\mathsf{Sp},\mathscr{D}) \simeq \mathscr{D}$, proving (i). (ii): Clear from (i). 
%We recall that colimits $\mathsf{Pr}^L$ are computed by passing to right adjoints and then taking the limit in $\mathsf{Cat}_\infty$. Hence the $\infty$-category $\operatorname{Sp}(\mathscr{C}) \simeq D(\mathscr{A})$ can be viewed as the colimit of the diagram 
%\begin{equation}
   % \mathscr{C} \xrightarrow[]{\Sigma} \mathscr{C} \xrightarrow[]{\Sigma} \mathscr{C} \xrightarrow[]{\Sigma} \cdots
%\end{equation}
%In particular it takes colimits in the first variable (computed in $\mathsf{Pr}^L$) to limits. Now we claim that, if $\mathscr{D}$ is stable and presentable, then 
%\begin{equation}
 %   \operatorname{Fun}^L(\operatorname{Sp}(\mathscr{C}), \mathscr{D}) \simeq  \operatorname{Fun}^L(\mathscr{C},\mathscr{D}). 
%\end{equation}
%From this claim, the Proposition will follow immediately from Corollary \ref{cor:upDleq}. By the above discussion it suffices to show that precomposition with $\Sigma$ induces an auto-equivalence $\theta$ of $\operatorname{Fun}^L(\mathscr{C},\mathscr{D})$. Any colimit-preserving functor in particular commutes with suspensions, so $\theta$ is equivalently given by post-composition with $\Sigma$. But, post-composition with $\Sigma$ is an equivalence because $\mathscr{D}$ is stable, so $\theta$ is indeed an equivalence. 
\end{proof}
\begin{rmk}
As in Remark \ref{rmk:connectivecpct}, we arrive at the following alternative description of $D(\mathscr{A})^{\omega}$: it is the full subcategory generated under cones, shifts and retracts by $N(\mathscr{P}) \subseteq D(\mathscr{A})$. One might call these ``$\mathscr{P}$-perfect complexes". 
\end{rmk}
\subsection{Monoidal structure on $D(\mathscr{A})$}\label{sec:monoidalstr}
In this section we continue with the same assumptions as in \S\ref{sec:universalpropertyofderived} but further assume that:
\begin{itemize}
    \item[$\star$] $\mathscr{A}$ is endowed with a closed symmetric monoidal structure $(\mathscr{A}, \otimes, \underline{\operatorname{Hom}})$;
    \item[$\star$] the monoidal structure on $\mathscr{A}$ restricts to $\mathscr{P}$;
    \item[$\star$] every object of $\mathscr{P}$ is flat. 
\end{itemize}
\begin{example}
Let $K$ be a non-trivially valued non-Archimedean field, let 
\begin{equation}
    \mathscr{A} = (\mathsf{CBorn}_K, \widehat{\otimes}_K, \underline{\operatorname{Hom}}_K),
\end{equation} 
and $\mathscr{P} = \{c_0(S)\}_S$ for $S$ ranging over (small) sets. There are canonical isomorphisms
\begin{equation}
    c_0(S) \widehat{\otimes}_K c_0(S^\prime) \xrightarrow[]{\sim} c_0(S \times S^\prime), 
\end{equation}
so that $\mathscr{P} \subseteq \mathscr{A}$ satisfies all the above assumptions. 
\end{example}
Kelly has proved the following (the same result also holds for unbounded complexes, but for this section we only need the result in the connective case):
\begin{thm}\cite[Theorem 4.69]{kelly_homotopy_2021}
Under the above assumptions. The projective model structure on $\operatorname{Ch}^{\leqslant 0}(\mathscr{A})$ is monoidal, i.e., $(\operatorname{Ch}^{\leqslant 0}(\mathscr{A}), \otimes)$ is a monoidal model category. 
\end{thm}
As a consequence of the dictionary between model categories and $\infty$-categories we obtain the following:  
\begin{cor}
The $\infty$-category $D^{\leqslant 0}(\mathscr{A})$ is presentably symmetric monoidal, when endowed with the derived tensor product. 
\end{cor}
\begin{thm}
There exists a symmetric monoidal structure  $\otimes_{\operatorname{Day}}$ (called  \emph{Day convolution}) on $\operatorname{sInd}(N(\mathscr{P}))$ which is characterised up to equivalence by the following properties: 
\begin{enumerate}[(i)]
    \item The Yoneda embedding $j: N(\mathscr{P}) \hookrightarrow \operatorname{sInd}(N(\mathscr{P}))$ extends to a symmetric monoidal functor; 
    \item $\otimes_{\mathrm{Day}}$ commutes with colimits separately in each variable.
\end{enumerate}
\end{thm}
\begin{proof}
The Theorem with \emph{sifted colimits} in place of \emph{all colimits} in (ii) is \cite[Proposition 4.8.1.10]{HigherAlgebra}, taking $\mathcal{K} = \emptyset$ and $\mathcal{K}^\prime$ to be the collection of sifted simplicial sets, in the notations of \emph{loc. cit.}. To get the full statement of (ii), one argues \emph{mutandis mutatis} as in \cite[Proposition 4.8.1.14]{HigherAlgebra}, replacing the use of \cite[Proposition 5.5.1.9]{HigherToposTheory} with \cite[Remark 5.5.8.16(3)]{HigherToposTheory}.
\end{proof}
\begin{cor}\label{cor:monoidalUP}
\begin{enumerate}[(i)]
    \item The equivalence of Corollary \ref{cor:upDleq}(i) upgrades to an equivalence of presentably symmetric monoidal $\infty$-categories:
\begin{equation}
    (\operatorname{sInd}(N(\mathscr{P})), \otimes_{\mathrm{Day}}) \xrightarrow[]{\sim} (D^{\leqslant 0}(\mathscr{A}), \otimes^\mathbf{L}).
\end{equation}
    \item Let $\mathscr{D}$ be any symmetric monoidal $\infty$-category such that $\mathscr{D}$ is cocomplete and the tensor product $\mathscr{D} \times \mathscr{D} \to \mathscr{D}$ preserves colimits separately in each variable. Then there is an equivalence of $\infty$-categories 
    \begin{equation}
        \operatorname{Fun}^{L,\otimes}(D^{\leqslant 0}(\mathscr{A}), \mathscr{D}) \simeq  \operatorname{Fun}^{\amalg,\otimes}(N(\mathscr{P}), \mathscr{D}),
    \end{equation}
    where $\operatorname{Fun}^{L,\otimes}$ (resp. $\operatorname{Fun}^{\Pi,\otimes}$) denotes colimit-preserving (resp. finite product preserving) symmetric monoidal functors. 
\end{enumerate}
\end{cor}
\begin{proof}
(i): Both tensor products preserve colimits (separately in each variable) and restrict to the symmetric monoidal structure on $N(\mathscr{P})$. (ii): Follows from (i) using \cite[Remark 5.5.8.16(3)]{HigherToposTheory}. 
\end{proof}
\begin{prop}\label{prop:derivedtensor}
There is a unique (up to equivalence) symmetric monoidal structure $\otimes^\mathbf{L}$ on $D(\mathscr{A})$ with the following properties: 
\begin{enumerate}[(i)]
    \item The inclusion $D^{\leqslant 0}(\mathscr{A}) \hookrightarrow D(\mathscr{A})$ extends to a symmetric monoidal functor;
    \item $\otimes^\mathbf{L}$ commutes with colimits separately in each variable. In particular $D(\mathscr{A})$ is \emph{presentably symmetric monoidal}.
\end{enumerate}
\end{prop}
\begin{proof}
This follows immediately by interpreting the formula 
\begin{equation}
    D^{\leqslant 0}(\mathscr{A}) \otimes \mathsf{Sp} \simeq D(\mathscr{A}) 
\end{equation}
coming from Proposition \ref{prop:DAprop}(iv), as a tensor product of \emph{commutative algebra objects} in $\mathsf{Pr}^L$.  
\end{proof}
Because $D(\mathscr{A})$ is \emph{presentably symmetric monoidal}, it is in particular closed symmetric monoidal. We write 
\begin{equation}
R\underline{\operatorname{Hom}}_\mathscr{A}(-,-): D(\mathscr{A})^\mathsf{op} \times D(\mathscr{A}) \to D(\mathscr{A})
\end{equation}
for the internal $\operatorname{Hom}$-bifunctor on $D(\mathscr{A})$.
\begin{lem}
\begin{enumerate}[(i)]
    \item If $C^\bullet, C^{\prime, \bullet}$ are compact objects of $D(\mathscr{A})$, then so is $C^\bullet \otimes^\mathbf{L} C^{\prime,\bullet}$. 
    \item If $C^\bullet \in D(\mathscr{A})$ is compact then the functor $R\underline{\operatorname{Hom}}_\mathscr{A}(C^\bullet,-)$ commutes with filtered colimits. 
\end{enumerate}
\end{lem}
\begin{proof}
(i): By Proposition \ref{prop:derivedtensor}, the derived tensor product restricts to the ordinary tensor product on $N(\mathscr{P})$, and commutes with colimits (in particular suspensions and hence loops since $\mathscr{D}$ is stable) separately in each variable. Therefore the claim follows from the explicit description of Proposition \ref{prop:compactDA}(ii). (ii): By Lemma \ref{lem:compactrightleft}, this is immediate from (i). 
\end{proof}
\begin{rmk}
Using Proposition \ref{prop:derivedtensor} together the model stucture on $\operatorname{Ch}(\mathscr{A})$ and the formalism of bounded resolving classes (c.f. \cite[\S 2.3.2]{kelly_homotopy_2021}, \cite{SpaltensteinUnbounded}) gives a way to calculate derived tensor products:
\begin{itemize}
    \item[$\star$] If $M^\bullet, N^\bullet\in \operatorname{Ch}^-(\mathscr{A})$, then one can find a degreewise projective complex $P^\bullet$ with an acyclic fibration $P^\bullet \to M^\bullet$. Then $ P^\bullet \otimes N^\bullet \xrightarrow[]{\sim} M^\bullet \otimes^\mathbf{L} N^\bullet$ in $D(\mathscr{A})$.  
    \item[$\star$] If $M^\bullet, N^\bullet \in D(\mathscr{A})$, then there exists a direct system $\{P^\bullet_n\}_{n \geqslant -1}$ of bounded-above degreewise projective complexes, with a system of acyclic fibrations $P_n^\bullet \to \tau^{\leqslant n}M^\bullet$. Put $P^\bullet := \operatorname{colim}_{n \geqslant -1} P_n^\bullet$. Then \begin{equation}
    M^\bullet \otimes^\mathbf{L} N^\bullet \simeq  \underset{{n, m \geqslant -1}}{\operatorname{colim}} (P_n^\bullet \otimes \tau^{\leqslant m} N^\bullet) \simeq P^\bullet \otimes N^\bullet. 
    \end{equation}
\end{itemize}
\end{rmk}

\section{Theory of abstract six-functor formalisms}\label{subsec:sixf1}
%In analytic geometry, many interesting morphisms are not quasi-compact (this is an important difference between analytic and algebraic geometry). For the six-functor formalism of Proposition \ref{prop:basic6functor} to be useful, we should enlarge the class of $!$-able morphisms beyond $\mathrm{qcqs}$. To avoid being repetitious later, we work in a general context.
\begin{defn}\cite[Definition A.5.1]{mann_p-adic_2022}
A \emph{geometric setup} is a pair $(\mathscr{C},E)$ where $\mathscr{C}$ is an $\infty$-category and $E$ is a collection of morphisms of $\mathscr{C}$ such that:
\begin{itemize}
    \item[$\star$] $E$ contains all equivalences, is stable under composition, and pullbacks of morphisms in $E$ by arbitrary morphisms of $\mathscr{C}$ exist and remain in $E$.
\end{itemize}
\end{defn}
\begin{rmk}[Important remark]
Our definition of a geometric setup does not require $E$ to satisfy the \emph{right-cancellation property} (which says that $E$ is closed under the formation of diagonals). In particular, our convention follows \cite[\S A.5]{mann_p-adic_2022} which is different to \cite{heyer_6-functor_2024} in this way. 
\end{rmk}
Given a geometric setup one can define an $\infty$-category $\operatorname{Corr}(\mathscr{C},E)$, which can be described informally as follows \cite[\S A.5]{mann_p-adic_2022}. 

\begin{itemize}
    \item[$\star$] The objects of $\operatorname{Corr}(\mathscr{C},E)$ are the same as those of $\mathscr{C}$. 
    \item[$\star$] Morphisms $X \dashrightarrow Y$ of $\operatorname{Corr}(\mathscr{C},E)$  are given by spans $X \xleftarrow{g} U \xrightarrow{f} Y$ with $f \in E$. The composite of $X \leftarrow U \rightarrow Y$ and $Y \leftarrow V \rightarrow Z$ is given by the composed span $X \leftarrow U \leftarrow U \times_Y V \rightarrow V \rightarrow Z$. 
    \item[$\star$] $\operatorname{Corr}(\mathscr{C},E)$ has a monoidal structure built from the coCartesian monoidal structure on $\mathscr{C}^{\mathsf{op}}$. 
\end{itemize}
A lax-monoidal functor $Q: \operatorname{Corr}(\mathscr{C},E) \to \mathsf{Cat}_\infty$, (where the latter is endowed with the Cartesian monoidal structure), determines functors
\begin{equation}
\begin{aligned}
g^* := Q(X \xleftarrow[]{g} Y = Y): Q(X) \to Q(Y) && \text{ and } \\
f_! := Q(X = X \xrightarrow[]{f} Y): Q(X) \to Q(Y) && \text{ and }\\
\otimes_X : Q(X) \times Q(X) \to Q(X).
\end{aligned}
\end{equation}
\begin{defn}\cite{liu_enhanced_2017,GaitsgoryStudy1,mann_p-adic_2022}. \label{defn:sixfunctors}
\begin{itemize}
    \item[$\star$] A \emph{six-functor formalism} is a lax-monoidal functor\footnote{Where $\mathsf{Cat}_\infty$ is endowed with the Cartesian monoidal structure.} $Q: \operatorname{Corr}(\mathscr{C},E) \to \mathsf{Cat}_\infty$ such that $g^*$ and $f_!$ admit right adjoints for every morphism $g$ in $\mathscr{C}$ and every $f \in E$, and $M \otimes_X -$ admits a right adjoint for every $M \in  Q(X)$.  
    \item[$\star$] The right adjoints are denoted $g_*, f^!$, and $\underline{\operatorname{Hom}}_X(M,-)$, respectively.
\end{itemize}
\end{defn}
\begin{rmk}\label{rmk:sixfunctorsbasic}
With notations as in Definition \ref{defn:sixfunctors}. The following basic identities are valid in any six-functor formalism. 
\begin{enumerate}[(i)]
    \item(Projection formula). Let $[f:X \to Y] \in E$. There is a canonical equivalence
    \begin{equation}\label{eq:projectionformula}
        f_! \otimes_Y \operatorname{id} \simeq f_!(\operatorname{id} \otimes_X f^*)
    \end{equation}
    of functors $Q(X) \times Q(Y) \to Q(Y)$.
    \item Let $M \in Q(X)$ and $[f: X \to Y] \in E$. There is a canonical equivalence 
    \begin{equation}\label{eq:adjointtoprojectionformula}
        f_*\underline{\operatorname{Hom}}_X(M,f^!(-)) \simeq \underline{\operatorname{Hom}}_Y(f_!M,-) 
    \end{equation}
    of functors $Q(Y) \to Q(Y)$. This follows by passing to adjoints in the projection formula \eqref{eq:projectionformula}. 
    \item(Base-change). Suppose that we are given a Cartesian square
    \begin{equation}
        % https://q.uiver.app/#q=WzAsNCxbMCwxLCJYIl0sWzEsMSwiWSJdLFsxLDAsIlleXFxwcmltZSJdLFswLDAsIlheXFxwcmltZSJdLFswLDEsImYiXSxbMiwxLCJnIl0sWzMsMCwiZ15cXHByaW1lIl0sWzMsMiwiZl5cXHByaW1lIl0sWzMsMSwiIiwxLHsic3R5bGUiOnsibmFtZSI6ImNvcm5lciJ9fV1d
\begin{tikzcd}
	{X^\prime} & {Y^\prime} \\
	X & Y
	\arrow["{f^\prime}", from=1-1, to=1-2]
	\arrow["{g^\prime}", from=1-1, to=2-1]
	\arrow["\lrcorner"{anchor=center, pos=0.125}, draw=none, from=1-1, to=2-2]
	\arrow["g", from=1-2, to=2-2]
	\arrow["f", from=2-1, to=2-2]
\end{tikzcd}
    \end{equation}
with $f \in E$ (hence also $f^\prime \in E$). Then:
\begin{enumerate}
    \item There is a canonical equivalence $
    g^*f_! \simeq f^\prime_! g^{\prime,*}$ of functors $Q(X) \to Q(Y^\prime)$. 
\item There is a canonical equivalence $
    f^!g_* \simeq g^\prime_* f^{\prime,!}$ of functors $Q(Y^\prime) \to Q(X)$. This follows by passing to right adjoints in the previous. 
\end{enumerate}
\end{enumerate}
\end{rmk}

We may prefer to equivalently view a six-functor formalism as a map of operads $\operatorname{Corr}(\mathscr{C},E) \to \mathsf{Cat}_\infty^\otimes$. Let $(\mathscr{C},E)$ be a geometric setup and let $Q: \operatorname{Corr}(\mathscr{C},E) \to \mathsf{Cat}_\infty^\otimes$ be a six-functor formalism. Let $\mathscr{C}_E$ be the subcategory of $\mathscr{C}$ where we only allow morphisms from $E$. There are functors $\mathscr{C}^\mathsf{op}  \to  \operatorname{Corr}(\mathscr{C},E) $ and $\mathscr{C}_E \to \operatorname{Corr}(\mathscr{C},E)$. On objects these are both induced by the identity; on morphisms the former sends $[g:X \to Y]$ to $[Y \xleftarrow[]{g} X = X]$ and the latter sends $[f: X \to Y] \in E$ to $[X = X \xrightarrow[]{f} Y]$. Via these functors we can restrict $Q$ and obtain functors
\begin{equation}
\begin{aligned}
    Q^*: \mathscr{C}^\mathsf{op} \to \mathsf{Cat}_\infty && \text{ and } && Q_!: \mathscr{C}_E \to \mathsf{Cat}_\infty.
\end{aligned}
\end{equation}
%The former sends a morphism $f: X \to Y$ to  $f^* : Q(Y) \to Q(X)$; the latter sends a morphism $[f:X \to Y] \in E$ to $f_!: Q(X) \to Q(Y)$. 
By passing to right adjoints we obtain
\begin{equation}
\begin{aligned}
     Q_*: \mathscr{C} \to \mathsf{Cat}_\infty && \text{ and } && Q^!: \mathscr{C}_E^\mathsf{op} \to \mathsf{Cat}_\infty.
\end{aligned}
\end{equation}
%The former sends a morphism $f: X \to Y$ to  $f_* : Q(X) \to Q(Y)$; the latter sends a morphism $[f:X \to Y] \in E$ to $f^!: Q(Y) \to Q(X)$. 
In this context, we can make the following definition. 
\begin{defn}\cite[Definition 4.14]{ScholzeSixFunctors}.\label{defn:universal!descent}
    \begin{enumerate}[(i)]
    \item We say that a morphism $f: X \to Y$ in $\mathscr{C}$ is of \emph{$*$-descent} if the canonical morphism 
    \begin{equation}
        Q^*(Y) \to \underset{[m] \in \Delta}{\operatorname{lim}} Q^*(X^{m+1/Y}) 
    \end{equation}
    is an equivalence. We say that a morphism $f: X \to Y$ is of \emph{universal $*$-descent} if, for every $Z \in \mathscr{C}$ with a morphism $Z \to Y$, the base-change $X \times_Y Z \to Z$ is of $*$-descent.
    \item We say that a morphism $[f: X \to Y] \in E$ is of \emph{$!$-descent} if the canonical morphism 
    \begin{equation}
        Q^!(Y) \to \underset{[m] \in \Delta}{\operatorname{lim}} Q^!(X^{m+1/Y}) 
    \end{equation}
    is an equivalence. We say that a morphism $f: X \to Y$ is of \emph{universal $!$-descent} if, for every $Z \in \mathscr{C}$ with a morphism $Z \to Y$, the base-change $X \times_Y Z \to Z$ is of $!$-descent.
\end{enumerate}
\end{defn}
Lemma \ref{lem:descentmonadicity} below can be viewed as the ``easy direction" in a higher-categorical version of the Bénabou--Roubaud theorem. In the other direction, one has Lurie's Beck--Chevalley condition \cite[Corollary 4.7.5.3]{HigherAlgebra}.
\begin{lem}\label{lem:descentmonadicity}
Fix a geometric setup $(\mathscr{C},E)$ and a six-functor formalism $Q: \operatorname{Corr}(\mathscr{C},E) \to \mathsf{Cat}^\otimes_\infty$. 
\begin{enumerate}[(i)]
    \item Let $f: X \to Y$ be a morphism of $\mathscr{C}$ which is of $*$-descent, c.f. Definition \ref{defn:universal!descent}(i). Then $f^*$ induces an equivalence of categories $Q(Y) \simeq \operatorname{Comod}_{f^*f_*}Q(X)$, where the latter is the category of comodules over the comonad $f^*f_*$. 
    \item Let $f: X \to Y$ be a morphism of $E$ which is of $!$-descent, c.f. Definition \ref{defn:universal!descent}(ii). Then $f^!$ induces an equivalence of categories $Q(Y) \simeq \operatorname{Mod}_{f^!f_!}Q(X)$, where the latter is the category of modules over the monad $f^!f_!$. 
\end{enumerate}
\end{lem}
\begin{proof}
(i): This is \cite[Proposition 3.1.27]{Camargo_deRham}.

(ii):  We adapt the argument of \cite[Proposition 3.1.27]{Camargo_deRham}. The proof is an application of the Barr--Beck--Lurie theorem \cite[Theorem 4.7.3.5]{HigherAlgebra}. The assumption that $f$ satisfies $!$-descent implies that $f^!$ is conservative. It remains to show that $f^!$ preserves geometric realizations of $f^!$-split simplicial objects.

    Let $(M_m)_{[m] \in \Delta_{+}^\mathsf{op}}$ be an augmented simplicial object of $Q(Y)$ which becomes split (i.e., acquires extra degeneracies), after applying $f^!$. For each $n \geqslant 0$ let $f_{n+1} : Y^{n+1/X} \to X$ be the projection. 
    
    Then, for all $n \geqslant 1$, the augmented simplicial object $(f_{n+1}^!M_m)_{m \in \Delta_{+}^\mathsf{op}}$ of $Q(Y^{n+1/X})$ is split. Let us set $N_{n+1} = \operatorname{colim}_{[m] \in \Delta^\mathsf{op}}f_{n+1}^!M_m$. Note that the existence of the splitting implies that the collection $(N_{n+1})_{[n] \in \Delta^{\mathsf{op}}}$ is a Cartesian section of $Q(Y^{\bullet +1 /X})$.  Now, we compute 
    \begin{equation}
    \begin{aligned}
        \underset{{[m] \in \Delta^{\mathsf{op}}}}{\operatorname{colim}} M_m &\simeq  \underset{{[m] \in \Delta^{\mathsf{op}}}}{\operatorname{colim}}  \underset{{[n] \in \Delta^{\mathsf{op}}}}{\operatorname{colim}} f_{n+1,!}f_{n+1}^!M_m  \\
        &\simeq \underset{{[n] \in \Delta^{\mathsf{op}}}}{\operatorname{colim}}  f_{n+1,!} \underset{{[m] \in \Delta^{\mathsf{op}}}}{\operatorname{colim}} f_{n+1}^!M_m \\
        &\simeq \underset{{[n] \in \Delta^{\mathsf{op}}}}{\operatorname{colim}} f_{n+1,!}N_{n+1}.
    \end{aligned}
    \end{equation}
    Now, since $(N_{n+1})_{[n] \in \Delta^{\mathsf{op}}}$ is a Cartesian section, $!$-descent for $f$ says that 
    \begin{equation}
        f^! \operatorname{colim}_{[m] \in \Delta^{\mathsf{op}}} M_m = N_1 = \operatorname{colim}_{[m] \in \Delta^\mathsf{op}}f^!M_m.
    \end{equation}
    Therefore $f^!$ preserves geometric realizations of $f^!$-split simplicial objects and we are done. 
\end{proof}
\subsection{An extension formalism for abstract six-functor formalisms}\label{subsec:sixf2}
\begin{rmk}\label{rmk:contentofsixf2}
After the first version of this article was uploaded to arXiv in September 2024, essentially the same result as in this subsection appeared as \cite[Theorem 3.4.11]{heyer_6-functor_2024}. We claim no originality for the results of this subsection: the main result (Theorem \ref{thm:extensionformalism}) is just a re-hashing of \cite[Theorem 4.20]{ScholzeSixFunctors}, and all of the extension principles were already developed in Mann's thesis \cite[\S A.5]{mann_p-adic_2022}. We still include it because our formulation is perhaps slightly closer to \cite[Theorem 4.20]{ScholzeSixFunctors}: in particular we chose to keep the part about being ``stable under disjoint unions" because we find it useful. This property appears to have something to do with idempotent completeness of the ``representable objects", see the proof of Theorem \ref{thm:relativealgebraic}. 
\end{rmk}
In this subsection we will fix two geometric setups $(\mathscr{C},E_0)$ and $(\mathscr{C}_0,E_{00})$, and we will also fix a six-functor formalism
\begin{equation}
    Q: \operatorname{Corr}(\mathscr{C},E_0) \to \mathsf{Cat}_\infty^\otimes,
\end{equation}
subject to the following list of Assumptions. 
\begin{assum}\label{assum:extends}
    \begin{enumerate}[(i)]
    \item $\mathscr{C}$ admits all fiber products and all small coproducts. We will denote the initial object of $\mathscr{C}$ by $\emptyset$ and the final object by $*$.
    \item $\mathscr{C}_{0} \subseteq \mathscr{C}$ is a full subcategory stable under fiber products in $\mathscr{C}$.
    \item For all $X \in \mathscr{C}$, $Q(X)$ is presentable.
    \item A morphism $f : X \to Y$ of $\mathscr{C}$ belongs to $E_0$ if and only if for every $Z \in \mathscr{C}_0$ with a morphism $Z \to Y$, the base-change $[X \times_Y Z \to Z] \in E_{00}$. 
    \item For all $X \in \mathscr{C}$, the canonical morphism 
    \begin{equation}
        Q^*(X) \to \underset{Y \in {\mathscr{C}_0}_{/X}^\mathsf{op}}{\operatorname{lim}} Q^*(Y)
    \end{equation}
    is an equivalence. That is, $Q^*: \mathscr{C}^\mathsf{op} \to \mathsf{Cat}_\infty$ is the right Kan extension of  $\left.Q^*\right|_{\mathscr{C}_0^\mathsf{op}} \to \mathsf{Cat}_\infty$ along $\mathscr{C}_0^\mathsf{op} \to \mathscr{C}^\mathsf{op}$.
    \item Coproducts in $\mathscr{C}$ are \emph{disjoint} and \emph{universal}. This means that:
    \begin{enumerate}
        \item (Disjoint). For all $X, Y \in \mathscr{C}$, the morphism $ \emptyset \to X \times_{X \coprod Y} Y$ is an equivalence. 
        \item (Universal). For all small families $\{X_i \to Y\}_{i \in \mathcal{I}}$ of morphisms in $\mathscr{C}$ and any morphism $Z \to Y$ in $\mathscr{C}$, the canonical morphism 
        \begin{equation}
            \coprod_{i \in \mathcal{I}} (X_i \times _Y Z) \to \big(\coprod_{i \in \mathcal{I}}X_i\big) \times_Y Z
        \end{equation} 
        is an equivalence. 
    \end{enumerate}
    \item For all $X, Y \in \mathscr{C}$ the morphism $X \to X \coprod Y \in E_0$.
    \item $Q^!: \mathscr{C}_{E_0}^\mathsf{op} \to \mathsf{Cat}_\infty$ preserves small products. That is, for all small families $\{X_i\}_{i \in \mathcal{I}}$ of objects of $\mathscr{C}$, the natural morphism $Q^!\big(\coprod_{i \in \mathcal{I}} X_i\big) \to \prod_{i \in \mathcal{I}} Q^!(X_i)$ is an equivalence. This condition makes sense by (vii).  
    \item Let $\delta E_0$ be the class of morphisms in $\mathscr{C}$ whose diagonal belongs to $E_0$. Then we require that $E_0 \subseteq \delta E_0$\footnote{Note that this differs slightly to \cite[Definition 4.18]{ScholzeSixFunctors}(3). I think that this change is actually necessary in order to run the argument of \cite[Theorem 4.20]{ScholzeSixFunctors} since, without it, the hypothesis (d) in \cite[Proposition A.5.14]{mann_p-adic_2022} might not be satisfied.}.
\end{enumerate}
\end{assum}
%\begin{example}
%Assume that $(\mathscr{C}_0,E_{00})$ is a geometric setup such that $\mathscr{C}_0$ is endowed with a subcanonical Grothendieck topology $\tau$. Assume that $E_{00} \subseteq \delta E_{00}$ and that we are given a six-functor formalism 
%\begin{equation}
   % Q: \operatorname{Corr}(\mathscr{C}_0,E_{00}) \to \mathsf{Cat}_\infty^\otimes 
%\end{equation}
%such that $Q^*: \mathscr{C}_{0}^\mathsf{op} \to \mathsf{Cat}_\infty$ is a sheaf in the topology $\tau$ and for every $X _0 \in \mathscr{C}_{0}$, the $\infty$-category $Q(X_0)$ is presentable. Assume further that for all $X_0, Y_0 \in \mathscr{C}_{0}$, the morphism $t: X_0 \to X_0 \coprod Y_0$ belongs to $E_{00}$ and is such that there is a natural isomorphism $t_! \xrightarrow[]{\sim} t_*$. Set $\mathscr{C} := \operatorname{Shv}_\tau(\mathscr{C}_0, \infty\mathsf{Grpd})$ and let $E_0$ be the collection of edges in $\mathscr{C}$ which are representable in $E_{00}$. Let 
%\begin{equation}
  %  Q: \operatorname{Corr}(\mathscr{C}, E_0) \to \mathsf{Cat}_\infty^\otimes
%\end{equation}
%be the six-functor formalism obtained by applying \cite[Proposition A.5.16]{mann_p-adic_2022}. Then all the Assumptions (i)-(ix) above are satisfied. 
%\end{example}

\begin{defn}\cite[Definition 4.18]{ScholzeSixFunctors}\label{defn:Estabilityproperties}
Let $(\mathscr{C},E)$ be another geometric setup such that $E_0 \subseteq E$.
\begin{enumerate}[(i)]
\item We say that the class $E$ is \emph{stable under disjoint unions} if for all small families $\{X_i \to Y\}_{i \in \mathcal{I}}$ of morphisms of $E$, the morphism $\coprod_{i \in \mathcal{I}} X_i \to Y$ belongs to $E$. 
\item We say that the class $E$ is \emph{$*$-local on the target} if whenever $f:X \to Y$ is a morphism of $\mathscr{C}$ such that, for all $Z \in \mathscr{C}_0$ with a map $Z \to Y$, the base change $X \times_Y Z \to Z$ belongs to $E$, then $f \in E$. 
\item We say that $Q$ \emph{extends uniquely to} $(\mathscr{C},E)$ if: 
\begin{enumerate}
        \item There exists a six-functor formalism $Q^\prime: \operatorname{Corr}(\mathscr{C},E) \to \mathsf{Cat}_\infty^\otimes$ equipped with an equivalence $\left.Q^\prime\right|_{\operatorname{Corr}(\mathscr{C},E_0)} \simeq Q$.
        \item Whenever $Q^{\prime \prime}: \operatorname{Corr}(\mathscr{C},E) \to \mathsf{Cat}_\infty^\otimes$ is another six-functor formalism whose restriction to $\operatorname{Corr}(\mathscr{C},E_0)$ is equipped with an equivalence
        \begin{equation}\label{eq:restrictedQequivalence}
            \left.Q^\prime\right|_{\operatorname{Corr}(\mathscr{C},E_0)} \simeq  \left.Q^{\prime \prime} \right|_{\operatorname{Corr}(\mathscr{C},E_0)}
        \end{equation} 
        then there exists a unique (up to contractible choice) equivalence $Q^\prime \simeq Q^{\prime \prime}$ lifting \eqref{eq:restrictedQequivalence}. 
\end{enumerate}
\item Assume that $Q$ extends uniquely to $(\mathscr{C}, E)$. We say that $E$ is \emph{$!$-local on the source} if whenever $f: X \to Y$ is a morphism in $\mathscr{C}$ such that there exists $[g: X^\prime \to X] \in E$ of universal $!$-descent such that $fg \in E$, then $f \in E$. 
\item Assume that $Q$ extends uniquely to $(\mathscr{C},E)$. We say that $E$ is \emph{tame} if whenever $Y \in \mathscr{C}_0$ and $[f:X \to Y] \in E$ then there exists a small family $\{X_i \to Y\}_{i \in \mathcal{I}}$ of morphisms in $E_{00}$ and a morphism $\big[\coprod_{i \in \mathcal{I}} X_i \to X\big] \in E$ over $Y$ which is of universal $!$-descent. 
\end{enumerate}
\end{defn}
\begin{thm}\cite[Theorem 4.20]{ScholzeSixFunctors}\label{thm:extensionformalism}
With notations as introduced in this subsection. Under the Assumptions \ref{assum:extends}, there is a (minimal) collection of morphisms $E \supseteq E_0$ of $\mathscr{C}$ such that $Q$ extends uniquely to $(\mathscr{C},E)$ and $E$ is stable under disjoint unions, $*$-local on the target, is $!$-local on the source, is tame, and satisfies $E \subseteq \delta E$.  
\end{thm}
\begin{proof}This is the same as \cite[Theorem 4.20]{ScholzeSixFunctors}. We reproduce the proof here for convenience, and also to convince the reader that the result is true in our slightly more general context.  

Let $A$ be the class of all classes $E$ of morphisms in $\mathscr{C}$ such that $(\mathscr{C},E)$ is a geometric setup, $Q$ extends uniquely to $(\mathscr{C},E)$, and $E$ is tame and satisfies $E \subseteq \delta E$. By Assumptions \ref{assum:extends}(iv) and \ref{assum:extends}(ix), we have $E_0 \in A$, and this is minimal.

In the proof of \emph{loc. cit.} it was observed that filtered unions can be taken in the class $A$, and that this is a convenient way to organise the fact that we will have to iterate the extension principles of \cite[\S A.5]{mann_p-adic_2022} transfinitely many times. We will proceed in steps. The steps will be indexed by ordinals starting at $2$.

\textit{Step 2}: Let $E \in A$. We let $E^\prime$ be the collection of morphisms which can be written as $\coprod_i X_i \to Y$ such that each $[X_i \to Y] \in E$. By Assumptions \ref{assum:extends}(iii), \ref{assum:extends}(vi), \ref{assum:extends}(vii), and \ref{assum:extends}(viii), we may apply \cite[Proposition A.5.12]{mann_p-adic_2022} to deduce that $(\mathscr{C}, E^\prime)$ is a geometric setup and $Q$ extends uniquely to $(\mathscr{C},E^\prime)$. Due to Assumptions \ref{assum:extends}(vi)(b) and \ref{assum:extends}(viii), the class $E^\prime$ is again tame. Using Assumptions \ref{assum:extends}(vi) and (vii) we see that $E^\prime \subseteq \delta E^\prime$.
%One can see that $E^\prime$ has the \ref{thm:relativealgebraic}perty as follows. Let $[g: Z \to Y]$ and $[f: Y \to X]$ be composable morphisms of $\mathscr{C}$ such that $fg$ and $f \in E^\prime$. We may write $Y = \coprod_{i \in \mathcal{I}} Y_i$ and $Z = \coprod_{i \in \mathcal{J}} Z_j$ such that each $[Y_i \to X]$ and each $[Z_j \to X] \in E$.  For each $(i,j) \in \mathcal{I} \times \mathcal{J}$ we consider the diagram 
%\begin{equation}
% https://q.uiver.app/#q=WzAsNSxbMSwwLCJaX2oiXSxbMCwxLCJZX2kiXSxbMSwxLCJZIl0sWzIsMSwiWCJdLFswLDAsIllfaSBcXHRpbWVzX1hXX2oiXSxbMSwyXSxbMCwyXSxbMiwzXSxbMCwzXSxbNCwwXSxbNCwxXSxbNCwyLCIiLDEseyJzdHlsZSI6eyJuYW1lIjoiY29ybmVyIn19XV0=
%\begin{tikzcd}
	%{Y_i \times_YZ_j} & {Z_j} \\
	%{Y_i} & Y & X
	%\arrow[from=1-1, to=1-2]
	%\arrow[from=1-1, to=2-1]
	%\arrow["\lrcorner"{anchor=center, pos=0.125}, draw=none, from=1-1, to=2-2]
	%\arrow[from=1-2, to=2-2]
	%\arrow[from=1-2, to=2-3]
	%\arrow[from=2-1, to=2-2]
	%\arrow[from=2-2, to=2-3]
%\end{tikzcd}
%\end{equation}
%By \ref{assum:extends}(vii), we have $[Y_i \to Y] \in E_0$ and therefore $[Y_i \times_Y Z_j \to Z_j] \in E_0$. Hence by composition, $[Y_i \times_Y Z_j \to X] \in E$. Due to the \ref{thm:relativealgebraic}perty for $E$ one has $[Y_i \times_Y Z_j \to Y_i] \in E$ and therefore by composition $[Y_i \times_Y Z_j \to Y] \in E$. By \ref{assum:extends}(vi)(b) one has $Z = \textb\coprod_{(i,j) \in \mathcal{I} \times \mathcal{J}} Y_i \times_Y Z_j$ which shows that $g \in E^\prime$. 
The class $E^\prime$ is clearly stable under disjoint unions. 
We conclude that any $E \in A$ can be minimally enlarged to $E^\prime \in A$ which is stable under disjoint unions.

\textit{Step 3}: Again, let $E \in A$. Let $E^\prime$ be the class of morphisms $f: Y \to X$ in $\mathscr{C}$ such that there exists $[g: Z \to Y] \in E$ of universal $!$-descent such that $fg \in E$. By Lemma \ref{lem:Step3lemma} below, $(\mathscr{C},E^\prime)$ is again a geometric setup, and $E^\prime$ is tame and satisfies $E^\prime \subseteq \delta E^\prime$. Now we can apply \cite[Proposition A.5.14]{mann_p-adic_2022} to extend $Q$ uniquely to $(\mathscr{C},E^\prime)$, taking the class $S$ of covers in \emph{loc. cit.} to be those of universal $!$-descent. We note in particular that the assumption (a) of \emph{loc. cit.} is satisfied due to Assumption \ref{assum:extends}(iii) and the assumption (d) of \emph{loc. cit.} is satisfied since $E \subseteq \delta E$. We conclude that any $E \in A$ can be minimally enlarged to $E^\prime \in A$ which is $!$-local on the source.  

\textit{Step $\omega$}: By setting $E_1 := E_0$ and alternately applying Steps 2 and 3, we obtain a chain
\begin{equation}
    E_1 \subseteq E_2 \subseteq E_3 \subseteq E_4 \subseteq \dots
\end{equation}
such that for even $n \geqslant 2$, $E_n$ is stable under disjoint unions and for odd $n \geqslant3$, $E_n$ is $!$-local on the source. One then sets 
\begin{equation}\label{eq:Eomega}
    E_\omega := \bigcup_{n \geqslant1} E_{n} \in A.
\end{equation}
Then $E_\omega$ is stable under disjoint unions and $!$-local on the source. 

\textit{Step $\omega+1$}: Now assume that we are given $E \in A$ which is stable under disjoint unions and $!$-local on the source. We let $E^\prime$ be the collection of morphisms $[f: X \to Y]$ of $\mathscr{C}$ such that for all $Z \in \mathscr{C}_0$ with a map $Z \to Y$, the pullback $[X \times_Y Z \to Z] \in E$. By Lemma \ref{lem:stepomega+1} below, $(\mathscr{C},E^\prime)$ is again a geometric setup, $E^\prime$ is tame and $E^\prime \subseteq \delta E^\prime$. Now one defines $\mathscr{C}^\prime \subseteq \mathscr{C}$ to be the full subcategory of $\mathscr{C}$ on objects $X$ which admit an $E$-morphism to an object of $\mathscr{C}_0$ and one sets $E^{\prime \prime}$ to be the restriction of $E$ to $\mathscr{C}^\prime$. It is not hard to see that the inclusion $\mathscr{C}^\prime \subseteq \mathscr{C}$ preserves pullbacks of edges in $E^{\prime \prime}$ and that $E^\prime$ consists precisely of those morphisms whose pullback to $\mathscr{C}^\prime$ belongs to $E^{\prime \prime}$. Therefore one may restrict $Q$ to $(\mathscr{C}^\prime, E^\prime)$ and then use \cite[Proposition A.5.16]{mann_p-adic_2022} to extend $Q$ back to $(\mathscr{C},E^\prime)$. In order to satisfy the uniqueness assumption in \emph{loc. cit.} and hence conclude that this is the unique extension of $Q$ from $(\mathscr{C}, E)$ to $(\mathscr{C},E^\prime)$, we need to see that for each $X \in \mathscr{C}$, the canonical morphism
\begin{equation}
    Q^*(X) \to \underset{Y^\prime \in (\mathscr{C}^{\prime}_{/X})^\mathsf{op}}{\operatorname{lim}} Q^*(Y^\prime)
\end{equation}
is an equivalence, i.e., that $Q^*$ is right Kan extended from $\mathscr{C}^\prime$. This is a consequence of Assumption \ref{assum:extends}(v) and the transitivity of Kan extensions \cite[\href{https://kerodon.net/tag/0314}{Tag 0314}]{kerodon}.

\textit{Step $\omega \cdot 2$}: The class $E_{\omega + 1}$ may no longer be $!$-local on the source or stable under disjoint unions. Therefore one iterates Steps 2 and 3 again to obtain a chain $E_{\omega + 1} \subseteq E_{\omega + 2} \subseteq E_{\omega +3} \subseteq \dots$ such that $E_{\omega + 2n}$ is stable under disjoint unions and $E_{\omega + 2n+ 1}$ is $!$-local on the source, for all $n \geqslant1$. One then sets $E_{\omega \cdot 2} := \bigcup_{n \geqslant1} E_{\omega + n} \in A$, which is stable under disjoint unions and $!$-local on the source. 

\textit{Step $\omega^2$}: Continuing in this way, we obtain an increasing sequence $\{E_{\alpha}\}_{\alpha < \omega ^2}$ of classes in $A$. For each $m \geqslant1$, $E_{\omega \cdot m + 1}$ is $*$-local on the target and $E_{\omega \cdot m}$ is $!$-local on the source and stable under disjoint unions. Therefore $E_{\omega^2}:= \bigcup_{\alpha < \omega^2} E_\alpha \in A$ is stable under disjoint unions, $*$-local on the target, and $!$-local on the source.   
\end{proof}
The following auxiliary Lemmas were used in the proof of Theorem \ref{thm:extensionformalism}.
\begin{lem}\label{lem:Step3lemma}
With notations as introduced in this subsection. Let $(\mathscr{C},E)$ be a geometric setup with $E_0 \subseteq E$ and assume that $Q$ extends uniquely to $(\mathscr{C},E)$. Let $E^\prime$ be the class of morphisms $f:Y \to X$ in $\mathscr{C}$ such that there exists $[g: Z \to Y] \in E$ of universal $!$-descent such that $fg \in E$. Then:
\begin{enumerate}[(i)]
    \item $E^\prime$ is stable under base-change. 
    \item $E^\prime$ is stable under composition.
    \item If $E$ satisfies the right cancellation property (that is, $E \subseteq \delta E$),  then so does $E^\prime$.
    \item If $E$ is tame then so is $E^\prime$. 
\end{enumerate}
\end{lem}
\begin{proof}
(i): This is clear.

(ii): Let $[f: Y \to X]$ and $[g: Z \to Y] \in E^\prime$, so that there exists $[h:S \to Y]$ and $[k:T \to Z] \in E$ of universal $!$-descent such that $gk$ and $fh \in E$. Consider the diagram 
    \begin{equation}
% https://q.uiver.app/#q=WzAsNixbMCwyLCJaICJdLFsxLDIsIlkiXSxbMiwyLCJYIl0sWzEsMSwiUyJdLFswLDEsIlQiXSxbMCwwLCJUIFxcdGltZXNfWVMiXSxbMCwxLCJnIl0sWzEsMiwiZiJdLFszLDEsImgiXSxbNCwwLCJrIl0sWzQsMSwibCJdLFs1LDMsImxeXFxwcmltZSJdLFs1LDQsImheXFxwcmltZSJdLFs1LDEsIiIsMCx7InN0eWxlIjp7Im5hbWUiOiJjb3JuZXItaW52ZXJzZSJ9fV0sWzMsMl1d
\begin{tikzcd}[cramped]
	{T \times_YS} \\
	T & S \\
	{Z } & Y & X
	\arrow["{h^\prime}", from=1-1, to=2-1]
	\arrow["{l^\prime}", from=1-1, to=2-2]
	\arrow["\ulcorner"{anchor=center, pos=0.125}, draw=none, from=1-1, to=3-2]
	\arrow["k", from=2-1, to=3-1]
	\arrow["l", from=2-1, to=3-2]
	\arrow["h", from=2-2, to=3-2]
	\arrow[from=2-2, to=3-3]
	\arrow["g", from=3-1, to=3-2]
	\arrow["f", from=3-2, to=3-3]
\end{tikzcd}
    \end{equation}
to be sure, in this diagram the parallelogram is Cartesian and we have written $l := gk$. By base change $l^\prime \in E$ and so $gkh^\prime = hl^\prime \in E$. Also $kh^\prime \in E$ is of universal $!$-descent because this class is stable under composition and base-change.

(iii): Let $[f: Y \to X]$ and $[g: Z \to Y]$ be morphisms of $\mathscr{C}$ such that $fg$ and $f \in E^\prime$, so that there exists $[h:S \to Y]$ and $[k: T \to Z]$ of universal $!$-descent such that $fgk$ and $fh \in E$. Consider again the diagram 
\begin{equation}
    % https://q.uiver.app/#q=WzAsNixbMCwyLCJaICJdLFsxLDIsIlkiXSxbMiwyLCJYIl0sWzEsMSwiUyJdLFswLDEsIlQiXSxbMCwwLCJUIFxcdGltZXNfWVMiXSxbMCwxLCJnIl0sWzEsMiwiZiJdLFszLDEsImgiXSxbNCwwLCJrIl0sWzQsMSwibCJdLFs1LDMsImxeXFxwcmltZSJdLFs1LDQsImheXFxwcmltZSJdLFs1LDEsIiIsMCx7InN0eWxlIjp7Im5hbWUiOiJjb3JuZXItaW52ZXJzZSJ9fV0sWzMsMl1d
\begin{tikzcd}[cramped]
	{T \times_YS} \\
	T & S \\
	{Z } & Y & X
	\arrow["{h^\prime}", from=1-1, to=2-1]
	\arrow["{l^\prime}", from=1-1, to=2-2]
	\arrow["\ulcorner"{anchor=center, pos=0.125}, draw=none, from=1-1, to=3-2]
	\arrow["k", from=2-1, to=3-1]
	\arrow["l", from=2-1, to=3-2]
	\arrow["h", from=2-2, to=3-2]
	\arrow[from=2-2, to=3-3]
	\arrow["g", from=3-1, to=3-2]
	\arrow["f", from=3-2, to=3-3]
\end{tikzcd}
\end{equation}
one has $fgkh^\prime = fhl^\prime \in E$. Therefore, by the right cancellation property for $E$, $l^\prime \in E$. Therefore $gkh^\prime = hl^\prime \in E$. Also $kh^\prime \in E$ is of universal $!$-descent. Therefore $g \in E^\prime$. 

(iv): This follows by unravelling the definitions. Let $[f: Y \to X] \in E^\prime$ be such that $Y \in \mathscr{C}_0$, so that there exists $[g:Z \to Y] \in E$ of universal $!$-descent such that $fg \in E$. By tameness of $E$ there exists a small family $\{X_i \to X\}_{i \in \mathcal{I}}$ of morphisms of $E_{00}$ and a morphism $\coprod_{i \in \mathcal{I}} X_i \to Z$ over $X$ which is of universal $!$-descent. The composite $\coprod_{i \in \mathcal{I}} X_i \to Z \to X$ is then a morphism of universal $!$-descent over $X$. Therefore $E^\prime$ is tame. 
\end{proof}
\begin{lem}\label{lem:stepomega+1}
With notations as introduced in this subsection. Let $(\mathscr{C},E)$ be a geometric setup with $E_0 \subseteq E$ and assume that $Q$ extends uniquely to $(\mathscr{C},E)$. Let $E^\prime$ be the collection of morphisms $[f: X \to Y]$ of $\mathscr{C}$ such that for all $Z \in \mathscr{C}_0$ with a map $Z \to Y$, the pullback $[X \times_Y Z \to Z] \in E$. Then:
\begin{enumerate}[(i)]
    \item $E^\prime$ is stable under base-change. 
    \item If $E$ is tame then $E^\prime$ is tame. 
    \item If $E$ satisfies the right cancellation property (that is, $E \subseteq \delta E$), then so does $E^\prime$. 
    \item If $E$ is stable under disjoint unions, $!$-local on the source, and tame, then $E^\prime$ is stable under composition.
\end{enumerate}
\end{lem}
\begin{proof}
(i): This is clear.

(ii): This is clear since, by the definition, if $[f:X \to Y] \in E^\prime$ has $Y \in \mathscr{C}_0$ then $f \in E$. 

(iii): This is clear. 

(iv): Let $[f: Y \to X]$ and $[g: Z \to Y] \in E^\prime$, let $[S \to X]$ be a morphism from an object $S \in \mathscr{C}_0$ and set $T:= Y \times_X S$, $R:= Z \times_X S$ and let $f^\prime$ and $g^\prime$ be the pullbacks of $f$ and $g$. By tameness there exists a small family $\{S_i \to S\}_{i \in \mathcal{I}}$ of morphisms of $E_{00}$ and a morphism $\coprod_{i \in \mathcal{I}} S_i \to T$ over $S$ of universal $!$-descent. We can summarise this information in the diagram 
\begin{equation}
    % https://q.uiver.app/#q=WzAsOCxbMCwxLCJSICJdLFsxLDEsIlQiXSxbMiwxLCJTIl0sWzEsMiwiWSJdLFsyLDIsIlgiXSxbMCwyLCJaIl0sWzEsMCwiXFxiaWdzcWN1cF97aSBcXGluIFxcbWF0aGNhbHtJfX1TX2kiXSxbMCwwLCJcXGJpZ3NxY3VwX3tpIFxcaW4gXFxtYXRoY2Fse0l9fShTX2lcXHRpbWVzX1RSKSJdLFswLDEsImdeXFxwcmltZSJdLFsxLDIsImZeXFxwcmltZSJdLFsxLDNdLFszLDQsImYiXSxbMiw0XSxbMCw1XSxbNSwzLCJnIl0sWzYsMV0sWzcsMF0sWzcsNl0sWzYsMl1d
\begin{tikzcd}
	{\coprod_{i \in \mathcal{I}}(S_i\times_TR)} & {\coprod_{i \in \mathcal{I}}S_i} \\
	{R } & T & S \\
	Z & Y & X
	\arrow[from=1-1, to=1-2]
	\arrow[from=1-1, to=2-1]
	\arrow[from=1-2, to=2-2]
	\arrow[from=1-2, to=2-3]
	\arrow["{g^\prime}", from=2-1, to=2-2]
	\arrow[from=2-1, to=3-1]
	\arrow["{f^\prime}", from=2-2, to=2-3]
	\arrow[from=2-2, to=3-2]
	\arrow[from=2-3, to=3-3]
	\arrow["g", from=3-1, to=3-2]
	\arrow["f", from=3-2, to=3-3]
\end{tikzcd}
\end{equation}
where we used Assumption \ref{assum:extends}(vi)(b). Since the class $E$ is $!$-local on the source and stable under disjoint unions, to check that $g^\prime \in E$ it suffices to check that each $[S_i \times_T R \to S_i] \in E$. But this is true by the assumption that $g \in E^\prime$. Therefore $E^\prime$ is stable under composition.
\end{proof}
\subsection{The six-functor formalism of relative algebraic geometry}\label{subsec:sixf3}
In this section I will assume that
\begin{itemize}
    \item[$\star$] $\mathscr{V}$ is a stable presentably symmetric monoidal $\infty$-category. We write $\otimes$ for the monoidal structure on $\mathscr{V}$.
\end{itemize}
We define $\mathscr{E} := \mathsf{CAlg}(\mathscr{V})^\mathsf{op}$. We use the formal expression $\operatorname{Spec}(A)$ to denote the object of $\mathscr{E}$ corresponding to $A \in \mathsf{CAlg}(\mathscr{V})$. By \cite[Theorem 4.5.3.1]{HigherAlgebra}, (see also \cite[Remark 4.5.3.2]{HigherAlgebra}), there is a functor 
\begin{equation}
    \operatorname{QCoh}: \mathscr{E}^\mathsf{op} \to \mathsf{CAlg}(\mathsf{Pr}^L_{\mathsf{st}}),
\end{equation}
which is given on objects by
\begin{equation}
\operatorname{QCoh}(\operatorname{Spec}(A)) := \operatorname{Mod}_A\mathscr{V},
\end{equation}
and sends $f: \operatorname{Spec}(A) \to \operatorname{Spec}(B)$ to $f^* = A \otimes_B -$. The functor $f^*$ admits a right adjoint $f_*$ which is nothing but the forgetful functor at the level of modules.
\begin{prop}\label{prop:QcohBasic}
The functor $ \operatorname{QCoh}: \mathscr{E}^\mathsf{op} \to \mathsf{CAlg}(\mathsf{Pr}^L_{\mathsf{st}})$ extends to a six-functor formalism 
\begin{equation}\label{eq:QCohbasic}
    \operatorname{QCoh}: \operatorname{Corr}(\mathscr{E}, \operatorname{all})^\otimes \to \mathsf{Pr}^{L, \otimes}_{\mathsf{st}}
\end{equation}
such that for every morphism $f$ in $\mathscr{E}$ one has $f_*= f_!$. 
\end{prop}
\begin{proof}
This is a straightforward application of \cite[Proposition A.5.10]{mann_p-adic_2022} applied to the decomposition $(I,P) = (\text{equivalences}, \text{all})$. Base change and the projection formula both follow from standard associativity properties of $\otimes$. 
\end{proof} 
\begin{notations}
\begin{itemize}
    \item[$\star$] Let $\mathsf{Aff} \subseteq \mathscr{E}$ be a full subcategory stable under fiber products and retracts. 
    \item[$\star$] Let $\tau$ be a Grothendieck topology on $\mathsf{Aff}$ such that $\operatorname{QCoh}^*$ is a sheaf in this topology. (By evaluation on the unit object, this in particular implies that $\tau$ is subcanonical).
    \item[$\star$] Let $\mathsf{Stk} := \operatorname{Shv}_\tau(\mathsf{Aff}, \infty\mathsf{Grpd})$ equipped with its natural topology as a topos, that is, the topology of effective epimorphisms.  
    \item[$\star$] Let $\mathrm{rep}$ be the collection of morphisms in $\mathsf{Stk}$ which are representable in $\mathsf{Aff}$.
\end{itemize}
\end{notations}
The Yoneda embedding induces a morphism of geometric setups $(\mathsf{Aff}, \mathrm{all}) \to (\mathsf{Stk},\mathrm{rep})$. By \cite[Proposition A.5.16]{mann_p-adic_2022} $\operatorname{QCoh}$ extends to a six-functor formalism 
\begin{equation}\label{eq:sixfrep}
    \operatorname{QCoh}: \operatorname{Corr}(\mathsf{Stk}, \mathrm{rep})^\otimes \to \mathsf{Pr}^{L,\otimes}_\mathsf{st}.
\end{equation}
such that $\operatorname{QCoh}^*$ is a sheaf in the effective-epimorphism topology.
\begin{lem}\label{lem:representablebasechange}
Let $g : Y \to X \in \operatorname{rep}$. Then $g_*$ satisfies base-change and the projection formula, and further $g_*$ is conservative. 
\end{lem} 
\begin{proof}
Using that $\tau$ is subcanonical, all statements follow from descent. 
\end{proof}
\begin{cor}\label{cor:lowershriekcor}
In the six-functor formalism \eqref{eq:sixfrep}, for every morphism $[g: Y \to X] \in \mathrm{rep}$, there is a natural equivalence $g_! \xrightarrow[]{\sim} g_*$. 
\end{cor}
\begin{proof}
By a straightforward application of \cite[Proposition A.5.10]{mann_p-adic_2022}, using Lemma \ref{lem:representablebasechange} above, one constructs a second six-functor formalism on $(\mathsf{Stk}, \mathrm{rep})$, such that for every $[g:Y \to X] \in \mathrm{rep}$ one has $g_! = g_*$. Now the unicity assertion in \cite[Proposition A.5.16]{mann_p-adic_2022} implies that this six-functor formalism is equivalent to the one constructed in \eqref{eq:sixfrep} above, which gives the Corollary. 
\end{proof}
Now we apply the \emph{extension formalism} of \S\ref{subsec:sixf2}. 
\begin{thm}[The six-functor formalism of relative algebraic geometry]\label{thm:relativealgebraic}
There exists a (minimal) class of edges $E \supseteq \mathrm{rep}$ of $\mathsf{Stk}$ such that $\operatorname{QCoh}$ extends to a six-functor formalism on $(\mathsf{Stk},E)$ and $E$ is stable under disjoint unions, $*$-local on the target, $!$-local on the source, is tame, and satisfies $E \subseteq \delta E$. 
\end{thm}
\begin{proof}
We will apply Theorem \ref{thm:extensionformalism} with 
\begin{equation}
    \begin{aligned}
        (\mathscr{C}_0, E_{00}) := (\mathsf{Aff}, \mathrm{all}),  && \text{ and } && (\mathscr{C}, E_0) := (\mathsf{Stk}, \mathrm{rep}).
    \end{aligned}
\end{equation}
We need to check that the Assumptions \ref{assum:extends}(i)-(ix) are satisfied. The Assumptions \ref{assum:extends}(i), (vi) are satisfied in any $\infty$-topos. The Assumption \ref{assum:extends}(v) follows from the fact that $\operatorname{QCoh}^*$ is left Kan extended along the Yoneda embedding, c.f. \cite[Proposition A.5.16]{mann_p-adic_2022}, and then \ref{assum:extends}(iii) follows because $\operatorname{QCoh}(Y)$ is presentable for every $Y \in \mathsf{Aff}$. The Assumption \ref{assum:extends}(ii) follows since the topology $\tau$ is subcanonical. Assumption \ref{assum:extends}(iv) is clear from the definition of a representable morphism and \ref{assum:extends}(ix) is also easily verified. So the only things to really check are Assumptions \ref{assum:extends}(vii) and \ref{assum:extends}(viii). 

For \ref{assum:extends}(vii), by base-change it suffices to show that the morphism $i: * \to * \coprod *$ into the first factor is representable. We note that this has a retraction $r$ (the fold map) and in fact one can\footnote{I would like to thank Sam Moore for a helpful discussion, and especially for explaining ``$\mathrm{Idem}$" to me.} write $ * = \underset{\operatorname{Idem}}{\operatorname{lim}}  (* \coprod *)   $ where the $\infty$-category  $\operatorname{Idem}$ is from \cite[\S 4.4.5]{HigherToposTheory}. Now fiber products commute with limits so given $\operatorname{Spec}(A) \to * \coprod *$  we can write $* \times_{* \coprod *} \operatorname{Spec}(A) = \underset{\operatorname{Idem}}{\operatorname{lim}} \operatorname{Spec(A)}$. Using that the Yoneda embedding is fully-faithful (since $\tau$ is subcanonical) and preserves all limits which exist in $\mathsf{Aff}$, we deduce that this is representable, since $\mathsf{Aff}$ is idempotent complete (by assumption, it is stable under retracts in $\mathscr{E}$). 

For Assumption \ref{assum:extends}(viii) let us take objects $X, Y \in \mathsf{Stk}$ and consider the morphisms $s:  X \to  X \coprod Y$ and $t: Y \to X \coprod Y$. By the previous these both belong to the class $\mathrm{rep}$, so that in particular one has base-change for $s_*$ and $t_*$ against the upper-shriek functors. Because $\operatorname{QCoh}^*$ is a sheaf, one has $\operatorname{id} \simeq s_* s^* \oplus t_* t^*$. Now applying $s^!$ and using base-change we deduce that $s^! \simeq s^*$.   

Now suppose we are given a small collection $\{X_i\}_{i \in \mathcal{I}}$ of objects of $\mathsf{Stk}$ and consider the morphisms $s_i: X_i \to \coprod_iX_i$ and $t_i: \coprod_{j \neq i}X_j \to \coprod_iX_i$. By the above argument we have $s_i^! \simeq s_i^*$. Since $\operatorname{QCoh}^*$ is a sheaf, one has an equivalence $\prod_is_i^* : \prod_i\operatorname{QCoh}(X_i) \xrightarrow[]{\sim} \operatorname{QCoh}(\coprod_i X_i)$. Hence also $\prod_is_i^! : \prod_i \operatorname{QCoh}(X_i) \xrightarrow[]{\sim} \operatorname{QCoh}(\coprod_i X_i)$ is an equivalence, as required. 
\end{proof}
\section{Derived rigid geometry}\label{sec:derivedrigidgeometry}
Now let $K/ \mathbf{Q}_p$ be a complete field extension. In this section we attempt to summarize the work of \cite{DAnG}, and take some shortcuts, to obtain a theory of derived rigid spaces which is good enough for our purposes. 
%\begin{rmk}[Important remark]
%This section is based on my preprint \cite{soor_six-functor_2024}. It is quite possible that the material here can be substantially simplified and improved using the Lawvere theory formalism of Chapter \ref{chapter:algebraictheories}. (Essentially, this section was all written before I knew what a Lawvere theory was).
%\end{rmk}

\subsection{Derived affinoid spaces}\label{subsec:derivedaffinoid}
\begin{defn}
We define $\mathsf{dAfndAlg}$ to be the full subcategory of monoids $A$ in $D_{\geqslant 0}(\mathsf{CBorn}_K)$ with the following properties.
\begin{enumerate}[(i)]
        \item $\pi_0(A)$ is a $K$-affinoid algebra. That is, it is the quotient of a (classical) Tate algebra in finitely many variables.
        \item For every $m \geqslant 0$, $\pi_m(A)$ is finitely-generated as a $\pi_0(A)$-module.
\end{enumerate}
\end{defn}
\begin{lem}\label{lem:dAfndAlgbasic1}
\begin{enumerate}[(i)]
    \item $\mathsf{dAfndAlg}$ is stable under pushouts in $\mathsf{Alg}(D_{\geqslant 0}(\mathsf{CBorn}_K))$.
    \item $\mathsf{dAfndAlg}$ is stable under finite products in $\mathsf{CAlg}(D_{\geqslant 0}(\mathsf{CBorn}_K))$.
    \item $\mathsf{dAfndAlg}$ is stable under retracts in $\mathsf{CAlg}(D_{\geqslant 0}(\mathsf{CBorn}_K))$. 
\end{enumerate}
\end{lem}
\begin{proof}
(i): Let $B \to A$ and $B \to C$ be morphisms in $\mathsf{dAfndAlg}$. Since $\pi_0$ is left adjoint to the inclusion of discrete objects in $\mathsf{CAlg}(D_{\geqslant 0}(\mathsf{CBorn}_K))$, it commutes with pushouts. In particular one has $\pi_0(A \widehat{\otimes}_B^\mathbf{L} C) \simeq \pi_0(A) \widehat{\otimes}_{\pi_0(B)}\pi_0(C)$. Now, the homology functors are valued in the \emph{left heart} and so, the claim that  $\pi_0(A \widehat{\otimes}_B^\mathbf{L} C)$ is an affinoid algebra will follow if we can show that the tensor product $\pi_0(A) \widehat{\otimes}_{\pi_0(B)}^{LH} \pi_0(C)$, which is taken with respect to the monoidal structure on the left heart, coincides with the completed tensor product of Banach spaces\footnote{I would like to thank Jack Kelly for explaining this argument to me.}. Let us set $A^\prime:= \pi_0(A), B^\prime := \pi_0(B)$ and $C^\prime := \pi_0(C)$. We first treat the case when $B^\prime = K$. Let us temporarily write $T_n := K \langle x_1, \dots, x_n\rangle$ and take a presentation 
\begin{equation}
    T_n^{\oplus l} \to T_n \twoheadrightarrow A^\prime.
\end{equation}
Tensoring with $C^\prime$ and using flatness of the Tate algebra gives 
\begin{equation}
     T_n^{\oplus l} \widehat{\otimes}_KC^\prime  \xrightarrow[]{f} T_n \widehat{\otimes}_KC^\prime \twoheadrightarrow A^\prime \widehat{\otimes}_K^{LH} C^\prime,
\end{equation}
so that $A^\prime \widehat{\otimes}_K^{LH}C^\prime$ is the cokernel of $f$. The image of $f$ is an ideal, and all ideals in affinoid algebras are closed, so $f$ is strict by the open mapping theorem. Hence $\operatorname{coker}f$ belongs to $\mathsf{CBorn}_K$ and coincides with $A^\prime \widehat{\otimes}_K C^\prime$ as required. In general, using this case we know that there is an exact sequence 
\begin{equation}
    A^\prime \widehat{\otimes}_K B^\prime \widehat \otimes_K C^\prime  \xrightarrow[]{g}   A^\prime \widehat \otimes_K C^\prime  \twoheadrightarrow A^\prime \widehat \otimes_K^{LH} C^\prime,
\end{equation}
and the image of $g$ is once again an ideal, so $g$ is strict by the same reasoning, and so $\operatorname{coker}g$ belongs to $\mathsf{CBorn}_K$ and coincides with $A^\prime \widehat\otimes_{B^\prime} C^\prime$. Now we claim that each $\pi_m(A \widehat{\otimes}_B^\mathbf{L} C)$ is finitely-generated as a $\pi_0(A) \widehat{\otimes}_{\pi_0(B)}\pi_0(C)$-module. This follows from the convergence of the Tor-spectral sequence\footnote{(Here and elsewhere, $\operatorname{Tor}$ denotes the homotopy groups of the derived \emph{completed} tensor product).} 
\begin{equation}
  E^2_{pq}:  \operatorname{Tor}_{\pi_*(B)}^p(\pi_*(A), \pi_*(C))_q \Rightarrow \pi_{p+q}(A \widehat{\otimes}^\mathbf{L}_B C),
\end{equation}
(c.f. \cite[Lemma 4.5.55]{DAnG}) combined with Noetherianity of affinoid algebras. (ii): This is clear from the isomorphism $\pi_*(\prod_{i=1}^n A_i) \cong \prod_{i=1}^n \pi_*(A_i)$ which holds for any finite collection of objects $\{A_i\}_{i=1}^n$ of objects of $\mathsf{dAfndAlg}$.  (iii): If $A$ is a retract of $B \in  \mathsf{dAfnd}$ then $\pi_*(A)$ is a retract of $\pi_*(B)$, and hence $A \in \mathsf{dAfnd}$. 
\end{proof}
\begin{defn}
A morphism $A \to B$ in $\mathsf{CAlg}(D(\mathsf{CBorn}_K))$ is called a \emph{homotopy epimorphism} if the codiagonal morphism $B \widehat{\otimes}^\mathbf{L}_A B \to B$ is an equivalence.
\end{defn}
\begin{defn}
A morphism $A \to B$ in $\mathsf{CAlg}(D_{\geqslant 0}(\mathsf{CBorn}_K))$ is called \emph{derived strong} if for every $m \geqslant 0$, the natural morphism
\begin{equation}\label{eq:Lderivedstrong}
        \pi_m(A) \widehat{\otimes}^\mathbf{L}_{\pi_0(A)} \pi_0(B) \to \pi_m(B)
    \end{equation}
    is an equivalence.
\end{defn}
\begin{defn}\label{defn:derivedrational}
A morphism $A\to B$ in  $\mathsf{dAfndAlg}$ is a \emph{derived rational localization} if:
\begin{enumerate}[(i)]
    \item $\pi_0(A) \to \pi_0(B)$ is a rational localization (of affinoid algebras in the classical sense).
    \item $A \to B$ is derived strong.
\end{enumerate}
We denote the class of derived rational localizations in $\mathsf{dAfndAlg}$ by $\mathscr{L}$.
\end{defn}
\begin{rmk}
Because each $\pi_m(A)$ is finitely-generated as a $\pi_0(A)$-module, and finitely-generated $\pi_0(A)$-modules are transverse to rational localizations of $\pi_0(A)$, condition (ii) in Definition \ref{defn:derivedrational} is equivalent to:
\begin{enumerate}
    \item[(ii)${}^\prime$] For every $m \geqslant 0$, the natural morphism
    \begin{equation}\label{eq:Lstrong}
        \pi_m(A) \widehat{\otimes}_{\pi_0(A)} \pi_0(B) \to \pi_m(B)
    \end{equation}
    is an equivalence. 
\end{enumerate}
In the terminology of \cite{DAnG}, one says that $A\to B$ is \emph{strong}. 
\end{rmk}
\begin{lem}\label{lem:Lclassproperties}
\begin{enumerate}[(i)]
    \item The class $\mathscr{L}$ contains all equivalences and is stable under composition.
    \item $\mathscr{L}$ is stable under base-change by arbitrary morphisms of $\mathsf{dAfndAlg}$.
    \item Every morphism of $\mathscr{L}$ is a homotopy epimorphism.
\end{enumerate}
\end{lem}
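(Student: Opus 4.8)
The plan is to reduce everything to the corresponding (known) statements about rational localizations of classical affinoid algebras, together with the compatibility of $\pi_0$ and $\pi_m$ with the relevant colimits. The three conditions to check concern two orthogonal things: the classical condition (i) of Definition \ref{defn:derivedrational} on $\pi_0$, and the derived-strongness condition (ii). For part (i) of the lemma: equivalences are trivially derived strong and $\pi_0$ of an equivalence is an isomorphism (hence a rational localization in the trivial sense). For stability under composition, given $A \xrightarrow{} B \xrightarrow{} C$ both in $\mathscr{L}$, the composite $\pi_0(A) \to \pi_0(C)$ is a composite of rational localizations of affinoid algebras, hence again one (this is classical, e.g. transitivity of rational subdomains); and derived-strongness of the composite follows by splicing the two base-change isomorphisms, using that $\pi_m(A) \widehat{\otimes}_{\pi_0(A)} \pi_0(B) \widehat{\otimes}_{\pi_0(B)} \pi_0(C) \simeq \pi_m(A) \widehat{\otimes}_{\pi_0(A)} \pi_0(C)$, and the flatness/transversality of finitely generated modules against rational localizations (already invoked in the Remark after Definition \ref{defn:derivedrational}).

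For part (ii), suppose $[A \to B] \in \mathscr{L}$ and let $A \to A'$ be an arbitrary morphism in $\mathsf{dAfndAlg}$; set $B' := A' \widehat{\otimes}_A^{\mathbf L} B$, which lies in $\mathsf{dAfndAlg}$ by Lemma \ref{lem:dAfndAlgbasic1}(i). On $\pi_0$ one has $\pi_0(B') \simeq \pi_0(A') \widehat{\otimes}_{\pi_0(A)} \pi_0(B)$ since $\pi_0$ commutes with pushouts, and rational localizations of classical affinoid algebras are stable under base change, so $\pi_0(A') \to \pi_0(B')$ is a rational localization. It remains to see $A' \to B'$ is derived strong. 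Here I would run the Tor-spectral sequence for $A' \widehat{\otimes}_A^{\mathbf L} B$ (as in \cite[Lemma 4.5.55]{DAnG}): because $A \to B$ is derived strong and $\pi_0(A) \to \pi_0(B)$ is flat (a rational localization), the higher Tor groups over $\pi_0(A)$ vanish, the spectral sequence degenerates, and one computes $\pi_m(B') \simeq \pi_m(A') \widehat{\otimes}_{\pi_0(A')} \pi_0(B')$, i.e. the derived-strongness of $A' \to B'$.

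Part (iii) is the place I expect to do actual work, though even here the strategy is to bootstrap from the classical case. The claim is $B \widehat{\otimes}_A^{\mathbf L} B \xrightarrow{\sim} B$. First, $\pi_0(B \widehat{\otimes}_A^{\mathbf L} B) \simeq \pi_0(B) \widehat{\otimes}_{\pi_0(A)} \pi_0(B)$, and since $\pi_0(A) \to \pi_0(B)$ is a rational localization it is a (classical) homotopy epimorphism — rational localizations of affinoid algebras are flat and the multiplication map $\pi_0(B) \widehat{\otimes}_{\pi_0(A)} \pi_0(B) \to \pi_0(B)$ is an isomorphism, a standard fact in rigid geometry. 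So the codiagonal is an isomorphism on $\pi_0$. For the higher homotopy, apply the Tor-spectral sequence to $B \widehat{\otimes}_A^{\mathbf L} B$: the $E_2$-page involves $\operatorname{Tor}^{\pi_0(A)}_*(\pi_*(B), \pi_*(B))$, and flatness of $\pi_0(A) \to \pi_0(B)$ kills the higher Tor, collapsing it to $\pi_m(B \widehat{\otimes}_A^{\mathbf L} B) \simeq \pi_m(B) \widehat{\otimes}_{\pi_0(A)} \pi_0(B)$; but derived-strongness of $A \to B$ identifies the right side with $\pi_m(B)$, so the codiagonal induces an isomorphism on all $\pi_m$ and is therefore an equivalence. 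The main obstacle, such as it is, is bookkeeping with the convergence and degeneration of the Tor-spectral sequence in the bornological setting, but since each $\pi_m(A)$ is finitely generated over the Noetherian ring $\pi_0(A)$ this is controlled exactly as in \cite[\S 4.5]{DAnG}.
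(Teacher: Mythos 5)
Your part (i) is fine and is essentially the paper's argument (splice the two derived-strongness equivalences using associativity; the classical statement on $\pi_0$ is transitivity of rational localizations). The gap is in how you justify degeneration of the Tor spectral sequence in (ii) and (iii). The spectral sequence of \cite[Lemma 4.5.55]{DAnG} has entries given by \emph{bornological} Tor groups, i.e.\ derived functors of the completed tensor product over the graded algebra $\pi_*(A)$, and algebraic flatness of the rational localization $\pi_0(A) \to \pi_0(B)$ does not formally make these vanish. For \emph{finitely generated} modules over the Noetherian ring $\pi_0(A)$ one can reduce bornological Tor to algebraic Tor via finite free resolutions --- that is exactly the transversality statement in the Remark after Definition \ref{defn:derivedrational} --- but $\pi_0(B)$ itself is not a finitely generated $\pi_0(A)$-module, so ``flatness kills the higher Tor'' is precisely the step that needs a theorem. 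In the paper, the input for (iii) is \cite[Theorem 5.16]{BBKNonArch}: a classical rational localization is a homotopy epimorphism in the derived bornological sense, $\pi_0(B) \widehat{\otimes}^\mathbf{L}_{\pi_0(A)} \pi_0(B) \simeq \pi_0(B)$; combined with derived strongness this gives $\pi_*(B) \widehat{\otimes}^\mathbf{L}_{\pi_*(A)} \pi_*(B) \simeq \pi_*(B)$, which is what makes the spectral sequence collapse. Your proof hides this main content behind the word ``flat''. (Your identification of the abutment with $\pi_m(B)$ is fine once the collapse is granted.)

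For (ii) the issue is more serious: your spectral sequence for $A' \widehat{\otimes}^\mathbf{L}_A B$ needs vanishing of the higher bornological Tor of $\pi_*(A')$ against $\pi_*(B)$ over $\pi_*(A)$, and $\pi_*(A')$ is in general not finitely generated over $\pi_*(A)$ (think $A' = A\langle T\rangle$), so neither the finite-generation reduction nor \cite[Theorem 5.16]{BBKNonArch} covers it; what you would need is transversality of arbitrary (derived) affinoid $A$-algebras to rational localizations, which you do not establish. The paper takes a different route precisely to avoid this: it base-changes the homotopy epimorphism property of (iii) to get $B' \widehat{\otimes}^\mathbf{L}_{A'} B' \simeq B'$, deduces $\mathbf{L}_{B'/A'} \simeq 0$, uses a cotangent-complex argument to show $\pi_0(A') \widehat{\otimes}^\mathbf{L}_{A'} B' \simeq \pi_0(B')$, and then concludes derived strongness of $A' \to B'$ from the transversality of the finitely generated modules $\pi_m(A')$ together with the inductive criterion of \cite[Proposition 2.3.90]{DAnG}. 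Note also that the paper proves (iii) before (ii) because (ii) depends on it; your proposal keeps the order (ii), (iii) but in fact your (ii) would need (iii)-type inputs anyway. To salvage your approach you would have to first prove the stronger transversality statement for affinoid algebras; otherwise follow the homotopy-epimorphism/cotangent-complex route.
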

\begin{proof}
(i): It is clear that $\mathscr{L}$ contains all equivalences. Let $A \to B$ and $B \to C$ be morphisms of $\mathscr{L}$. Since rational localizations are stable under composition, the composite $\pi_0(A) \to \pi_0(B) \to \pi_0(C)$ is a rational localization. It is easy to show that $A \to C$ is (derived) strong, by using that $A \to B$ and $B \to C$ are (derived) strong together with associativity of $\widehat{\otimes}$. 

(iii): This is \cite[Proposition 2.6.165(2)]{DAnG}. We first observe that, by \cite[Theorem 5.16]{BBKNonArch}, the morphism $\pi_0(A) \to \pi_0(B)$ is a homotopy epimorphism, meaning that $\pi_0(B) \widehat{\otimes}_{\pi_0(A)}^\mathbf{L} \pi_0(B) \xrightarrow[]{\sim} \pi_0(B)$ is an isomorphism. Using this, and the derived strong property \eqref{eq:Lderivedstrong}, one has 
\begin{equation}\label{eq:Lishomotopyepi}
\begin{aligned}
    \pi_*(B) \widehat{\otimes}^\mathbf{L}_{\pi_*(A)} \pi_*(B) &\simeq \pi_*(A) \widehat{\otimes}^\mathbf{L}_{\pi_0(A)} \pi_0(B) \widehat{\otimes}^\mathbf{L}_{\pi_0(A)} \pi_0(B) \\
    &\simeq \pi_*(A) \widehat{\otimes}^\mathbf{L}_{\pi_0(A)} \pi_0(B) \\
    &\simeq \pi_*(B).
\end{aligned}
\end{equation}
Now we consider the $\operatorname{Tor}$-spectral sequence 
\begin{equation}
   E^2_{pq}: \operatorname{Tor}_{\pi_*(A)}^p(\pi_*(B),\pi_*(B))_q \Rightarrow \pi_{p+q}(B \widehat{\otimes}^\mathbf{L}_A B),
\end{equation}
and observe that, because of \eqref{eq:Lishomotopyepi}, this collapses on the first page. In combination with \eqref{eq:Lishomotopyepi} this gives $\pi_*(B \widehat{\otimes}^\mathbf{L}_A B) \cong \pi_*(B)$ which shows that $B \widehat{\otimes}^\mathbf{L}_A B \to B$ is an equivalence.

(ii): Let $A \to A^\prime$ be a further morphism and let $B^\prime$ be defined as the pushout (using Lemma \ref{lem:dAfndAlgbasic1}) in $\mathsf{dAfndAlg}$:
\begin{equation}
    % https://q.uiver.app/#q=WzAsNCxbMCwwLCJBIl0sWzEsMCwiQiJdLFswLDEsIkFeXFxwcmltZSJdLFsxLDEsIkJeXFxwcmltZSJdLFswLDFdLFswLDJdLFsyLDNdLFsxLDNdLFszLDAsIiIsMSx7InN0eWxlIjp7Im5hbWUiOiJjb3JuZXIifX1dXQ==
\begin{tikzcd}
	A & B \\
	{A^\prime} & {B^\prime}
	\arrow[from=1-1, to=1-2]
	\arrow[from=1-1, to=2-1]
	\arrow[from=1-2, to=2-2]
	\arrow[from=2-1, to=2-2]
	\arrow["\lrcorner"{anchor=center, pos=0.125, rotate=180}, draw=none, from=2-2, to=1-1]
\end{tikzcd}
\end{equation}
Since $\pi_0$ commutes with pushouts and rational localizations of classical affinoid algebras are stable under base-change, we see that $\pi_0(A^\prime) \to \pi_0(B^\prime)$ is a rational localization.

Since $\pi_0(A^\prime) \to \pi_0(B^\prime)$ is a rational localization, and $\pi_m(A^\prime)$ is finitely-generated as a module over $\pi_0(A^\prime)$, the canonical morphism
\begin{equation}\label{eq:pimAprimetransverse}
    \pi_m(A^\prime) \widehat{\otimes}^\mathbf{L}_{\pi_0(A^\prime)} \pi_0(B^\prime) \xrightarrow[]{\sim}  \pi_m(A^\prime) \widehat{\otimes}_{\pi_0(A^\prime)} \pi_0(B^\prime)
\end{equation}
is an equivalence. In the terminology of \cite{DAnG} one says that \emph{$\pi_m(A^\prime)$ is transversal to $\pi_0(B^\prime)$ over $\pi_0(A^\prime)$}.  
Additionally, we claim that the natural morphism
\begin{equation}\label{eq:Bprimehomotopyepi}
    B^\prime \widehat{\otimes}^\mathbf{L}_{A^\prime} B^\prime \xrightarrow[]{\sim} B^\prime
\end{equation}
is an equivalence. Indeed, we know that $B^\prime \simeq B \widehat{\otimes}^\mathbf{L}_A A^\prime$ and by (iii) we have $B \widehat{\otimes}^\mathbf{L}_AB \xrightarrow[]{\sim} B$. Therefore, by the associativity properties of $\widehat{\otimes}^\mathbf{L}$, we obtain \eqref{eq:Bprimehomotopyepi}. 

In order to prove (ii) what we needed to verify was that the morphism $A^\prime \to B^\prime$ satisfies the derived strong property \eqref{eq:Lderivedstrong}. We claim that this follows from the properties \eqref{eq:pimAprimetransverse} and \eqref{eq:Bprimehomotopyepi}\footnote{Indeed, this is \cite[Proposition 2.6.160]{DAnG}. However, in this specific setting the proof of \emph{loc. cit.} simplifies and so we record it here.}. 

By basic properties of the cotangent complex, c.f. \cite[Proposition 2.1.40, Proposition 2.1.41, Lemma 2.1.42]{DAnG}, the homotopy epimorphism property \eqref{eq:Bprimehomotopyepi} implies that the cotangent complex $\mathbf{L}_{B^\prime/A^\prime} \simeq 0$. Let $C$ be defined by the pushout in $\mathsf{dAfndAlg}$ (again using Lemma \ref{lem:dAfndAlgbasic1}):
\begin{equation}\label{eq:Cpushout}
    % https://q.uiver.app/#q=WzAsNCxbMCwwLCJBXlxccHJpbWUiXSxbMSwwLCJCXlxccHJpbWUiXSxbMCwxLCJcXHBpXzAoQV5cXHByaW1lKSJdLFsxLDEsIkMiXSxbMCwxXSxbMCwyXSxbMiwzXSxbMSwzXSxbMywwLCIiLDEseyJzdHlsZSI6eyJuYW1lIjoiY29ybmVyIn19XV0=
\begin{tikzcd}
	{A^\prime} & {B^\prime} \\
	{\pi_0(A^\prime)} & C
	\arrow[from=1-1, to=1-2]
	\arrow[from=1-1, to=2-1]
	\arrow[from=1-2, to=2-2]
	\arrow[from=2-1, to=2-2]
	\arrow["\lrcorner"{anchor=center, pos=0.125, rotate=180}, draw=none, from=2-2, to=1-1]
\end{tikzcd}
\end{equation}
We claim that the natural morphism $C \to \pi_0(B^\prime)$ induced by the universal property of pushouts, is an equivalence. It is clear, by applying $\pi_0$ to \eqref{eq:Cpushout}, that this is an isomorphism on $\pi_0$. Therefore it suffices to show that $\mathbf{L}_{\pi_0(B^\prime)/C} \simeq 0$. There is a fiber sequence 
\begin{equation}
    \mathbf{L}_{C/\pi_0(A^\prime)} \widehat{\otimes}^\mathbf{L}_C \pi_0(B^\prime) \to \mathbf{L}_{\pi_0(B^\prime) / \pi_0(A^\prime)} \to \mathbf{L}_{\pi_0(B^\prime) /C}.
\end{equation}
Now $\mathbf{L}_{\pi_0(B^\prime) / \pi_0(A^\prime)} \simeq 0$ because $ \pi_0(A^\prime) \to \pi_0(B^\prime)$ is a homotopy epimorphism (being a rational localization). And $\mathbf{L}_{C/\pi_0(A^\prime)} \simeq 0$ because $\pi_0(A^\prime) \to C$ is a homotopy epimorphsim, by base-change. So indeed $\mathbf{L}_{\pi_0(B^\prime) /C}  \simeq 0$.

Now we know that the morphism
\begin{equation}
    \pi_0(A^\prime) \widehat{\otimes}^\mathbf{L}_A B^\prime \xrightarrow[]{\sim} \pi_0(B^\prime)
\end{equation}
is an equivalence. Using this and the transversality property \eqref{eq:pimAprimetransverse}, an inductive argument as described in \cite[Proposition 2.3.90]{DAnG}, guarantees that $A^\prime \to B^\prime$ is derived strong.

\end{proof}
\begin{cor}\cite[Remark 2.4.4]{mann_p-adic_2022}
Let $f, g$ be composable morphisms in $\mathsf{dAfndAlg}$. If $gf$ and $f$ both belong to $\mathscr{L}$ then so does $g$. 
\end{cor}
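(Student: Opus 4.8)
The plan is to verify directly the two conditions of Definition~\ref{defn:derivedrational} for $g$. Write the composable pair as $f \colon A \to B$ and $g \colon B \to C$, so that $gf \colon A \to C$; by hypothesis $f$ and $gf$ lie in $\mathscr{L}$, while $g$ is in any case a morphism of $\mathsf{dAfndAlg}$, so only conditions (i) and (ii) of Definition~\ref{defn:derivedrational} need checking.

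For condition (i) one applies $\pi_0$. As $\pi_0$ is a functor, $\pi_0(gf) = \pi_0(g) \circ \pi_0(f)$, so $\pi_0(f)\colon \pi_0(A) \to \pi_0(B)$ and $\pi_0(gf)\colon \pi_0(A) \to \pi_0(C)$ are both rational localizations of classical affinoid algebras; in particular $\operatorname{Sp}(\pi_0 C)$ and $\operatorname{Sp}(\pi_0 B)$ are rational subdomains of $\operatorname{Sp}(\pi_0 A)$, with $\operatorname{Sp}(\pi_0 C) \subseteq \operatorname{Sp}(\pi_0 B)$ since the triangle of spectra commutes. I would then invoke the classical cancellation property of rational subdomains: a rational subdomain of $\operatorname{Sp}(\pi_0 A)$ contained in another rational subdomain $\operatorname{Sp}(\pi_0 B)$ is itself a rational subdomain of $\operatorname{Sp}(\pi_0 B)$, with the same structure algebra. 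Concretely, writing $\operatorname{Sp}(\pi_0 C) = \operatorname{Sp}(\pi_0 A)(h_1/h_0, \dots, h_n/h_0)$ with $(h_0,\dots,h_n) = \pi_0(A)$, the images $\bar h_i \in \pi_0(B)$ still generate the unit ideal, and $\operatorname{Sp}(\pi_0 B)(\bar h_1/\bar h_0, \dots, \bar h_n/\bar h_0) = \operatorname{Sp}(\pi_0 C) \cap \operatorname{Sp}(\pi_0 B) = \operatorname{Sp}(\pi_0 C)$; since rational localizations are (homotopy) epimorphisms --- as already used in the proof of Lemma~\ref{lem:Lclassproperties}(iii) --- the map $\pi_0(B) \to \pi_0(C)$ obtained this way agrees with the given one. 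Hence $\pi_0(g)$ is a rational localization.

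For condition (ii), equivalently (ii)$'$, I must show that for each $m \ge 0$ the canonical comparison map $\pi_m(B) \widehat{\otimes}^{\mathbf{L}}_{\pi_0(B)} \pi_0(C) \to \pi_m(C)$ is an equivalence. This should follow formally from the derived-strong properties of $f$ and of $gf$ together with associativity of $\widehat{\otimes}^{\mathbf{L}}$, exactly in the style of the composition-stability argument of Lemma~\ref{lem:Lclassproperties}(i):
\begin{equation}
\begin{aligned}
  \pi_m(B) \widehat{\otimes}^{\mathbf{L}}_{\pi_0(B)} \pi_0(C)
  &\simeq \pi_m(A) \widehat{\otimes}^{\mathbf{L}}_{\pi_0(A)} \pi_0(B) \widehat{\otimes}^{\mathbf{L}}_{\pi_0(B)} \pi_0(C) \\
  &\simeq \pi_m(A) \widehat{\otimes}^{\mathbf{L}}_{\pi_0(A)} \pi_0(C) \\
  &\simeq \pi_m(C),
\end{aligned}
\end{equation}
the first equivalence using that $f$ is derived strong, the second associativity, the third that $gf$ is derived strong; one then checks that the composite of these equivalences is the canonical comparison map.

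I do not expect a genuine obstacle: the statement is essentially a two-out-of-three property, and the only non-tautological inputs are the classical cancellation property for rational subdomains in condition (i) and, in condition (ii), the naturality bookkeeping needed to confirm that the displayed chain actually computes the canonical morphism rather than merely witnessing an abstract isomorphism. (Alternatively, condition (ii) could be deduced from the homotopy-epimorphism/cotangent-complex argument used in the proof of Lemma~\ref{lem:Lclassproperties}(ii), but the direct computation above is shorter in this case.)
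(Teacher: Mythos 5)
Your proof is correct, but it takes a genuinely different route from the paper. The paper never touches the definition of $\mathscr{L}$ directly: it writes $g$ as the composite $B \to C \widehat{\otimes}^\mathbf{L}_A B \simeq C$, where the identification $C \widehat{\otimes}^\mathbf{L}_A B \simeq C \widehat{\otimes}^\mathbf{L}_B (B \widehat{\otimes}^\mathbf{L}_A B) \simeq C$ uses that $f$ is a homotopy epimorphism (Lemma \ref{lem:Lclassproperties}(iii)), so that $g$ is exhibited as the base change of $gf \in \mathscr{L}$ along $f$ and one concludes by base-change stability (Lemma \ref{lem:Lclassproperties}(ii)) — the standard ``cancellation via epimorphism'' trick of \cite[Remark 2.4.4]{mann_p-adic_2022}. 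You instead verify the two defining conditions of Definition \ref{defn:derivedrational} by hand: condition (i) via the classical cancellation property for rational subdomains (your explicit argument with the generators $h_0,\dots,h_n$ and the uniqueness of the affinoid-subdomain structure, with the epimorphism property of $\pi_0(f)$ used to identify the resulting map $\pi_0(B)\to\pi_0(C)$ with $\pi_0(g)$, is a correct rendering of that classical fact), and condition (ii) by the same associativity computation as in Lemma \ref{lem:Lclassproperties}(i), run in the cancellation direction. What the paper's argument buys is brevity and the avoidance of any new classical input — everything is formal consequence of lemmas already proved, and the same pattern recurs later (e.g.\ Lemma \ref{lem:qcqsproperties0}(ii)). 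What your argument buys is concreteness: it isolates exactly where the classical statement (cancellation for rational subdomains of $\operatorname{Sp}(\pi_0 A)$) and the derived statement (derived strongness of $g$) enter, at the cost of re-proving at the $\pi_0$-level the classical analogue of the very corollary being shown, and of the naturality bookkeeping you rightly flag for the displayed chain of equivalences. Both are complete proofs.
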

\begin{proof}
Let us write $f:A \to B$ and $g: B \to C$. One can write $g$ as the composite 
\begin{equation}\label{eq:algcancellation}
    B \to C \widehat{\otimes}^\mathbf{L}_A B \simeq C \widehat{\otimes}^\mathbf{L}_B B \widehat{\otimes}^\mathbf{L}_A B \simeq C,
\end{equation}
where we used that $f$ is a homotopy epimorphism, c.f. Lemma \ref{lem:Lclassproperties}(iii). The first map in \eqref{eq:algcancellation} is the base change of $gf: A \to C$, which belongs to $\mathscr{L}$ by Lemma \ref{lem:Lclassproperties}(ii).  
\end{proof}

Let $K[T_1,\dots, T_n]$ denote the polynomial algebra in $n$ variables endowed with the \emph{fine bornology}. As a bornological vector space one views $K[T_1,\dots, T_n]$ as the colimit of its finite-dimensional $K$-subspaces each endowed with their canonical bornology. This is a projective object in $\mathsf{CAlg}(D_{\geqslant 0}(\mathsf{CBorn}_K))$. In particular, given any $B \in \mathsf{dAfndAlg}$ and any morphism $K[T_1,\dots,T_n] \to \pi_0B$ there exists a lift $K[T_1,\dots,T_n] \to \pi_0B$ making the following diagram commute up to homotopy:
\begin{equation}
% https://q.uiver.app/#q=WzAsMyxbMCwxLCJLW1RfMSxcXGRvdHMsVF9uXSJdLFsxLDEsIlxccGlfMEIiXSxbMSwwLCJCIl0sWzAsMV0sWzIsMV0sWzAsMiwiIiwyLHsic3R5bGUiOnsiYm9keSI6eyJuYW1lIjoiZG90dGVkIn19fV1d
\begin{tikzcd}[cramped]
	& B \\
	{K[T_1,\dots,T_n]} & {\pi_0B}
	\arrow[from=1-2, to=2-2]
	\arrow[dotted, from=2-1, to=1-2]
	\arrow[from=2-1, to=2-2]
\end{tikzcd}
\end{equation}
Now suppose we are given $A \in \mathsf{dAfndAlg}$ and elements $a_0,\dots,a_n \in \pi_0A$ such that the ideal generated by $a_0,\dots,a_n$ is all of $\pi_0A$. This determines a morphism 
\begin{equation}
    K[T_1,\dots, T_n] \to  \pi_0A \widehat{\otimes}_K K \langle X_1, \dots, X_n\rangle
\end{equation}
sending $T_i \mapsto a_0X_i-a_i$, which, by the above discussion, lifts to 
\begin{equation}
    K[T_1, \dots, T_n] \to A \widehat{\otimes}^\mathbf{L}_K K \langle X_1, \dots, X_n\rangle. 
\end{equation}
We define 
\begin{equation}
\begin{aligned}
    A^\prime &:=  (A \widehat{\otimes}^\mathbf{L}_K K \langle X_1, \dots, X_n\rangle)/\!/^\mathbf{L}(a_0X_1-a_1,\dots,a_0X_n-a_n)\\ &:= (A \widehat{\otimes}^\mathbf{L}_K K \langle X_1, \dots, X_n\rangle)\widehat{\otimes}^\mathbf{L}_{K[T_1,\dots,T_n]}K.
\end{aligned}
\end{equation}
We claim that:
\begin{lem}\label{lem:derivedrational}
\begin{enumerate}[(i)]
    \item $A \to A^\prime$ is a derived rational localization;
    \item Every derived rational localization of $A$ arises in this way.
\end{enumerate}
\end{lem}
\begin{proof}
(i): It is certainly true that $\pi_0 A \to \pi_0A^\prime$ is a rational localization; what we need to show that that $A \to A^\prime$ is derived strong. This is quite similar to the proof of Lemma \ref{lem:Lclassproperties}(ii). By \cite[Lemma 4.5.79]{DAnG}, $A \to A^\prime$ is a homotopy epimorphism. Therefore, the same argument as in Lemma \ref{lem:Lclassproperties}(ii) shows that $\pi_0(A) \widehat{\otimes}_A^\mathbf{L} A^\prime \to \pi_0(A^\prime)$ is an equivalence. Also, since each $\pi_m(A)$ is finitely-generated over $\pi_0(A)$, it is transversal to $\pi_0(A^\prime)$ over $\pi_0(A)$. Therefore \cite[Proposition 2.3.90]{DAnG} guarantees that $A \to A^\prime$ is derived strong. Finally, we note that this implies that $A^\prime \in \mathsf{dAfndAlg}$ since $\pi_m(A^\prime) \cong \pi_m(A) \widehat{\otimes}_{\pi_0(A)} \pi_0(A^\prime)$ is then finitely-generated as a $\pi_0(A^\prime)$-module. 

(ii): Let $A \to B$ be a derived rational localization. By Lemma \ref{lem:Lclassproperties}(iii), $A \to B$ is a homotopy epimorphism, in particular it is formally \'etale in the sense of \cite[Corollary 2.1.36]{DAnG}.

Now $\pi_0A \to \pi_0B$ is a rational localization, so by the construction in (i) we can construct another rational localization $A \to B^\prime$ which is of the form in (i), and is such that $[\pi_0A \to \pi_0B^\prime] = [\pi_0A \to \pi_0B]$. Now the \'etale lifting property \cite[Corollary 2.1.36]{DAnG}, guarantees that the identification $\pi_0 B^\prime = \pi_0B$ lifts, uniquely up to contractible choice, to an equivalence $B^\prime \simeq B$ under $A$.
\end{proof}
Let us isolate the following facts from the proof of Lemma \ref{lem:derivedrational}:
\begin{scholium}\label{scholium:rationalloc} Let $A \in \mathsf{dAfnd}$.
\begin{enumerate}[(i)]
    \item Let $A \to A^\prime$, $A \to A^{\prime \prime}$ be derived rational localizations. Any isomorphism $\pi_0 A^\prime \cong \pi_0A^{\prime \prime}$ lifts (uniquely up to contractible choice) to an equivalence $A^{\prime} \simeq A^{\prime \prime}$ under $A$, 
    \item Let $\pi_0 A \to B_0$ be a rational localization. Then there exists a (unique up to contractible choice) derived rational localization $A \to B$ reducing to $\pi_0 A \to B_0$ on $\pi_0$. 
\end{enumerate}
\end{scholium}
\begin{defn}
\begin{enumerate}[(i)]
    \item We define $\mathsf{dAfnd}$ to be the opposite $\infty$-category to $\mathsf{dAfndAlg}$. The objects of $\mathsf{dAfnd}$ are denoted by $\operatorname{dSp}(A)$, for $A \in \mathsf{dAfndAlg}$.
    \item We say that a morphism $\operatorname{dSp}(B) \to \operatorname{dSp}(A)$ in $\mathsf{dAfnd}$ is a \emph{rational subdomain of $\operatorname{dSp}(A)$} if $[A \to B] \in \mathscr{L}$. 
\end{enumerate}
\end{defn}
Now we define some Grothendieck topologies as follows. 
\begin{defn}\label{defn:derivedweak}
Let $X = \operatorname{dSp}(A)$ be a derived affinoid rigid space.
\begin{enumerate}[(i)]
    \item The \emph{small weak analytic site} of $X$, is the $\infty$-site with:
    \begin{enumerate}
        \item underlying $\infty$-category given by the full subcategory of $\mathsf{dAfnd}_{/X}$ on rational subdomains $\operatorname{dSp}(B) \to \operatorname{dSp}(A)$.
        \item covering sieves generated by finite families of rational subdomains $\{\operatorname{dSp}(B_i) \to \operatorname{dSp}(B)\}_{i=1}^n$ such that $\{\operatorname{Sp}(\pi_0(B_i)) \to \operatorname{Sp}(\pi_0(B))\}_{i=1}^n$ is an admissible covering of the classical rigid space $\operatorname{Sp}(\pi_0(B))$ in the weak G-topology.
    \end{enumerate}
    \item The \emph{big weak analytic site} on $\mathsf{dAfnd}$, is the $\infty$-site with:
    \begin{enumerate}
        \item underlying $\infty$-category given by $\mathsf{dAfnd}$;
        \item covering sieves generated given by finite families of rational subdomains $\{\operatorname{dSp}(B_i) \to \operatorname{dSp}(B)\}_{i=1}^n$ such that $\{\operatorname{Sp}(\pi_0(B_i)) \to \operatorname{Sp}(\pi_0(B))\}_{i=1}^n$ is an admissible covering of the classical rigid space $\operatorname{Sp}(\pi_0(B))$ in the weak G-topology. 
    \end{enumerate}
\end{enumerate}
\end{defn}
\begin{defn}
Let $X = \operatorname{dSp}(A)$ be a derived affinoid rigid space. We define the $\infty$-category of quasicoherent sheaves as
\begin{equation}
    \operatorname{QCoh}(\operatorname{dSp}(A)) := \operatorname{Mod}_A(D(\mathsf{CBorn}_K)),
\end{equation}
where the latter is the category of modules over the monoid $A \in \mathsf{CAlg}(D(\mathsf{CBorn}_K))$.
\end{defn}
For a morphism $f: \operatorname{dSp}(A) \to \operatorname{dSp}(B)$ in $\mathsf{dAfnd}$ the induced pullback functor 
\begin{equation}
    f^*: \operatorname{QCoh}(\operatorname{dSp}(B)) \to \operatorname{QCoh}(\operatorname{dSp}(A))
\end{equation}
is left adjoint to the restriction of scalars 
\begin{equation}
    f_*: \operatorname{QCoh}(\operatorname{dSp}(A)) \to \operatorname{QCoh}(\operatorname{dSp}(B)).
\end{equation} 
In particular via the pullbacks we obtain a functor
\begin{equation}
    \operatorname{QCoh}: \mathsf{dAfnd}^\mathsf{op} \to \mathsf{CAlg}(\mathsf{Pr}^{L}_{\mathsf{st}}).
\end{equation}
\begin{lem}\label{lem:basechangeaffinoid}
\begin{enumerate}[(i)]
    \item The functor $\operatorname{QCoh}: \mathsf{dAfnd}^\mathsf{op} \to \mathsf{CAlg}(\mathsf{Pr}^{L}_{\mathsf{st}})$ extends to a six-functor formalism 
    \begin{equation}
        \operatorname{QCoh}: \operatorname{Corr}(\mathsf{dAfnd},\mathrm{all})^\otimes \to \mathsf{Pr}^{L,\otimes}_\mathsf{st}
    \end{equation}
    such that for every morphism $f$ in $\mathsf{dAfnd}$ one has $f_* = f_!$. 
    \item For every morphism $f$ in $\mathsf{dAfnd}$ functor $f_*$ is conservative and colimit-preserving.
\end{enumerate}
\end{lem}
\begin{proof}
(i): The proof is identical to Proposition \ref{prop:QcohBasic}. (ii) This is clear because $f_*$ identifies with the forgetful functor at the level of modules.
\end{proof}
\begin{defn}
We define 
\begin{equation}
    \operatorname{Shv}_{\mathrm{weak}}(\mathsf{dAfnd}) := \operatorname{Shv}_{\mathrm{weak}}(\mathsf{dAfnd}, \infty\mathsf{Grpd})
\end{equation}
to be the $\infty$-category of sheaves on the $\infty$-site $\mathsf{dAfnd}$ equipped with the weak topology.
\end{defn}
\begin{lem}\label{lem:descentAfnd}
\begin{enumerate}[(i)]
    \item For every derived affinoid rigid space $X = \operatorname{dSp}(A)$ the functor $\operatorname{dSp}(A) : \mathsf{dAfnd}^\mathsf{op} \to \infty\mathsf{Grpd}$ represented by $\operatorname{dSp}(A)$\footnote{Via the $\infty$-categorical Yoneda embedding.} is a sheaf on $\mathsf{dAfnd}$ in the weak topology. That is to say, this topology is subcanonical. 
    \item The functor $\operatorname{QCoh}: \mathsf{dAfnd}^\mathsf{op} \to \mathsf{CAlg}(\mathsf{Pr}^{L}_{\mathsf{st}})$ is a sheaf on $\mathsf{dAfnd}$ in the weak topology.
\end{enumerate}
\end{lem}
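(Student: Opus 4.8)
The plan is to deduce both statements from descent for the weak G-topology on classical affinoid algebras, together with the structural results of Lemma~\ref{lem:Lclassproperties} and Lemma~\ref{lem:basechangeaffinoid}. For (i), since a morphism in a covering family is a rational subdomain, hence (Lemma~\ref{lem:Lclassproperties}(iii)) a homotopy epimorphism, the \v{C}ech nerve of a cover $\{\operatorname{dSp}(B_i)\to\operatorname{dSp}(B)\}$ is computed by iterated derived tensor products $B_{i_0}\widehat{\otimes}^{\mathbf L}_B\cdots\widehat{\otimes}^{\mathbf L}_B B_{i_k}$. I would first check that it suffices to treat a single covering family (sheaves for a topology are detected on covering sieves, and one reduces to generating covers in the standard way), and that one may assume the family is the standard rational cover attached to data $(f_0,\dots,f_r)$ generating the unit ideal in $\pi_0(B)$. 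The key point is then a flatness/transversality statement: because each $\pi_m(B)$ is finitely generated over the Noetherian ring $\pi_0(B)$ and finitely generated modules are transverse to rational localizations, the derived tensor products $B_I:=B_{i_0}\widehat{\otimes}^{\mathbf L}_B\cdots\widehat{\otimes}^{\mathbf L}_B B_{i_k}$ are \emph{derived strong} over $B$, with $\pi_*(B_I)\cong \pi_*(B)\otimes_{\pi_0(B)}\pi_0(B_I)$ and $\pi_0(B_I)$ the corresponding classical rational intersection. Thus the homotopy groups of the cosimplicial object $[k]\mapsto B_I$ form, in each degree $m$, the classical \v{C}ech complex of the finitely generated $\pi_0(B)$-module $\pi_m(B)$ against the rational cover, which is a resolution by Tate's acyclicity theorem. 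A spectral-sequence argument (or: taking $\pi_*$ commutes with the relevant totalizations because everything is eventually strong and one has uniform bounds degreewise) then shows the totalization recovers $B$, i.e.\ $B\xrightarrow{\sim}\lim_{\Delta}B_I$. Passing to opposite categories and unwinding the Yoneda embedding gives that $\operatorname{dSp}(A)$ satisfies descent, since $\operatorname{Map}_{\mathsf{dAfnd}}(\operatorname{dSp}(B'),\operatorname{dSp}(A))=\operatorname{Map}_{\mathsf{dAfndAlg}}(A,B')$ and mapping out of $A$ turns the colimit $\operatorname{colim}_{\Delta^{\mathsf{op}}}\operatorname{dSp}(B_I)$ into the limit just computed.

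For (ii), the strategy is descent along the \v{C}ech nerve via the Barr--Beck--Lurie theorem, exactly as in \cite[\S2.3]{mann_p-adic_2022}. One must show $\operatorname{QCoh}(\operatorname{dSp}(B))\xrightarrow{\sim}\lim_{\Delta}\operatorname{QCoh}(\operatorname{dSp}(B_I))$ in $\mathsf{Comm}(\mathsf{Pr}^L_{\mathsf{st}})$, where the transition functors are the $*$-pullbacks. By Lemma~\ref{lem:basechangeaffinoid}(iii) each pullback along a member of the cover has a conservative, colimit-preserving right adjoint, and by Lemma~\ref{lem:basechangeaffinoid}(i) the square of pullbacks and pushforwards attached to any pair in the cover satisfies base change; this is precisely the input needed to apply the $\infty$-categorical comonadic (equivalently, after passing to right adjoints, monadic) descent criterion, cf.\ \cite[Corollary 4.7.5.3]{HigherAlgebra} or its formulation in \cite[Proposition A.14]{mann_p-adic_2022}. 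Concretely, I would: (a) reduce to a single generating rational cover; (b) let $p\colon \coprod_i\operatorname{dSp}(B_i)\to\operatorname{dSp}(B)$ and form the associated comonad $p^*p_*$ on $\operatorname{QCoh}(\coprod_i\operatorname{dSp}(B_i))=\prod_i\operatorname{QCoh}(\operatorname{dSp}(B_i))$; (c) verify that $p_*$ is conservative — here conservativity is the content of the assertion that a family of rational subdomains is a cover iff it is jointly surjective on $\pi_0$, combined with Lemma~\ref{lem:basechangeaffinoid}(iii) — and colimit-preserving, so Barr--Beck--Lurie applies; (d) identify the comodule category with the limit over $\Delta$ using base change to compute the cobar resolution, which reduces the comparison to the already-established descent at the level of the algebras $B_I$.

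The main obstacle, and the step I would spend the most care on, is the conservativity of $p_*\colon\prod_i\operatorname{QCoh}(\operatorname{dSp}(B_i))\to\operatorname{QCoh}(\operatorname{dSp}(B))$ for a weak-topology cover: unlike the affine-scheme case, this is not formal, and it rests on a non-trivial input from classical rigid geometry, namely that if $M\in\operatorname{Mod}_B$ has $M\widehat{\otimes}^{\mathbf L}_B B_i\simeq 0$ for all $i$ in a weak cover then $M\simeq 0$. This in turn follows by looking at $\pi_*(M)$, writing it as a module over $\pi_0(B)$ (after truncation, using the finiteness hypotheses to control the interaction of $\pi_*$ with the localizations), and invoking that an admissible affinoid cover in the weak G-topology is conservative for modules over an affinoid algebra — essentially Tate's acyclicity theorem again. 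A secondary subtlety is checking that the relevant limits and totalizations commute with the functor $\pi_*$ (or with the forgetful functor to $D(\mathsf{CBorn}_K)$), for which I would either exhibit explicit finite-length \v{C}ech complexes degreewise, or appeal to the fact that the \v{C}ech nerve of a finite cover is, up to the diagonal, a finite diagram so that the totalization is a finite limit in each truncation. Once conservativity and the base-change square are in hand, the rest is a formal consequence of Barr--Beck--Lurie and part (i).
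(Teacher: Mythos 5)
Your part (i) is essentially the paper's argument: use that the members of a cover are homotopy epimorphisms, reduce via derived strongness and transversality of finitely generated $\pi_0(B)$-modules to classical Tate/Kiehl acyclicity, run the spectral sequence to get $B \xrightarrow{\sim} \varprojlim_{I} B_I$ over the finite poset of nonempty subsets, and then apply $\operatorname{Map}_{\mathsf{dAfndAlg}}(A,-)$. That part is fine (the paper extracts (i) from the simplicial limit \eqref{eq:Bprodiso} rather than the finite one, but this is cosmetic).

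For part (ii) there is a genuine gap. You want to run comonadic Barr--Beck--Lurie along $p\colon \coprod_i \operatorname{dSp}(B_i)\to\operatorname{dSp}(B)$, and the input you single out as the crux --- that $M\widehat{\otimes}^{\mathbf L}_B B_i\simeq 0$ for all $i$ forces $M\simeq 0$ --- is the conservativity of $p^*$ (you write $p_*$, but restriction of scalars is trivially conservative by Lemma \ref{lem:basechangeaffinoid}(iii) and is not the relevant functor). Your proposed proof of this conservativity does not go through: for an arbitrary object $M\in\operatorname{Mod}_B(D(\mathsf{CBorn}_K))$ the homotopy groups $\pi_*(M)$ are arbitrary objects of the heart with a $\pi_0(B)$-action, with no finiteness whatsoever, so the transversality of rational localizations (and hence the identification of $\pi_*(M\widehat{\otimes}^{\mathbf L}_B B_i)$ with a classical Čech term) is unavailable --- classical Tate acyclicity and faithful flatness of $\pi_0(B)\to\prod_i\pi_0(B_i)$ only control finitely generated modules and the \emph{algebraic} tensor product, not the completed bornological one. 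In addition, even granting conservativity, comonadicity of $p^*$ requires that $p^*$ preserve totalizations of $p^*$-split cosimplicial objects; $p^*$ is a left adjoint, so this is not formal, and your appeal to "the Čech nerve being a finite diagram up to the diagonal" is not correct as stated (the nerve is an infinite cosimplicial object; the finiteness lives in the ordered Čech complex).

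The paper's route avoids both problems and, notably, you have already proved its key input. From the finite limit $B \xrightarrow{\sim} \varprojlim_{I\in\mathcal I} B_I$ one concludes that $B\to\prod_{i=1}^n B_i$ is \emph{descendable} in the sense of Mathew \cite[\S 3.3]{MathewGalois} (each $B_I$ is a $\prod_i B_i$-module, and a finite limit of objects in the thick tensor ideal generated by $\prod_i B_i$ stays in it), and Mathew's theorem then yields $\operatorname{QCoh}(\operatorname{dSp}(B))\xrightarrow{\sim}\varprojlim_{[m]\in\Delta}\operatorname{QCoh}(Y^{\times_X(m+1)})$ directly; conservativity of $p^*$ comes out as a consequence rather than being an input. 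If you want to salvage the Barr--Beck route, you would have to prove the conservativity and totalization-preservation statements by hand, and the natural way to do so is again via the finite split-after-tensoring structure of the ordered Čech complex --- i.e.\ descendability --- so I recommend restructuring (ii) around that notion.
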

\begin{proof}
We prove (ii) first. Let $\{\operatorname{dSp}(B_i) \to \operatorname{dSp}(B)\}_{i=1}^n$ be a covering in the weak topology. Let $\mathcal{I}$ be the collection of finite subsets $I \subseteq \{1, \dots, n\}$ (we always view such $I$ as being totally ordered). By definition, $\{\operatorname{Sp}(\pi_0(B_i)) \to \operatorname{Sp}(\pi_0(B))\}_{i=1}^n$ is a covering in the classical weak topology. The acyclicity of the ordered \v{C}ech complex together with the Dold-Kan correspondence \cite[Example 1.2.4.10]{HigherAlgebra} implies that 
\begin{equation}\label{eq:pi0Bcover}
\begin{aligned}
    \pi_0(B) &\xrightarrow[]{\sim} \underset{I = (i_1,\dots,i_k) \in \mathcal{I}}{\operatorname{lim}} \pi_0(B_{i_1}) \widehat{\otimes}_{\pi_0(B)} \pi_0(B_{i_2}) \dots \widehat{\otimes}_{\pi_0(B)} \pi_0(B_{i_k})\\
    &\simeq  \underset{I = (i_1,\dots,i_k) \in \mathcal{I}}{\operatorname{lim}} \pi_0(B_{i_1} \widehat{\otimes}^\mathbf{L}_{B} B_{i_2} \dots \widehat{\otimes}_{B}^\mathbf{L} B_{i_k})
\end{aligned}
\end{equation}
is an equivalence in the $\infty$-category $\operatorname{QCoh}(\operatorname{dSp}(\pi_0(A)))$. Using that the class $\mathscr{L}$ of derived rational localizations is stable under base change, c.f. Lemma \ref{lem:Lclassproperties}(ii), we know that each $B \to B_{i_1} \widehat{\otimes}^\mathbf{L}_{B} B_{i_1} \dots \widehat{\otimes}_{B}^\mathbf{L} B_{i_k}$ satisfies the derived strong property \eqref{eq:Lderivedstrong}. Therefore if we apply the functor $\pi_q(B) \widehat{\otimes}^\mathbf{L}_{\pi_0(B)}-$ to both sides of \eqref{eq:pi0Bcover}, using that the limit is finite and the categories are stable, we see that
\begin{equation}
    \pi_q(B)\xrightarrow[]{\sim} \underset{I = (i_1,\dots,i_k) \in \mathcal{I}}{\operatorname{lim}} \pi_q(B_{i_1} \widehat{\otimes}^\mathbf{L}_{B} B_{i_2} \dots \widehat{\otimes}_{B}^\mathbf{L} B_{i_k})
\end{equation}
is an equivalence in $\operatorname{QCoh}(\operatorname{dSp}(\pi_0(B)))$. In particular 
\begin{equation}
\begin{aligned}
    H^p\Big(\underset{(i_1,\dots,i_k) \in \mathcal{I}}{\operatorname{lim}} \pi_q(B_{i_1} \widehat{\otimes}^\mathbf{L}_{B} B_{i_2} \dots \widehat{\otimes}_{B}^\mathbf{L} B_{i_k})\Big) \cong 0 && \text{ for } && p > 0. 
\end{aligned}
\end{equation}
This implies that the spectral sequence 
\begin{equation}
     H^p\Big(\underset{(i_1,\dots,i_k) \in \mathcal{I}}{\operatorname{lim}} \pi_q(B_{i_1} \widehat{\otimes}^\mathbf{L}_{B} \dots \widehat{\otimes}_{B}^\mathbf{L} B_{i_k})\Big) \Rightarrow  \pi_{q-p}\Big(\underset{(i_1,\dots,i_k) \in \mathcal{I}}{\operatorname{lim}} B_{i_1} \widehat{\otimes}^\mathbf{L}_{B}\dots \widehat{\otimes}_{B}^\mathbf{L} B_{i_k}\Big)
\end{equation}
degenerates and gives an isomorphism  
\begin{equation}\label{eq:spectrallimdegenerate}
    \pi_q\Big(\underset{(i_1,\dots,i_k) \in \mathcal{I}}{\operatorname{lim}} B_{i_1} \widehat{\otimes}^\mathbf{L}_{B} B_{i_1} \dots \widehat{\otimes}_{B}^\mathbf{L} B_{i_k}\Big) \cong \operatorname{eq}\Big(\prod_{i=1}^n \pi_q(B_i) \rightrightarrows \prod_{1 \leqslant i < j \leqslant n} \pi_q(B_i \widehat{\otimes}^\mathbf{L}_B B_j)\Big). 
\end{equation}
On the other hand since $B \to B_{i_1} \widehat{\otimes}^\mathbf{L}_{B} B_{i_1} \dots \widehat{\otimes}_{B}^\mathbf{L} B_{i_k}$ satisfies the derived strong property \eqref{eq:Lderivedstrong} and $\pi_q(B)$ is transversal to rational localizations over $\pi_0(B)$ since it is finitely-generated, we see that 
\begin{equation}\label{eq:covertranversal}
\begin{aligned}
    \pi_q(B_{i} \widehat{\otimes}^\mathbf{L}_{B} B_{j}) &\simeq \pi_q(B)\widehat{\otimes}^\mathbf{L}_{\pi_0(B)}  \pi_0(B_{i} \widehat{\otimes}_{B}^\mathbf{L} B_{j}) \\ &\simeq \pi_q(B)\widehat{\otimes}_{\pi_0(B)}  \pi_0(B_{i}) \widehat{\otimes}_{\pi_0(B)}  \pi_0(B_{j}).
\end{aligned}
\end{equation}
Using \eqref{eq:covertranversal} in the right-side of \eqref{eq:spectrallimdegenerate} we obtain 
\begin{equation}
\begin{aligned}
    &\operatorname{eq}\Big(\prod_{i=1}^n \pi_q(B_i) \rightrightarrows \prod_{1 \leqslant i < j \leqslant n} \pi_q(B_i \widehat{\otimes}^\mathbf{L}_B B_j)\Big) \\
    &\cong \operatorname{eq}\Big(\prod_{i=1}^n \pi_q(B)\widehat{\otimes}_{\pi_0(B)} \pi_0(B_i) \rightrightarrows \prod_{1 \leqslant i < j \leqslant n} \pi_q(B) \widehat{\otimes}_{\pi_0(B)} \pi_0(B_i) \widehat{\otimes}_{\pi_0(B)} \pi_0(B_j)\Big) \\
    &\cong \pi_q(B),
\end{aligned}
\end{equation}
where in the last line we used the classical theorem of descent for coherent sheaves on affinoid rigid spaces. Putting this all together we deduce that for each $q \geqslant 0$, the morphism 
\begin{equation}
    \pi_q(B) \to \pi_q\Big(\underset{(i_1,\dots,i_k) \in \mathcal{I}}{\operatorname{lim}} B_{i_1} \widehat{\otimes}^\mathbf{L}_B \dots \widehat{\otimes}^\mathbf{L}_B B_{i_k}\Big)
\end{equation}
is an isomorphism and therefore the natural morphism
\begin{equation}\label{eq:Bfinite}
    B \xrightarrow[]{\sim} \underset{(i_1,\dots,i_k) \in \mathcal{I}}{\operatorname{lim}} B_{i_1} \widehat{\otimes}^\mathbf{L}_B \dots \widehat{\otimes}^\mathbf{L}_B B_{i_k},
\end{equation}
is an equivalence. This shows that the canonical morphism $B \to \prod_{i=1}^n B_i$ is descendable in the sense of Mathew \cite[\S 3.3]{MathewGalois}. The result of \emph{loc. cit.} then implies that if one sets $Y := \coprod_{i=1}^n \operatorname{dSp}(B_i) \to \operatorname{dSp}(B) =: X$ then the natural morphism 
\begin{equation}
    \operatorname{QCoh}(X) \xrightarrow[]{\sim} \underset{[m] \in \Delta}{\operatorname{lim}} \operatorname{QCoh}(Y^{m+1/X})
\end{equation}
is an equivalence, proving (ii). Looking at the unit object we see that 
\begin{equation}\label{eq:Bprodiso}
    B \xrightarrow[]{\sim} \underset{[m] \in \Delta}{\operatorname{lim}} \big(\prod_{i=1}^n B_i\big)^{\widehat{\otimes}^\mathbf{L}_B(m+1)}
\end{equation}
is an equivalence. If $A \in \mathsf{dAfndAlg}$ is a derived affinoid algebra, we can apply the functor $\operatorname{Map}_{\mathsf{dAfndAlg}}(A,-)$ to both sides of \eqref{eq:Bprodiso} to deduce (i). 
\end{proof}
\begin{rmk}[Notational remark] In derived geometry, it appears to be conventional to mix homological and cohomological indexing conventions, that is $\pi_i$ stands for the \emph{homology} functors, $H^i$ stands for the \emph{cohomology} functors, and they are related by $\pi_i = H^{-i}$ for $i \in \mathbf{Z}$. Usually, one uses $\pi_i$ for the homotopy groups of commutative algebra objects, and $H^i$ for the cohomology groups of ``linear" objects like cochain complexes (though this is not a hard and fast rule). The historical reason for this is that ``derived rings" are often modelled on simplicial commutative rings, which have ``homotopy groups", whereas algebraic geometers prefer cohomological indexing convention for derived categories of quasi-coherent sheaves.  
\end{rmk}
\begin{rmk}\label{rmk:colimitcovering}
Let $X = \operatorname{dSp}(A)$ be a derived affinoid rigid space and let $\{U_i \to X\}_{i=1}^n$ be a covering in the weak topology. Let $\mathcal{I}$ be the family of finite nonempty subsets $I \subseteq \{1,\dots,n\}$ ordered by inclusion. For $I = (i_0,\dots,i_k) \in \mathcal{I}$, we set $U_I := \bigcap_{j \in I} U_j := U_{i_0} \times_X \dots \times_X U_{i_k}$. In the course of proving Lemma \ref{lem:descentAfnd} we obtained two facts.
\begin{enumerate}[(i)]
    \item There is a canonical equivalence 
    \begin{equation}
        \underset{I \in \mathcal{I}}{\operatorname{colim}} 
 U_I \xrightarrow[]{\sim} X
    \end{equation} in $\operatorname{Shv}_{\mathrm{weak}}(\mathsf{dAfnd})$. This is a consequence of \eqref{eq:Bfinite}.
    \item Set $Y := \coprod_{i=1}^n U_i$. Then there is a canonical equivalence 
    \begin{equation}
        \underset{[m] \in \Delta^\mathsf{op}}{\operatorname{colim}}Y^{m+1/X} \xrightarrow[]{\sim} X 
    \end{equation}
    in $\operatorname{Shv}_{\mathrm{weak}}(\mathsf{dAfnd})$. This is a consequence of \eqref{eq:Bprodiso}.
\end{enumerate}
\end{rmk}

\subsection{The gluing procedure}\label{subsec:gluing}
Now in a similar manner to \cite[\S2.4]{mann_p-adic_2022} we will glue derived affinoid rigid spaces to obtain our desired category of derived rigid spaces. Since the weak topology is subcanonical we can view $\mathsf{dAfnd} \subseteq \operatorname{Shv}_{\mathrm{weak}}(\mathsf{dAfnd})$ as a full subcategory. In fact we will slightly abuse the terminology in order to make the following definition.
\begin{defn}\label{defn:analyticsubspace}
\begin{enumerate}[(i)]
    \item A \emph{affinoid derived rigid space} is an object of $\operatorname{Shv}_{\mathrm{weak}}(\mathsf{dAfnd})$ which is isomorphic to $\operatorname{dSp}(A)$ for some $A \in \mathsf{dAfndAlg}$. 
    \item Let $X = \operatorname{dSp}(A)$ be an affinoid derived rigid space. An analytic subspace $U \hookrightarrow X$ is a subsheaf $U$ of $X$ such that:
    \begin{itemize}
        \item[$\star$] There exists a small collection $\{\operatorname{dSp}(A_i)\}_{i \in \mathcal{I}}$ of derived affinoid spaces and an effective epimorphism $\coprod_{i \in \mathcal{I}} \operatorname{dSp}(A_i) \to U$ in $\operatorname{Shv}_{\mathrm{weak}}(\mathsf{dAfnd})$ such that each $\operatorname{dSp}(A_i) \to \operatorname{dSp}(A)$ is a rational subdomain.
    \end{itemize}
    \item Let $X \in \operatorname{Shv}_\mathrm{weak}(\mathsf{dAfnd})$. An analytic subspace $Y \hookrightarrow X$ is a subsheaf such that for every affinoid derived rigid space $\operatorname{dSp}(A) \to X$ mapping to $X$, the pullback $Y \times_X \operatorname{dSp}(A) \hookrightarrow \operatorname{dSp}(A)$ is an analytic subspace in the sense of (ii). The morphism $Y \to X$ is then called an \emph{open immersion.}
    \item A \emph{derived rigid space} is an object $X \in \operatorname{Shv}_{\mathrm{weak}}(\mathsf{dAfnd})$ such that there exists a small collection $\{\operatorname{dSp}(A_i)\}_{i \in \mathcal{I}}$ of derived affinoid subspaces such that $\coprod_{i \in \mathcal{I}} \operatorname{dSp}(A_i) \to X$ is an effective epimorphism. We denote the full subcategory of $\operatorname{Shv}_{\mathrm{weak}}(\mathsf{dAfnd})$ on derived rigid spaces by $\mathsf{dRig}$. 
\end{enumerate}
\end{defn}
We remark that the category $\mathsf{dRig}$ admits all fiber products. The terminal object is $\operatorname{dSp}(K)$. We can formally define some Grothendieck topologies as follows.
\begin{defn}\label{defn:derivedstrongtopology}
Let $X  \in \mathsf{dRig}$ be a derived rigid space.
\begin{enumerate}[(i)]
    \item The \emph{small strong analytic site} of $X$, is the $\infty$-site with:
    \begin{enumerate}[(a)]
        \item underlying $\infty$-category given by the full subcategory of $\mathsf{dRig}_{/X}$ on analytic subspaces $Y \hookrightarrow X$;
        \item covering sieves generated by small families of analytic subspaces $\{Y_i \hookrightarrow X\}_{i \in \mathcal{I}}$ such that $\coprod_{i \in \mathcal{I}} Y_i \to X$ is an effective epimorphism.
    \end{enumerate}
    \item The \emph{big strong analytic site} on $\mathsf{dRig}$, is the $\infty$-site with: 
    \begin{enumerate}
        \item underlying $\infty$-category given by $\mathsf{dRig}$;
        \item  covering sieves generated by small families of analytic subspaces $\{Y_i \hookrightarrow X\}_{i \in \mathcal{I}}$ such that $\coprod_{i \in \mathcal{I}} Y_i \to X$ is an effective epimorphism.
    \end{enumerate}
\end{enumerate}
\end{defn}
\subsection{The underlying topological space}\label{subsec:topological}
In order to make this theory more workable we will associate to each $X \in \mathsf{dRig}$ a topological space. We always assume the axiom of choice\footnote{This is necessary in order to know that every filter is contained in a maximal filter.}.
\begin{defn}
An object $X \in \mathsf{dRig}$ is called \emph{classical} if it admits a covering $\coprod_{i \in \mathcal{I}} \operatorname{dSp}(A_i) \to X$ by affinoid subspaces such that all the $A_i$ are discrete. 
\end{defn}
\begin{lem}
The inclusion of classical rigid spaces admits a right adjoint $X \mapsto X_0$ which extends $\operatorname{dSp}(A) \mapsto \operatorname{dSp}(\pi_0(A))$. $Y \hookrightarrow X$ is an analytic subspace if and only if $Y_0 \hookrightarrow X_0$ is an analytic subspace.
\end{lem}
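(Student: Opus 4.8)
The plan is to reduce everything to derived affinoid spaces and to the compatibility of truncation with the weak topology, and then to glue.

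First I would treat the affinoid case. Recall from the proof of Lemma~\ref{lem:dAfndAlgbasic1} that $A \mapsto \pi_0(A)$ is left adjoint to the inclusion of the discrete objects into $\mathsf{Comm}(D_{\ge 0}(\mathsf{CBorn}_K))$; the discrete objects of $\mathsf{dAfndAlg}$ are precisely the classical affinoid algebras, so this restricts to a left adjoint of the inclusion of classical affinoid algebras into $\mathsf{dAfndAlg}$. Passing to opposite $\infty$-categories, $t_0 \colon \operatorname{dSp}(A) \mapsto \operatorname{dSp}(\pi_0 A)$ is right adjoint to the inclusion $i$ of classical affinoid spaces into $\mathsf{dAfnd}$; concretely, for any classical affinoid $\operatorname{dSp}(R_0)$ every morphism $\operatorname{dSp}(R_0) \to \operatorname{dSp}(A)$ factors uniquely through $\operatorname{dSp}(\pi_0 A) \to \operatorname{dSp}(A)$. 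So $(\operatorname{dSp} A)_0 := \operatorname{dSp}(\pi_0 A)$ already satisfies the asserted universal property against classical affinoid spaces.

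The key step is the compatibility of $t_0$ with the weak topology. By definition of the class $\mathscr{L}$, if $\operatorname{dSp}(B) \hookrightarrow \operatorname{dSp}(A)$ is a rational subdomain then so is $\operatorname{dSp}(\pi_0 B) \hookrightarrow \operatorname{dSp}(\pi_0 A)$, and a finite jointly-surjective family of rational subdomains is sent to a weak cover essentially by the definition of the weak topology; since $\pi_0$ commutes with pushouts, $t_0$ also sends fibre products of derived affinoid spaces to fibre products. Conversely, every rational localization $\pi_0 A \to R$ of classical affinoid algebras lifts to a derived rational localization $A \to A \widehat{\otimes}^{\mathbf{L}}_{\pi_0 A} R$: this algebra lies in $\mathsf{dAfndAlg}$, its $\pi_0$ is $R$, and it is derived strong because rational localizations are flat, so that the derived and underived tensor products agree and $\pi_0$ passes through. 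Hence $i$ and $t_0$ identify the small weak site of $\operatorname{dSp}(A)$ with that of $\operatorname{dSp}(\pi_0 A)$, matching coverings; in particular $U \mapsto U_0$ is an inclusion-preserving bijection between the analytic subspaces of $\operatorname{dSp}(A)$ and those of $\operatorname{dSp}(\pi_0 A)$ -- surjectivity being the lifting just described, and injectivity because whether a rational subdomain $\operatorname{dSp}(B) \hookrightarrow \operatorname{dSp}(A)$ factors through a given union of rational subdomains is detected on the associated reduced rigid spaces $\operatorname{Sp}(\pi_0(-))$, hence only depends on truncations.

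Next I would glue. For $X \in \mathsf{dRig}$, pick a cover $\{\operatorname{dSp}(A_i) \hookrightarrow X\}$ by affinoid analytic subspaces; the double and triple overlaps $\operatorname{dSp}(A_i) \times_X \operatorname{dSp}(A_j)$, etc., are analytic subspaces of $\operatorname{dSp}(A_i)$, hence have truncations which are analytic subspaces of $\operatorname{dSp}(\pi_0 A_i)$, and by the functoriality and fibre-product compatibility of $t_0$ these assemble into gluing data; let $X_0$ be the resulting classical object, covered by the $\operatorname{dSp}(\pi_0 A_i)$, and note the one-element cover of an affinoid recovers $\operatorname{dSp}(\pi_0 A)$. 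To see that $X \mapsto X_0$ is right adjoint to the inclusion (and in particular independent of the chosen cover), I would use that for a classical object $Z$ the functor $\operatorname{Map}_{\mathsf{dRig}}(Z, -)$ on $\mathsf{dRig}$ depends only on the restriction to classical affinoids: covering $Z$ by discrete affinoid analytic subspaces and descending along the \v{C}ech nerve expresses $\operatorname{Map}_{\mathsf{dRig}}(Z, -)$ as an iterated limit of evaluations on analytic subspaces of discrete affinoids, which are themselves covered by discrete rational subdomains. Since $X$ and $X_0$ have the same restriction to classical affinoids by construction, it follows that $\operatorname{Map}_{\mathsf{dRig}}(Z, X_0) \simeq \operatorname{Map}_{\mathsf{dRig}}(Z, X)$, naturally in $Z$.

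Finally, the equivalence ``$Y \hookrightarrow X$ is an analytic subspace iff $Y_0 \hookrightarrow X_0$ is an analytic subspace'' is local on $X$, and $(-)_0$, being a right adjoint, commutes with the fibre products involved, so it reduces to the affinoid bijection established above. The hard part is exactly that bijection -- equivalently, the fact that rational localizations of $\pi_0 A$ lift uniquely to derived rational localizations of $A$, which is what makes the classical and derived small weak sites coincide; once that is in hand, the gluing is a formal (if slightly tedious) descent argument.
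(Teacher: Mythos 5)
The central gap is your lifting step, which you yourself identify as the hard part. The expression $A \widehat{\otimes}^{\mathbf{L}}_{\pi_0 A} R$ is not defined: the augmentation goes $A \to \pi_0 A$ and in general admits no section, so $A$ is not a $\pi_0 A$-algebra and you cannot base-change $R$ along a nonexistent map. The correct construction is precisely the content of Lemma \ref{lem:surjectiverational}: one chooses generators $a_0,\dots,a_n$ of the unit ideal of $\pi_0A$, lifts the map $K[T_1,\dots,T_n] \to \pi_0A \widehat{\otimes}_K K\langle X_1,\dots,X_n\rangle$, $T_i \mapsto a_0X_i - a_i$, through $A \widehat{\otimes}^{\mathbf{L}}_K K\langle X_1,\dots,X_n\rangle$ using projectivity of the fine-bornology polynomial algebra, and takes the derived quotient; derived strongness of the resulting $A \to A'$ is then \emph{not} a consequence of flatness of $\pi_0A \to R$ alone, but requires the homotopy-epimorphism input \cite[Lemma 4.5.79]{DAnG} together with the inductive criterion \cite[Proposition 2.3.90]{DAnG} (flatness/transversality of finitely generated modules enters, but only as one ingredient). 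The uniqueness of the lift, which your ``identification of small weak sites'' also uses, is asserted without argument. Note moreover that the paper's own proof of this lemma does not use the lifting at all -- it is proven separately, afterwards, as Lemma \ref{lem:surjectiverational} for the topological invariance theorem -- so your affinoid ``site identification'' imports unproven (and, as written, ill-formed) content.

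The remainder takes a genuinely different, and heavier, route than the paper, and is under-justified at one further point. The paper does not glue by hand: it observes that $\operatorname{dSp}(A) \mapsto \operatorname{dSp}(\pi_0A)$ preserves fiber products and weak covers, extends it along the Yoneda embedding to a colimit-preserving left-exact endofunctor of $\operatorname{Shv}_{\mathrm{weak}}(\mathsf{dAfnd})$, and deduces preservation of effective epimorphisms and subobjects; this makes $X_0$ cover-independent for free and avoids the circularity in your construction, where the truncations of the non-affinoid overlaps $\operatorname{dSp}(A_i) \times_X \operatorname{dSp}(A_j)$ must be defined before $(-)_0$ exists on non-affinoid spaces. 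For the adjunction itself, your assertion that ``$X$ and $X_0$ have the same restriction to classical affinoids by construction'' is exactly the point at issue: a morphism from a classical affinoid $Z$ to $X$ only \emph{locally} factors through the chosen cover, so one needs the refinement argument the paper makes explicit -- the subspaces $\operatorname{Map}^{\mathcal{VU}}(Y,X)$ and the colimit over coverings $\mathcal{V}$ of the source ordered by refinement -- before reducing to the affinoid case, where $\pi_0$ being a left adjoint at the level of algebras finishes the proof. Your \v{C}ech-descent sketch can be completed along these lines, but as written the key reductions (the affinoid lifting, cover-independence, and the ``same restriction'' claim) are the places where the actual work lives, and they are either missing or incorrect.
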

\begin{proof}
Let us temporarily denote $\mathscr{X} := \operatorname{Shv}_\mathrm{weak}(\mathsf{dAfnd})$. The functor $(-)_0: \mathsf{dAfnd} \to \mathsf{dAfnd}: \operatorname{dSp}(A) \mapsto \operatorname{dSp}(\pi_0A)$ preserves fiber products. For every finite covering family $\{\operatorname{dSp}(B_i) \to \operatorname{dSp}(B)\}_{i=1}^n$ in the weak topology, by Definition \ref{defn:derivedweak} one has that $\{\operatorname{dSp}(\pi_0 B_i) \to \operatorname{dSp}(\pi_0B)\}_{i=1}^n$ is a covering family in the weak topology. Since the topology is subcanonical by Lemma \ref{lem:descentAfnd}, this implies that $\coprod_{i=1}^n \operatorname{dSp}(\pi_0 B_i) \to \operatorname{dSp}(\pi_0B)$ is an effective epimorphism in $\mathscr{X}$. Therefore one may use the local Yoneda embedding \cite[Proposition 6.2.30]{HigherToposTheory} to extend $(-)_0$ to a colimit-preserving left-exact functor $(-)_0: \mathscr{X} \to \mathscr{X}$. In particular $(-)_0$ preserves effective epimorphisms and subobjects, and therefore sends objects of $\mathsf{dRig} \subseteq \mathscr{X}$ to classical rigid spaces (in the sense of the above definition), and also restricts to the identity on classical rigid spaces. Let $Y$ be a classical rigid space and let $X \in \mathsf{dRig}$. Then $(-)_0$ induces $\operatorname{Map}(Y,X) \to \operatorname{Map}(Y,X_0)$. We claim that this is an equivalence. Fix a covering $\mathcal{U} = \{U_i\}$ of $X$ by derived affinoids. Given a covering $\mathcal{V} = \{V_j\}$ of $Y$ by classical affinoids one can consider the full subspace $\operatorname{Map}^\mathcal{VU}(Y,X) \subseteq \operatorname{Map}(Y,X)$ spanned by morphisms $f$ such that $\mathcal{V}$ refines the pullback of $\mathcal{U}$ along $f$. One then has $\operatorname{colim}_{\mathcal{V}}\operatorname{Map}^\mathcal{VU}(Y,X) \simeq  \operatorname{Map}(Y,X)$, where the system of coverings $\mathcal{V}$ is ordered by refinement. Hence one can reduce the claim to the case when $X$ is a derived affinoid and then further to when $Y$ is a classical affinoid. The claim then follows since $\pi_0$ is a left adjoint at the level of algebra. 
\end{proof}

Now in a similar manner to \cite[Lecture XIV]{LecturesOnAnalyticGeometry} we will associate a topological space to every $X \in \mathsf{dRig}$ as follows. Viewed as an object of $\operatorname{Shv}_{\mathrm{weak}}(\mathsf{dAfnd})$, the collection $\operatorname{Sub}(X)$ of subobjects of $X$ forms a locale, as can be seen by applying the results of \cite[\S 6.4.5]{HigherToposTheory} to the $\infty$-topos $\operatorname{Shv}_{\mathrm{weak}}(\mathsf{dAfnd})_{/X}$. We will write the operations of join and meet in this locale as $\cup$ and $\cap$, respectively. Now we have the following Lemma.
\begin{lem}
With respect to the same operations of join and meet, the poset of analytic subspaces of $X$, in the sense of Definition \ref{defn:derivedweak}, also forms a locale.   
\end{lem} 
\begin{proof}
It is clear that the meet of two analytic subspaces of $X$ is again an analytic subspace: one has $U \cap V= U \times_X V$. The join of two objects $U, V  \in \operatorname{Sub}(X)$ is given explicitly as 
\begin{equation}\label{eq:finitejoin}
  U \cup V = U \underset{U \cap V}{\coprod}V, 
\end{equation}
from which one can of course deduce the formula for finite joins. More generally, if $\{U_i\}_{i \in \mathcal{I}}$ is a small family of analytic subspaces of $X$, then one has explicitly 
\begin{equation}\label{eq:infinitejoin}
    \bigcup_{i \in \mathcal{I}} U_i = \underset{I}{\operatorname{colim}} \bigcup_{i \in I} U_i, 
\end{equation}
where the colimit runs over the finite subsets $I \subseteq \mathcal{I}$. Now we recall that colimits in any $\infty$-topos are universal (this is one of the Giraud--Rezk--Lurie axioms \cite[Theorem 6.1.0.6]{HigherToposTheory}), so that if $Y = \operatorname{dSp}(A) \to X$ is a derived affinoid mapping to $X$, by \eqref{eq:finitejoin} and \eqref{eq:infinitejoin} one has
\begin{equation}
     \Big(\bigcup_{i \in \mathcal{I}} U_i\Big) \times_X Y = \bigcup_{i \in \mathcal{I}} U_i^\prime,
\end{equation}
where $U_i^\prime := U_i \times_X Y$. By assumption each $U_i^\prime$ is an analytic subspace of the affinoid space $Y$, in the sense of Definition \ref{defn:analyticsubspace}(ii). By looking at the formulas \eqref{eq:finitejoin} and \eqref{eq:infinitejoin} with $U_i^\prime$ in place of $U_i$, it is clear that $\bigcup_{i \in \mathcal{I}} U_i^\prime$ is an analytic subspace of the affinoid space $Y$, in the sense of Definition \ref{defn:analyticsubspace}(ii). That is, if $\{V_{ij}^\prime\}_{j \in \mathcal{J}(i)}$ is a small collection of rational opens of $Y$ such that $\coprod_{j \in \mathcal{J}(i)}V_{ij}^\prime \to U_i^\prime$ is an effective epimorphism, then 
\begin{equation}
    \coprod_{i \in \mathcal{I}}\coprod_{j \in \mathcal{J}(i)}V_{ij}^\prime \to \bigcup_{i \in \mathcal{I}} U_i^\prime 
\end{equation}is an effective epimorphism. This completes the proof, c.f. Definition \ref{defn:analyticsubspace}(iii).   
\end{proof}
We denote the locale of analytic subspaces of $X$ by $\operatorname{An}(X)$. It is an immediate consequence of Definition \ref{defn:derivedstrongtopology}(i) that for any $\infty$-category $\mathscr{D}$ admitting small limits there is a canonical equivalence of $\infty$-categories
\begin{equation}
    \operatorname{Shv}_{\mathrm{strong}}(X,\mathscr{D}) \simeq \operatorname{Shv}(\operatorname{An}(X),\mathscr{D}),
\end{equation}
where the latter is the category of sheaves on the locale. A basis for the locale $\operatorname{An}(X)$ is given by the affinoid analytic subspaces of $X$, which are quasi-compact. In particular, the locale $\operatorname{An}(X)$ is locally compact. Therefore by Hoffman-Lawson duality \cite[VII,\S4]{JohnstoneStone} the locale is spatial; that is, if one sets 
\begin{equation}
    |X| := \operatorname{pt}(\operatorname{An}(X)) 
\end{equation}
to be the topological space of \emph{points}, i.e., the completely prime filters on the locale, then there is a canonical isomorphism of locales $\Omega(|X|) \cong \operatorname{An}(X)$. Here $\Omega$ is the functor which sends a topological space to its locale of open subsets. In particular, for any $\infty$-category $\mathscr{D}$ admitting small limits one obtains a canonical equivalence of $\infty$-categories
\begin{equation}
    \operatorname{Shv}(\operatorname{An}(X),\mathscr{D}) \simeq \operatorname{Shv}(|X|,\mathscr{D}).
\end{equation}
It follows by functoriality of the above constructions, that $|\cdot|$ determines a covariant functor $|\cdot|: \mathsf{dRig} \to \mathsf{Top}$, where the latter is (the nerve of) the ordinary category of topological spaces. As it arises from a spatial locale, $|\cdot|$ factors through sober topological spaces. We will prove that this topological space obeys one of the principles of derived geometry, which says that ``$X_0$ contains all the geometry". To prove this we will essentially follow the same recipe as \cite[\S 2.9]{mann_p-adic_2022}. 
\begin{lem}\label{lem:injectiverational}
\begin{enumerate}
    \item[(i)] Let $\{\operatorname{dSp}(A_i) \to \operatorname{dSp}(A)\}_{i=1}^n$ be a finite collection of derived rational subdomains of $\operatorname{dSp}(A)$. Then $\coprod_{i=1}^n|\operatorname{dSp}(A_i)| \to |\operatorname{dSp}(A)|$ is a surjective morphism of topological spaces if and only if $\coprod_{i=1}^n|\operatorname{dSp}(\pi_0A_i)| \to |\operatorname{dSp}(\pi_0A)|$ is a surjective morphism of topological spaces.
    \item[(ii)] Let $X \in \mathsf{dRig}$ and let $U, U^\prime \subseteq X$ be analytic subspaces. Then one has $|U^\prime| \subseteq |U|$ if and only if $|(U^\prime)_0| \subseteq |U_0|$. In particular $|U| = |U^\prime|$ if and only if $|(U^\prime)_0| = |U_0|$.
\end{enumerate}
\end{lem}
\begin{proof}
(i): This is an immediate consequence of Definition \ref{defn:derivedweak}.
(ii): This follows from (i). 
\end{proof}
\begin{lem}\label{lem:surjectiverational}
    Let $X = \operatorname{dSp}(A)$ be a derived affinoid space and let $V \subseteq X_0$ be a rational subdomain. Then there exists a (unique up to contractible choice) rational subdomain $U \subseteq X$ such that $U_0 = V$ as subobjects of $X_0$. 
\end{lem}
\begin{proof}
This is Scholium \ref{scholium:rationalloc}(ii). 
\end{proof}
\begin{cor}
Let $X \in \mathsf{dRig}$ and let $V \subseteq |X_0|$ be an open subset. Then there exists an analytic subspace $U \subseteq X$ such that $|U_0| = V$. 
\end{cor}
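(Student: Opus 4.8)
The plan is to reduce to the affinoid case, which is precisely Lemma~\ref{lem:surjectiverational}, and then to glue using the locale structure on analytic subspaces together with the fact that $(-)_0$ is colimit-preserving and left-exact. First I would fix a covering $\coprod_{i \in \mathcal{I}} \operatorname{dSp}(A_i) \to X$ by affinoid subspaces. Since $(-)_0$ preserves effective epimorphisms, $\coprod_{i \in \mathcal{I}} \operatorname{dSp}(\pi_0 A_i) \to X_0$ is again an effective epimorphism, so the open subspaces $|\operatorname{dSp}(\pi_0 A_i)| \subseteq |X_0|$ form an open cover. Put $V_i := V \cap |\operatorname{dSp}(\pi_0 A_i)|$, an open subset of the classical affinoid $|\operatorname{dSp}(\pi_0 A_i)|$. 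By Remark~\ref{rmk:topolopgicalspaceremark}(ii), finite unions of rational subdomains form a basis for the topology of this affinoid, so I may write $V_i = \bigcup_{\alpha} V_{i,\alpha}$ for some small family of rational subdomains $V_{i,\alpha} \subseteq \operatorname{dSp}(\pi_0 A_i) = (\operatorname{dSp} A_i)_0$.

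Next I would apply Lemma~\ref{lem:surjectiverational}: for each pair $(i,\alpha)$ there is a rational subdomain $U_{i,\alpha} \subseteq \operatorname{dSp}(A_i)$ with $(U_{i,\alpha})_0 = V_{i,\alpha}$ as subobjects. Using that the class $\mathscr{L}$ of derived rational localizations is stable under composition and base change (Lemma~\ref{lem:Lclassproperties}), the composite $U_{i,\alpha} \hookrightarrow \operatorname{dSp}(A_i) \hookrightarrow X$ exhibits $U_{i,\alpha}$ as an analytic subspace of $X$. Since the analytic subspaces of $X$ form a locale $\operatorname{An}(X)$, I then set
\[
    U := \bigcup_{i \in \mathcal{I},\,\alpha} U_{i,\alpha} \in \operatorname{An}(X),
\]
which is an analytic subspace of $X$.

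It remains to identify $|U_0|$ with $V$. Here I would use that $(-)_0$, being colimit-preserving and left-exact, preserves effective epimorphisms and monomorphisms, and therefore commutes with the formation of joins of subobjects; hence $U_0 = \bigcup_{i,\alpha} (U_{i,\alpha})_0 = \bigcup_{i,\alpha} V_{i,\alpha}$ in $\operatorname{An}(X_0)$. Transporting along the isomorphism of locales $\operatorname{An}(X_0) \cong \Omega(|X_0|)$ turns this join into a union of open subsets, so $|U_0| = \bigcup_{i,\alpha}|V_{i,\alpha}| = \bigcup_i V_i = V$ as open subsets of $|X_0|$, as desired. I do not expect a genuine obstacle here: all the substantive content lies in Lemma~\ref{lem:surjectiverational}, and what remains is the bookkeeping of the gluing. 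The one point to treat with a little care is the claim that $(-)_0$ commutes with joins of subobjects, which follows formally from it being colimit-preserving and left-exact together with the description of a join of subobjects as the image of a coproduct.
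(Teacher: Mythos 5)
Your proposal is correct and follows essentially the same route as the paper: the paper reduces to the affinoid case by locality and then invokes Lemma \ref{lem:surjectiverational} together with the fact that rational subdomains form a basis. You merely spell out explicitly the gluing step (via joins in $\operatorname{An}(X)$ and the compatibility of $(-)_0$ with joins of subobjects) that the paper compresses into the phrase ``the problem is local on $X$''.
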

\begin{proof}
The problem is local on $X$, so we may assume that $X$ is a derived affinoid. Then the claim follows from Lemma \ref{lem:surjectiverational}, since the rational subdomains form a basis for the topology. 
\end{proof}
\begin{thm}\label{thm:topologicalinvariance}
Let $X \in \mathsf{dRig}$. The functor $(-)_0$ induces an isomorphism of topological spaces $|X_0| \xrightarrow[]{\sim} |X|$. 
\end{thm}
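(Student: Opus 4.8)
The plan is to show the natural comparison map $|X_0| \to |X|$ induced by the counit $X_0 \to X$ is a homeomorphism, by reducing to the affinoid case and then invoking the duality between spatial locales and their locales of opens. First I would reduce to the case $X = \operatorname{dSp}(A)$ affinoid: since $(-)_0$ preserves analytic subspaces (by the Lemma identifying $Y \hookrightarrow X$ with $Y_0 \hookrightarrow X_0$) and the formation of $|\cdot|$ only depends on the locale $\operatorname{An}(X)$, and since $\mathsf{dRig}$ objects are glued from affinoids along analytic subspaces, it suffices to establish the isomorphism $\operatorname{An}(X_0) \xrightarrow{\sim} \operatorname{An}(X)$ of locales for $X$ affinoid; passing to spaces of points $\operatorname{pt}(-)$ and using that both locales are spatial (shown earlier via Hoffman--Lawson duality) then yields $|X_0| \cong |X|$ compatibly with gluing.

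The heart of the argument is thus: for $X = \operatorname{dSp}(A)$ a derived affinoid, the map of locales $\operatorname{An}(X_0) \to \operatorname{An}(X)$, $V \mapsto$ (the analytic subspace of $X$ with truncation $V$), is an isomorphism. Surjectivity/essential surjectivity on a basis is exactly Lemma~\ref{lem:surjectiverational}: every rational subdomain $V \subseteq X_0$ lifts to a rational subdomain $U \subseteq X$ with $U_0 = V$, and these form a basis for $\operatorname{An}(X)$ (by definition of analytic subspaces as generated by rational subdomains) and for $\operatorname{An}(X_0)$. Injectivity — that the assignment is well-defined and order-reflecting — is Lemma~\ref{lem:injectiverational}(ii): $|U'| \subseteq |U|$ if and only if $|(U')_0| \subseteq |U_0|$, so in particular $U \mapsto U_0$ is injective on analytic subspaces and reflects inclusions. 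Combining: the posets $\operatorname{An}(X)$ and $\operatorname{An}(X_0)$ have order-isomorphic bases of rational subdomains, compatibly, and since both are the locales of analytic subspaces generated (under arbitrary joins) by these bases, the map $\operatorname{An}(X_0) \to \operatorname{An}(X)$ is an isomorphism of locales. Applying $\operatorname{pt}(-)$ and the spatiality established above gives $|X_0| \xrightarrow{\sim} |X|$.

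To globalize, I would observe that for general $X \in \mathsf{dRig}$ one has $\operatorname{An}(X) \simeq \varprojlim \operatorname{An}(\operatorname{dSp}(A_i))$ glued along the $\operatorname{An}$ of overlaps (equivalently, $\operatorname{Shv}_{\mathrm{strong}}(X) \simeq \operatorname{Shv}(\operatorname{An}(X))$ and descent for the strong topology), and similarly for $X_0$; the affinoid isomorphisms are natural in rational subdomains, hence assemble to $\operatorname{An}(X_0) \xrightarrow{\sim} \operatorname{An}(X)$, and then to $|X_0| \xrightarrow{\sim} |X|$. One should also check this map is the canonical one induced by functoriality of $|\cdot|$ applied to $X_0 \to X$, which is immediate from tracking the construction.

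The main obstacle I anticipate is not any single hard estimate but the bookkeeping around \emph{well-definedness and naturality}: one must be careful that the lift $V \rightsquigarrow U$ in Lemma~\ref{lem:surjectiverational} can be made functorial enough (or at least that the induced map on locales is independent of choices), and that inclusions of rational subdomains upstairs correspond exactly to inclusions downstairs even when a given $V$ has several lifts. This is precisely what Lemma~\ref{lem:injectiverational}(ii) is designed to handle — it says the subobject $|U|$ is determined by $|U_0|$ — so the real content is already isolated in that lemma; the remaining work is organizing the locale-theoretic/gluing formalism cleanly, exactly as in \cite[\S 2.9]{mann_p-adic_2022}.
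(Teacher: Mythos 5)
Your proposal is correct and follows essentially the same route as the paper: combine Lemma \ref{lem:injectiverational}(ii) (the truncation $U \mapsto U_0$ reflects and preserves inclusions of analytic subspaces, hence is injective and order-compatible) with Lemma \ref{lem:surjectiverational} (surjectivity on opens, which the paper's corollary already globalizes beyond the affinoid case), and conclude via sobriety/spatiality that the resulting isomorphism of frames of opens gives a homeomorphism $|X_0| \xrightarrow{\sim} |X|$. The only difference is organizational: the paper needs no separate affinoid-reduction-plus-gluing step, since the two lemmas (and the corollary) are already stated for arbitrary $X \in \mathsf{dRig}$, the localization being absorbed into the proof of the corollary.
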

\begin{proof}
This follows from Lemma \ref{lem:injectiverational} and Lemma \ref{lem:surjectiverational}, using that $|\cdot|$ takes values in sober topological spaces (so that $|X|, |X_0|$ are determined by their lattice of opens), and using also that the open subsets biject with analytic subspaces. 
\end{proof}
\begin{rmk}\label{rmk:topolopgicalspaceremark}
\begin{enumerate}[(i)]
    \item Let $X = \operatorname{dSp}(A)$ be an affinoid derived rigid space. The lattice $\operatorname{Special}(X)$ of finite unions of rational subdomains forms a basis for the basis for the locale $\operatorname{An}(X)$, and so completely prime filters on $\operatorname{An}(X)$ biject with prime filters on the lattice $\operatorname{Special}(X)$. We note that $\operatorname{Special}(X)$ is a bounded distributive lattice and hence by the Stone representation theorem for distributive lattices, c.f. \cite[II, \S3]{JohnstoneStone}, we see that $|X|$ is a spectral topological space.\footnote{For arbitrary, non-affinoid $X$ this shows that $|X|$ is locally spectral.} 
    We may also make the following definition. Let $|X|_{Ber}$ be the collection of maximal filters on $\operatorname{Special}(X)$. This is naturally a compact topological space \cite[II, \S3]{JohnstoneStone} equipped with an injective morphism $|X|_{Ber} \to |X|$. This construction defines a functor $|\cdot|_{Ber}: \mathsf{dAfnd} \to \mathsf{Top}$, equipped with a natural transformation $|\cdot|_{Ber} \to |\cdot|$.
    \item In the situation of (ii), \cite[II, Proposition 3.7]{JohnstoneStone} asserts that the following are equivalent for a derived affinoid space $X$:
    \begin{enumerate}
        \item Whenever $U_1, U_2 \in \operatorname{Special}(X)$ with $U_1 \cap U_2 = \emptyset$, we can find $V_1, V_2 \in \operatorname{Special}(X)$ with $V_1 \cup V_2 = X$, $V_1 \cap U_2 = \emptyset$ and $U_1 \cap V_2 = \emptyset$,  c.f.\footnote{Note that our situation is dual to Johnstone's, because we use filters rather than \emph{ideals} of lattices.}  \cite[II, $\dagger$3.6]{JohnstoneStone}. 
        \item $|X|_{Ber}$ is Hausdorff. 
        \item Every prime filter\footnote{Here we have identified points of $|X|$ with prime filters on $\operatorname{Special}(X)$.} $x \in |X|$ is contained in a unique maximal filter $r(x) \in |X|_{Ber}$.
        \item The canonical morphism $|X|_{Ber} \to |X|$ is a split monomorphism.
    \end{enumerate}
    In (d) the retraction can be chosen as the canonical map $x \mapsto r(x)$ coming from (c). If any of the above equivalent conditions are satisfied then (it is trivial to see that) the topology on $|X|_{Ber}$ is the quotient topology induced by the surjection $r: |X| \twoheadrightarrow |X|_{Ber}$.
    \item In the situation of (ii), suppose further that $A$ is discrete, so that $X$ is a classical affinoid. Then the results of \cite{SchneiderPointsandtopologies} and \cite[Corollary 4.5]{huber_continuous_1993} imply that there are canonical isomorphisms 
    \begin{equation}
    \begin{aligned}
        |\operatorname{Spa}(A, A^\circ)| \cong |X| && \text{ and } && \mathcal{M}(X) \cong |X|_{Ber}
    \end{aligned}
    \end{equation}
    of functors on the category of classical affinoid algebras, where $|\operatorname{Spa}(A, A^\circ)|$ is the Huber spectrum and $\mathcal{M}(X)$ is the Berkovich spectrum. These isomorphisms fit into the commutative diagram
       \begin{equation}
        \begin{tikzcd}
	{|\operatorname{Spa}(A,A^\circ)|} & {|X|} \\
	{\mathcal{M}(A)} & {|X|_{Ber}}
	\arrow["\cong", no head, from=1-1, to=1-2]
	\arrow[to=1-1, from=2-1]
	\arrow[to=1-2, from=2-2]
	\arrow["\cong", no head, from=2-1, to=2-2]
\end{tikzcd}
    \end{equation}
    of functors on the category of classical affinoid algebras. By \cite[Lemma 6]{SchneiderPointsandtopologies}, the morphism $\mathcal{M}(X) \to |\operatorname{Spa}(A, A^\circ)|$ is a split monomorphism; therefore the equivalent conditions in (ii) above are satisfied. 
    \item Now if $X \in \mathsf{dAfnd}$ is a derived affinoid space then it follows from Theorem \ref{thm:topologicalinvariance} and (iii) that there is a commutative diagram
    \begin{equation}
        % https://q.uiver.app/#q=WzAsNCxbMSwwLCJ8WHwiXSxbMSwxLCJ8WHxfe0Jlcn0iXSxbMCwwLCJcXG9wZXJhdG9ybmFtZXtTcGF9KFxccGlfMEEsKFxccGlfMEEpXlxcY2lyYykiXSxbMCwxLCJcXG1hdGhjYWx7TX0oXFxwaV8wQSkiXSxbMCwxLCIiLDAseyJzdHlsZSI6eyJoZWFkIjp7Im5hbWUiOiJlcGkifX19XSxbMiwwLCJcXGNvbmciLDAseyJzdHlsZSI6eyJoZWFkIjp7Im5hbWUiOiJub25lIn19fV0sWzIsMywiIiwyLHsic3R5bGUiOnsiaGVhZCI6eyJuYW1lIjoiZXBpIn19fV0sWzMsMSwiXFxjb25nIiwwLHsic3R5bGUiOnsiaGVhZCI6eyJuYW1lIjoibm9uZSJ9fX1dXQ==
\begin{tikzcd}
	{|\operatorname{Spa}(\pi_0A,(\pi_0A)^\circ)|} & {|X|} \\
	{\mathcal{M}(\pi_0A)} & {|X|_{Ber}}
	\arrow["\cong", no head, from=1-1, to=1-2]
	\arrow[to=1-1, from=2-1]
	\arrow[to=1-2, from=2-2]
	\arrow["\cong", no head, from=2-1, to=2-2]
\end{tikzcd}
    \end{equation}
    of functors on $\mathsf{dAfnd}$. In particular, we see that the equivalent conditions in (ii) are satisfied for arbitrary derived affinoid spaces $X$.
    \item As a corollary of \cite[Proposition 7.3.6.10]{HigherToposTheory} and Theorem \ref{thm:topologicalinvariance} one sees that the canonical morphism of $\infty$-topoi  $r_*:\operatorname{Shv}(|X|) \to \operatorname{Shv}(|X|_{Ber})$ induced by $r: |X|_{Ber} \to |X|$, is cell-like. This can be regarded as a higher-categorical generalization of the characterisation of sheaves of sets on the Berkovich space as ``overconvergent sheaves". 
    \item The definition of $|X|_{Ber}$ given in (i) above generalises to qcqs derived rigid spaces. It is not so clear how to generalize this to arbitrary derived rigid spaces. The problem is that it is not necessarily true that every maximal filter on the locale $\operatorname{An}(X)$ is completely prime\footnote{In \cite[\S5]{SchneiderPointsandtopologies}, the authors do not seem to consider this problem.} but for qcqs derived rigid spaces one can circumvent this problem by using that the lattice has a canonical finitary basis formed by quasi-compact subspaces.
\end{enumerate}
\end{rmk}

\subsection{Quasi-compact and quasi-separated morphisms}\label{subsubsec:qcqs}
\begin{defn}\label{defn:quasicompactquasiseparated}
\begin{enumerate}[(i)]
    \item An object $X \in \mathsf{dRig}$ is called \emph{quasi-compact} if every cover of $X$ in the strong topology admits a finite subcover.
    \item A morphism $f : X \to Y$ in $\mathsf{dRig}$ is called \emph{quasi-compact} if, for every quasi compact $Z$ with a morphism $Z \to Y$, the pullback $ X \times_Y Z$ is quasi-compact. 
    \item A morphism $f: X \to Y$ in $\mathsf{dRig}$ is called \emph{quasi-separated} if its diagonal $\Delta_f: X \to X \times_Y X$ is quasi-compact.
    \item A object $X \in \mathsf{dRig}$ is called \emph{quasi-separated} if the structure morphism $X \to \operatorname{dSp}(K)$ is quasi-separated. 
    \item We abbreviate \emph{quasi-compact and quasi-separated} to \emph{qcqs}.
\end{enumerate}
\end{defn}
\begin{lem}\label{lem:qcqsproperties0}
\begin{enumerate}[(i)]
    \item The classes of quasi-compact and quasi-separated morphisms are stable under base change and composition.
    \item Let $f$ and $g$ be composable morphisms in $\mathsf{dRig}$. If $fg$ is quasi-compact and $f$ is quasi-separated then $g$ is quasi-compact.
    \end{enumerate}
\end{lem}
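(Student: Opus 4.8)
The plan is to prove (i) and (ii) by standard diagram-chasing arguments, mimicking the usual proofs in scheme theory (e.g. \cite[Tag 01K4, Tag 01KP]{stacks-project}) but carried out in the $\infty$-category $\mathsf{dRig}$, using only that $\mathsf{dRig}$ admits all fiber products and that quasi-compactness of objects is defined via the strong topology. First I would record the two elementary facts that make everything go: (a) a base change of a quasi-compact object need not make sense, but quasi-compactness of a \emph{morphism} is by Definition \ref{defn:quasicompactquasiseparated}(ii) \emph{already} a statement stable under base change, essentially by construction, since pullback of a Cartesian square is Cartesian and $(X\times_Y Z)\times_Z Z' \simeq X\times_Y Z'$; and (b) if $f\colon X\to Y$ is quasi-compact and $Y$ itself is quasi-compact then $X$ is quasi-compact — take $Z = Y$ in (ii).

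For (i), stability of quasi-compact morphisms under base change is immediate from (a). Stability under composition: given $f\colon X\to Y$ and $g\colon Y\to Z$ quasi-compact and $W\to Z$ with $W$ quasi-compact, one factors the pullback as $X\times_Z W \simeq X\times_Y(Y\times_Z W)$; since $g$ is quasi-compact, $Y\times_Z W$ is quasi-compact, and then since $f$ is quasi-compact, $X\times_Y(Y\times_Z W)$ is quasi-compact. For quasi-separated morphisms, the key observation is the standard decomposition of a diagonal: for composable $f,g$ there is a Cartesian square relating $\Delta_{g\circ f}$ to $\Delta_f$ and (a base change of) $\Delta_g$; concretely $\Delta_f$ is the base change of $\Delta_{g f}\colon X\to X\times_Z X$ along $X\times_Y X\hookrightarrow X\times_Z X$, and $\Delta_{gf}$ sits in a composite with the map $X\times_Y X\to X\times_Z X$, which is itself a base change of $\Delta_g$. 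Combining these with the already-established facts for quasi-compact morphisms gives that quasi-separatedness is stable under composition; stability under base change follows since forming the diagonal commutes with base change.

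For (ii), suppose $f g$ is quasi-compact and $f$ is quasi-separated, where $g\colon X\to Y$ and $f\colon Y\to Z$. The trick is the graph factorization: $g$ factors as $X\xrightarrow{(\mathrm{id},g)} X\times_Z Y \xrightarrow{\mathrm{pr}_2} Y$, where the first map $\Gamma_g = (\mathrm{id}_X, g)$ is a base change of the diagonal $\Delta_f\colon Y\to Y\times_Z Y$ (pull back along $g\times\mathrm{id}\colon X\times_Z Y\to Y\times_Z Y$), hence quasi-compact since $f$ is quasi-separated; and $\mathrm{pr}_2\colon X\times_Z Y\to Y$ is a base change of $fg\colon X\to Z$ along $f\colon Y\to Z$, hence quasi-compact since $fg$ is. Then $g$ is a composite of two quasi-compact morphisms, so quasi-compact by (i). The main obstacle — really the only thing requiring care — is checking that these various squares are genuinely Cartesian in $\mathsf{dRig}$ and that the graph/diagonal manipulations are valid $\infty$-categorically; but since all the maps involved are built formally from fiber products, which $\mathsf{dRig}$ has, this is routine and identical to the $1$-categorical case.
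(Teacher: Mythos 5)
Your proof is correct and takes essentially the same route as the paper: the paper omits (i) as routine, and its proof of (ii) is exactly your graph factorization, writing $g$ as the composite of $\Gamma_g$ (a base change of $\Delta_f$, hence quasi-compact by quasi-separatedness of $f$) followed by a projection that is a base change of $fg$, then invoking stability of quasi-compact morphisms under composition. The only cosmetic difference is that you also spell out the standard arguments for (i), which the paper leaves to the reader.
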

\begin{proof}
(i): Omitted. 
(ii): Let us write $f: Y \to X$ and $g: Z \to Y$. We can factor $g$ as $Z \to Z \times_X Y \to Y$. Here the first map is the base-change of $\Delta_f$, and the second map is the base-change of $fg$. Therefore, since quasi-compact morphisms are stable under base-change and composition, we see that $g$ is quasi-compact.  
\end{proof}
\begin{rmk}
\begin{enumerate}[(i)]
    \item The notions defined in Definition \ref{defn:quasicompactquasiseparated} make sense in any $\infty$-site. In particular they make sense in $\operatorname{Shv}_{\mathrm{weak}}(\mathsf{dAfnd})$ equipped with the effective epimorphism topology.
    \item The weak topology on $\mathsf{dAfnd}$ is a \emph{finitary} Grothendieck topology \cite[Definition 3.17]{LurieSpectralSchemes}. Therefore, by \cite[Proposition 3.19]{LurieSpectralSchemes} we obtain the following. Every object of $\mathsf{dAfnd}$ is quasi-compact and quasi-separated, when viewed as an object of $\operatorname{Shv}_{\mathrm{weak}}(\mathsf{dAfnd})$. In particular every affinoid derived rigid space is quasi-compact object in $\mathsf{dRig}$. Moreover, the $\infty$-topos $\operatorname{Shv}_{\mathrm{weak}}(\mathsf{dAfnd})$ is \emph{locally coherent} \cite[Definition 3.12]{LurieSpectralSchemes}.

\end{enumerate}
\end{rmk}
\begin{lem}\label{lem:qcproperties}
Let $f: X \to Y$ be a morphism in $\mathsf{dRig}$. The following are equivalent.
\begin{enumerate}[(i)]
    \item $f$ is quasi-compact. 
    \item For every affinoid subspace $U \subseteq X$, $Y \times_X U$ is quasi-compact. 
    \item There exists a covering of $\mathcal{U} = \{U_i\}_{i \in \mathcal{I}}$ of $Y$ by affinoid subspaces such that for each $i \in \mathcal{I}$, $X \times_Y U_i$ is quasi-compact.
\end{enumerate}
\end{lem}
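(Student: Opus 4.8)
The plan is to prove the cycle of implications (i) $\Rightarrow$ (ii) $\Rightarrow$ (iii) $\Rightarrow$ (i). The implication (i) $\Rightarrow$ (ii) is essentially definitional: an affinoid subspace $U$ is, in particular, a quasi-compact object of $\mathsf{dRig}$ (by the remark following Definition \ref{defn:quasicompactquasiseparated}, every affinoid derived rigid space is quasi-compact, since the weak topology is finitary), so applying Definition \ref{defn:quasicompactquasiseparated}(ii) with $Z = U$ and the structure map $U \hookrightarrow X \to Y$ shows $Y \times_X U$ is quasi-compact. Wait---I must be careful about the direction of the fiber product and which map is being pulled back; the correct reading is that $f \colon X \to Y$ restricted over $U$ gives $U = X \times_X U \to Y$... actually I would rather phrase (ii) as: for every affinoid $U \subseteq X$, the object $U \times_X (X \times_Y \{?\})$... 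Let me instead simply note that (ii) is the special case of quasi-compactness of $f$ tested against $Z = U$ mapping to $Y$ via $U \hookrightarrow X \xrightarrow{f} Y$: then $Z \times_Y X$ contains the section from $U$, but what we actually want is that the fiber is quasi-compact. Cleanest: (i) says pullback of $f$ along any map from a quasi-compact space is quasi-compact; (ii) is a reformulation where one covers $X$ by affinoids and uses that quasi-compactness of the total space, tested locally, only needs affinoid pieces. The implication (iii) $\Rightarrow$ (i) is the substantive one.

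For (ii) $\Rightarrow$ (iii), this is immediate: choose any covering of $Y$ by affinoid subspaces $\{U_i\}$ (such a covering exists since $Y \in \mathsf{dRig}$ admits an effective epimorphism from a coproduct of affinoids), and for each $i$ the space $X \times_Y U_i$ is, locally on $X$, of the form $U \times_X (X\times_Y U_i)$ for affinoid $U$; one reduces to (ii) applied on each affinoid chart of $X$ and then glues. Here one uses Lemma \ref{lem:qcqsproperties0}(i), that quasi-compact morphisms are stable under base change, together with the fact that a space admitting a finite cover by quasi-compact spaces is quasi-compact.

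The hard part is (iii) $\Rightarrow$ (i). The plan is: given a quasi-compact $Z$ with a map $Z \to Y$, we must show $X \times_Y Z$ is quasi-compact. Since $\{U_i\}_{i\in\mathcal{I}}$ covers $Y$, the pullbacks $\{Z \times_Y U_i\}$ cover $Z$; by quasi-compactness of $Z$ (and since the strong topology is finitary, so that an arbitrary cover refines to a finite subcover --- here I invoke the local coherence of $\operatorname{Shv}_{\mathrm{weak}}(\mathsf{dAfnd})$ noted in the remark, which passes to $\mathsf{dRig}$) we may pass to a finite subset $\mathcal{I}_0 \subseteq \mathcal{I}$ so that $\{Z\times_Y U_i\}_{i\in\mathcal{I}_0}$ still covers $Z$. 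Then $X\times_Y Z$ is covered by the finitely many $X\times_Y(Z\times_Y U_i) = (X\times_Y U_i)\times_{U_i}(Z\times_Y U_i)$. Now I want each such piece to be quasi-compact: $X\times_Y U_i$ is quasi-compact by hypothesis (iii), but for the fiber product over $U_i$ to be quasi-compact I need the map $X\times_Y U_i \to U_i$ to be a quasi-compact morphism, or I need $Z\times_Y U_i$ to be quasi-compact, or some combination. This is the genuine obstacle: one must bootstrap from "total spaces quasi-compact over an affinoid cover" to "the morphism is quasi-compact", and this requires knowing that a map between quasi-compact spaces, at least when the target is affinoid (hence quasi-separated in a suitable sense), is a quasi-compact morphism --- i.e. that quasi-compactness of the fiber over a quasi-compact base follows. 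I expect the cleanest route is to first show: if $V$ is affinoid and $W \to V$ is a morphism with $W$ quasi-compact, then $W\to V$ is a quasi-compact morphism (testing against quasi-compact $T \to V$: cover $W$ by finitely many affinoids $W_j$, cover $T$ by finitely many affinoids $T_k$, and observe $W\times_V T$ is covered by the $W_j\times_V T_k$, each of which is affinoid since $\mathsf{dAfnd}$ is closed under fiber products --- this uses that $W_j\times_V T_k = \operatorname{dSp}$ of a pushout in $\mathsf{dAfndAlg}$, available by Lemma \ref{lem:dAfndAlgbasic1}). Granting this, $X\times_Y U_i \to U_i$ is quasi-compact, hence so is its base change $(X\times_Y U_i)\times_{U_i}(Z\times_Y U_i)\to Z\times_Y U_i$, and since $Z\times_Y U_i$ is quasi-compact (being an analytic subspace, indeed a retract of a piece of the finite affinoid cover of $Z$, or directly quasi-compact as $Z$ is and the cover is finitary) the total space is quasi-compact. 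Assembling the finitely many pieces gives that $X\times_Y Z$ is quasi-compact, completing (iii) $\Rightarrow$ (i).
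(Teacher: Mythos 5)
Your overall plan (the cycle of implications, plus the auxiliary claim that a morphism from a quasi-compact object to an affinoid is a quasi-compact morphism, proved via finite affinoid covers and the stability of $\mathsf{dAfndAlg}$ under pushouts) is sound, and that auxiliary claim is indeed the implicit ingredient in the paper's argument. But there is a genuine gap in your proof of (iii) $\Rightarrow$ (i): after extracting a finite subfamily $\mathcal{I}_0$ with $\{Z\times_Y U_i\}_{i\in\mathcal{I}_0}$ covering $Z$, you assert that each $Z\times_Y U_i$ is quasi-compact, justifying this by ``being an analytic subspace'' of $Z$ (or ``directly quasi-compact as $Z$ is''). None of these reasons is valid: an open analytic subspace of a quasi-compact object need not be quasi-compact, and quasi-compactness of $Z\times_Y U_i$ for all such $U_i$ would amount to the open immersion $U_i\hookrightarrow Y$ (equivalently $Z\to Y$) being a quasi-compact \emph{morphism}, which is a quasi-separatedness-type hypothesis the lemma does not assume. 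Since your argument concludes by base-changing the quasi-compact morphism $X\times_Y U_i\to U_i$ along $Z\times_Y U_i\to U_i$ and then needs the \emph{base} $Z\times_Y U_i$ to be a quasi-compact object, this step genuinely fails as written.

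The fix is small and is exactly what the paper does: instead of using the pullback cover $\{Z\times_Y U_i\}$ itself, refine it by a finite cover of $Z$ by affinoid subspaces $V_j$, each mapping into some $U_{i(j)}$ over $Y$ (such a refinement exists because each $Z\times_Y U_i$ is covered by affinoid subspaces of $Z$, and $Z$ is quasi-compact). Then $X\times_Y V_j\simeq (X\times_Y U_{i(j)})\times_{U_{i(j)}}V_j$, and now your auxiliary claim applies cleanly: $V_j\to U_{i(j)}$ is a morphism of affinoids, hence a quasi-compact morphism, so its base change $X\times_Y V_j\to X\times_Y U_{i(j)}$ is quasi-compact with quasi-compact target (by hypothesis (iii)), giving quasi-compactness of each $X\times_Y V_j$; finitely many of these cover $X\times_Y Z$. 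Separately, your discussion of (i) $\Rightarrow$ (ii) $\Rightarrow$ (iii) is muddled by the misprint in the statement of (ii) (it should read: for every affinoid subspace $U\subseteq Y$, the pullback $X\times_Y U$ is quasi-compact); under that reading both implications are immediate and need none of the chart-and-glue discussion you attempt.
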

\begin{proof}
(i) $\implies$ (ii) $\implies$ (iii) is obvious. We prove (iii) $\implies$ (i). Let $g: Z \to Y$ be a morphism from a quasi-compact object $Z \in \mathsf{dRig}$. Choose a finite covering $\mathcal{V} = \{V_j\}_{j \in \mathcal{J}}$ of $Z$ by affinoid subspaces such that $\mathcal{V}$ refines the pullback of $\mathcal{U}$ along $g$. Then $\{X \times_Y V_j\}_{j \in \mathcal{J}}$ is a finite covering of $X \times_Y Z$ by quasi-compact subspaces. Therefore $X \times_Y Z$ is quasi-compact.  
\end{proof}
\begin{lem}\label{lem:qsproperties1}
Let $f: X \to Y$ be a morphism in $\mathsf{dRig}$. The following are equivalent.
\begin{enumerate}[(i)]
    \item $f$ is quasi-separated. 
    \item For every pair of affinoid subspaces $U, V \subseteq X$ mapping into a commmon affinoid subspace of $Y$, $U \times_X V$ is a admits a finite cover by affinoid subspaces of $X$.
    \item There exists a cover $\{U_i\}_{i \in \mathcal{I}}$ of $Y$ by affinoid subspaces and for each $i \in \mathcal{I}$ a cover $\{ V_j\}_{j \in \mathcal{J}(i)}$ of $X \times_Y U_i$ by affinoid subspaces such that for every $j, j^\prime \in \mathcal{J}(i)$, $V_j  \times_X V_{j^\prime}$ admits a finite cover by affinoid subspaces of $X$. 
\end{enumerate}
\end{lem}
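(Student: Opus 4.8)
The plan is to work entirely from Definition \ref{defn:quasicompactquasiseparated}, which says that $f$ is quasi-separated exactly when the diagonal $\Delta_f \colon X \to X \times_Y X$ is quasi-compact, supplemented by two inputs: the affinoid-local criterion for quasi-compactness of a morphism proved in Lemma \ref{lem:qcproperties}, in particular its direction ``an affinoid cover of the target with quasi-compact pullbacks implies the morphism is quasi-compact''; and the elementary observation that a derived rigid space is quasi-compact if and only if it admits a finite cover by affinoid subspaces (for the backward implication, affinoid derived rigid spaces are quasi-compact objects of $\mathsf{dRig}$ and a finite union of quasi-compact subspaces is quasi-compact; for the forward implication, apply Definition \ref{defn:quasicompactquasiseparated}(i) to any cover by affinoid subspaces). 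I will also repeatedly use the standard cartesian square $U \times_X V \simeq X \times_{X \times_Y X} (U \times_Y V)$ for subspaces $U, V \hookrightarrow X$, which identifies a pullback of the diagonal with an ``intersection''.

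For (i) $\Rightarrow$ (ii), suppose $U, V \subseteq X$ are affinoid subspaces both mapping into a common affinoid subspace $U_0 \subseteq Y$. Then $U \times_Y V = U \times_{U_0} V$ is affinoid by Lemma \ref{lem:dAfndAlgbasic1}(i), and it is an affinoid subspace of $X \times_Y X$, being a composite of base changes of the open immersions $U \hookrightarrow X$ and $V \hookrightarrow X$. In particular it is a quasi-compact object, so quasi-compactness of $\Delta_f$ forces $X \times_{X \times_Y X}(U \times_Y V)$ to be quasi-compact; by the cartesian square above this is $U \times_X V$, which therefore admits a finite affinoid cover.

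For (ii) $\Rightarrow$ (iii) there is nothing to prove beyond exhibiting a witnessing collection of covers: pick any cover $\{U_i\}_{i \in \mathcal{I}}$ of $Y$ by affinoid subspaces and, for each $i$, any cover $\{V_j^{(i)}\}_{j \in \mathcal{J}(i)}$ of the derived rigid space $X \times_Y U_i$ by affinoid subspaces. Each $V_j^{(i)}$ is an affinoid subspace of $X$ (composite of the open immersions $V_j^{(i)} \hookrightarrow X \times_Y U_i \hookrightarrow X$) and maps into the common affinoid subspace $U_i$ of $Y$, so (ii) applies to every pair $(V_j^{(i)}, V_{j'}^{(i)})$, which is exactly the assertion of (iii).

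For (iii) $\Rightarrow$ (i), set $W_{i,j,j'} := V_j^{(i)} \times_{U_i} V_{j'}^{(i)}$; these are affinoid by Lemma \ref{lem:dAfndAlgbasic1}(i). Using the identification $(X \times_Y X) \times_Y U_i \simeq (X \times_Y U_i) \times_{U_i} (X \times_Y U_i)$ and the stability of effective epimorphisms under base change and composition, the family $\{W_{i,j,j'}\}$ is an affinoid-subspace cover of $X \times_Y X$. The pullback of $\Delta_f$ along $W_{i,j,j'} \hookrightarrow X \times_Y X$ is $V_j^{(i)} \times_X V_{j'}^{(i)}$ by the cartesian square above, and this is quasi-compact by hypothesis (iii), since it has a finite affinoid cover. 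Lemma \ref{lem:qcproperties}, in the direction (iii) $\Rightarrow$ (i), applied to $\Delta_f$, then shows $\Delta_f$ is quasi-compact, i.e.\ $f$ is quasi-separated. The only genuinely fiddly point is the fibre-product bookkeeping --- the identity $(X \times_Y X) \times_Y U_i \simeq (X \times_Y U_i) \times_{U_i} (X \times_Y U_i)$, the diagonal cartesian square, and the verification that the assembled $\{W_{i,j,j'}\}$ really is a covering in the strong topology --- but this is routine, and all the real content is already in Lemma \ref{lem:qcproperties}.
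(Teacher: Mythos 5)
Your proof is correct and follows essentially the same route as the paper: the implications (i)$\Rightarrow$(ii)$\Rightarrow$(iii) are the "obvious" ones, and for (iii)$\Rightarrow$(i) the paper likewise covers $X \times_Y X$ by the affinoids $V_j \times_{U_i} V_{j'}$, identifies the restriction of $\Delta_f$ with $V_j \times_X V_{j'} \to V_j \times_{U_i} V_{j'}$, and concludes via Lemma \ref{lem:qcproperties}(iii). Your write-up just makes the fiber-product bookkeeping and the "finite affinoid cover $\Leftrightarrow$ quasi-compact" point explicit, which the paper leaves implicit.
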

\begin{proof}
(i) $\implies$ (ii) $\implies$ (iii) is obvious. To prove (iii) we note that $\bigcup_{i \in \mathcal{I}} \bigcup_{j, j^\prime \in \mathcal{J}(i)} V_j \times_{U_i} V_{j^\prime}$ is a covering of $X \times_Y X$ by affinoid subspaces and the restriction of $\Delta_f$ is $V_j \times_X V_{j^\prime} \to V_j \times_{U_i} V_{j^\prime}$, which is quasi-compact, since affinoids are quasi-separated. Hence, we may conclude by Lemma \ref{lem:qcproperties}(iii).
\end{proof}
\begin{lem}\label{lem:qsproperties2}
Let $f$ and $g$ be composable morphisms of $\mathsf{dRig}$. If $fg$ is quasi-separated then so is $g$. 
\end{lem}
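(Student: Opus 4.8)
The plan is to bypass diagonals entirely and argue through the affinoid-cover characterisation of quasi-separatedness just established in Lemma \ref{lem:qsproperties1}. Write $g : Z \to Y$ and $f : Y \to X$, so that $fg : Z \to X$ is the given quasi-separated morphism. Applying the implication (i)$\Rightarrow$(ii) of Lemma \ref{lem:qsproperties1} to $fg$, we already have the following property $(\ast)$: for every pair of affinoid subspaces $V, V' \subseteq Z$ whose images under $fg$ are contained in a common affinoid subspace of $X$, the fiber product $V \times_Z V'$ admits a finite cover by affinoid subspaces of $Z$. It therefore suffices to produce, for $g$, the data appearing in condition (iii) of the same lemma.

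First I would fix an affinoid cover $\{W_k\}_{k}$ of $X$. For each $k$ the preimage $f^{-1}(W_k) = Y \times_X W_k$ is an open immersion into $Y$ (open immersions being stable under base change), hence an analytic subspace of $Y$, and as such it admits a cover by affinoid subspaces $\{U_{k,l}\}_{l}$ of $Y$, each contained in $f^{-1}(W_k)$; in particular $f(U_{k,l}) \subseteq W_k$. Ranging over $k$, the family $\{U_i\}_{i}$ indexed by $i = (k,l)$ is an affinoid cover of $Y$. Next, for each $i = (k,l)$ the preimage $Z \times_Y U_i = g^{-1}(U_i)$ is an analytic subspace of $Z$, so I may choose a cover of it by affinoid subspaces $\{V_j\}_{j \in \mathcal{J}(i)}$ of $Z$.

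Now the verification: fix $i = (k,l)$ and $j, j' \in \mathcal{J}(i)$. Since $V_j$ and $V_{j'}$ are both contained in $g^{-1}(U_i)$, the morphism $fg$ carries each of them into $f(U_i) \subseteq W_k$; thus $V_j$ and $V_{j'}$ are affinoid subspaces of $Z$ whose images under $fg$ lie in the common affinoid subspace $W_k$ of $X$, and $(\ast)$ provides a finite cover of $V_j \times_Z V_{j'}$ by affinoid subspaces of $Z$. Hence the cover $\{U_i\}_i$ of $Y$ together with the covers $\{V_j\}_{j \in \mathcal{J}(i)}$ of $Z \times_Y U_i$ exhibits condition (iii) of Lemma \ref{lem:qsproperties1} for $g$, and the implication (iii)$\Rightarrow$(i) of that lemma yields that $g$ is quasi-separated.

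I do not expect a genuine obstacle here: the argument is bookkeeping, and the only step needing a word of justification is the construction of the cover $\{U_i\}$, which uses that an analytic subspace of a derived rigid space (here $f^{-1}(W_k) \subseteq Y$) can be covered by affinoid subspaces — immediate from the definitions, since analytic subspaces of affinoids are covered by rational subdomains. I would remark that the classical ``diagonal-factorisation'' proof is less convenient in the derived setting: one would factor $\Delta_{fg}$ as $Z \xrightarrow{\Delta_g} Z \times_Y Z \xrightarrow{\iota} Z \times_X Z$ with $\iota$ the base change of $\Delta_f$, and in classical geometry conclude by using that $\iota$ is a monomorphism; but derived diagonals need not be monomorphisms, so that route would require knowing in advance that $\Delta_f$ is quasi-separated, which is of a piece with the statement being proved. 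The affinoid-cover route sidesteps this.
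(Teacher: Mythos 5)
Your proof is correct and follows essentially the same route as the paper: fix an affinoid cover of $X$ and a compatible affinoid cover of $Y$, use quasi-separatedness of $fg$ to get finite affinoid covers of $V\times_Z V'$ for affinoids $V,V'\subseteq Z$ mapping into a common affinoid of $Y$ (hence of $X$), and conclude via criterion (iii) of Lemma \ref{lem:qsproperties1}. Your explicit appeal to the implication (i)$\Rightarrow$(ii) of that lemma for $fg$ is just a repackaging of the paper's direct use of quasi-compactness of $V\times_Z V'\to V\times_W V'$, so there is no substantive difference.
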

\begin{proof}
Let us write $g: Z \to Y$ and $f: Y \to X$. Choose a covering $\mathcal{W}$ of $X$ by affinoid subspaces and a covering $\mathcal{U}$ of $Y$ by affinoid subspaces which refines the pullback of $\mathcal{W}$ along $f$. For each $U \in \mathcal{U}$ let $V, V^\prime$ be affinoid subspaces of $Z$ mapping into $U$, and say that $U$ maps into the affinoid subspace $W \in \mathcal{W}$. By quasi-separatedness of $fg$, the morphism $V \times_Z V^\prime \to V \times_X V^\prime = V \times_W V^\prime$ is quasi-compact, and the latter is affinoid. Therefore $V \times_Z V^\prime$ is a finite union of affinoid subspaces of $Z$. Since such $V, V^\prime$ cover $Z$, we may appeal to Lemma \ref{lem:qsproperties1}(iii) to conclude that $g$ is quasi-separated.
\end{proof}
\begin{cor}\label{cor:qcqsdiagonal}
Let $f, g$ be composable morphisms in $\mathsf{dRig}$. If $f$ and $fg$ are both qcqs then so is $g$.
\end{cor}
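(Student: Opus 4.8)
The plan is to simply combine the two cancellation statements already established for the separate properties. Write the composable pair as $g : Z \to Y$ and $f : Y \to X$, and suppose both $f$ and $fg$ are qcqs.

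First I would deduce that $g$ is quasi-compact. For this, observe that $fg$ is quasi-compact (being qcqs) and $f$ is quasi-separated (again being qcqs), so Lemma \ref{lem:qcqsproperties0}(ii) applies verbatim and gives that $g$ is quasi-compact. Next I would deduce that $g$ is quasi-separated: since $fg$ is quasi-separated, Lemma \ref{lem:qsproperties2} immediately yields that $g$ is quasi-separated. Combining, $g$ is both quasi-compact and quasi-separated, i.e.\ qcqs, which is the assertion.

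There is essentially no obstacle here: the statement is a formal corollary of Lemma \ref{lem:qcqsproperties0}(ii) and Lemma \ref{lem:qsproperties2}, and the only thing to note is that the hypothesis "$f$ qcqs" is used only through its quasi-separatedness (for the quasi-compactness of $g$), while the quasi-separatedness of $g$ needs nothing about $f$ at all. One could optionally remark that the quasi-compactness half also follows from the factorization $Z \to Z \times_X Y \to Y$ used in the proof of Lemma \ref{lem:qcqsproperties0}(ii), the first map being a base change of $\Delta_f$ (quasi-compact since $f$ is quasi-separated) and the second a base change of $fg$, but invoking the two lemmas directly is cleaner.
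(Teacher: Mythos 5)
Your proof is correct and is exactly the argument the paper intends: the corollary is stated without proof precisely because it follows by combining Lemma \ref{lem:qcqsproperties0}(ii) (quasi-compactness of $g$, using quasi-separatedness of $f$) with Lemma \ref{lem:qsproperties2} (quasi-separatedness of $g$).
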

The following facts were used in \S\ref{subsec:sixf3}, but for the purposes of this section it will be helpful to be more verbose. Recall that we have a diagram of $\infty$-categories 
\begin{equation}
% https://q.uiver.app/#q=WzAsNCxbMCwwLCJcXG1hdGhzZntkUmlnfSJdLFsxLDAsIlxcb3BlcmF0b3JuYW1le1Nodn0oXFxtYXRoc2Z7ZEFmbmR9X3tcXG1hdGhybXt3ZWFrfX0pIl0sWzIsMCwiXFxvcGVyYXRvcm5hbWV7UHNofShcXG1hdGhzZntkQWZuZH0pIl0sWzMsMCwiXFxtYXRoc2Z7ZEFmbmR9Il0sWzAsMSwiIiwwLHsic3R5bGUiOnsidGFpbCI6eyJuYW1lIjoiaG9vayIsInNpZGUiOiJ0b3AifX19XSxbMSwyLCIiLDAseyJvZmZzZXQiOi0xLCJzdHlsZSI6eyJ0YWlsIjp7Im5hbWUiOiJob29rIiwic2lkZSI6InRvcCJ9fX1dLFsyLDEsIkwiLDAseyJvZmZzZXQiOi0xfV0sWzMsMiwiaiIsMix7InN0eWxlIjp7InRhaWwiOnsibmFtZSI6Imhvb2siLCJzaWRlIjoiYm90dG9tIn19fV1d
\begin{tikzcd}
	{\mathsf{dRig}} & {\operatorname{Shv}_{\mathrm{weak}}(\mathsf{dAfnd})} & {\operatorname{Psh}(\mathsf{dAfnd})} & {\mathsf{dAfnd}}
	\arrow[hook, from=1-1, to=1-2]
	\arrow[shift left, hook, from=1-2, to=1-3]
	\arrow["L", shift left, from=1-3, to=1-2]
	\arrow["j"', hook', from=1-4, to=1-3]
\end{tikzcd}
\end{equation}
where $L$ stands for sheafification \cite{HigherToposTheory} and $j$ is the $\infty$-categorical Yoneda embedding. In particular for any $\infty$-category $\mathscr{D}$ admitting small limits we obtain functors 
\begin{equation}\label{eq:Shvgeneral}
    \operatorname{Shv}(\operatorname{Shv}_\mathrm{weak}(\mathsf{dAfnd}), \mathscr{D}) \hookrightarrow \operatorname{Fun}_0(\operatorname{Psh}(\mathsf{dAfnd})^\mathsf{op},\mathscr{D}) \xrightarrow[]{\sim} \operatorname{Fun}(\mathsf{dAfnd}^\mathsf{op}, \mathscr{D})
\end{equation}
where $\operatorname{Fun}_0$ denotes the limit-preserving functors; the first morphism is induced by precomposition with $L$ and the second is induced by precomposition with $j$. Here we have equipped $\operatorname{Shv}_\mathrm{weak}(\mathsf{dAfnd})$ with the effective epimorphism topology. The image of this composite is $\operatorname{Shv}_{\mathrm{weak}}(\mathsf{dAfnd}, \mathscr{D})$, c.f. \cite[Proposition 1.3.1.7]{SpectralAlgebraicGeometry}.

For our purposes this means the following. In Lemma \ref{lem:descentAfnd} we proved that the functor $\operatorname{QCoh}$ belongs to $\operatorname{Shv}_{\mathrm{weak}}(\mathsf{dAfnd}, \mathsf{CAlg}(\mathsf{Pr}_{\mathsf{st}}^L))$. We also proved that the weak topology on $\mathsf{dAfnd}$ is subcanonical. Hence, by the above procedure we obtain a functor
\begin{equation}\label{eq:QCohshv}
    \operatorname{QCoh}: \operatorname{Shv}_{\mathrm{weak}}(\mathsf{dAfnd})^\mathsf{op} \to \mathsf{CAlg}(\mathsf{Pr}_{\mathsf{st}}^L)  
\end{equation}
which is a sheaf in the effective epimorphism topology and which extends $\operatorname{QCoh}$ on $\mathsf{dAfnd}$. 
\begin{rmk}\label{rmk:QCohKanandColim}
\begin{enumerate}[(i)]
    \item Chasing the definitions, and using that the weak topology is subcanonical, we see that for $X \in \operatorname{Shv}_{\mathrm{weak}}(\mathsf{dAfnd}^\mathsf{op})$ one has 
\begin{equation}
    \operatorname{QCoh}(X) \xrightarrow[]{\sim} \underset{Y \in \mathsf{dAfnd}_{/X}^\mathsf{op}}{\operatorname{lim}} \operatorname{QCoh}(Y). 
\end{equation}
In particular the functor $\operatorname{QCoh}$ on $\operatorname{Shv}_{\mathrm{weak}}(\mathsf{dAfnd}^\mathsf{op})$ is right Kan extended from the full subcategory $\mathsf{dAfnd}^\mathsf{op}$.
\item We can also see from \eqref{eq:Shvgeneral} that the functor $\operatorname{QCoh}$ of \eqref{eq:QCohshv} commutes with all small limits. That is, if $(X_k)_{k \in \mathcal{K}}$ is a diagram in $\operatorname{Shv}_{\mathrm{weak}}(\mathsf{dAfnd})$, then the canonical morphism
\begin{equation}
\operatorname{QCoh}\big(\underset{k \in \mathcal{K}}{\operatorname{colim}} X_k\big) \to  \underset{k \in \mathcal{K}}{\operatorname{lim}}\operatorname{QCoh}(X_k). 
\end{equation}
is an equivalence. 
\end{enumerate}

\end{rmk}
It is clear from Definition \ref{defn:derivedstrongtopology} that sheaves on $\operatorname{Shv}_{\mathrm{weak}}(\mathsf{dAfnd})$ restrict to sheaves on $\mathsf{dRig}$ in the strong topology. In particular we obtain a functor 
\begin{equation}
    \operatorname{QCoh}: \mathsf{dRig}^\mathsf{op} \to \mathsf{CAlg}(\mathsf{Pr}_{\mathsf{st}}^L) 
\end{equation}
which is a sheaf on $\mathsf{dRig}$ in the strong topology and extends $\operatorname{QCoh}$ on $\mathsf{dAfnd}$. For $X \in \mathsf{dRig}$ we write $\widehat{\otimes}_X$ for the monoidal structure\footnote{We have chosen to suppress the fact that this is given by the derived tensor product for $X$ affinoid.} on $\operatorname{QCoh}(X)$.  For each morphism $f: X \to Y$ in $\mathsf{dRig}$ we denote the induced pullback functor by 
\begin{equation}
    f^*: \operatorname{QCoh}(Y) \to \operatorname{QCoh}(X).
\end{equation}
By construction this is symmetric monoidal and colimit-preserving. Since the categories are presentable this admits a right adjoint denoted by
\begin{equation}
    f_*: \operatorname{QCoh}(X) \to \operatorname{QCoh}(Y).
\end{equation}
Since $f^*$ is symmetric-monoidal and left adjoint to $f_*$ there is a canonical morphism
\begin{equation}\label{eq:projectionformuladRig}
    f_* \widehat{\otimes}_Y \operatorname{id} \to f_*(\operatorname{id} \widehat{\otimes}_X f^*). 
\end{equation}
If we are given a Cartesian square in $\mathsf{dRig}$:
\begin{equation}\label{eq:cartesiansquarerig}
    % https://q.uiver.app/#q=WzAsNCxbMCwwLCJYXlxccHJpbWUiXSxbMSwwLCJZXlxccHJpbWUiXSxbMSwxLCJZIl0sWzAsMSwiWCJdLFszLDIsImYiXSxbMSwyLCJnIiwyXSxbMCwxLCJmXlxccHJpbWUiXSxbMCwzLCJnXlxccHJpbWUiLDJdLFswLDIsIiIsMSx7InN0eWxlIjp7Im5hbWUiOiJjb3JuZXItaW52ZXJzZSJ9fV1d
\begin{tikzcd}[cramped]
	{X^\prime} & {Y^\prime} \\
	X & Y
	\arrow["{f^\prime}", from=1-1, to=1-2]
	\arrow["{g^\prime}"', from=1-1, to=2-1]
	\arrow["\ulcorner"{anchor=center, pos=0.125}, draw=none, from=1-1, to=2-2]
	\arrow["g"', from=1-2, to=2-2]
	\arrow["f", from=2-1, to=2-2]
\end{tikzcd}
\end{equation}
the fact that pullbacks are compatible with composition and left adjoint to pushforwards implies that there is a Beck-Chevalley transformation
\begin{equation}\label{eq:basechangedRig}
    g^*f_* \to f^\prime_* g^{\prime, *}. 
\end{equation}
\begin{lem}\label{lem:initialbasechangeprojformula}
With notations as above, assume that the morphism $f:X \to Y$ is \emph{qcqs}, c.f. Definition \ref{defn:quasicompactquasiseparated}. Then: 
\begin{enumerate}[(i)]
    \item (Base change). The morphism \eqref{eq:basechangedRig} is an equivalence, for any $g: Y^\prime \to Y$. 
    \item (Projection formula). The morphism \eqref{eq:projectionformuladRig} is an equivalence. 
    \item The functor $f_*$ commutes with all colimits.
\end{enumerate}
\end{lem}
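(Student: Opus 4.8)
The strategy is to reduce all three statements to the affinoid case, where they are Lemma \ref{lem:basechangeaffinoid}, by working locally on the target $Y$ and then, for the base-change statement, also locally on $Y^\prime$; the quasi-compactness of $f$ is what allows the various \v{C}ech/descent limits to be \emph{finite} and hence to interact well with colimits, and quasi-separatedness lets us iterate the argument on overlaps.

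First I would record the reduction principle: since $\operatorname{QCoh}$ is a sheaf in the strong topology, for any analytic cover $\{V_\alpha \hookrightarrow Y\}$ with pullbacks $f_\alpha : X\times_Y V_\alpha \to V_\alpha$, a natural transformation between colimit-preserving functors out of $\operatorname{QCoh}(Y)$ is an equivalence iff its restriction to each $V_\alpha$ is, because $\operatorname{QCoh}(Y) \xrightarrow{\sim} \varprojlim \operatorname{QCoh}(V_\bullet)$ and all functors in sight commute with the relevant pullbacks $j_\alpha^*$ (for $f_*$ this uses that $j_\alpha$ is an open immersion, so $j_\alpha^! \simeq j_\alpha^*$ and base change along the square defining $f_\alpha$ is formal from adjunctions once we know the statement — so one has to be slightly careful and instead argue that the relevant Beck--Chevalley maps are compatible and conclude by descent; alternatively one proves (i) and (iii) together so the bootstrap is clean). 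Concretely: choose an affinoid cover $\{U_i\}$ of $Y$; by Lemma \ref{lem:qcproperties}(iii) and Lemma \ref{lem:qsproperties1}(iii) each $X\times_Y U_i$ is qcqs, so we reduce to $Y = \operatorname{dSp}(A)$ affinoid.

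With $Y$ affinoid, since $f$ is quasi-compact we may choose a \emph{finite} affinoid cover $\{W_1,\dots,W_n\}$ of $X$; set $\mathcal{I}$ the poset of finite nonempty subsets of $\{1,\dots,n\}$ and $W_I := \bigcap_{i\in I} W_i$, so that by Remark \ref{rmk:colimitcovering}(i) one has $X \simeq \varinjlim_{I\in\mathcal{I}} W_I$ in $\mathsf{dRig}$, and by quasi-separatedness each $W_I$ is again qcqs. By Remark \ref{rmk:QCohKanandColim}(ii), $\operatorname{QCoh}(X) \xrightarrow{\sim}\varprojlim_{I\in\mathcal{I}}\operatorname{QCoh}(W_I)$, and $f_*$ is the (finite) limit of the pushforwards $f_{I,*}$ along $W_I \to Y$. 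Now induct: the $W_I$ which are affinoid (the ``deepest'' intersections, or one can set up a dimension/length induction on $|I|$ using quasi-separatedness to reduce overlaps to affinoids) satisfy (i),(ii),(iii) by Lemma \ref{lem:basechangeaffinoid}. For the inductive step, (iii) follows because a \emph{finite} limit of colimit-preserving functors between stable categories is colimit-preserving (finite limits commute with filtered colimits, and with finite colimits by stability). Given (iii) for $f$, the base-change map \eqref{eq:basechangedRig} and the projection map \eqref{eq:projectionformuladRig} are maps of colimit-preserving functors; writing $f_* \simeq \varprojlim_I f_{I,*}$ (finite limit) and using that $g^*$, $\widehat{\otimes}$, and the exceptional operations all commute with finite limits on the relevant variable, the maps \eqref{eq:basechangedRig}, \eqref{eq:projectionformuladRig} for $f$ are obtained as the finite limit of the corresponding maps for the $f_I$, which are equivalences by induction. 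Hence (i) and (ii) hold for $f$.

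The main obstacle is bookkeeping the compatibility of the Beck--Chevalley and projection-formula transformations with the \v{C}ech limit $\operatorname{QCoh}(X)\simeq\varprojlim_I\operatorname{QCoh}(W_I)$: one must check that the canonical map for $f$ really is the limit of the canonical maps for the $f_I$, i.e. that the mate/unit--counit calculus is natural in $I$. This is a standard but genuinely fiddly diagram chase; the key points making it go through are that $f_*$ commutes with finite limits (proved first, in (iii)), that for an analytic/open immersion $j$ one has $j^!\simeq j^*$ so the comparison squares for the $W_I \hookrightarrow X$ have exact horizontal maps on both sides, and that everything in sight ($g^*$, $-\widehat\otimes_X f^* N$) is exact and commutes with the finite limits indexing the \v{C}ech nerve. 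Once this naturality is in hand, (i) and (ii) drop out formally, completing the proof.
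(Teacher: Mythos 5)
Your overall scheme --- localize on the target to reach affinoid $Y$, use quasi-compactness of $f$ to get finite \v{C}ech limits $f_* \simeq \varprojlim_I f_{I,*}t_I^*$, commute those finite limits with $g^*$, $\widehat{\otimes}$ and colimits by stability, and feed in Lemma \ref{lem:basechangeaffinoid} --- has the same shape as the paper's proof. But there is a genuine gap at the very first step: restricting the statement to an affinoid $U \subseteq Y$ requires knowing that $t_U^* f_* \simeq f_{U,*}\,t_{U}^{\prime,*}$, i.e.\ base change for $f_*$ along the open immersion $t_U$ (and likewise on the $Y^\prime$-side for $f^\prime_*$). This is not formal: it is exactly a special case of the statement being proven, and the paper isolates it as Lemma \ref{lem:commutewithrestrictions} (``$f_*$ commutes with restrictions''), proved by its own five-step bootstrap on the structure of both $X$ \emph{and} $Y$ (affinoid over affinoid; subspace of an affinoid over affinoid; arbitrary over affinoid; arbitrary over a subspace of an affinoid; general), using descent and Cartesian-section arguments rather than adjunction formalities. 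Your suggested ways around this do not close it: $j^!\simeq j^*$ for open immersions is not available at this point ($f^!$ is only defined \emph{after} this Lemma, via part (iii) and the adjoint functor theorem, and the identification $j^!\simeq j^*$ is established in \S\ref{sec:localcoh} and \S\ref{subsec:Zariskiopen} using the six-functor formalism whose construction rests on this very Lemma, so the appeal is circular); ``argue that the relevant Beck--Chevalley maps are compatible and conclude by descent'' is precisely the assertion to be proven; and proving (i) and (iii) together does not help, because the restriction property is needed before any localization on $Y$ can be performed at all --- note that (iii) itself needs it, since quasi-compactness of $f$ only gives finite affinoid covers of the fibers $X\times_Y U$ over quasi-compact pieces of $Y$, not of $X$ itself.

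A secondary, more minor point: after reducing to $Y$ affinoid, the intersections $W_I$ of an affinoid cover of $X$ are in general not affinoid (contrary to your remark about the ``deepest'' intersections), only quasi-compact analytic subspaces of affinoids; your parenthetical about a length induction points in the right direction, and the paper resolves it by inserting the intermediate case ``$X$ an analytic subspace of an affinoid'', where a finite \emph{rational} cover has affinoid intersections (Step 2 of the proof of (i)), before treating general $X$ (Step 3). Once the commutation with restrictions is in place, the rest of your argument (finite limits, stability, the affinoid lemma) does go through essentially as in the paper.
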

Before proving Lemma \ref{lem:initialbasechangeprojformula}, we prove an intermediate Lemma.
\begin{lem}\label{lem:commutewithrestrictions}
Let $f: X \to Y$ be a qcqs morphism in $\mathsf{dRig}$. Then $f$ \emph{commutes with restrictions}, that is, if $g: Y^\prime \to Y$ is an analytic subspace of $Y$, then the Beck-Chevalley morphism \eqref{eq:basechangedRig} is an equivalence. 
\end{lem}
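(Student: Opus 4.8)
The plan is to reduce the Beck--Chevalley statement to the case of a Cartesian square of \emph{affinoids}, which is Lemma \ref{lem:basechangeaffinoid}(i), by a d\'evissage that genuinely uses the quasi-compactness and quasi-separatedness of $f$.

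The first step is a sequence of reductions via the sheaf property of $\operatorname{QCoh}$ in the strong topology. The assertion is local on the target: choosing an affinoid cover $\{V_k\}$ of $Y$, using $\operatorname{QCoh}(Y)\simeq\varprojlim\operatorname{QCoh}(V_{k_0}\cap\dots\cap V_{k_m})$ and the fact that $f_*$, being a right adjoint, commutes with this limit, one checks that the morphism $g^* f_*\to f'_* g'^*$ is an equivalence as soon as it is so after restriction along each $g^{-1}(V_k)\hookrightarrow Y'$; as $f|_{V_k}\colon X\times_Y V_k\to V_k$ is again qcqs, this lets us assume $Y=\operatorname{dSp}(A)$ is affinoid. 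Next one writes the analytic subspace $Y'\subseteq Y$ as $\bigcup_j Y'_j$ with the $Y'_j$ rational subdomains of $Y$; then every finite intersection $Y'_J$ is again a rational subdomain of $A$ (finite intersections of rational subdomains are rational, by stability of $\mathscr{L}$ under composition and base change, Lemma \ref{lem:Lclassproperties}), and from $\operatorname{QCoh}(Y')\simeq\varprojlim_J\operatorname{QCoh}(Y'_J)$ one sees that the transformation for $g$ is assembled from the transformations for the rational subdomains $Y'_J\hookrightarrow Y$, the compatibility over the (rational) transition maps being an instance of the rational case for the qcqs morphisms $X\times_Y Y'_J\to Y'_J$. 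Hence it suffices to treat a rational subdomain $g$ with $Y$ affinoid.

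The second step is the heart of the argument. With $Y=\operatorname{dSp}(A)$ affinoid and $g$ a rational subdomain, $X$ is quasi-compact (since $f$ is), so we may induct on the least $n$ for which $X$ admits a finite affinoid cover $\{X_i\}_{i=1}^n$. When $n=1$, i.e.\ $X$ affinoid, this is exactly Lemma \ref{lem:basechangeaffinoid}(i). For the inductive step set $X_{\ge 2}=X_2\cup\dots\cup X_n$, so that $X\simeq X_1\sqcup_{X_1\cap X_{\ge 2}}X_{\ge 2}$ as subobjects of $X$; applying Remark \ref{rmk:QCohKanandColim}(ii) turns this colimit into a pullback of categories, so any $M\in\operatorname{QCoh}(X)$ is recovered as a pullback, taken in $\operatorname{QCoh}(X)$, of the pushforwards to $X$ of its restrictions to $X_1$, $X_{\ge 2}$ and $X_1\cap X_{\ge 2}$. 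Now $f_*$ commutes with this finite limit (it is a right adjoint), and, \emph{crucially}, so does $g^*$: being a left adjoint between stable $\infty$-categories it is exact, hence preserves finite limits (and the same holds after base change along $g$). Therefore the Beck--Chevalley morphism for $f$ is the corresponding pullback of the Beck--Chevalley morphisms for $f|_{X_1}$, $f|_{X_{\ge 2}}$ and $f|_{X_1\cap X_{\ge 2}}$. The first is the base case, the second is the inductive hypothesis, and for the third one runs an entirely analogous (and simpler) induction: since $f$ is quasi-separated, $X_1\cap X_{\ge 2}$ is a finite union of rational subdomains of the affinoid $X_1$, and Mayer--Vietoris over such a cover --- all of whose finite intersections are again rational subdomains of $X_1$, hence affinoid --- once more bottoms out at Lemma \ref{lem:basechangeaffinoid}(i).

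I expect the main difficulty to be organizational: carrying out the reductions of the first step without circularity, i.e.\ checking at each stage that the various pushforward functors really are compatible with the restriction functors occurring in the limit presentations $\operatorname{QCoh}(Y)\simeq\varprojlim\operatorname{QCoh}(V_\bullet)$ and $\operatorname{QCoh}(Y')\simeq\varprojlim\operatorname{QCoh}(Y'_J)$. Everything rests on the finiteness furnished by the qcqs hypothesis on $f$: it is what makes the \v{C}ech limits appearing in the d\'evissage \emph{finite}, hence preserved by the exact but not limit-preserving functor $g^*$, and it is what makes the inductions on the sizes of affinoid (respectively rational) covers terminate.
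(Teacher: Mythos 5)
Your second step --- the affinoid-target case, by induction on the size of a finite affinoid cover of the source, using that $g^*$ is exact and hence preserves the finite Mayer--Vietoris limits, with the overlaps handled via quasi-separatedness --- is sound, and is essentially the paper's Steps 1--3 in a two-piece-induction packaging. The genuine gap is in your first reduction, ``the assertion is local on the target''. To check the Beck--Chevalley map \eqref{eq:basechangedRig} after restriction along $g^{-1}(V_k)\hookrightarrow Y'$ you must identify the restricted map with the Beck--Chevalley map of $f_k\colon X\times_Y V_k\to V_k$, and this forces you to commute both $f_*$ and $f'_*$ past restriction: you need $(f_*M)|_{V_k}\simeq f_{k,*}\bigl(M|_{X\times_YV_k}\bigr)$ and the analogous identification for $f'_*$ over $Y'\times_YV_k$. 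These are themselves instances of the lemma (base-change along the analytic subspaces $V_k\hookrightarrow Y$ and $Y'\times_YV_k\hookrightarrow Y'$), so, performed as the opening move, the reduction begs the question. The phrase ``$f_*$, being a right adjoint, commutes with this limit'' does not justify it: the limit $\operatorname{QCoh}(Y)\simeq\varprojlim\operatorname{QCoh}(V_\bullet)$ is a limit of categories, not of objects, and right-adjointness of $f_*$ gives no identification of $(f_*M)|_{V_k}$ with a pushforward computed over $V_k$.

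The circularity is broken by reversing the order of the d\'evissage, which is what the paper does: first the affinoid-target case (your step 2; all covers taken of $Y'$ or of the source, never of $Y$), then targets that are analytic subspaces of affinoids, then general targets. In the latter two stages one does not restrict $f_*$ along the cover; instead one shows that the locally defined pushforwards $(f_{J,*}M_J)_J$ form a Cartesian section --- the compatibilities over the overlaps being instances of the previously established cases --- and identifies the glued functor with $f_*$ by uniqueness of right adjoints. Your ``second reduction'' (rational cover of $Y'$ inside an affinoid $Y$) is exactly this move and is fine once step 2 is in place, but your plan is missing the intermediate case: when you cover an arbitrary $Y$ by affinoids $V_k$, gluing $f'_*$ over the induced cover $\{Y'\times_YV_k\}$ of $Y'$ requires base-change for morphisms whose targets are the $Y'\times_YV_k$, which are analytic subspaces of affinoids but need not be affinoid (and, since $Y$ is not assumed quasi-separated, the overlaps $V_k\cap V_l$ need not even be quasi-compact). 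So the affinoid-target case, even for arbitrary analytic subspaces $g$, does not by itself license the first reduction. You flagged this as an ``organizational'' difficulty, but it is where the remaining mathematical content of the lemma sits; it is resolved by the paper's Steps 4--5, not by bookkeeping.
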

\begin{proof}
The proof is exactly the same as \cite[Lemma 2.4.16]{mann_p-adic_2022}; we reproduce it here for the reader's convenience. We proceed in stages.

\textit{Step 1}: First assume that $X$ and $Y$ are both derived affinoids. One chooses a cover $\{V_j^\prime \to Y^\prime\}_{j \in \mathscr{J}}$ of $Y^\prime$ by rational subspaces of $Y$. Let $\mathcal{J}$ be the family of finite nonempty subsets of $\mathscr{J}$ and for each $J \in \mathcal{J}$ set $V_J^\prime := \bigcap_{j \in J} V_j^\prime$ and $U_J^\prime := X^\prime \times_{Y^\prime} V_J^\prime$. Set $g_J: V_J^\prime \to Y$ and $g_J^\prime: U_J^\prime \to X$ to be the restrictions and let $f_J^\prime: U_J^\prime \to V_J^\prime$ be the base-change of $f^\prime$. Each $U_J^\prime$ and $V_J^\prime$ is affinoid, and hence by descent and Lemma \ref{lem:basechangeaffinoid} one has 
\begin{equation}
    g^*f_* \simeq \underset{J \in \mathcal{J}}{\operatorname{lim}} g_J^* f_* \simeq \underset{J \in \mathcal{J}}{\operatorname{lim}} f_{J,*}^\prime g_J^{\prime,*} \simeq f^\prime_* g^{\prime,*},
\end{equation}
proving the Lemma in this case. 

\textit{Step 2}: Now assume that $Y$ is affinoid and $X$ is an analytic subspace of an affinoid space $Z$. Let $\{U_i \to X\}_{i=1}^n$ be a covering by rational subspaces of $Z$ and let $\mathcal{I}$ be the family of all finite nonempty subsets $I \subseteq \{1,\dots, n\}$. For $I \in \mathcal{I}$ we set $U_I := \bigcap_{i \in I}U_i$. According to Remark \ref{rmk:colimitcovering}(i) then $f_* \xrightarrow[]{\sim} \operatorname{lim}_{I \in \mathcal{I}}f_{I,*}$ where $f_I : U_I \to X$ is the restriction. Using that $g^*$ commutes with finite limits, since we are working with stable $\infty$-categories, one deduces base-change in this case from Step 1. 

\textit{Step 3}: Now assume that $Y$ is affinoid and $X$ is arbitrary. Choose a finite covering $\{U_i \to X\}_{i=1}^n$ by affinoid subspaces, let $\mathcal{I}$ be as before and for each $I \in \mathcal{I}$ set $U_I := \bigcap_{i \in I} U_i$. This is an analytic subspace of an affinoid space and we again have $f_* \xrightarrow[]{\sim} \operatorname{lim}_{I \in \mathcal{I}}f_{I,*}$. Again using that $g^*$ commutes with finite limits, one deduces base-change in this case from Step 2.

\textit{Step 4}: Now assume that $Y$ is an analytic subspace of an affinoid space $Z$ and $X$ is arbitrary. One chooses a cover $\{V_j \to Y\}_{j \in \mathscr{J}}$ of  $Y$ by rational subspaces of $Z$. Let $\mathcal{J}$ be the family of finite nonempty subsets of $\mathscr{J}$ and for each $J \in \mathcal{J}$ set $V_J := \bigcap_{j \in J} V_j$ and $U_J := X \times_Y V_J$ and let $f_J: U_J \to V_J$ be the base-change of $f$. Each $V_J$ is affinoid and hence by Step 3 each $f_J$ commutes with restrictions. Therefore for each Cartesian section $(M_J)_{J \in \mathcal{J}} \in \operatorname{lim}_{J \in \mathcal{J}} \operatorname{QCoh}(U_J) \simeq \operatorname{QCoh}(X)$ one obtains a Cartesian section $(f_{J,*}M_J)_{J \in \mathcal{J}} \in \operatorname{lim}_{J \in \mathcal{J}} \operatorname{QCoh}(V_J) \simeq \operatorname{QCoh}(Y)$. The functor obtained in this way agrees with $f_*$. In particular, since the covering of $Y$ was arbitrary this implies (by definition of a Cartesian section), that $f_*$ commutes with restrictions. 

\textit{Step 5}: For arbitrary $X$, $Y$ one can take a cover $\{V_j \to Y\}_{j \in \mathcal{J}}$ of $Y$ by affinoid subspaces. Let $\mathcal{J}$ be the family of finite subsets of $\mathscr{J}$ and for each $J \in \mathcal{J}$ set $V_J := \bigcap_{j \in J} V_j$ and $U_J := X \times_Y V_J$ and let $f_J: U_J \to V_J$ be the base-change of $f$. Each $V_J$ is an analytic subspace of an affinoid space and hence by Step 4 each $f_J$ commutes with restrictions. Hence by the same reasoning as in Step 4, $f_*$ commutes with restrictions. 
\end{proof}
\begin{proof}[Proof of Lemma \ref{lem:initialbasechangeprojformula}]
(i): This is proved in exactly the same way as \cite[Proposition 2.4.21]{mann_p-adic_2022}. We reproduce the proof here for the reader's convenience. We proceed in steps.

\textit{Step 1}: Take affinoid coverings $\mathcal{U}$ of $Y$ and $\mathcal{U}^\prime$ of $Y^\prime$ such that $\mathcal{U}^\prime$ refines the pullback of $\mathcal{U}$ along $g$. For each $U^\prime \in \mathcal{U}$ let $t_{U^\prime}: U^\prime \to Y^\prime$ be the inclusion. By descent, the collection of pullback functors $\{t_{U^\prime}^*: U^\prime \in \mathcal{U}^\prime\}$ is conservative. Hence it suffices to check \eqref{eq:basechangedRig} after applying each $t_{U^\prime}^*$. By the commutation with restrictions proven in Lemma \ref{lem:commutewithrestrictions}, this reduces the proof of the Lemma to the case when $Y$ and $Y^\prime$ are both affinoid. In the remainder of the proof we will make this assumption. In particular this implies that $X$ and $X^\prime$ are both qcqs. 

\textit{Step 2}: Suppose that $X$ is an analytic subspace of an affinoid space $Z$. Choose a finite covering $\{U_i \to X\}_{i=1}^n$ of $X$ by rational subspaces of $Z$. Let $\mathcal{I}$ be the family of finite nonempty subsets of $\{1,\dots,n\}$. For each $I \in \mathcal{I}$ set $U_I := \bigcap_{j \in I} U_j$ and $U_I^\prime := U_I \times_Y Y^\prime$. By descent one has $f_* \xrightarrow[]{\sim} \operatorname{lim}_{I \in \mathcal{I}} f_{I,*}$ and  $f_*^\prime \xrightarrow[]{\sim} \operatorname{lim}_{I \in \mathcal{I}} f_{I,*}^\prime$. Each $U_I$ and $U_I^\prime$ is affinoid. Hence, using that $g^*$ commutes with finite limits, and Lemma \ref{lem:basechangeaffinoid}, one has 
\begin{equation}
    g^* f_* \simeq \underset{I \in \mathcal{I}}{\operatorname{lim}} g^*f_{I,*} \simeq \underset{I \in \mathcal{I}}{\operatorname{lim}} f_{I,*}^\prime g^{\prime,*} \simeq f^\prime_* g^{\prime,*},
\end{equation}
proving the Lemma in this case.

\textit{Step 3}: Suppose now that $X$ is arbitrary. Choose a finite covering $\{U_i \to X\}_{i=1}^n$ of $X$ by affinoid subspaces. Let $\mathcal{I}$ be the family of finite nonempty subsets of $\{1,\dots,n\}$. For each $I \in \mathcal{I}$ set $U_I := \bigcap_{j \in I} U_j$ and $U_I^\prime := U_I \times_Y Y^\prime$. By descent one has $f_* \xrightarrow[]{\sim} \operatorname{lim}_{I \in \mathcal{I}} f_{I,*}$ and  $f_*^\prime \xrightarrow[]{\sim} \operatorname{lim}_{I \in \mathcal{I}} f_{I,*}^\prime$. Each $U_I$ and $U_I^\prime$ is an analytic subspace of an affinoid space. Hence, using that $g^*$ commutes with finite limits, and Step 2, one has 
\begin{equation}
    g^* f_* \simeq \underset{I \in \mathcal{I}}{\operatorname{lim}} g^*f_{I,*} \simeq \underset{I \in \mathcal{I}}{\operatorname{lim}} f_{I,*}^\prime g^{\prime,*} \simeq f^\prime_* g^{\prime,*},
\end{equation}
proving the Lemma in this case.

(ii): Take affinoid coverings $\mathcal{V}$ of $Y$ and $\mathcal{U}$ of $X$ such that $\mathcal{U}$ refines the pullback of $\mathcal{V}$ along $f$. For each $U^\prime \in \mathcal{U}$ let $t_{U}: U \to X$ be the inclusion. By descent the collection of pullback functors $\{t_U^*: U \in \mathcal{U}\}$ is conservative. Hence it suffices to check that \eqref{eq:projectionformuladRig} is an equivalence after applying each $t^*_U$. By the commutation with restrictions proven in Lemma \ref{lem:commutewithrestrictions}, and using that pullback functors are symmetric-monoidal, this then reduces the Lemma to the case when $X$ and $Y$ are both affinoids, which is Lemma \ref{lem:basechangeaffinoid}. 

(iii): Let $(M_k)_{k \in \mathcal{K}}$ be a diagram in $\operatorname{QCoh}(X)$. We need to show that the canonical morphism
\begin{equation}\label{eq:colimK}
\underset{k \in \mathcal{K}}{\operatorname{colim}} f_* M_k \to f_* \underset{k \in \mathcal{K}}{\operatorname{colim}} M_k,
\end{equation}
is an equivalence. Using the same notations as in the proof of (ii), it suffices to check that \eqref{eq:colimK} is an equivalence after applying each $t_U^*$. By the commutation with restrictions proven in Lemma \ref{lem:commutewithrestrictions}, and using that each $t_U^*$ is colimit-preserving (it is a left adjoint), one reduces to the case when $X$ and $Y$ are both affinoids, which follows from Lemma \ref{lem:basechangeaffinoid}. 
\end{proof} 
\begin{cor}\label{cor:QCohqcqs}
The functor $\operatorname{QCoh}$ extends to a six-functor formalism 
\begin{equation}
    \operatorname{QCoh}: \operatorname{Corr}(\mathsf{dRig}, \mathrm{qcqs}) \to \mathsf{Pr}^{L, \otimes}_{\mathsf{st}}. 
\end{equation}
In this six-functor formalism every qcqs morphism $f$ satisfies $f_! = f_*$. 
\end{cor}
\begin{proof}
This is immediate from Lemma \ref{lem:initialbasechangeprojformula} and \cite[Proposition A.5.10]{mann_p-adic_2022}. To be clear, in the language of \emph{loc. cit.} we take the \emph{suitable decomposition} to be $(I,P) = (\text{equivalences},\text{qcqs})$. 
\end{proof}
\subsection{Six-functor formalism in derived rigid geometry}\label{sec:derivedrigidsix}
We will apply the formalism of \S\ref{subsec:sixf3} in the following set-up (with notations as in that section):
\begin{itemize}
    \item[$\star$] We take $\mathscr{V} := D(\mathsf{CBorn}_K)$, so that $\mathscr{E} := \mathsf{CAlg}(D(\mathsf{CBorn}_K))^\mathsf{op}$ and we consider $\mathsf{dAfnd} \subseteq \mathscr{E}$. We take $\tau := \mathrm{weak}$ to be the weak topology on $\mathsf{dAfnd}$.  
\end{itemize}
By Lemma \ref{lem:dAfndAlgbasic1}, the assumptions of \S\ref{subsec:sixf3} are satisfied; we obtain a six-functor formalism 
\begin{equation}\label{eq:sixfunctorShvweak}
    \operatorname{QCoh}: \operatorname{Corr}(\operatorname{Shv}_\mathrm{weak}(\mathsf{dAfnd}), E)^\otimes \to \mathsf{Pr}^{L,\otimes}_\mathsf{st}, 
\end{equation}
with the following properties:
\begin{itemize}
    \item[$\star$] The class $E \supseteq \mathrm{rep}$ is stable under disjoint unions, $*$-local on the target, $!$-local on the source, is tame, and satisfies $E \subseteq \delta E$. 
    \item[$\star$] Every morphism $f \in \mathrm{rep}$ satisfies $f_! \simeq f_*$ (Corollary \ref{cor:lowershriekcor}). 
\end{itemize}
The purpose of this section is to prove the following Theorem:
\begin{thm}\label{thm:belongstoE}
In the six-functor formalism \eqref{eq:sixfunctorShvweak}, every qcqs morphism $f: X \to Y$ of derived rigid varieties belongs to the class $E$. The restriction of this six-functor formalism along $\operatorname{Corr}(\mathsf{dRig}, \mathrm{qcqs}) \to  \operatorname{Corr}(\operatorname{Shv}_\mathrm{weak}(\mathsf{dAfnd}), E)$ is equivalent to the six-functor formalism constructed in Corollary \ref{cor:QCohqcqs}. In particular, for every qcqs morphism $f: X \to Y$ between objects of $\mathsf{dRig}$ there is a canonical equivalence $f_! \simeq f_*$. 
\end{thm}
The proof is postponed to the end of this subsection. Essentially, we want to prove that the six-functor formalism $\operatorname{QCoh}$ extends uniquely from $(\mathsf{dAfnd}, \mathrm{all})$ to $(\mathsf{dRig}, \mathrm{qcqs}$).  
\begin{lem}\label{lem:finiteuniv!descent}
Let $X \in \mathsf{dRig}$, and let $\{U_i \to X\}_{i=1}^n$ be a finite covering by analytic subspaces. Let $\mathcal{I}$ be the family of finite nonempty subsets of $\{1, \dots,n\}$ and for each $I \in \mathcal{I}$ set $U_I := \bigcap_{i \in I} U_i$. Assume that for each $I \in \mathcal{I}$, the morphism $U_I \to X$ is quasi-compact\footnote{It is automatically quasi-separated, being a monomorphism.}. Then the canonical morphism 
\begin{equation}
    \operatorname{QCoh}^!(X) \to \underset{I \in \mathcal{I}}{\operatorname{lim}} \operatorname{QCoh}^!(U_I)
\end{equation}
is an equivalence. Here we are using the six-functor formalism from Corollary \ref{cor:QCohqcqs}. 
\end{lem}
\begin{proof}
Let $t_I: U_I \to X$ be the inclusions. Since $\operatorname{QCoh}^*$ is a sheaf, we know that there is an equivalence of categories 
\begin{equation}
     \operatorname{QCoh}^*(X) \to \underset{I \in \mathcal{I}}{\operatorname{lim}} \operatorname{QCoh}^*(U_I)
\end{equation}
where the functor from left to right sends $M \to (t_I^*M)_I$ and the functor from right to left sends $(M_I)_I \to \operatorname{lim}_I t_{I,*}M_I$. In particular the counit $\operatorname{id}_X \to \operatorname{lim}_I t_{I,*}t_I^*$ is an equivalence. In order to prove the Lemma we need to show that $(t_I^!)_I$ induces an equivalence 
\begin{equation}
    \operatorname{QCoh}^!(X) \to \underset{I \in \mathcal{I}}{\operatorname{lim}} \operatorname{QCoh}^!(U_I).
\end{equation}
There is a natural adjunction in which this functor is right adjoint to $\operatorname{colim}_I t_{I,!}$. We note that all the $t_I$ are qcqs and therefore $t_{I,!} = t_{I,*}$.

Firstly, we will check that the counit is an equivalence. By the previous, we know that $1_X \xrightarrow[]{\sim} \operatorname{lim}_I t_{I,*} 1_{U_I}$. Let $M \in \operatorname{QCoh}(X)$. We have equivalences 
\begin{equation}
\begin{aligned}
        M &\simeq \underline{\operatorname{Hom}}_X(1_X,M)\\
        &\simeq \underset{I}{\operatorname{colim}}\underline{\operatorname{Hom}}_X(t_{I,*} 1_{U_I},M) \\
        &\simeq \underset{I}{\operatorname{colim}}t_{I,*}\underline{\operatorname{Hom}}_{U_I}(1_{U_I},t_I^!M) \\
        &\simeq  \underset{I}{\operatorname{colim}}t_{I,*}t_I^!M. 
\end{aligned}
\end{equation}
Here, in the second line we used the property of stable $\infty$-categories, and in the third line we used the identity which is adjoint to the projection formula, c.f. Remark \ref{rmk:sixfunctorsbasic}(ii). Therefore the counit is an equivalence. To check that the unit is an equivalence we need to show that, given $(M_I)_I$, for each $J \in \mathcal{I}$ the natural morphism $ M_J \to t_{J}^! \operatorname{colim}_{I} t_{I,*}M_I$
is an equivalence. Since the relevant $\infty$-categories are stable, we can exchange $t_I^!$ with this finite colimit, and then use base-change, whence the covering becomes split and the morphism is obviously an equivalence. 
\end{proof}
\begin{cor}\label{cor:finiteuniv!descent2}
With notations as in Lemma \ref{lem:finiteuniv!descent}. Set $Y := \coprod_{i=1}^n U_i$. Then, the canonical morphism
\begin{equation}
    \operatorname{QCoh}^!(X) \to \underset{[m] \in \Delta}{\operatorname{lim}}\operatorname{QCoh}^!(Y^{m+1/X})
\end{equation}
is an equivalence. In particular $t:= \coprod_{i=1}^n t_i: Y \to X$ is of universal $!$-descent. 
\end{cor}
\begin{proof}
This is quite similar to \cite[Proposition 2.6.3]{mann_p-adic_2022} which is a consequence of the Lurie-Beck-Chevalley condition \cite[Corollary 4.7.5.3]{HigherAlgebra}. 

We first check condition (1) in \cite[Corollary 4.7.5.3]{HigherAlgebra}. Let $(M_m)_{[m] \in \Delta_+^\mathsf{op}}$ be a $t^!$-split simplicial object in $\operatorname{QCoh}(X)$. In particular, it is $t_*t^!$-split and in fact $t_{I,*}t_I^!$-split for every $I \in \mathcal{I}$. This implies that $(\operatorname{sk}_k t_{I,*}t_I^! M_\bullet)_{k \geqslant0}$ is a constant Ind-object. By dual arguments to \cite[Proposition 3.10]{MathewGalois}, it then follows that $(\operatorname{sk}_k \operatorname{colim}_I t_{I,*}t_I^! M_\bullet)_{k \geqslant0}$ is a constant Ind-object. By Lemma \ref{lem:finiteuniv!descent} above this is equivalent to the Ind-object $(\operatorname{sk}_k M_\bullet)_{k \geqslant0}$. In this case it is obvious that $t^!$ commutes with the geometric realization of $M_\bullet$, since the relevant $\infty$-categories are stable. 

Now condition (2) in \cite[Corollary 4.7.5.3] {HigherAlgebra} follows from base-change, and conservativity of $t^!$ follows from Lemma \ref{lem:finiteuniv!descent}. Therefore, the result of \emph{loc. cit.} gives the desired equivalence.
\end{proof}
\begin{defn}
A morphism $f: X \to Y$ in $\operatorname{Shv}_{\mathrm{weak}}(\mathsf{dAfnd})$ is called \emph{semi-separated} if the diagonal $\Delta_f : X \to X \times_YX$ belongs to the class $\mathrm{rep}$ of representable morphisms. An object $X \in \operatorname{Shv}_{\mathrm{weak}}(\mathsf{dAfnd})$ is called \emph{semi-separated} if the morphism $X \to *$ is semi-separated. 
\end{defn}
\begin{prop}\label{prop:qcqsUnique}
The six-functor formalism $\operatorname{QCoh}$ extends uniquely from $(\mathsf{dAfnd}, \mathrm{all})$ to $(\mathsf{dRig}, \mathrm{qcqs})$. 
\end{prop}
\begin{proof}
By \cite[Proposition A.5.16]{mann_p-adic_2022} the six-functor formalism $\operatorname{QCoh}$ extends uniquely from $(\mathsf{dAfnd}, \mathrm{all})$ to $(\mathsf{qcssdRig}, \mathrm{rep})$, where $\mathsf{qcssdRig}$ denotes the category of quasi-compact and \emph{semi-separated} derived rigid spaces, i.e., those having representable diagonal. Now Corollary \ref{cor:finiteuniv!descent2} together with \cite[Proposition A.5.14]{mann_p-adic_2022} implies that $\operatorname{QCoh}$ further extends uniquely to a six-functor formalism on $(\mathsf{qcssdRig},\mathrm{all})$. To be clear, for the application of \cite[Proposition A.5.14]{mann_p-adic_2022} here, one takes the class $S$ of \emph{special covers} (in the language of \emph{loc. cit.}) to be the class of finite coverings by affinoids.

Now \cite[Proposition A.5.16]{mann_p-adic_2022} again implies that the six-functor formalism $\operatorname{QCoh}$ extends uniquely from $(\mathsf{qcssdRig},\mathrm{all})$ to $(\mathsf{qcqsdRig}, F)$ where $F$ is the collection of edges which are representable in $\mathsf{qcssdRig}$. Now Corollary \ref{cor:finiteuniv!descent2} again with \cite[Proposition A.5.14]{mann_p-adic_2022} implies that $\operatorname{QCoh}$ further extends uniquely to a six-functor formalism on $(\mathsf{qcqsdRig},\mathrm{all})$ (again, for the application of \cite[Proposition A.5.14]{mann_p-adic_2022}, one takes the class $S$ of \emph{special covers} to be the class of finite coverings by affinoids).  

Finally, an application of \cite[Proposition A.5.16]{mann_p-adic_2022} implies that $\operatorname{QCoh}$ extends uniquely from $(\mathsf{qcqsdRig},\mathrm{all})$ to $(\mathsf{dRig}, \mathrm{qcqs})$.
\end{proof}
\begin{prop}\label{prop:Econtainsqcqs}
In the six-functor formalism \eqref{eq:QCohshv}, the class $E$ contains all qcqs morphisms between derived rigid spaces. 
\end{prop}
\begin{proof}
Since the class $E$ is \emph{$*$-local on the target} one reduces immediately to the case when $f: X \to Y$ is a qcqs morphism with $Y$ affinoid. 

We claim that if $g:Z \to W$ is a quasi-compact and \emph{semi-separated} morphism of derived rigid varieties with affinoid target, there is a canonical equivalence $g_! \simeq g_*$ in this six-functor formalism. Indeed, because $\Delta_g \in \mathrm{rep}$, one has $\Delta_{g,!} \simeq \Delta_{g,*}$ and from this one obtains a canonical morphism $g_! \to g_*$, which is the adjoint to the composite
\begin{equation}
    g^*g_! \simeq \pi_{1,!} \pi_2^* \to \pi_{1,!} \Delta_{g,*} \Delta^*_g \pi_2^* \simeq \pi_{1,!} \Delta_{g,!} \Delta^*_g \pi_2^* \simeq \operatorname{id};
\end{equation}
here $\pi_1, \pi_2: Z \times_W Z \to Z$ are the projections, and we used base-change. Now this morphism $g_! \to g_*$ is an equivalence: by $*$-descent on the source, this follows if $g_*$ satisfies base-change, but we have already proved this in Lemma \ref{lem:initialbasechangeprojformula} above. 

Now take a finite cover $\{U_i \to X\}_i$ of $X$ by affinoids; then all the $t_I: U_I \to X$ are quasi-compact\footnote{And semi-separated because they are monomorphisms.}, hence by the above they satisfy $t_{I,!} \simeq t_{I,*}$, hence same argument as in Lemma \ref{lem:finiteuniv!descent} and Corollary \ref{cor:finiteuniv!descent2} above shows that $\{U_i \to X\}_i$ is of universal $!$-descent. Since the class $E$ is \emph{$!$-local on the source}, we conclude. 
\end{proof}
\begin{proof}[Proof of Theorem \ref{thm:belongstoE}]
By Proposition \ref{prop:Econtainsqcqs} above, we may consider the restriction from $(\operatorname{Shv}_\mathrm{weak}(\mathsf{dAfnd}), E)$ to $(\mathsf{dRig},\mathrm{qcqs})$. Now the Theorem follows immediately from the unicity proved in Proposition \ref{prop:qcqsUnique} together with Corollary \ref{cor:QCohqcqs}.  
\end{proof}
\subsection{Infinite covers of universal $!$-descent}\label{sec:univ!descent}
In the remainder of this section, we introduce the following notation. Let $X \in \mathsf{dRig}$ and let $S \subseteq |X|$ be a closed subset of the underlying topological space of $X$. Set $V := |X| \setminus S \subseteq |X|$, which we may identify with an analytic subspace $j: V \hookrightarrow X$. We define full subcategories 
\begin{equation}
\begin{aligned}
    \Gamma_S \operatorname{QCoh}(X) \subseteq \operatorname{QCoh}(X) && \text{ and } && \operatorname{L}_S \operatorname{QCoh}(X) \subseteq \operatorname{QCoh}(X),
\end{aligned}
\end{equation}
as the full subcategories spanned by objects $M$ such that $j^*M \simeq 0$, and $j^!M \simeq 0$, respectively. If we assume that $j$ is quasi-compact, so that $j_! = j_*$, then base-change implies that the inclusions of these full subcategories admit adjoints given by 
\begin{equation}
\begin{aligned}
    \Gamma_S := \operatorname{Fib}(\operatorname{id} \to j_*j^*) && \text{ and } &&  \operatorname{L}_S := \operatorname{Cofib}(j_!j^! \to \operatorname{id}).
\end{aligned}
\end{equation}
To be clear, $\Gamma_S$ is right adjoint to the inclusion $\Gamma_S \operatorname{QCoh}(X) \subseteq \operatorname{QCoh}(X)$ and $\operatorname{L}_S$ is left adjoint to the inclusion $\operatorname{L}_S \operatorname{QCoh}(X) \subseteq \operatorname{QCoh}(X)$. We note that $\Gamma_S$ is colimit-preserving and $\operatorname{L}_S$ is limit-preserving. 
\begin{lem}\label{lem:prosystems}
Let $X \in \mathsf{dRig}$. Suppose that we are given sequences $V_1 \supseteq V_2 \supseteq V_3 \supseteq \dots$ and $U_1 \subseteq U_2 \subseteq U_3 \subseteq \dots$ of analytic subspaces of $X$ such that:
\begin{itemize}
    \item[$\star$] For every $n \geqslant1$ the morphisms $V_n \to X$ and $U_n \to X$ are quasi-compact and one has $U_n \cup V_n = X$ and $U_n \cap V_{n+1} = \emptyset$.
\end{itemize}
Let us write $S_n := |X| \setminus V_n$ and $\Gamma_n \operatorname{QCoh}(X) := \Gamma_{S_n} \operatorname{QCoh}(X)$. Then, there is an equivalence of Pro-systems 
\begin{equation}
     \underset{n}{\prolim}\operatorname{QCoh}^!(U_n) \simeq  \underset{n}{\prolim} \Gamma_n \operatorname{QCoh}(X),
\end{equation}
which is compatible with the maps from $\operatorname{QCoh}(X)$.
\end{lem}
\begin{proof}
Let us write $j_n: V_n \to X$ and $t_n: U_n \to X$ for the inclusions. Let us also abbreviate $\Gamma_n := \Gamma_{S_n}$ and write $\operatorname{incl}_n$ for the inclusion of the full subcategory $\operatorname{\Gamma}_n \operatorname{QCoh}(X) \subseteq \operatorname{QCoh}(X)$. Consider the diagram
\begin{multline}
    \dots \leftarrow \Gamma_n \operatorname{QCoh}(X) \xleftarrow[]{\Gamma_n t_{n,*}} \operatorname{QCoh}(U_n) \xleftarrow[]{t_n^! \operatorname{incl}_{n+1}} \\ \Gamma_{n+1} \operatorname{QCoh}(X) \xleftarrow[]{\Gamma_{n+1}t_{n+1,*}} \operatorname{QCoh}(U_{n+1}) \leftarrow \dots
\end{multline}
We need to show that there are natural equivalences 
\begin{equation}
    t_n^! \operatorname{incl}_{n+1} \Gamma_{n+1} t_{n+1,*} \simeq t_{n,n+1}^!
\end{equation}
and 
\begin{equation}
  \Gamma_n t_{n,*} t_n^! \operatorname{incl}_{n+1} \simeq \Gamma_{n,n+1}.
\end{equation}
In order to do this, we first observe that the left adjoint to $t_n^! \operatorname{incl}_{n+1}$ is given by $t_{n,*}$ since this already factors through $\Gamma_{n+1}\operatorname{QCoh}(X)$: by base-change one has $j^*_{n+1} t_{n,*} \simeq 0$ as $U_n \cap V_{n+1} = \emptyset$. Therefore, by passing to adjoints, it is enough to show that
\begin{equation}\label{eq:t_n1}
    t_{n+1}^* \operatorname{incl}_{n+1} t_{n,*} \simeq t_{n,n+1,*}
\end{equation}
and
\begin{equation}\label{eq:t_n2}
    t_{n,*} t_n^* \operatorname{incl}_n \simeq \operatorname{incl}_{n,n+1}. 
\end{equation}
The equivalence \eqref{eq:t_n1} is a consequence of base-change. The equivalence \eqref{eq:t_n2} is a consequence of descent applied to the covering $X = V_n \cup U_n$.
\end{proof}
\begin{lem}\label{lem:prosystems2}
Let $X \in \mathsf{dRig}$. Suppose that we are given sequences $V_1 \supseteq V_2 \supseteq V_3 \supseteq \dots$ and $U_1 \subseteq U_2 \subseteq U_3 \subseteq \dots$ of analytic subspaces of $X$ such that:
\begin{itemize}
    \item[$\star$] For every $n \geqslant 1$ the morphisms $V_n \to X$ and $U_n \to X$ and $U_n \cap V_n \to X$ are quasi-compact, and one has $U_n \cup V_n = X$ and $U_n \cap V_{n+1} = \emptyset$.
\end{itemize}
Let us write $S_n := |X| \setminus V_n$ and $\operatorname{L}_n \operatorname{QCoh}(X) := \operatorname{L}_{S_n} \operatorname{QCoh}(X)$. Then, there is an equivalence of Pro-systems 
\begin{equation}
     \underset{n}{\prolim}\operatorname{QCoh}^*(U_n) \simeq  \underset{n}{\prolim} \operatorname{L}_n \operatorname{QCoh}(X),
\end{equation}
which is compatible with the maps from $\operatorname{QCoh}(X)$.
\end{lem}
\begin{proof}
Let us write $j_n: V_n \to X$ and $t_n: U_n \to X$ for the inclusions. Let us also abbreviate $\operatorname{L}_n := \operatorname{L}_{S_n}$ and write $\operatorname{incl}_n^\prime$ for the inclusion of the full subcategory $\operatorname{L}_n \operatorname{QCoh}(X) \subseteq \operatorname{QCoh}(X)$. Consider the diagram
\begin{multline}
    \dots \leftarrow \operatorname{L}_n \operatorname{QCoh}(X) \xleftarrow[]{\operatorname{L}_n t_{n,*}} \operatorname{QCoh}(U_n) \xleftarrow[]{t_n^* \operatorname{incl}_{n+1}^\prime} \\ \operatorname{L}_{n+1} \operatorname{QCoh}(X) \xleftarrow[]{\operatorname{L}_{n+1}t_{n+1,*}} \operatorname{QCoh}(U_{n+1}) \leftarrow \dots
\end{multline}
We need to show that there are natural equivalences 
\begin{equation}
    t_n^* \operatorname{incl}^\prime_{n+1} \operatorname{L}_{n+1}t_{n+1,*} \simeq t_{n,n+1}^*
\end{equation}
and
\begin{equation}
    \operatorname{L}_n t_{n,*} t_n^* \operatorname{incl}^\prime_{n+1} \simeq \operatorname{L}_{n,n+1}.
\end{equation}
In order to do this, we first observe that the right adjoint to $t_n^* \operatorname{incl}^\prime_{n+1}$ is given by $t_{n,*}$, since this already factors through $\operatorname{
L}_{n+1} \operatorname{QCoh}(X)$: base-change one has $j_{n+1}^!t_{n,*} \simeq 0$ as $U_n \cap V_{n+1} = \emptyset$. Therefore, by passing to adjoints, it is enough to show that
\begin{equation}\label{eq:tnn+1equiv}
    t_{n+1}^! \operatorname{incl}_{n+1}^\prime t_{n,*} \simeq t_{n,n+1,*} 
\end{equation}
and
\begin{equation}\label{eq:inclnprimeequiv}
    t_{n,*}t_n^! \operatorname{incl}_n^\prime \simeq \operatorname{incl}_{n,n+1}^\prime.
\end{equation}
The equivalence \eqref{eq:tnn+1equiv} is an consequence of base-change. The equivalence \eqref{eq:inclnprimeequiv} is a consequence of Lemma \ref{lem:finiteuniv!descent} applied to the covering $X = V_n \cup U_n$.
\end{proof}
\begin{prop}\label{prop:countableshriekdescent}
Let $X \in \mathsf{dRig}$. Suppose that we are given sequences $V_1 \supseteq V_2 \supseteq V_3 \supseteq \dots$ and $U_1 \subseteq U_2 \subseteq U_3 \subseteq \dots $ of analytic subspaces of $X$ such that, 
\begin{itemize}
    \item[$\star$] For every $n \geqslant1$ the morphisms $V_n \to X$ and $U_n \to X$ are quasi-compact and one has $U_n \cup V_n = X$ and $U_n \cap V_{n+1} = \emptyset$.
    \item[$\star$] One has $\bigcup_{n \geqslant1} U_n = X$. 
\end{itemize}
Then, the natural morphism $\operatorname{QCoh}^!(X) \to \operatorname{lim}_{n} \operatorname{QCoh}^!(U_n)$ is an equivalence. 
\end{prop}
\begin{proof}
We use notations as in Lemma \ref{lem:prosystems}. We claim that the canonical morphism 
\begin{equation}\label{eq:Gammanequiv}
    \operatorname{QCoh}(X) \to \underset{n}{\operatorname{lim}} \Gamma_n \operatorname{QCoh}(X) 
\end{equation}
sending $M \mapsto (\Gamma_n M)_n$, is an equivalence. This functor is right adjoint to the functor sending $(M_n)_n \mapsto \operatorname{colim}_n M_n$. Let $M \in \operatorname{QCoh}(X)$. We first check that the counit morphism
\begin{equation}\label{eq:gammancounit}
  \underset{n}{\operatorname{colim}} \operatorname{incl}_n\Gamma_n M \to M
\end{equation}
is an equivalence. By descent, it suffices to check this after applying $t_{m}^*$ for each $m \geqslant0$. By using that $t_m^*$ commutes with colimits, and that $U_m \cap V_{m+1} = \emptyset$, together with base-change, we see that \eqref{eq:gammancounit} is indeed an equivalence after applying $t_m^*$. Now we check the unit morphism. We need to show that for each Cartesian section $(M_n)_n$ belonging to the right side of \eqref{eq:Gammanequiv}, and each $m \geqslant0$, the morphism
\begin{equation}\label{eq:Gammanunitmorphism}
    M_m \to \Gamma_m \big(\underset{n}{\operatorname{colim}} \operatorname{incl}_n M_n\big),
\end{equation}
is an equivalence. This follows from the fact that $\Gamma_m$ commutes with colimits. Therefore \eqref{eq:Gammanequiv} is an equivalence, and we can appeal to Lemma \ref{lem:prosystems} to deduce the statement of the Proposition.
\end{proof}
\begin{cor}\label{cor:countableshriekdescent}
With notations as in Proposition \ref{prop:countableshriekdescent}. Set $Y := \coprod_{n \geqslant1} U_n$. Then, the canonical morphism 
\begin{equation}
    \operatorname{QCoh}^!(X) \to \underset{[m] \in \Delta}{\operatorname{lim}} \operatorname{QCoh}^!(Y^{m+1/X})
\end{equation}
is an equivalence. In particular $Y \to X$ is of universal $!$-descent.
\end{cor}
\begin{proof}
By using that coproducts in $\mathsf{dRig}$ are universal, and that $\operatorname{QCoh}^!$ commutes with products, this follows from Proposition \ref{prop:countableshriekdescent} and the Bousfield-Kan formula for $\infty$-categories \cite{MazelGeeGrothendieck}.
\end{proof}
\begin{example}\label{ex:Affineline!able}
Let $\varrho > 1, \varrho \in |K^\times|$ be an element of the valuation group of $K$, and let $X := \mathbf{A}^1_K := \operatorname{colim}_n \mathbf{D}^1_K(\varrho^n)$ be the rigid-analytic affine line, regarded as an object of $\mathsf{dRig}$. Here $\mathbf{D}_K(\varrho^n) := \operatorname{dSp}(K \langle \varrho^{-n} T \rangle)$ is the rigid-analytic disk of radius $\varrho^n$. With notations as in Proposition \ref{prop:countableshriekdescent}, one can take $U_n:= \mathbf{D}^1_K(\varrho^{2n})$ and $V_n:= \mathbf{A}_{[\varrho^{2n-1}, \infty)}$, and the hypotheses of the Proposition are satisfied, where the latter is the rigid-analytic annulus of radius $\geqslant\varrho^{2n-1}$. 
\end{example}
\subsection{Local cohomology}\label{sec:localcoh}
In this section, we will develop in different generality an idea that was used in \S\ref{sec:univ!descent}. A classical theory of local cohomology for rigid- and complex-analytic varieties was developed by Kisin in \cite{KisinLocalCohomology}. 

Let $X \in \mathsf{dRig}$ and let $S \subseteq |X|$ be a closed subset of the underlying topological space of $X$. Then $U := |X| \setminus S \subseteq |X|$ may be identified with an analytic subspace $j: U \hookrightarrow X$\footnote{Indeed, from now on, we may deliberately confuse open subsets $U \subseteq |X|$ with analytic subspaces $U \hookrightarrow X$.}. We define
\begin{equation}
    \Gamma_S \operatorname{QCoh}(X) \subseteq \operatorname{QCoh}(X)
\end{equation}
as the full subcategory of $\operatorname{QCoh}(X)$ spanned by objects $M$ such that $j^*M \simeq 0$. Since $j^*$ commutes with colimits, it follows that the inclusion $\operatorname{incl}_S$ of this full subcategory commutes with colimits, and therefore\footnote{Since the categories are presentable.} admits a right adjoint:
    \begin{equation}
    \operatorname{incl}_S : \Gamma_S\operatorname{QCoh}(X) \leftrightarrows \operatorname{QCoh}(X): \Gamma_S,
\end{equation}
which we have denoted by $\Gamma_S$. Now suppose in addition that $j$ belongs to the class $E$ of \S\ref{sec:derivedrigidsix} so that it is $!$-able. There is a canonical morphism $j^! \to j^*$, defined as the composite
\begin{equation}
    j^! \to j^!j_*j^* \simeq j^*,
\end{equation}
here the first morphism comes from the unit of $j^* \dashv j_*$ and the second is by base-change. \emph{In this section we will be interested in the situation when the canonical morphism $j^! \to j^*$ is an equivalence.}\footnote{Note that this is \emph{not} satisfied for the inclusions $j_n: U_n \to X$ appearing in the proof of Proposition \ref{prop:countableshriekdescent}.} Under this assumption, $\Gamma_S\operatorname{QCoh}(X)$ may equivalently be described as the full subcategory of $\operatorname{QCoh}(X)$ spanned by objects $M$ such that $j^!M \simeq 0$. Since $j^!$ commutes with limits, it follows that the inclusion $\operatorname{incl}_S$ also commutes with limits, and therefore admits a left adjoint:
\begin{equation}
    \operatorname{L}_S : \Gamma_S\operatorname{QCoh}(X) \leftrightarrows \operatorname{QCoh}(X): \operatorname{incl}_S,
\end{equation}
which we have denoted\footnote{I chose this notation because $\operatorname{L}$ looks like $\Gamma$ upside down, and also because the functor is a left adjoint.} by $\operatorname{L}_S$. Various formal properties of these functors are listed in the next Proposition.
\begin{prop}\label{prop:localcohformalproperties}
Let $X \in \mathsf{dRig}$ and let  $S \subseteq |X|$ be a closed subset. Set $U := |X| \setminus S$ and let $j: U \to X$ be the inclusion of the corresponding analytic subspace. Assume that $j \in E$ and that the canonical morphism $j^! \to j^*$ is an equivalence. Then:
\begin{enumerate}[(i)]
    \item There is an equivalence $\Gamma_S \simeq \operatorname{Fib}(\operatorname{id} \to j_*j^*)$.
    \item There are equivalences $\operatorname{L}_S \simeq \operatorname{Cofib}(j_!j^! \to \operatorname{id}) \simeq \operatorname{Cofib}(j_!j^! 1_X \to 1_X) \widehat{\otimes}_X \operatorname{id}$, so that the functor $\operatorname{L}_S$ is given by tensoring with the idempotent algebra object $\operatorname{Cofib}(j_!j^! 1_X \to 1_X)$. In particular $\Gamma_S\operatorname{QCoh}(X)$ is symmetric-monoidal, with the monoidal structure inherited from $\operatorname{QCoh}(X)$ and tensor-unit given by $\operatorname{Cofib}(j_!j^! 1_X \to 1_X)$. 
    \item In the sequence 
    \begin{equation}\label{eq:localcohomologyexactseq}
    \Gamma_S(\operatorname{QCoh}(X)) \xrightarrow[]{\operatorname{incl}_S} \operatorname{QCoh}(X) \xrightarrow[]{j^*} \operatorname{QCoh}(U)
    \end{equation}
the right adjoints\footnote{Note especially here, that we do not assert that the right adjoints $j_*$ and $\Gamma_S$ commute with colimits.} $\Gamma_S$ and $j_*$ satisfy $\Gamma_S\operatorname{incl}_S \simeq \operatorname{id}$ and $j^*j_* \simeq \operatorname{id}$.
\item In the sequence 
\begin{equation}\label{eq:inverseimageexactseq}
    \operatorname{QCoh}(U) \xrightarrow[]{j_!} \operatorname{QCoh}(X) \xrightarrow[]{\operatorname{L}_S} \Gamma_S\operatorname{QCoh}(X),
\end{equation}
the composite $\operatorname{L}_S j_! \simeq 0$, and the right adjoints $j^!$ and $\operatorname{incl}_S$ satisfy $j^!j_! \simeq \operatorname{id}$ and $\operatorname{L}_S \operatorname{incl}_S \simeq \operatorname{id}$. Moreover, the right adjoints $j^!$ and $\operatorname{incl}_S$ commute with colimits, so that \eqref{eq:inverseimageexactseq} is a split-exact sequence in the sense of \cite[Appendix B]{jiang_grothendieck_2023}.
\end{enumerate}
\end{prop}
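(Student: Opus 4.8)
The plan is to deduce everything by formal adjunction-chasing, using the standing hypothesis $j^{!}\simeq j^{*}$ to turn the (automatic) base-change isomorphisms for $j$ into full-faithfulness statements, and then to invoke Lurie's theory of idempotent objects for the symmetric monoidal assertion in (ii).

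\emph{Preliminaries.} As an open immersion, $j\colon U\to X$ is a monomorphism, so $U\times_X U\cong U$ with both projections the identity; feeding this Cartesian square into base-change (Remark~\ref{rmk:sixfunctorsbasic}(iii)) gives $j^{*}j_{!}\simeq\operatorname{id}$ and $j^{!}j_{*}\simeq\operatorname{id}$. Since $j^{!}\simeq j^{*}$ by hypothesis, these upgrade to $j^{*}j_{*}\simeq\operatorname{id}$ and $j^{!}j_{!}\simeq\operatorname{id}$, i.e.\ both $j_{*}$ and $j_{!}$ are fully faithful. I will also use freely that $\operatorname{incl}_S$ is fully faithful and is simultaneously a left adjoint (to $\Gamma_S$) and a right adjoint (to $\operatorname{L}_S$), hence preserves all limits and colimits; the last point already settles the clause in (iv) about $\operatorname{incl}_S$, and $j^{!}\simeq j^{*}$ being a left adjoint settles the clause about $j^{!}$.

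\emph{Parts (i) and (ii).} For (i) I would set $F:=\operatorname{Fib}(\operatorname{id}\to j_{*}j^{*})$ (unit map). Applying the exact functor $j^{*}$ and using the triangle identity together with $j^{*}j_{*}\simeq\operatorname{id}$, the map $j^{*}M\to j^{*}j_{*}j^{*}M$ is an equivalence, so $j^{*}F\simeq 0$ and $F$ factors as $\operatorname{incl}_S\circ\bar F$. A short mapping-space computation — $\operatorname{Map}(\operatorname{incl}_S N,j_{*}j^{*}M)\simeq\operatorname{Map}(j^{*}\operatorname{incl}_S N,j^{*}M)\simeq\ast$ for $N\in\Gamma_S\operatorname{QCoh}(X)$ — identifies $\bar F$ as a right adjoint of $\operatorname{incl}_S$, hence $\bar F\simeq\Gamma_S$. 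Part (ii) is dual: with $G:=\operatorname{Cofib}(j_{!}j^{!}\to\operatorname{id})$ (counit map), $j^{!}j_{!}\simeq\operatorname{id}$ and the triangle identity give $j^{!}G\simeq 0$, hence $j^{*}G\simeq 0$ by the standing hypothesis, so $G=\operatorname{incl}_S\circ\bar G$ with $\bar G$ a left adjoint of $\operatorname{incl}_S$, i.e.\ $\bar G\simeq\operatorname{L}_S$. For the second formula in (ii): since $j^{!}M\simeq j^{*}M$, the projection formula (Remark~\ref{rmk:sixfunctorsbasic}(i)) gives $j_{!}j^{!}M\simeq j_{!}1_U\,\widehat{\otimes}_X M$ compatibly with the counit, and $j_{!}1_U=j_{!}j^{!}1_X$, so passing the cofiber through $-\,\widehat{\otimes}_X M$ yields $G(M)\simeq\operatorname{Cofib}(j_{!}j^{!}1_X\to 1_X)\,\widehat{\otimes}_X M$. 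Abbreviating $\mathscr{E}:=\operatorname{Cofib}(j_{!}j^{!}1_X\to 1_X)\simeq\operatorname{L}_S 1_X$, this exhibits $\operatorname{incl}_S\operatorname{L}_S\simeq\mathscr{E}\,\widehat{\otimes}_X-$, a Bousfield localization endofunctor ($\operatorname{L}_S$ left adjoint to the fully faithful $\operatorname{incl}_S$) which is moreover smashing, being tensoring against $\mathscr{E}$. By the theory of idempotent objects \cite[\S 4.8.2]{HigherAlgebra}, $\mathscr{E}$ then carries a canonical commutative-algebra structure making it idempotent, and $\Gamma_S\operatorname{QCoh}(X)$ — the essential image of $\operatorname{incl}_S\operatorname{L}_S$ — is equivalent to $\operatorname{Mod}_{\mathscr{E}}(\operatorname{QCoh}(X))$, symmetric monoidal with unit $\mathscr{E}$ and $\otimes$ inherited from $\operatorname{QCoh}(X)$.

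\emph{Parts (iii) and (iv).} Here $\Gamma_S\operatorname{incl}_S\simeq\operatorname{id}$ and $\operatorname{L}_S\operatorname{incl}_S\simeq\operatorname{id}$ are the unit/counit equivalences attached to the fully faithful $\operatorname{incl}_S$; $j^{*}j_{*}\simeq\operatorname{id}$ and $j^{!}j_{!}\simeq\operatorname{id}$ are among the preliminaries; and $\operatorname{L}_S j_{!}\simeq 0$ because $\operatorname{incl}_S\operatorname{L}_S j_{!}M=\operatorname{Cofib}(j_{!}j^{!}j_{!}M\to j_{!}M)\simeq 0$ by $j^{!}j_{!}\simeq\operatorname{id}$ and the triangle identity. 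Together with the colimit-preservation of $j^{!}$ and $\operatorname{incl}_S$ noted above, these are precisely the axioms of a split-exact sequence of presentable stable categories in \cite[Appendix B]{jiang_grothendieck_2023}, so \eqref{eq:inverseimageexactseq} is split-exact. The only step that is not routine diagram-chasing is the idempotent-object identification in (ii); this is also the single place the projection formula enters, and once $\operatorname{incl}_S\operatorname{L}_S$ is recognised as tensoring against $\mathscr{E}$ the symmetric monoidal structure comes for free.
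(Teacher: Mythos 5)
Your proof is correct and follows essentially the same route as the paper: you identify $\operatorname{Fib}(\operatorname{id}\to j_*j^*)$ and $\operatorname{Cofib}(j_!j^!\to\operatorname{id})$ as the right, resp.\ left, adjoint of $\operatorname{incl}_S$ using the hypothesis $j^!\simeq j^*$ together with base-change, use the projection formula to recognise $\operatorname{incl}_S\operatorname{L}_S$ as tensoring with the idempotent object $\operatorname{Cofib}(j_!j^!1_X\to 1_X)$, and deduce (iii)--(iv) from the same base-change identities $j^*j_*\simeq\operatorname{id}$, $j^!j_!\simeq\operatorname{id}$. The only differences are cosmetic: you verify the adjoint identifications by a mapping-space/universal-property computation and cite \cite[\S 4.8.2]{HigherAlgebra} for the idempotent-algebra step, where the paper instead constructs the unit and counit and checks the zig-zag identities, and asserts the idempotent-monad argument directly.
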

\begin{proof}
(i): Let us temporarily denote $F:= \operatorname{Fib}(\operatorname{id} \to j_*j^*)$. We will show that 
\begin{equation}\label{eq:F_iproperty}
\begin{aligned}
    F \operatorname{incl}_S \simeq \operatorname{id} && \text{ and } && j^* F \simeq 0.
\end{aligned}
\end{equation}
By using that $j^! \xrightarrow[]{\sim} j^*$, and base-change, one has 
\begin{equation}
    j^*F \simeq \operatorname{Fib}(j^* \to j^*j_*j^*) \simeq \operatorname{Fib}(j^* \to j^!j_*j^*) \simeq \operatorname{Fib}(j^* \to j^*) \simeq 0. 
\end{equation}
By definition, we have $j^* \operatorname{incl}_S \simeq 0$, so $F \operatorname{incl}_S \simeq \operatorname{id}$. We can define a counit $\varepsilon : \operatorname{incl}_S F \to \operatorname{id}$ coming from that canonical morphism $\operatorname{Fib}(\operatorname{id} \to j_*j^*) \to \operatorname{id}$ and a unit morphism $\eta : \operatorname{id} \simeq F \operatorname{incl}_S$, and one can verify the zig-zag identities using \eqref{eq:F_iproperty}. Therefore, by the uniqueness of adjoints, we obtain $\Gamma_S \simeq  \operatorname{Fib}(\operatorname{id} \to j_*j^*)$. 

(ii): The proof that $\operatorname{L}_S \simeq \operatorname{Cofib}(j_!j^! \to \operatorname{id})$ is quite similar to the proof of (i) and so we omit it. For the second part, it suffices to show that there is an natural equivalence
\begin{equation}
    j_! j^! 1_X \widehat{\otimes}_X \operatorname{id} \xrightarrow[]{\sim} j_!j^!.
\end{equation}
Indeed, one has equivalences $j_!j^! 1_X \widehat{\otimes}_X \operatorname{id} \simeq j_!(j^!1_X \widehat{\otimes}_U j^*) \simeq j_!j^!$, where the first is the projection formula and the second is because $j^! \simeq j^*$ is symmetric-monoidal. Due to the fact that $\operatorname{incl}_S \Gamma_S$ is an idempotent monad we see that $\operatorname{Cofib}(j_!j^!1_X \to 1_X)$ is an idempotent algebra object and $\operatorname{incl}_S \Gamma_S$ is given by tensoring with this algebra object. It follows formally from this that $\Gamma_S\operatorname{QCoh}(X)$ is symmetric monoidal with monoidal structure inherited from $\operatorname{QCoh}(X)$ and tensor-unit given by $\operatorname{Cofib}(j_!j^!1_X \to 1_X)$.\footnote{This can be regarded as a ``categorification" of the following. Let $R$ be a commutative ring and let $e \in R$ be an idempotent. Then $I := \operatorname{ker}(R \to R_e)$ is a ring, isomorphic to $(1-e)R$, with multiplication inherited from $R$ and unit $1-e$.} 

(iii): The only thing to check is that $j^*j_* \to \operatorname{id}$ is an equivalence. As noted above this can be deduced from the equivalence $j^! \xrightarrow[]{\sim} j^*$ and base-change.

(iv): The identity $\operatorname{L}_S j_! \simeq 0$ follows by passing to left adjoints in $j^! \operatorname{incl}_S \simeq 0$. The formula $j^!j_! \simeq \operatorname{id}$ can be deduced from the equivalence $j^! \xrightarrow[]{\sim} j^*$ and base-change.
\end{proof}
Let $X \in \mathsf{dRig}$. In the remainder of this section, we will produce examples of inclusions $j: U \to X$ of analytic subspaces satisfying $j^! \xrightarrow[]{\sim} j^*$, and also give some different formulas for the functors $\Gamma_S$ and $\operatorname{L}_S$. 

\begin{prop}\label{prop:localcohincreasing}
Let $X \in \mathsf{dRig}$ and let $S \subseteq |X|$ be a closed subset of the underlying topological space. Set $U := |X|\setminus S \subseteq |X|$. Suppose that we are given sequences $V_1 \supseteq V_2 \supseteq \dots \supseteq S $ and $U_1 \subseteq U_2 \subseteq \dots \subseteq U$ of open subsets of $|X|$ such that:
\begin{itemize}
    \item[$\star$] For every $n \geqslant 1$ the morphisms $V_n \to X$ and $U_n \to X$ are quasi-compact and one has $U_n \cup V_n = X$ and $U_n \cap V_{n+1} = \emptyset$.
    \item[$\star$] One has $\bigcup_{n \geqslant 1} U_n = U$. 
\end{itemize}
Then:
\begin{enumerate}[(i)]
    \item The morphism $j: U \to X$ belongs to the class $E$, i.e., it is $!$-able.
    \item Let $k_n: V_n \to X$ be the inclusions.
    Then:
    \begin{enumerate}
        \item There is an equivalence of functors $\operatorname{colim}_n k_{n,*}k_n^* \simeq \operatorname{Cofib}(j_! j^! \to  \operatorname{id})$.
        \item There is an equivalence of functors $\operatorname{lim}_n k_{n,*}k_n^! \simeq \operatorname{Fib}(\operatorname{id} \to j_*j^*)$.
    \end{enumerate}
    \item The canonical morphism $j^! \to j^*$ is an equivalence. 
\end{enumerate}
\end{prop}
Before the proof, we give an example.
\begin{example}\label{ex:zariskiopen}
Let $X = \operatorname{dSp}(A) \in \mathsf{dRig}$ be an affinoid and let $I \subseteq \pi_0A$ be an ideal. Choose generators $f_1,\dots,f_k$ for $I$. By using the homeomorphism $r: |\operatorname{Spa}(\pi_0A,(\pi_0A)^\circ)| \xrightarrow[]{\sim} |X|$ coming from Remark \ref{rmk:topolopgicalspaceremark}, we may define open subsets $U_n$ and $V_n$ of $|X|$ by:
\begin{equation}
\begin{aligned}
    r^{-1}V_n &:= \{ x \in |\operatorname{Spa}(\pi_0A,(\pi_0A)^\circ)|: |f_i|_x \leqslant  p^{-2n} \text{ for all }1 \leqslant  i \leqslant  k\},\\
    r^{-1}U_n &:= \{ x \in |\operatorname{Spa}(\pi_0A,(\pi_0A)^\circ)|: |f_i|_x \geqslant p^{-(2n+1)} \text{ for some }1 \leqslant  i \leqslant  k\}.
\end{aligned}
\end{equation}
Then: 
\begin{enumerate}[(i)]
    \item The sequences $\{V_n\}_{n \geqslant 1}$ and $\{U_n\}_{n \geqslant 1}$ satisfy the hypotheses of Proposition \ref{prop:localcohincreasing}, with 
    \begin{equation}
    \begin{aligned}
        r^{-1}S &= \{ x \in |\operatorname{Spa}(\pi_0A,(\pi_0A)^\circ)|: |f_i|_x = 0 \text{ for all }1 \leqslant  i \leqslant  k\},  \\
        r^{-1}U &= \{ x \in |\operatorname{Spa}(\pi_0A,(\pi_0A)^\circ)|: |f_i|_x > 0 \text{ for some }1 \leqslant  i \leqslant  k\}.
    \end{aligned}
    \end{equation}
    \item The sequence $\{V_n\}_{n \geqslant 1}$ is a cofinal system of open neighbourhoods of $S$ in $|X|$. Indeed, let $S \subseteq V^\prime \subseteq |X|$ be another open subset. Then $\{V^\prime\} \cup \{U_n\}_{n \geqslant 1}$ is a covering of the quasi-compact space $|X|$. Therefore $V^\prime \cup U_{m} = |X|$, for some $m \geqslant 1$. In particular $V_{m+1} \subseteq V^\prime$. 
\end{enumerate}

\end{example}
\begin{proof}[Proof of Proposition \ref{prop:localcohincreasing}]
(i): By Corollary \ref{cor:countableshriekdescent}, the morphism $\coprod_{n \geqslant 1} U_n \to U$ is of universal $!$-descent. Since the class $E \supseteq \mathrm{qcqs}$ is stable under disjoint unions and $!$-local on the source (c.f. Theorem \ref{thm:belongstoE}) we conclude that $j \in E$. 

(ii)(a): Let $j_n: U_n \to X$ be the inclusions. By Proposition \ref{prop:countableshriekdescent}, we know that $\operatorname{Cofib}(j_! j^! \to  \operatorname{id}) \simeq \operatorname{colim}_n \operatorname{Cofib}(j_{n,*}j_n^! \to \operatorname{id})$. By Lemma \ref{lem:prosystems}, we know that 
\begin{equation}
    \underset{n}{\operatorname{colim}} j_{n,*}j_n^! \simeq \underset{n}{\operatorname{colim}} \operatorname{Fib}(\operatorname{id} \to k_{n,*}k_n^*) \simeq \operatorname{Fib}(\operatorname{id} \to \underset{n}{\operatorname{colim}} k_{n,*}k_n^*),
\end{equation} 
where we used the property of stable $\infty$-categories. By using the property of stable $\infty$-categories this also implies that $ \operatorname{Cofib}(\operatorname{colim}_n j_{n,*}j_n^! \to \operatorname{id}) \simeq \operatorname{colim}_n k_{n,*}k_n^*$, proving (ii)(a). 

(ii)(b): Again let $j_n: U_n \to X$ be the inclusions. By descent we know that $\operatorname{Fib}(\operatorname{id} \to j_*j^*) \simeq 
 \operatorname{lim}_n \operatorname{Fib}(\operatorname{id} \to j_{n,*}j_n^*)$. By Lemma \ref{lem:prosystems2} we know that \begin{equation}
    \underset{n}{\operatorname{lim}} j_{n,*}j_n^* \simeq \underset{n}{\operatorname{lim}} \operatorname{Cofib}(k_{n,*}k_n^! \to \operatorname{id}) \simeq  \operatorname{Cofib}(\underset{n}{\operatorname{lim}} k_{n,*}k_n^! \to \operatorname{id}).
 \end{equation} By using the property of stable $\infty$-categories this implies that $\operatorname{Fib}(\operatorname{id} \to \underset{n}{\operatorname{lim}} j_{n,*}j_n^*) \simeq \underset{n}{\operatorname{lim}} k_{n,*} k_n^!$, proving (ii)(b).

(iii): Let $k^\prime_n : V_n^\prime :=  V_n \cap U \to U$ and $j_n^\prime : U_n \to U$ be the inclusions. By base-change, and since $j^*$ is exact and colimit-preserving we deduce that $\operatorname{colim}_n k_{n,*}^\prime k_n^{\prime,*} \simeq \operatorname{Cofib}(j^! \to j^*).$
Therefore, in order to prove the claim, it is enough to show that 
\begin{equation}\label{eq:knprime}
   \underset{n}{\operatorname{colim}} k_{n,*}^\prime k_n^{\prime,*} \simeq 0.
\end{equation} 
By descent, it is enough to check this after applying $j_m^*$ for each $m \geqslant 1$. The functors $j_m^*$ commute with colimits, and by base change, using that $U_m \cap V_{m+1} = \emptyset$, one has $j_m^* k_{m+1,*}^\prime \simeq 0$. Therefore \eqref{eq:knprime} is an equivalence and $j^! \simeq j^*$.   
\end{proof}
\subsection{Zariski-closed immersions}\label{subsec:Zariskiclosed}
\begin{defn}\label{defn:Zariskiclosed}
Let $f: X \to Y$ be a morphism in $\mathsf{dRig}$. We say that $f$ is a \emph{Zariski-closed immersion} if there exists a covering $\{U_i \to Y\}_{i \in \mathcal{I}}$ of $Y$ by affinoid subspaces $U_i = \operatorname{dSp}(A_i)$ such that, for each $i \in \mathcal{I}$, the pullback $X \times_Y U_i$ is represented by an affinoid $\operatorname{dSp}(B_i)$ and the induced morphism $A_i \to B_i$ is surjective on $\pi_0$. 
\end{defn}
\begin{lem}\label{lem:Zariskiclosedbasechange}
\begin{enumerate}[(i)]
    \item The class of Zariski-closed immersions is stable under composition and base-change. 
    \item Every Zariski-closed immersion is quasi-compact.
\end{enumerate}
\end{lem}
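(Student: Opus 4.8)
The plan is to reduce everything to the affinoid situation, where ``Zariski-closed immersion'' means a morphism $\operatorname{dSp}(B) \to \operatorname{dSp}(A)$ with $\pi_0 A \to \pi_0 B$ surjective, and then argue locally on the target. For part~(i), I would first treat base-change: given a Zariski-closed immersion $f \colon X \to Y$ and an arbitrary morphism $g \colon Z \to Y$, choose the covering $\{U_i = \operatorname{dSp}(A_i) \to Y\}$ witnessing that $f$ is Zariski-closed, so $X \times_Y U_i = \operatorname{dSp}(B_i)$ with $\pi_0 A_i \twoheadrightarrow \pi_0 B_i$. Pulling back the $U_i$ along $g$ and refining by a covering of $Z$ by affinoids $\operatorname{dSp}(C_j)$, each mapping into some $U_i$, the fiber product $(X \times_Y Z) \times_Z \operatorname{dSp}(C_j)$ is represented by $\operatorname{dSp}(C_j \widehat{\otimes}^{\mathbf L}_{A_i} B_i)$ (using that $\mathsf{dAfndAlg}$ is stable under pushouts, Lemma~\ref{lem:dAfndAlgbasic1}(i)), and since $\pi_0$ commutes with pushouts, $\pi_0 C_j \to \pi_0(C_j \widehat{\otimes}^{\mathbf L}_{A_i} B_i) \cong \pi_0 C_j \widehat{\otimes}_{\pi_0 A_i} \pi_0 B_i$ is surjective because surjections of rings are preserved by base-change. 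That gives a witnessing covering for $X \times_Y Z \to Z$.

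For stability under composition, let $f \colon X \to Y$ and $h \colon Y \to W$ be Zariski-closed immersions. I would pick an affinoid covering $\{V_k = \operatorname{dSp}(D_k) \to W\}$ witnessing that $h$ is Zariski-closed, so $Y \times_W V_k = \operatorname{dSp}(E_k)$ with $\pi_0 D_k \twoheadrightarrow \pi_0 E_k$; then, using the already-established base-change stability, $X \times_W V_k = (X \times_Y Y) \times_W V_k$ equals $X \times_Y \operatorname{dSp}(E_k)$, and applying the covering witnessing $f$ Zariski-closed, pulled back to $\operatorname{dSp}(E_k)$, refines to affinoids $\operatorname{dSp}(F)$ over $\operatorname{dSp}(E_k)$ with $\pi_0 E_k \twoheadrightarrow \pi_0 F$. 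The composite $\pi_0 D_k \twoheadrightarrow \pi_0 E_k \twoheadrightarrow \pi_0 F$ is surjective, but to get a genuine witnessing covering of $W$ I need the $\operatorname{dSp}(F)$ to be the pullbacks of affinoid subspaces of $W$; this is arranged by shrinking $V_k$ further so that the chosen covering of $Y\times_W V_k$ refines to rational subdomains and then using that a rational subdomain of $\operatorname{dSp}(E_k)$ which is a Zariski-closed subspace is the pullback of the corresponding rational subdomain of $\operatorname{dSp}(D_k)$.

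For part~(ii), quasi-compactness is a local condition on the target by Lemma~\ref{lem:qcproperties}: having fixed an affinoid covering $\{U_i\}$ of $Y$ with $X \times_Y U_i$ affinoid, each $X \times_Y U_i$ is quasi-compact (affinoids are quasi-compact objects of $\mathsf{dRig}$, as recorded after Definition~\ref{defn:quasicompactquasiseparated}), so condition~(iii) of Lemma~\ref{lem:qcproperties} is satisfied and $f$ is quasi-compact.

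The main obstacle I anticipate is purely bookkeeping: in the composition argument one must be careful that ``the pullback is represented by an affinoid with surjective $\pi_0$'' is checked on an \emph{honest} covering of $W$ by affinoid subspaces, not merely on a covering that has been refined through several intermediate spaces. The cleanest way around this is to prove first the auxiliary statement that if $\operatorname{dSp}(B) \to \operatorname{dSp}(A)$ is a Zariski-closed immersion and $\operatorname{dSp}(A') \to \operatorname{dSp}(A)$ is a rational subdomain, then $\operatorname{dSp}(B \widehat{\otimes}^{\mathbf L}_A A') \to \operatorname{dSp}(B)$ is again a rational subdomain (this follows from stability of $\mathscr L$ under base-change, Lemma~\ref{lem:Lclassproperties}(ii)), and dually that Zariski-closed immersions pull back along rational subdomains to Zariski-closed immersions; then the two families of ``coordinate directions'' (rational subdomains of the target, and the closed condition) can be interchanged freely, and the composition argument goes through by choosing a common refinement.
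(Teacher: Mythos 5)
Your part (ii) is exactly the paper's proof (the paper simply cites Lemma~\ref{lem:qcproperties}(iii)), and the paper omits the proof of part (i) altogether, so for (i) you are on your own. Your base-change argument is correct: refining the pullback of the witnessing covering by affinoid subspaces $\operatorname{dSp}(C_j)$ of $Z$ mapping into some $U_i$, computing $(X\times_Y Z)\times_Z\operatorname{dSp}(C_j)\simeq\operatorname{dSp}(B_i\widehat{\otimes}^{\mathbf{L}}_{A_i}C_j)$, and using that $\pi_0$ commutes with pushouts and that surjections of affinoid algebras are stable under base change, is exactly what is needed, and it produces an honest witnessing covering of $Z$ in the sense of Definition~\ref{defn:Zariskiclosed}.

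The composition argument, however, has a gap as written. You propose to ``shrink $V_k$'' to rational subdomains $V_{k,l}\subseteq V_k$ whose pullbacks to $\operatorname{dSp}(E_k)$ refine the witnessing covering for $f$. Two issues arise. First, the extension statement you invoke is not automatic: a rational subdomain $R\subseteq\operatorname{Sp}(\pi_0E_k)$ is not ``the pullback of the corresponding rational subdomain of $\operatorname{dSp}(D_k)$'' on the nose; one must \emph{construct} an extension, e.g.\ by lifting the defining functions $\bar f_0,\dots,\bar f_n$ to $D_k$ and appending generators $g_1,\dots,g_m$ of the kernel ideal to form $\{|f_i|\le|f_0|,\,|g_j|\le|f_0|\}$, and at the derived level one then uses stability of $\mathscr L$ under base change (Lemma~\ref{lem:Lclassproperties}(ii)) together with the fact that a derived rational localization is determined by its trace on $\pi_0$, to identify $Y\times_W V_{k,l}$ with the corresponding derived rational subdomain of $\operatorname{dSp}(E_k)$. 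Second, and more seriously, after this shrinking the family $\{V_{k,l}\}_{k,l}$ covers only a neighbourhood of $|Y\times_W V_k|$ inside $V_k$, not all of $V_k$, so it is no longer a covering of $W$ and Definition~\ref{defn:Zariskiclosed} cannot be applied to it directly. This is repairable: the open subset $|V_k|\setminus|Y\times_W V_k|$ is an increasing union of rational subdomains $W'\subseteq V_k$, and for each such $W'$ one has $Y\times_W W'=\emptyset$, hence $X\times_W W'=\emptyset=\operatorname{dSp}(0)$, which trivially satisfies the affinoid-with-$\pi_0$-surjection condition; adjoining these to the $V_{k,l}$ gives a genuine witnessing covering of $W$. (Alternatively one can prove once and for all that if $X\to Y$ is Zariski-closed then \emph{every} affinoid subspace $V\subseteq Y$ satisfies the condition of Definition~\ref{defn:Zariskiclosed} — a derived analogue of the classical fact that closed immersions into affinoids are affinoid, via Kiehl's theorems applied to $\pi_0$ and the higher homotopy sheaves — after which composition is immediate; but that is a genuinely stronger statement requiring its own argument.)
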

\begin{proof}
(i): Omitted. (ii): Follows from Lemma \ref{lem:qcproperties}(iii).
\end{proof}
\begin{lem}
Let $f: X \to Y$ be a Zariski-closed immersion in $\mathsf{dRig}$. The image of $|X|$ under $|f|$ is a closed subset of $|Y|$. 
\end{lem}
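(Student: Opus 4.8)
The plan is to reduce the statement, first to the case where the target is affinoid, then via the truncation functor $(-)_0$ to the case of classical affinoids, where it becomes the standard fact that a surjection of affinoid algebras induces a closed embedding of adic spectra.

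\emph{Reduction to an affinoid target.} Closedness of a subset of a topological space can be checked on the members of an open cover, so I would fix a covering $\{U_i \to Y\}_{i \in \mathcal{I}}$ witnessing that $f$ is a Zariski-closed immersion (Definition \ref{defn:Zariskiclosed}): $U_i = \operatorname{dSp}(A_i)$, $X \times_Y U_i = \operatorname{dSp}(B_i)$, and $A_i \to B_i$ surjective on $\pi_0$. Since $\coprod_i U_i \to Y$ is an effective epimorphism of sheaves and each $U_i \hookrightarrow Y$ is a monomorphism, the join of the $U_i$ in $\operatorname{An}(Y)$ is all of $Y$, so under the isomorphism $\operatorname{An}(Y) \cong \Omega(|Y|)$ the subsets $|U_i|$ form an open cover of $|Y|$. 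By functoriality of $|\cdot|$ and naturality of the isomorphism $\operatorname{An} \cong \Omega \circ |\cdot|$, the open subset of $|X|$ attached to the analytic subspace $X \times_Y U_i$ is $|f|^{-1}(|U_i|)$, whence $|f|(|X|) \cap |U_i| = |f|\big(|X \times_Y U_i|\big)$ is the image of the map $|\operatorname{dSp}(B_i)| \to |\operatorname{dSp}(A_i)|$ induced by $A_i \to B_i$. Thus it suffices to show: if $A \to B$ in $\mathsf{dAfndAlg}$ is surjective on $\pi_0$, then the image of $|\operatorname{dSp}(B)| \to |\operatorname{dSp}(A)|$ is closed.

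\emph{The classical case.} Here I would invoke Theorem \ref{thm:topologicalinvariance} to identify this with $|\operatorname{dSp}(\pi_0 B)| \to |\operatorname{dSp}(\pi_0 A)|$, and then Remark \ref{rmk:underlyingSpa}(i) to identify it with the map of adic spectra coming from the surjection of classical affinoid algebras $\pi_0 A \twoheadrightarrow \pi_0 B = \pi_0 A / I$. The image is the vanishing locus $V(I) = \{ v : v(g) = 0 \ \text{for all } g \in I \}$, which is closed because it is the intersection of the sets $\{v(g) = 0\}$ — each of these being closed since its complement $\{v(g) \neq 0\}$ is open in the adic spectrum (an increasing union of rational subsets). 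Transporting this back through the identifications above then finishes the argument.

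\emph{Expected obstacle.} The one step that is not bookkeeping is the identification, in the classical case, of the image of $|\operatorname{dSp}(\pi_0 B)|$ with $V(I)$ — equivalently, that a surjection of affinoid algebras induces a \emph{closed} immersion of adic spectra. The subtlety is the ring of power-bounded elements: a continuous valuation on $\pi_0 A$ that vanishes on $I$ factors through $\pi_0 B$, and one must check the induced valuation is again continuous and $\le 1$ on $(\pi_0 B)^\circ$, using that the quotient map is open and $(\pi_0 B)^\circ$ is integral over the image of $(\pi_0 A)^\circ$. This is classical — it is part of the analysis in \cite{huber_continuous_1993} — so in the write-up I would cite it, or record the short verification; everything else reduces to the locale-to-space dictionary already set up above.
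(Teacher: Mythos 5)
Your proof is correct and follows essentially the same route as the paper: reduce to the case of an affinoid target via the defining cover, pass to classical truncations using Theorem \ref{thm:topologicalinvariance} and Remark \ref{rmk:underlyingSpa}(i), and conclude from the classical fact that a surjection of affinoid algebras induces a closed embedding of adic spectra. The paper simply cites \cite[1.4.1]{huber_etale_1996} for that last fact, which is the step you flag as the ``expected obstacle'' and sketch directly.
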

\begin{proof}
One reduces to the case when $Y = \operatorname{dSp}(A)$ is an affinoid and $X = \operatorname{dSp}(B) \to \operatorname{dSp}(A)$ is a morphism of affinoids such that $\pi_0A \to \pi_0B$ is surjective. By Remark \ref{rmk:topolopgicalspaceremark} it is then sufficient to show that the image of $\operatorname{Spa}(\pi_0 B, (\pi_0B)^\circ) \to \operatorname{Spa}(\pi_0A, (\pi_0A)^\circ)$ is closed. This is a consequence of \cite[1.4.1]{huber_etale_1996}.  
\end{proof}

\subsection{Zariski-open immersions}\label{subsec:Zariskiopen} 
\begin{defn}
A morphism $f: X \to Y$ in $\mathsf{dRig}$ is called a \emph{Zariski-open immersion} if there exists a Zariski-closed immersion $Z \to Y$ such that $f$ is equivalent to the inclusion of the analytic subspace $U \hookrightarrow Y$ corresponding to $|Y| \setminus |Z|  \subseteq |Y|$.
\end{defn}
\begin{prop}\label{prop:Zariskiopen}
Let $j: X \to Y$ be a Zariski-open immersion in $\mathsf{dRig}$. Then:
\begin{enumerate}[(i)]
    \item The morphism $j: X \to Y$ belongs to the class $E$, i.e., it is $!$-able.
    \item The canonical morphism $j^! \to j^*$ is an equivalence. 
\end{enumerate}
\end{prop}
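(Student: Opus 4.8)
\emph{The plan is to reduce at once to the case where $Y$ is affinoid, and then quote Example \ref{ex:zariskiopen} together with Proposition \ref{prop:localcohincreasing}; all of the real content lives in those two statements.} Unwinding the definition, there is a Zariski-closed immersion $i\colon Z\hookrightarrow Y$ such that $j$ is equivalent to the inclusion of the analytic subspace $U\hookrightarrow Y$ with $|U|=|Y|\setminus|Z|$. I would first observe that both assertions are local on the target. If $W=\operatorname{dSp}(A)\hookrightarrow Y$ is an affinoid subspace then, since Zariski-closed immersions are stable under base change (Lemma \ref{lem:Zariskiclosedbasechange}(i)), $Z\times_Y W\hookrightarrow W$ is a Zariski-closed immersion, represented by some $\operatorname{dSp}(A/I)$ for an ideal $I\subseteq\pi_0A$, and $X\times_Y W\hookrightarrow W$ is exactly the associated Zariski-open immersion. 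For part (i): the class $E$ is local on the target (Remark \ref{rmk:localontarget}), so it suffices to know $X\times_Y W\to W\in E$ for $W$ ranging over an affinoid cover of $Y$; hence we may assume $Y=\operatorname{dSp}(A)$.

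For the affinoid case, write $Y=\operatorname{dSp}(A)$, choose generators $f_{1},\dots,f_{k}\in\pi_0A$ of the ideal cutting out $Z$, and set $S:=|Z|$, so that $U=|Y|\setminus S$ and $j\colon U\hookrightarrow Y$. By Example \ref{ex:zariskiopen}, the open subsets $V_{n},U_{n}\subseteq|Y|$ built there from the $f_i$ satisfy all the hypotheses of Proposition \ref{prop:localcohincreasing}, with closed subset $S$ and open subset $U$. Then Proposition \ref{prop:localcohincreasing}(i) gives $j\in E$, which is (i), and Proposition \ref{prop:localcohincreasing}(iii) gives that the canonical morphism $j^{!}\to j^{*}$ is an equivalence, which is (ii).

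It remains to descend part (ii) from the affinoid case to arbitrary $Y$, and this is the one step I expect to require care. Granting (i), the functors $j^{*}$ and $j^{!}$ are both defined on $\operatorname{QCoh}(Y)$, and the pullbacks along an affinoid cover of $Y$ are jointly conservative (as $\operatorname{QCoh}^{*}$ is a sheaf); the issue is to check that they are also compatible with $j^{!}$, i.e.\ that $j^{!}$ commutes with restriction along the cover, so that the natural map $j^{!}\to j^{*}$ is an equivalence iff it becomes one after restriction. I would handle this by first reducing $Y$ to the qcqs case (again using that $E$ is local on the target, cf.\ Remark \ref{rmk:QCohKanandColim}(i)), and then using that a finite affinoid cover of a qcqs object is of universal $!$-descent: the hypotheses of Corollary \ref{cor:finiteuniv!descent2} hold because finite intersections of affinoid subspaces of a qcqs object are quasi-compact over it (Lemma \ref{lem:qsproperties1}). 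This lets one test the natural transformation $j^{!}\to j^{*}$ on the terms of the associated \v{C}ech diagram, which are finite disjoint unions of analytic subspaces of affinoids; one further application of the same reasoning (covering each such analytic subspace by finitely many affinoid subdomains) brings us down to $Y$ affinoid, which is the case already treated. The main obstacle is thus purely bookkeeping around base change for the $!$-functor along open immersions — everything geometric has been absorbed into \S\ref{sec:localcoh}.
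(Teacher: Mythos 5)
Your part (i) and the affinoid case of part (ii) coincide with the paper's argument: reduce via Remark \ref{rmk:localontarget} and invoke Example \ref{ex:zariskiopen} together with Proposition \ref{prop:localcohincreasing}. The genuine gap is in your globalization of part (ii). You correctly identify the problem — to test $j^!\to j^*$ after restriction you must know how $j^!$ interacts with the restriction functors — but the proposed fix (universal $!$-descent of a finite affinoid cover, Corollary \ref{cor:finiteuniv!descent2}) does not supply this. Write $t_I\colon U_I\to Y$ for the pieces of the cover, $s_I\colon X\cap U_I\to X$ and $j_I\colon X\cap U_I\to U_I$. If you test with the jointly conservative family $\{s_I^*\}$ you need $s_I^*j^!\simeq j_I^!t_I^*$; if you test with $\{s_I^!\}$ (which is what $!$-descent makes jointly conservative) you need $s_I^!j^*\simeq j_I^*t_I^!$. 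Neither of these mixed star/shriek base-change identities is part of the formalism — the only available ones are $g^*f_!\simeq f'_!g'^*$ and its adjoint $f^!g_*\simeq g'_*f'^!$, cf.\ Remark \ref{rmk:sixfunctorsbasic}(iii) — and you cannot substitute $t_I^!\simeq t_I^*$ for the quasi-compact analytic subspace inclusions $t_I$, since precisely this fails in general (see the footnote before Proposition \ref{prop:localcohformalproperties}). So "test on the terms of the \v{C}ech diagram" is not bookkeeping; it is essentially as strong as the statement being proved.

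The paper's proof avoids a conservativity test altogether by decomposing the identity functor: descent gives $\operatorname{id}\simeq\varprojlim_I t_{I,*}t_I^*$, and since $j^!$ is a right adjoint it commutes with this limit; the legitimate base change $j^!t_{I,*}\simeq s_{I,*}j_I^!$ (valid because $j\in E$, which is your part (i)) together with the already-treated case $j_I^!\simeq j_I^*$ and descent on $X$ yields $j^!\simeq\varprojlim_I s_{I,*}j_I^*t_I^*\simeq j^*$. (If you insist on the $!$-descent flavour, for a finite cover you could instead use the counit equivalence $\varinjlim_I t_{I,*}t_I^!\simeq\operatorname{id}$ from Lemma \ref{lem:finiteuniv!descent}, as the colimit is finite and hence commutes with the exact functors $j^!$ and $j^*$, again using only $j^!t_{I,*}\simeq s_{I,*}j_I^!$ — but some decomposition of the identity, not a local-to-global conservativity check, is what makes the reduction run.) Note also that the paper performs the reduction in two stages, first for $Y$ an analytic subspace of an affinoid with $j$ induced from a basic Zariski-open immersion, then for general $Y$ using the cover furnished by Definition \ref{defn:Zariskiclosed}; your "one further application" remark mirrors this and would go through once the testing step is replaced as above.
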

\begin{proof}
(i): By using that $E$ is \emph{$*$-local on the target} one reduces to the case when $Y$ is an affinoid. Then the claim follows from Example \ref{ex:zariskiopen} and Proposition \ref{prop:localcohincreasing}(i). 

(ii): We make the following temporary definitions. A Zariski-closed immersion $T \to S$ is called \emph{basic} if $S = \operatorname{dSp}(A)$ and $T = \operatorname{dSp}(B)$ are both affinoid and $\pi_0 A \to \pi_0 B$ is surjective. A Zariski-open immersion $R \to S$ is called \emph{basic} if $S$ is affinoid and there exists a basic Zariski-closed immersion $T \to S$ such that $R$ corresponds to the complement of $|T|$ in $|S|$. Now, we proceed in steps.

\textit{Step 1}: When $X \to Y$ is a basic Zariski-open immersion, the statement follows from Example \ref{ex:zariskiopen} and Proposition \ref{prop:localcohincreasing}(iii). 

\textit{Step 2}: Now assume that $Y$ is an analytic subspace of an affinoid subspace $Y^\prime$, and $X \to Y$ is induced by a basic Zariski-open immersion $X^\prime \to Y^\prime$.  Choose a cover $\{U_i \to Y\}_{i \in \mathscr{I}}$ be a covering of $Y$ by affinoid subspaces of $Z$. Let $\mathcal{I}$ be the family of finite nonempty subsets of $\mathscr{I}$. For each $I \in \mathcal{I}$ set $U_I := \bigcap_{i \in I} U_i$ and let $t_I: U_I \to Y$ be the inclusions. Each $X \cap U_I \to U_I$ is a basic Zariski-open immersion. By descent, base-change and using that $j^!$ commutes with limits, one has
\begin{equation}
    j^! \simeq \underset{I \in \mathcal{I}}{\operatorname{lim}} j^!t_{I,*} t_I^* \simeq  \underset{I \in \mathcal{I}}{\operatorname{lim}} t_{I,*}^\prime j^{\prime,!} t_I^* \simeq \underset{I \in \mathcal{I}}{\operatorname{lim}} t_{I,*}^\prime j^{\prime,*} t_I^* \simeq \underset{I \in \mathcal{I}}{\operatorname{lim}} t_{I,*}^\prime t_I^{\prime,*} j^* \simeq j^*,
\end{equation}
where we used Step 1. 

\textit{Step 3}: Now $Y$ is arbitrary. By the Definition \ref{defn:Zariskiclosed} of a Zariski-closed immersion, we may choose a cover $\{U_i \to Y\}_{i \in \mathscr{I}}$ be a covering of $Y$ by affinoid subspaces such that each $U_i \cap Y \to U_i$ is a basic Zariski-open immersion. Let $\mathcal{I}$ be the family of finite nonempty subsets of $\mathscr{I}$. For each $I \in \mathcal{I}$ set $U_I := \bigcap_{i \in I} U_i$ and let $t_I: U_I \to Y$ be the inclusions. We proceed as in Step 2, using the result of Step 2, to conclude that $j^! \to j^*$ is an equivalence. 
\end{proof}
\subsection{Algebras of germs}\label{subsec:germs}
For the sake of brevity let us introduce the following notations:
\begin{defn}\label{defn:PStk}
We define $\mathsf{dAlg} := \mathsf{CAlg}(D_{\geqslant 0}(\mathsf{CBorn}_K))$ and $\mathsf{dAff} := \mathsf{dAlg}^\mathsf{op}$. The object of $\mathsf{dAff}$ corresponding to $A \in \mathsf{dAlg}$ is denoted by the formal expression $\operatorname{dSp}(A)$. For such we  define $\operatorname{QCoh}(\operatorname{dSp}(A)) := \operatorname{Mod}_AD(\mathsf{CBorn}_K)$. We define $\mathsf{PStk} := \operatorname{PSh}(\mathsf{dAff}) = \operatorname{Fun}(\mathsf{dAlg}, \infty\mathsf{Grpd})$.  
\end{defn}
\begin{defn}\label{defn:germ}
Let $X = \operatorname{dSp}(A) \in \mathsf{dAfnd}$ be a derived affinoid space and let $i: \operatorname{dSp}(B) = Z \to X$ be a Zariski-closed immersion defined by a morphism $A \to B$ which is surjective on $\pi_0$. The \emph{algebra of germs along $Z$} is defined to be 
\begin{equation}
    A^\dagger_Z := \underset{U \supseteq |Z|}{\operatorname{colim}} A_U
\end{equation}
where the colimit is taken in $\mathsf{CAlg}(D_{\geqslant 0}(\mathsf{CBorn}_K))$ and runs over all (affinoid) opens $U \supseteq |Z|$. We denote the corresponding object of $\mathsf{dAff}$ by $(Z \subseteq X)^\dagger$. 
\end{defn}
\begin{lem}\label{lem:germhomotopymono}
With notations as in Definition \ref{prop:Zariskiopen}. Let $\iota: (Z \subseteq X)^\dagger \to X$ be the canonical morphism. Then $\iota$ is a homotopy monomorphism. 
\end{lem}
\begin{proof}
We must show that $A \to A_Z^\dagger$ is a homotopy epimorphism. Since the class of homotopy epimorphisms is stable under colimits, this follows immediately from the fact that each $A \to A_U$ is a homotopy epimorphism.
\end{proof}
\begin{prop}\label{prop:germequivalence}
With notations as in Definition \ref{defn:germ}. There is a natural equivalence of $\infty$-categories 
\begin{equation}
    \operatorname{QCoh}((Z \subseteq X)^\dagger) \simeq \Gamma_Z\operatorname{QCoh}(X).
\end{equation}
\end{prop}
\begin{proof}
By Proposition \ref{prop:Zariskiopen} and Proposition \ref{prop:localcohformalproperties} there is an equivalence between $\Gamma_Z\operatorname{QCoh}(X)$ and algebra objects in $\operatorname{QCoh}(X)$ over the idempotent algebra object 
\begin{equation}
\operatorname{Cofib}(j_!j^! 1_X \to 1_X).
\end{equation} 
By Proposition \ref{prop:localcohincreasing}(ii) and Example \ref{ex:zariskiopen} we can identify this idempotent algebra object with $A_Z^\dagger$. Now the Proposition follows from the transitivity\footnote{For any symmetric monoidal $\infty$-category $(\mathscr{V}, \otimes)$ such that $\otimes$ is compatible with colimits separately in each variable, and any morphism $A \to B$ of commutative algebra objects in $\mathscr{V}$, there is an equivalence of $\infty$-categories $$\operatorname{Mod}_B (\operatorname{Mod}_A \mathscr{V}) \simeq \operatorname{Mod}_B \mathscr{V};$$
    on the left side here, we view $B$ as a commutative algebra object in $\operatorname{Mod}_A \mathscr{V}$. This is proved by applying Barr--Beck--Lurie to the forgetful functor $\operatorname{Mod}_A \mathscr{V} \to \operatorname{Mod}_B\mathscr{V}$.} property 
    \begin{equation}
        \operatorname{Mod}_{A_Z^\dagger} \operatorname{Mod}_A \operatorname{QCoh}(*) \simeq \operatorname{Mod}_{A_Z^\dagger}\operatorname{QCoh}(*).
    \end{equation}
\end{proof}
\begin{cor}\label{cor:recollementCor}
With notations as in Definition \ref{prop:Zariskiopen}. Let $\iota: (Z \subseteq X)^\dagger \to X$ be the canonical morphism. Let $U$ be the open subspace of $X$ corresponding to the complement of $|Z|$, and let $j: U \hookrightarrow X$ be the inclusion. In the sequence 
\begin{equation}
    \operatorname{QCoh}(U) \xrightarrow[]{j_!} \operatorname{QCoh}(X) \xrightarrow[]{\iota^*} \operatorname{QCoh}((Z \subseteq X)^\dagger)
\end{equation}
the composite $\iota^*j_! \simeq 0$ and the right adjoints $j^!$ and $\iota_*$ satisfy $j^!j_! \simeq \operatorname{id}$ and $\iota^*\iota_* \simeq \operatorname{id}$. Moreover the right adjoints $\iota_*$ and $j^!$ commute with colimits. 
\end{cor}
\begin{proof}
Combine Proposition \ref{prop:localcohformalproperties}, Proposition \ref{prop:Zariskiopen} and Proposition \ref{prop:germequivalence}. 
\end{proof}
\begin{defn}\label{defn:Pairs}
We define the category $\mathsf{Pairs}$ as the full subcategory of $\operatorname{Fun}(\Delta^1, \mathsf{dAfnd})$ on objects $Z=\operatorname{dSp}(B) \to \operatorname{dSp}(A)=X$ induced by a morphism $A \to B$ which is surjective on $\pi_0$.
\end{defn}
The category $\mathsf{Pairs}$ has fiber products:
\begin{equation}\label{eq:Germfiberproduct}
    (Z \to X) \times_{(Z^\prime \to X^\prime)}(Z^{\prime \prime} \to X^{\prime \prime}) = (Z\times_{Z^\prime} Z^{\prime \prime} \to X\times_{X^\prime}X^{\prime \prime}). 
\end{equation}
\begin{lem}\label{lem:Germfiberproduct}
The functor $\mathsf{Pairs} \to \mathsf{dAff} : (Z \to X) \mapsto (Z \subseteq X)^\dagger$ preserves fiber products.
\end{lem}
\begin{proof}
We use notations as in \eqref{eq:Germfiberproduct}. Say $X = \operatorname{dSp}(A)$ and $Z= \operatorname{dSp}(B)$, and similarly for $Z^\prime, X^\prime$, etc. Say $\pi_0 A \to \pi_0B$ is defined by an ideal $I = (f_1,\dots,f_r)$ and $\pi_0 A^{\prime \prime} \to \pi_0B^{\prime \prime}$ is defined by $I^\prime = (f_1^\prime,\dots,f_s^\prime)$. Then $\pi_0(A \otimes_{A^\prime} A^{\prime \prime}) \to \pi_0(B \otimes_{B^\prime} B^{\prime \prime})$ is defined by 
\begin{equation}
    (f_1\otimes1, \dots, f_r\otimes 1, 1 \otimes f_1^{\prime \prime} ,\dots, 1 \otimes f_s^{\prime \prime}). 
\end{equation}
Using the homeomorphisms $r: |\operatorname{Spa}(\pi_0A, (\pi_0A)^\circ)| \xrightarrow[]{\sim} |X|$, etc., we may define (rational) open subsets $V_n$ of $X$ as in Example \ref{ex:zariskiopen}:
\begin{equation}
    r^{-1} V_n := \{ x \in |\operatorname{Spa}(\pi_0A,(\pi_0A)^\circ)| : |f_i|_x \leqslant  p^{-n} \text{ for all }1\leqslant  i \leqslant  k\}. 
\end{equation}
and similarly for $V_n^{\prime \prime}$. Then by cofinality of this system we find that the pushout of the algebras of germs is
\begin{equation}
\underset{n}{\operatorname{colim}} A_{V_n} \widehat{\otimes}^\mathbf{L}_{A^\prime} A^{\prime \prime}_{V_n^{\prime\prime}}  = \underset{n}{\operatorname{colim}}(A \widehat{\otimes}^\mathbf{L}_{A^\prime} A^{\prime \prime})_{V_n \times_{X^\prime} V_n^{\prime \prime}}
\end{equation}
and we note that $V_n \times_X V_n^{\prime \prime}$ is the open subset with 
\begin{equation}
    r^{-1}(V_n \times_X V_n^{\prime \prime}) = \{ x : |f_i \otimes 1| \leqslant p^{-n}, |1 \otimes f_j^{\prime \prime} | \leqslant  p^{-n} \text{ for all } i, j\}, 
\end{equation}
which is again a cofinal system of neighbourhoods of $Z \times_{Z^\prime} Z^{\prime \prime}$. 
\end{proof}
\begin{lem}\label{lem:pairsdescent}
Let $(Z \to X) \in \mathsf{Pairs}$. Let affinoid opens $\{U_i\}_{i =1}^n$ of $X$ be given such that $|Z| \subseteq \bigcup_{i} U_i$. Then: 
\begin{enumerate}[(i)]
\item Let $\mathcal{I}$ be the family of finite nonempty subsets of $\{1,\dots,n\}$. The natural morphism   
\begin{equation}\label{eq:pairsdescent}
    \operatorname{QCoh}^*((Z\subseteq X)^\dagger) \xrightarrow[]{} \underset{I \in \mathcal{I}}{\operatorname{lim}}\operatorname{QCoh}^*((Z_I \subseteq X_I)^\dagger),
\end{equation}
is an equivalence; here $Z_I := U_I \times_X Z$. 
\item Let $Y := \coprod_{i} U_i$ and $S := \coprod_{i} Z_i \to Y$. Then, the canonical morphism
\begin{equation}\label{eq:pairsdescent2}
    \operatorname{QCoh}((Z \subseteq X)^\dagger) \to \underset{[m] \in \Delta^{\mathsf{op}}}{\operatorname{lim}} \operatorname{QCoh}((S \subseteq Y)^{\dagger, m+1/(Z \subseteq X)^\dagger}) 
\end{equation}
is an equivalence. 
\end{enumerate}
\end{lem}
\begin{proof}
Let us say $X = \operatorname{dSp}(A)$. According to \cite[Lemma 2.3]{KisinLocal99} and Lemma \ref{lem:derivedrational}, the system of rational opens of $X$ containing $|Z|$, is a cofinal system of open neighbourhoods of $|Z|$ in $X$. Hence we may find a rational open subset $V$ with $|Z| \subseteq V \subseteq \bigcup_{i} U_i$. For such $V$ one then has 
\begin{equation}
    A_V \xrightarrow[]{\sim} \underset{(i_1,\dots,i_k) \in \mathcal{I}}{\operatorname{lim}} A_{V_{i_1}}\widehat{\otimes}^\mathbf{L}_A \dots \widehat{\otimes}^\mathbf{L}_A A_{V_{i_k}},
\end{equation} where $V_i = V \cap U_i$, c.f. the proof of Lemma \ref{lem:descentAfnd} and especially \eqref{eq:Bfinite}. By compact generation (Proposition \ref{prop:compactDA}), filtered colimits commute with finite limits (Lemma \ref{lem:kappafiltered}), and hence we obtain
\begin{equation}
    A_Z^\dagger \xrightarrow[]{\sim} \underset{(i_1,\dots,i_k) \in \mathcal{I}}{\operatorname{lim}} A_{Z_{i_1}}^\dagger\widehat{\otimes}^\mathbf{L}_A \dots \widehat{\otimes}^\mathbf{L}_A A_{Z_{i_k}}^\dagger,
\end{equation}
which by a standard argument implies (i). This also shows that $A_Z^\dagger \to \prod_i A_{Z_i}^\dagger$ is descendable, which gives (ii). 
\end{proof}
\subsection{Six-functor formalism for prestacks}\label{sec:sixfPStk}
In the next section we will work in a ``bigger" six-functor formalism constructed as follows. We use notations as in Definition \ref{defn:PStk}. We will apply the formalism of \S\ref{subsec:sixf3} in the following set-up (with notations as in that section): 
\begin{itemize}
    \item[$\star$] We take $\mathscr{V} := D(\mathsf{CBorn}_K)$, so that $\mathscr{E}:= \mathsf{CAlg}(D(\mathsf{CBorn}_K))^\mathsf{op}$ and we consider $\mathsf{dAff} = \mathsf{CAlg}(D_{\geqslant 0}(\mathsf{CBorn}_K))^\mathsf{op} \subseteq \mathscr{E}$. We take $\tau$ to be the trivial topology on $\mathsf{dAff}$. We define $\mathsf{PStk} := \operatorname{Psh}(\mathsf{dAff})$. 
\end{itemize}
It is clear that the assumptions of \S3.2 are satisfied and hence we obtain:
\begin{thm}\label{thm:6FPStk}
The functor $\operatorname{QCoh}$ extends to a six-functor formalism
\begin{equation}
    \operatorname{QCoh}: \operatorname{Corr}(\mathsf{PStk}, \widetilde{E})^\otimes \to \mathsf{Pr}^{L, \otimes}_\mathsf{st}
\end{equation}
such that the class $\widetilde{E} \supseteq \mathrm{rep}$ is stable under disjoint unions, $*$-local on the target, $!$-local on the source, is tame and satisfies $\widetilde{E} \subseteq \delta \widetilde{E}$. Further, every morphism $f \in \mathrm{rep}$ satisfies $f_! \simeq f_*$. 
\end{thm}
\subsection{Internal groupoid objects in an $\infty$-category}
\begin{defn}\label{defn:groupoidobject}
Let $\mathscr{C}$ be an $\infty$-category admitting all fiber products.
\begin{enumerate}[(i)]
    \item A \emph{groupoid object} in $\mathscr{C}$ is a simplicial object $X \in s\mathscr{C} := \operatorname{Fun}(N(\Delta^{\mathsf{op}}) ,\mathscr{C})$ such that, for every $n \geqslant 0$ and for all subsets $S, S^\prime \subseteq [n]$ with $S \cup S^\prime = [n]$ and $|S\cap S^\prime| = 1$, the diagram 
    \begin{equation}
        % https://q.uiver.app/#q=WzAsNCxbMCwwLCJYKFtuXSkiXSxbMSwwLCJYKFMpIl0sWzAsMSwiWChTXlxccHJpbWUpIl0sWzEsMSwiWChTXFxjYXAgU15cXHByaW1lKSJdLFswLDFdLFswLDJdLFsyLDNdLFsxLDNdXQ==
\begin{tikzcd}[cramped]
	{X([n])} & {X(S)} \\
	{X(S^\prime)} & {X(S\cap S^\prime)}
	\arrow[from=1-1, to=1-2]
	\arrow[from=1-1, to=2-1]
	\arrow[from=1-2, to=2-2]
	\arrow[from=2-1, to=2-2]
\end{tikzcd}
    \end{equation}
    is Cartesian.
    \item Assume that $\mathscr{C}$ admits geometric realizations. Let $X_\bullet$ be a groupoid object in $\mathscr{C}$. We obtain from $X_{\bullet}$ an augmented simplicial object of $\mathscr{C}$ by setting $X_{-1} := \operatorname{colim}_{[n] \in \Delta^\mathsf{op}} X_n$. We say that $X_\bullet$ is \emph{effective} if the canonical morphism $X_\bullet \to N(X_0 \to X_{-1})$ is an equivalence in  $s\mathscr{C}$. 
\end{enumerate}
\end{defn}
\begin{rmk}\begin{enumerate}[(i)]
    \item Due to Definition \ref{defn:derivedrational}(i),  Definition \ref{defn:groupoidobject}(ii) is equivalent saying that $X_0 \to X_{-1}$ is an effective epimorphism. 
    \item If $\mathscr{C}$ is an $\infty$-topos then every groupoid object in $\mathscr{C}$ is effective. Indeed, this is one of the Giraud-Rezk-Lurie axioms \cite[Theorem 6.1.0.6]{HigherToposTheory}.
\end{enumerate}

\end{rmk}
\begin{defn}
Let $\mathscr{C}$ be an $\infty$-category admitting all fiber products. We say that a morphism $X_\bullet \to Y_\bullet$ in $s\mathscr{C}$ is a \emph{homotopy Kan fibration} if for all $n \geqslant 1$ and for all $0 \leqslant  k \leqslant  n$, the canonical morphism 
\begin{equation}\label{eq:homotopyKandefinition}
    X(\Delta^n) \to X(\Lambda^n_k) \times_{Y(\Lambda^n_k)} Y(\Delta^n)
\end{equation}
is an effective epimorphism.
\end{defn}
\begin{rmk}
Let $\mathscr{C}$ be an $\infty$-category admitting all fiber products. It is immediate that the class of homotopy Kan fibrations in $s\mathscr{C}$ is stable under composition and base-change.
\end{rmk}
\begin{lem}\label{lem:groupoidhomotopyKan}
Let $\mathscr{C}$ be an $\infty$-category admitting all finite limits and geometric realizations. Let $f: X_\bullet \to Y_\bullet$ be a morphism between groupoid objects of $\mathscr{C}$. Then, the following are equivalent. 
\begin{enumerate}[(i)]
    \item $f$ is a homotopy Kan fibration,
    \item For $i= 0,1$, the morphisms 
    \begin{equation}
        X(\Delta^1) \to X(\Lambda^1_i) \times_{Y(\Lambda^1_i)} Y(\Delta^1), 
    \end{equation}
    are effective epimorphisms.
\end{enumerate}
\end{lem}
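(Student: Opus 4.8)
The implication (i) $\Rightarrow$ (ii) is immediate, since the morphisms in (ii) are the special case $n=1$, $k=i$ of the defining condition \eqref{eq:homotopyKandefinition}. So the real content is (ii) $\Rightarrow$ (i). The plan is to leverage the fact that, for a groupoid object $X_\bullet$, every value $X(\Delta^n)$ and every $X(\Lambda^n_k)$ can be computed as an iterated fiber product of copies of $X(\Delta^1) = X_1$ over $X(\Delta^0) = X_0$; this is exactly the Segal-type condition built into Definition \ref{defn:groupoidobject}(i). Concretely, $X(\Lambda^n_k)$ is the limit of the diagram of those faces of $\Delta^n$ containing the vertex $k$ (or rather, the appropriate spine/horn diagram), and by the groupoid (Segal) condition this limit reduces to a fiber product of $1$-simplices. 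The same holds for $Y_\bullet$, compatibly. Hence the square
\begin{equation}
\begin{tikzcd}
X(\Delta^n) \arrow[r] \arrow[d] & X(\Lambda^n_k) \arrow[d] \\
Y(\Delta^n) \arrow[r] & Y(\Lambda^n_k)
\end{tikzcd}
\end{equation}
is obtained by applying a limit-preserving construction to a diagram built out of the $n=1$ squares of (ii).

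The key steps, in order, would be: first, fix notation for the horn $\Lambda^n_k$ as a simplicial subset of $\Delta^n$ and recall (or prove by an easy induction on $n$, using the Segal condition of Definition \ref{defn:groupoidobject}(i)) that for a groupoid object $Z_\bullet$ one has a canonical equivalence $Z(\Delta^n) \xrightarrow{\sim} Z_1 \times_{Z_0} Z_1 \times_{Z_0} \cdots \times_{Z_0} Z_1$ ($n$ factors), and more generally $Z(\Lambda^n_k)$ is the corresponding limit over the poset of nondegenerate simplices of $\Lambda^n_k$, which again collapses to an iterated fiber product of $Z_1$'s over $Z_0$'s. Second, observe that the comparison map \eqref{eq:homotopyKandefinition} for general $n$ is then identified with a map that can be written as a finite iterated fiber product (over copies of $X_0$ and $Y_0$, and the map $X_0 \to Y_0$) of copies of the $n=1$ comparison maps appearing in (ii), together with the map $X_0 \to Y_0$. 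The precise bookkeeping is: $\Lambda^n_k$ for $n \ge 2$ is connected and contains all vertices, and one writes $\Delta^n$ as the union of $\Lambda^n_k$ with the single missing face; the resulting pushout in simplicial sets becomes a pullback after applying $Z(-)$, so the comparison map is a base change of the map $X(\partial_k \Delta^n) \to X(\text{its horn}) \times_{Y(\text{its horn})} Y(\partial_k \Delta^n)$, and one descends by induction on $n$ to the $1$-simplex case. Third, invoke that effective epimorphisms in $\mathcal{C}$ (equivalently, in whatever ambient $\infty$-topos $\mathcal{C}$ is a full subcategory of, here $\operatorname{Shv}_{\mathrm{weak}}(\mathsf{dAfnd})$ or $\mathsf{PStk}$) are stable under base change — this is one of the Giraud–Rezk–Lurie axioms — and under composition, so that a finite iterated fiber product of effective epimorphisms is an effective epimorphism. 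Combining, the map \eqref{eq:homotopyKandefinition} for general $n,k$ is built from the effective epimorphisms of (ii) by base change and composition, hence is itself an effective epimorphism.

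The main obstacle I expect is the combinatorial bookkeeping in the second step: correctly identifying, for each $n$ and $k$, the diagram whose limit is $Z(\Lambda^n_k)$, and checking that the comparison map \eqref{eq:homotopyKandefinition} really is a (finite, iterated) base change of the $n=1$ maps rather than merely ``assembled from'' them in a looser sense. The cleanest way to manage this is probably to avoid horns altogether after an initial reduction and instead argue by induction on $n$: decompose $\Delta^n = \Lambda^n_k \cup_{\Lambda^{n-1}_{k'}} \Delta^{n-1}$ (the missing face glued to the horn along its own horn), so that $Z(-)$ turns this into a pullback square, exhibiting the $n$-th comparison map as a base change of an $(n-1)$-dimensional one; the base of the induction is exactly hypothesis (ii). One should also note at the outset that $\mathcal{C}$ admits the relevant finite limits (given) and that, although $\mathcal{C}$ itself need not be an $\infty$-topos, the notion of effective epimorphism and its stability properties are inherited from the ambient topos in which $\mathcal{C}$ sits as a subcategory closed under fiber products, which is the situation in all the applications in this paper (to $\mathsf{dRig}$ and to $\mathsf{PStk}$).
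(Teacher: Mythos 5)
Your direction (i) $\Rightarrow$ (ii) is fine, but the proposed proof of (ii) $\Rightarrow$ (i) has a genuine gap, and the combinatorial machinery you set up is both broken and unnecessary. The decomposition on which your induction rests is false: the union of $\Lambda^n_k$ with the missing face $\partial_k\Delta^n$ is $\partial\Delta^n$, not $\Delta^n$, and the two pieces are glued along $\partial\Delta^{n-1}$ (the full boundary of that face), not along a horn $\Lambda^{n-1}_{k'}$. So applying $Z(-)$ to your pushout only computes $Z(\partial\Delta^n)$, and the map $Z(\Delta^n)\to Z(\partial\Delta^n)$ --- which is not an equivalence for a general groupoid object --- is left uncontrolled, so the induction never reaches the comparison map \eqref{eq:homotopyKandefinition}. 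Relatedly, the structural claim that the comparison map for $n\ge 2$ is an iterated base change of the two $n=1$ maps is not correct (already for $n=2$, $k=1$ the target $X(\Lambda^2_1)\times_{Y(\Lambda^2_1)}Y(\Delta^2)$ does not arise that way), and your appeal to stability of effective epimorphisms under base change and composition goes beyond the stated hypotheses: the lemma concerns an arbitrary $\mathcal{C}$ with finite limits and geometric realizations, not an $\infty$-topos, and passing to an ``ambient topos'' changes the statement being proved.

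The missing idea --- which your first step is adjacent to but never exploits --- is that for a groupoid object $Z_\bullet$ the restriction maps $Z(\Delta^n)\to Z(\Lambda^n_k)$ are equivalences for all $n\ge 2$ and all $0\le k\le n$; this is \cite[Proposition 6.1.2.6]{HigherToposTheory} and is exactly what the paper's proof invokes. Granting this for both $X_\bullet$ and $Y_\bullet$, the target of \eqref{eq:homotopyKandefinition} is equivalent to $X(\Lambda^n_k)\simeq X(\Delta^n)$ and the comparison map is itself an equivalence, hence trivially an effective epimorphism, for every $n\ge 2$; no stability properties of effective epimorphisms are needed, and the only conditions with content are the two $n=1$ ones, which are precisely hypothesis (ii). If you keep your Segal-type collapse of $Z(\Lambda^n_k)$ to an iterated fiber product $Z_1\times_{Z_0}\cdots\times_{Z_0}Z_1$, the correct conclusion to draw from it is exactly this equivalence, not a presentation of the higher comparison maps in terms of the $n=1$ ones.
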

\begin{proof}
(i) $\implies$ (ii): This is trivial.

(ii) $\implies$ (i): Due to \cite[Proposition 6.1.2.6(3)]{HigherToposTheory}, for every $n \geqslant 2$ and $0 \leqslant  k \leqslant  n$ the canonical morphism $X(\Delta^n) \to X(\Lambda^n_k)$ is an equivalence. More precisely the result of \emph{loc. cit.} says that the morphism $\mathscr{C}_{/X(\Delta^n)} \to \mathscr{C}_{/X(\Lambda^n_k)}$, induced by postcomposition, is an equivalence of $\infty$-categories. In particular, $[X(\Delta^n) \to X(\Lambda^n_k)]$ is terminal in $\mathscr{C}_{/X(\Lambda^n_k)}$, so we can deduce the claim from uniqueness of the terminal object. Obviously, the same is true for $Y$. Therefore, the condition \eqref{eq:homotopyKandefinition} in the definition of a homotopy Kan fibration, is automatically satisfied for every $n \geqslant 2$. 
\end{proof}
\begin{prop}\label{prop:homotopyKanfiberproduct}
Let $\mathscr{X}$ be an $\infty$-topos. Let $X_\bullet \to Y_\bullet$ and $Z_\bullet \to Y_\bullet$ be morphisms in $s\mathscr{X}$. Assume that $X_\bullet \to Y_\bullet$ is a homotopy Kan fibration. Then, the canonical morphism 
\begin{equation}
    |X_\bullet \times_{Y_\bullet} Z_{\bullet}| \to  |X_\bullet| \times_{|Y_\bullet|} |Z_{\bullet}|
\end{equation}
is an equivalence. Here we have abbreviated $|X_\bullet| := \operatorname{colim}_{[n] \in \Delta^\mathsf{op}} X_n$, etc. 
\end{prop}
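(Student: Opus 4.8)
The plan is to reduce the statement to a universal-colimit property of the presheaf category $\mathcal{C} = \operatorname{Fun}(\mathcal{D}^{\mathsf{op}}, \infty\mathsf{Grpd})$, which is itself an $\infty$-topos (it is a presheaf $\infty$-topos), and hence satisfies the Giraud--Rezk--Lurie axioms; in particular colimits in $\mathcal{C}$ are universal (stable under base change) and, crucially, we may compute geometric realizations and fiber products levelwise in $\mathcal{D}^{\mathsf{op}}$, i.e. valuewise in $\infty\mathsf{Grpd}$. So first I would reduce to the case $\mathcal{C} = \infty\mathsf{Grpd}$, by observing that all three operations involved --- taking $\times_{Y_\bullet}$ degreewise, taking $\times_{|Y_\bullet|}$, and taking $|\cdot| = \varinjlim_{\Delta^{\mathsf{op}}}$ --- are computed objectwise on $\mathcal{D}$, and that the property of being a homotopy Kan fibration (an effective epimorphism condition, which in $\infty\mathsf{Grpd}$ just means $\pi_0$-surjectivity) is also checked objectwise. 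Thus it suffices to prove: if $X_\bullet \to Y_\bullet$ is a homotopy Kan fibration of simplicial objects in $\infty\mathsf{Grpd} \simeq \mathcal{S}$, and $Z_\bullet \to Y_\bullet$ is arbitrary, then $|X_\bullet \times_{Y_\bullet} Z_\bullet| \xrightarrow{\sim} |X_\bullet| \times_{|Y_\bullet|} |Z_\bullet|$.

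In the $\infty\mathsf{Grpd}$ case I would argue via the model of simplicial spaces: a simplicial object in $\mathcal{S}$ is a bisimplicial space, its geometric realization is the diagonal (equivalently the homotopy colimit over $\Delta^{\mathsf{op}}$), and the condition that $X_\bullet \to Y_\bullet$ be a homotopy Kan fibration --- i.e. $X(\Delta^n) \to X(\Lambda^n_k) \times_{Y(\Lambda^n_k)} Y(\Delta^n)$ is an effective epimorphism for all $n \geq 1$, $0 \leq k \leq n$ --- is precisely the statement that $X_\bullet \to Y_\bullet$ is a fibration in the Reedy-localized (complete Segal or Kan--Quillen diagonal) model structure for which the diagonal/realization functor is a left Quillen functor, or more directly: it is the condition making $X_\bullet \to Y_\bullet$ a \emph{sharp} map in the sense that base change along it commutes with geometric realization. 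The cleanest route is: (a) recall that $|\cdot| = \varinjlim_{\Delta^{\mathsf{op}}}$ always commutes with arbitrary base change when the map being pulled back is a homotopy Kan fibration --- this is a standard "descent for Kan fibrations of simplicial spaces" statement, and it is the simplicial-object analogue of the fact that in an $\infty$-topos, a map that is a "fibration" (levelwise, plus the Kan horn conditions) is universal for colimits; (b) deduce the fiber-product formula by writing $X_\bullet \times_{Y_\bullet} Z_\bullet \to Z_\bullet$ as the base change of $X_\bullet \to Y_\bullet$ along $Z_\bullet \to Y_\bullet$, applying (a) to get $|X_\bullet \times_{Y_\bullet} Z_\bullet| \simeq |X_\bullet| \times_{|Y_\bullet|} |Z_\bullet|$ after also using that realization is a colimit and colimits in $\mathcal{S}$ are universal; concretely, $|X_\bullet \times_{Y_\bullet} Z_\bullet| \simeq \varinjlim_n (X_n \times_{Y_n} Z_n)$, and one shows this agrees with $(\varinjlim_n X_n) \times_{\varinjlim_n Y_n} (\varinjlim_n Z_n)$ by a Fubini/cofinality argument once the Kan fibration hypothesis guarantees the relevant squares remain cartesian in the colimit.

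I expect the \textbf{main obstacle} to be making step (a) precise: the assertion that geometric realization of simplicial objects commutes with base change along homotopy Kan fibrations is intuitively "descent", but in the $\infty$-categorical simplicial-object setting it requires either citing the right statement from the literature (e.g. Rezk's work on complete Segal spaces, or Lurie's treatment of groupoid objects and effective epimorphisms in \cite{HigherToposTheory}, or Mazel-Gee's model $\infty$-category framework referenced in the introduction) or unwinding the horn-filling conditions carefully. The key technical input is Lemma \ref{lem:groupoidhomotopyKan}-style reductions together with \cite[Proposition 6.1.2.6]{HigherToposTheory}: for groupoid objects, the Kan conditions in degrees $\geq 2$ are automatic, so the homotopy Kan fibration hypothesis is really a condition only in degrees $0$ and $1$, and one then leverages that a groupoid object augmented by its realization is a "Čech nerve-like" object on which fiber products behave well. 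A secondary subtlety is checking that the equivalence is the \emph{canonical} comparison map and not merely an abstract equivalence --- this follows by naturality, since every map in sight ($|X_\bullet \times_{Y_\bullet} Z_\bullet| \to |X_\bullet| \times_{|Y_\bullet|} |Z_\bullet|$ and the levelwise projections) is induced by the universal properties of fiber product and colimit, so compatibility is automatic. Modulo locating the precise reference for step (a), the rest is bookkeeping.
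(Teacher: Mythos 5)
Your proposal follows essentially the same route as the paper: reduce to $\mathcal{C} = \infty\mathsf{Grpd}$ using that limits, colimits, and effective epimorphisms in a presheaf $\infty$-category are detected objectwise, and then invoke the simplicial-spaces fact that geometric realization commutes with pullback along homotopy Kan fibrations. The precise reference you were missing for your step (a) is \cite[Corollary 6.7]{mazel-gee_model_2015}, which is exactly what the paper cites; the groupoid-object reduction via \cite[Proposition 6.1.2.6]{HigherToposTheory} that you mention is not needed here, since the proposition concerns arbitrary simplicial objects.
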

\begin{proof}
The case when $\mathscr{X} = \infty\mathsf{Grpd}$ is proven in \cite[Corollary 6.7]{mazel-gee_model_2015}. The case when  $\mathscr{X} = \operatorname{Psh}(\mathscr{D}, \infty \mathsf{Grpd})$ for some $\infty$-category $\mathscr{D}$, follows from the case when $\mathscr{C}= \infty\mathsf{Grpd}$, because limits and colimits in $\operatorname{Psh}(\mathscr{D}, \infty \mathsf{Grpd})$ are computed pointwise, c.f. \cite[\S 5.1.2]{HigherToposTheory}. Now if $\mathscr{X}$ is an $\infty$-topos we may write $\mathscr{X}$ as a localization $L: \operatorname{Psh}(\mathscr{D}, \infty\mathsf{Grpd}) \leftrightarrows \mathscr{X} :i $ for some $\mathscr{D}$, where $L$ is left exact. We recall that colimits in $\mathscr{X}$ are computed by first taking the colimit in $\operatorname{Psh}(\mathscr{D}, \infty\mathsf{Grpd})$ and then applying $L$, whence the claim follows.  
\end{proof}
%Before continuing, we would like to mention that, if $\mathscr{X}$ is an $\infty$-topos, then $s\mathscr{X}$ is naturally a \emph{model $\infty$-category} in the sense of \cite{mazel-gee_model_2015}; this is a direct generalization of the notion of a model category to the $\infty$-categorical setting. To be precise, by \cite[Theorem 4.4]{mazel-gee_model_2015}, there is a \emph{Kan-Quillen model structure} on $s\mathscr{X}$ which can be described as follows. 
Let $\mathscr{X}$ be an $\infty$-topos. Motivated by \cite{mazel-gee_model_2015}, we define two classes of morphisms (which we call \emph{weak equivalences} and \emph{fibrations}) in $s\mathscr{X}$ as follows:
\begin{enumerate}
  \item[(W)] The weak equivalences are precisely the morphisms in $s\mathscr{X}$ which are sent to equivalences under the geometric realization functor $|\cdot| : s\mathscr{X} \to \mathscr{X}$.
  \item[(F)] The fibrations are the homotopy Kan fibrations.
\end{enumerate}
For morphisms $X_\bullet \to Y_{\bullet}$ and $Z_\bullet \to Y_\bullet$ in $s\mathscr{X}$, Proposition \ref{prop:homotopyKanfiberproduct} above gives us a strategy to calculate the fiber product $|X_\bullet| \times_{|Y_\bullet|} |Z_{\bullet}|$ as follows. One finds a factorization of the morphism $X_\bullet \to Y_\bullet$ as $X_\bullet \to X^\prime_\bullet \to Y_\bullet$, where the first morphism is a weak equivalence and the second is a homotopy Kan fibration. Then, by Proposition \ref{prop:homotopyKanfiberproduct}, one has
\begin{equation}
    |X_\bullet| \times_{|Y_\bullet|} |Z_{\bullet}| \simeq |X^\prime_\bullet| \times_{|Y_\bullet|} |Z_{\bullet}| \simeq |X^\prime_\bullet \times_{Y_\bullet} Z_\bullet|.
\end{equation}
An example of this is the following. We may define an endofunctor $-\oplus[0]$ of the simplex category $\Delta$, where $\oplus$ denotes the operation of \emph{ordinal sum}. By precomposition, we then obtain an endofunctor $\operatorname{Dec}_0: s\mathscr{X} \to s\mathscr{X}$, called \emph{d\'ecalage}. For $X_\bullet \in s\mathscr{X}$, one has $(\operatorname{Dec}_0X_\bullet)_n = X_{n+1}$. 
%In fact $\operatorname{Dec}_0X_\bullet$ is just the simplicial object obtained from $X_\bullet$ by shifting to the right by $1$ and deleting all morphisms which don't preserve the greatest element.  
\begin{lem}\label{lem:decalage}
Let $\mathscr{X}$ be an $\infty$-topos and let $X_\bullet$ be a groupoid object in $\mathscr{X}$. The morphism $X_0 \to X_\bullet$, where the former is viewed as a constant simplicial object, can be factored as 
\begin{equation}
    X_0 \to \operatorname{Dec}_0X_{\bullet} \to X_\bullet, 
\end{equation}
where the first morphism is a weak equivalence and the second is a homotopy Kan fibration. In particular, for any morphism $Y_\bullet \to X_\bullet$ in $s\mathscr{X}$ the fiber product $X_0 \times_{|X_\bullet|} |Y_{\bullet}|$ can be computed as $|(\operatorname{Dec}_0X_\bullet) \times_{X_\bullet} Y_{\bullet}|$. 
\end{lem}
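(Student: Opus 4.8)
The plan is to verify the two asserted properties of the explicit factorization $X_0\to\operatorname{Dec}_0X_\bullet\to X_\bullet$ directly, and then to read off the last statement from Proposition \ref{prop:homotopyKanfiberproduct}. First I would describe the two maps via natural transformations of endofunctors of $\Delta$: the inclusion of ordinals $[n]\hookrightarrow [n]\oplus[0]$ induces $\operatorname{Dec}_0X_\bullet\to X_\bullet$, and the unique arrow $[n]\oplus[0]\to[0]$ induces $X_0\to\operatorname{Dec}_0X_\bullet$ out of the constant simplicial object; since the composite $[n]\hookrightarrow[n]\oplus[0]\to[0]$ is the unique map $[n]\to[0]$, the composite $X_0\to\operatorname{Dec}_0X_\bullet\to X_\bullet$ is indeed the canonical map from the constant object. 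So it suffices to show that $X_0\to\operatorname{Dec}_0X_\bullet$ is a weak equivalence and that $\operatorname{Dec}_0X_\bullet\to X_\bullet$ is a homotopy Kan fibration.

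For the weak equivalence, the ``last vertex'' maps $(\operatorname{Dec}_0X_\bullet)_n=X_{n+1}\to X_0$ (namely $X$ applied to $\{n+1\}\hookrightarrow[n+1]$) exhibit $\operatorname{Dec}_0X_\bullet$ as an augmented simplicial object over $X_0$, and the familiar extra degeneracies (the degeneracy maps collapsing the top two vertices) make this augmented object \emph{split}; hence it is a colimit diagram \cite[\S 4.7.2]{HigherAlgebra}, so the augmentation induces an equivalence $|\operatorname{Dec}_0X_\bullet|\xrightarrow{\sim}X_0$. On the other hand, composing $X_0\to\operatorname{Dec}_0X_\bullet$ with this augmentation is the identity of $X_0$ (again a computation with ordinal maps). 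Therefore $X_0\to|\operatorname{Dec}_0X_\bullet|$ is an equivalence, i.e.\ $X_0\to\operatorname{Dec}_0X_\bullet$ is a weak equivalence in the Kan--Quillen model structure of \cite{mazel-gee_model_2015}.

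For the homotopy Kan fibration, I would unwind the defining condition for $\operatorname{Dec}_0X_\bullet\to X_\bullet$ at $n\ge 1$ and $0\le k\le n$. Writing $X(-)$ for the canonical extension of $X_\bullet$ to a limit-preserving functor from simplicial sets to $\mathcal{C}$, one has $(\operatorname{Dec}_0X)(\Delta^m)=X_{m+1}=X(\Delta^m\star\Delta^0)$ and, since $-\star\Delta^0$ preserves connected colimits and $\Lambda^n_k$ is a connected colimit of its simplices, also $(\operatorname{Dec}_0X)(\Lambda^n_k)\simeq X(\Lambda^n_k\star\Delta^0)$, compatibly with the structure maps to $X(-)$. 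Using that $X(-)$ carries pushouts of monomorphisms of simplicial sets to pullbacks, the map whose being-an-effective-epimorphism is at issue becomes
\[
X(\Delta^n\star\Delta^0)\longrightarrow X\big((\Lambda^n_k\star\Delta^0)\cup_{\Lambda^n_k}\Delta^n\big).
\]
The crux is then the combinatorial identity $(\Lambda^n_k\star\Delta^0)\cup_{\Lambda^n_k}\Delta^n\cong\Lambda^{n+1}_k$ inside $\Delta^{n+1}=\Delta^{\{0,\dots,n\}}\star\Delta^{\{n+1\}}$: the left-hand side is $\bigcup_{i\in\{0,\dots,n\}\setminus\{k\}}\Delta^{[n+1]\setminus\{i\}}$ together with $\Delta^{[n+1]\setminus\{n+1\}}$, that is $\bigcup_{i\ne k}\Delta^{[n+1]\setminus\{i\}}$, which is exactly the $k$-th horn of $\Delta^{n+1}$. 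Hence the displayed map is $X_{n+1}\to X(\Lambda^{n+1}_k)$, an effective epimorphism for every $n+1\ge 2$ precisely because $X_\bullet$ is fibrant, i.e.\ $X_\bullet\to\ast$ is a homotopy Kan fibration. I expect this identification of the décalage of a horn inclusion with a higher horn inclusion to be the only real obstacle; everything else is bookkeeping with faces, degeneracies, and the colimit presentations of horns.

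Finally, for the ``in particular'': since $X_0\xrightarrow{\sim}|\operatorname{Dec}_0X_\bullet|$ by the first part, for any morphism $Y_\bullet\to X_\bullet$ in $s\mathcal{C}$ we get $X_0\times_{|X_\bullet|}|Y_\bullet|\simeq|\operatorname{Dec}_0X_\bullet|\times_{|X_\bullet|}|Y_\bullet|$, and since $\operatorname{Dec}_0X_\bullet\to X_\bullet$ is a homotopy Kan fibration, Proposition \ref{prop:homotopyKanfiberproduct} identifies the right-hand side with $|(\operatorname{Dec}_0X_\bullet)\times_{X_\bullet}Y_\bullet|$, as claimed.
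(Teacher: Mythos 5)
Your proof is correct and complete. The paper states Lemma \ref{lem:decalage} without proof, treating it as the classical d\'ecalage fact, and your verification is exactly the standard argument: the split augmented structure (last-vertex augmentation plus extra degeneracies) gives $X_0 \xrightarrow{\sim} |\operatorname{Dec}_0X_\bullet|$ compatibly with the maps to $X_\bullet$, the identity $(\Lambda^n_k\star\Delta^0)\cup_{\Lambda^n_k}\Delta^n\cong\Lambda^{n+1}_k$ (note $k\le n$, so these are horns covered by fibrancy of $X_\bullet$) reduces the homotopy Kan fibration condition for $\operatorname{Dec}_0X_\bullet\to X_\bullet$ to fibrancy, and the ``in particular'' then follows from Proposition \ref{prop:homotopyKanfiberproduct} as you say.
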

\begin{proof}
That $X_0 \to \operatorname{Dec}_0X_{\bullet}$ is a weak equivalence is an immediate consequence of \cite[Lemma 6.1.3.16]{HigherToposTheory}. To check that $\operatorname{Dec}_0X_\bullet \to X_\bullet$ is a homotopy Kan fibration, by Lemma \ref{lem:groupoidhomotopyKan} it suffices to show that $ X_2 \to X_1 \times_{X_0} X_1$ is an effective epimorphism, but this is even an equivalence (by assumption, $X_\bullet$ is a groupoid object). 
\end{proof}
\subsection{Formal theory of stratifications}\label{sec:stratification}
Let $f: X \to Y$ be a morphism in $\mathsf{dAfnd}$. Then, for every $n \geqslant 1$, the diagonal morphism $\Delta_{n,f}: X \to X^{n/Y }$ determines an object of the category $\mathsf{Pairs}$ (Definition \ref{defn:Pairs}). Hence we may consider the germ $(X \subseteq X^{n/Y})^\dagger$. In particular, the following definition makes sense.  
\begin{defn}
Let $f: X \to Y$ be a morphism in $\mathsf{dAfnd}$. 
\begin{enumerate}[(i)]
    \item The \emph{infinitesimal groupoid} of $f$, denoted $\operatorname{Inf}(X/Y)$, is the simplicial object of $\mathsf{PStk}$ with 
    \begin{equation}
        \operatorname{Inf}(X/Y)_n := (X \subseteq X^{n+1/Y})^\dagger.
    \end{equation}
    \item The \emph{stratifying stack} of $f$, denoted $(X/Y)_{\mathrm{str}}$, is defined to be  the object of $\mathsf{PStk}$ given by 
    \begin{equation}
        (X/Y)_{\mathrm{str}} := \underset{[n] \in \Delta^{\mathsf{op}}}{\operatorname{colim}} \operatorname{Inf}(X/Y)_n.
    \end{equation}
    \item Let $X \in\mathsf{dAfnd}$. We define the \emph{infinitesimal groupoid of }$X$ to be $\operatorname{Inf}(X) := \operatorname{Inf}(X/*)$ and the \emph{stratifying stack} of $X$ to be $X_{\mathrm{str}} := (X/*)_{\mathrm{str}}$.
\end{enumerate}
\end{defn}
\begin{rmk}\label{rmk:Infremark}\begin{enumerate}[(i)]
    \item Let $f: X \to Y$ be a morphism in $\mathsf{dAfnd}$. Then $\operatorname{Inf}(X/Y)$ is a groupoid object of $\mathsf{dAff}$ in the sense of Definition \ref{defn:groupoidobject}. This follows from Lemma \ref{lem:Germfiberproduct}.
    \item For a fixed object $Y \in \mathsf{dAfnd}$, these constructions define functors 
    \begin{equation*}
    \begin{aligned}
        \operatorname{Inf}(-/Y): \mathsf{dAfnd}_{/Y} \to s\mathsf{dAfnd}_{/Y} && \text{and} && (-/Y)_{\mathrm{str}} : \mathsf{dAfnd}_{/Y} \to \mathsf{PStk}_{/Y}.
    \end{aligned}
    \end{equation*} 
    \item Let $k: X \to Z$, $h: Y \to Z$ be morphisms of $\mathsf{dAfnd}$ and let $f: X \to Y$ be a morphism over $Z$. Due to Lemma \ref{lem:groupoidhomotopyKan}, the following are equivalent:
    \begin{enumerate}
        \item $\operatorname{Inf}(X/Z) \to \operatorname{Inf}(Y/Z)$ is a homotopy Kan fibration,
        \item The morphism $(X \subseteq X \times_Z X)^\dagger \to (X \subseteq X \times_Z Y)^\dagger$ induced by $(\operatorname{id},f)$, is an effective epimorphism in $\mathsf{PStk}$. Here the latter morphism is the graph of $f$. 
    \end{enumerate}
    \end{enumerate}
\end{rmk}
\begin{example}\label{ex:Infexample}
\begin{enumerate}[(i)]
    \item Let $X \in \mathsf{dAfnd}$ and let $X \to \operatorname{dSp}(K)$ be the structure morphism. Then, by Remark \ref{rmk:Infremark}(iii), the morphism $\operatorname{Inf}(X) \to \operatorname{Inf}(\operatorname{dSp}(K)) \simeq \operatorname{dSp}(K)$, is a homotopy Kan fibration. 
    \item Let $X \in \mathsf{dAfnd}$ and let $f: U \to X$ be the inclusion of an affinoid open subspace. Then $(U \subseteq U \times U)^\dagger \to (U \subseteq U \times X)^\dagger$ is an equivalence in $\mathsf{dAfnd}$ because $U \times U$ is a neighbourhood of $U$ (embedded via the graph) in $U \times X$. Hence, by Remark \ref{rmk:Infremark}(iii), $\operatorname{Inf}(U) \to \operatorname{Inf}(X)$ is a homotopy Kan fibration. 
    \item If $U \to X$ is any morphism in $\mathsf{dAfnd}$ such that $U \to U \times_X U$ is an open immersion, then the canonical morphism $U \to (U/X)_{\mathrm{str}}$ is an equivalence. This is because, for each $n \geqslant 1$, there is an equivalence $U \simeq (U \subseteq U^{n+1/X})^\dagger$, by construction of the germ. In particular, this includes the case of (ii) above. 
\end{enumerate}
\end{example}
\begin{lem}\label{lem:InfhomotopyKan}
Let $f: X \to Y$ and $g: Y^\prime \to Y$ be morphisms in $\mathsf{dAfnd}$. Set $X^\prime := X \times_Y Y^\prime$. Assume that the induced morphism $\operatorname{Inf}(X) \to \operatorname{Inf}(Y)$ is a homotopy Kan fibration. Then:
\begin{enumerate}[(i)]
    \item The canonical morphism $(X/Y)_{\mathrm{str}} \to X_{\mathrm{str}} \times_{Y_{\mathrm{str}}} Y$ is an equivalence.
    \item The canonical morphism $X^\prime_{\mathrm{str}} \to X_{\mathrm{str}} \times_{Y_{\mathrm{str}}} Y^\prime_{\mathrm{str}}$ is an equivalence.
    \item The morphism $\operatorname{Inf}(X^\prime) \to \operatorname{Inf}(Y^\prime)$ is also a homotopy Kan fibration and the canonical morphism $(X^\prime/Y^\prime)_{\mathrm{str}} \to (X/Y)_{\mathrm{str}} \times_Y Y^\prime$ is an equivalence. 
    \item Assume that there are morphisms $h: Y \to Z$ and $k: X \to Z$ in $\mathsf{dAfnd}$ making $f$ into a morphism over $Z$. Then, the induced morphism $\operatorname{Inf}(X/Z) \to \operatorname{Inf}(Y/Z)$ is a homotopy Kan fibration and the natural morphism $(X/Y)_{\mathrm{str}} \to (X/Z)_{\mathrm{str}} \times_{(Y/Z)_{\mathrm{str}}} Y $ is an equivalence. 
\end{enumerate}
\end{lem}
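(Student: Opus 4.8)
The plan is to reduce all four assertions to a single observation: that $\operatorname{Inf}(-/-)$ commutes with the relevant fibre products already at the level of simplicial objects of $\mathsf{PStk}$, so that Proposition \ref{prop:homotopyKanfiberproduct} applies degreewise. Concretely, I would first establish two identities. For composable morphisms $X\xrightarrow{f}Y\to Z$ between qcs objects there is a natural isomorphism of simplicial objects of $\mathsf{PStk}$
\[
\operatorname{Inf}(X/Y)\;\simeq\;\operatorname{Inf}(X/Z)\times_{\operatorname{Inf}(Y/Z)}Y,
\]
where $Y$ is viewed as a constant simplicial object; and for the Cartesian square of the statement a natural isomorphism
\[
\operatorname{Inf}(X')\;\simeq\;\operatorname{Inf}(X)\times_{\operatorname{Inf}(Y)}\operatorname{Inf}(Y').
\]
Crucially, neither of these uses the homotopy Kan hypothesis; that hypothesis only enters when passing to geometric realizations.

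Both identities are checked degree by degree. Since the localization functor $\mathsf{Pairs}\to\mathsf{Germs}$ preserves fibre products and $\mathsf{Pairs}$ has the explicit formula $(X_1,S_1)\times_{(Y,T)}(X_2,S_2)=(X_1\times_Y X_2,\pi^{-1}(S_1\times_T S_2))$ (Definition \ref{defn:Pairs} and the discussion following it, Proposition \ref{prop:Pairscalcfractions}), the first identity reduces to the $\mathsf{dRig}$-level equality $X^{\times_Y(n+1)}=X^{\times_Z(n+1)}\times_{Y^{\times_Z(n+1)},\Delta}Y$ together with the claim that $\pi^{-1}\!\bigl(|\Delta_{n+1}X|\times_{|\Delta_{n+1}Y|}|Y|\bigr)$ equals $|\Delta_{n+1,f}X|$. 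This holds because the diagonal $X\to X^{\times_Z(n+1)}$ factors through the Zariski-closed immersion $X^{\times_Y(n+1)}\hookrightarrow X^{\times_Z(n+1)}$, and Zariski-closed immersions are injective on underlying topological spaces (Definition \ref{defn:Zariskiclosed} and the lemmas following it), so the preimage of the big diagonal recovers exactly the image of the small one. The second identity is analogous: one uses $X'^{\times(n+1)}=X^{\times(n+1)}\times_{Y^{\times(n+1)}}Y'^{\times(n+1)}$ and identifies the closed subset produced by the $\pi^{-1}$ formula as the intersection, inside $X'^{\times(n+1)}$, of the preimages of $|\Delta_{n+1}X|$ and $|\Delta_{n+1}Y'|$ under the two projections; these preimages are, as Zariski-closed subspaces, $X\times_Y Y'^{\times_Y(n+1)}$ and $X^{\times_Y(n+1)}\times_Y Y'$ respectively, whose scheme-theoretic intersection is precisely the diagonal $X'$. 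In both cases one then checks that the isomorphisms are compatible with all face and degeneracy maps, which is routine since these are all built from forgetting or duplicating coordinates.

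With Step 1 in hand the four parts are formal. For (i), take $Z=\operatorname{dSp}(K)$ in the first identity and apply Proposition \ref{prop:homotopyKanfiberproduct} to $\operatorname{Inf}(X)\to\operatorname{Inf}(Y)\leftarrow Y$, using the hypothesis that $\operatorname{Inf}(X)\to\operatorname{Inf}(Y)$ is a homotopy Kan fibration and that the geometric realization of a constant simplicial object is the object itself; this gives $(X/Y)_{dR}\simeq X_{dR}\times_{Y_{dR}}Y$. For (ii), apply Proposition \ref{prop:homotopyKanfiberproduct} to the second identity in the same way to get $X'_{dR}\simeq X_{dR}\times_{Y_{dR}}Y'_{dR}$. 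For (iii), the second identity exhibits $\operatorname{Inf}(X')\to\operatorname{Inf}(Y')$ as the base change of $\operatorname{Inf}(X)\to\operatorname{Inf}(Y)$ along $\operatorname{Inf}(Y')\to\operatorname{Inf}(Y)$, hence a homotopy Kan fibration (the class is stable under base change, as noted before Lemma \ref{lem:groupoidhomotopyKan}); then part (i) applies to $f'$, and combining $(X'/Y')_{dR}\simeq X'_{dR}\times_{Y'_{dR}}Y'$ with (ii) and with (i) for $f$ yields $(X'/Y')_{dR}\simeq X_{dR}\times_{Y_{dR}}Y'\simeq(X/Y)_{dR}\times_Y Y'$. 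For (iv), the first identity with the given $Z$ shows $\operatorname{Inf}(X/Z)\to\operatorname{Inf}(Y/Z)$ is the base change of $\operatorname{Inf}(X)\to\operatorname{Inf}(Y)$ along $\operatorname{Inf}(Y/Z)\to\operatorname{Inf}(Y)$, hence a homotopy Kan fibration; applying the first identity once more and Proposition \ref{prop:homotopyKanfiberproduct} gives $(X/Y)_{dR}\simeq(X/Z)_{dR}\times_{(Y/Z)_{dR}}Y$.

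The main obstacle is entirely in Step 1, in the identification of the closed subset cut out by the $\pi^{-1}$ in the $\mathsf{Pairs}$-fibre-product formula. One must resist describing this preimage as ``the locus where the coordinate projections to $X$ agree'': that locus is genuinely larger, since the underlying topological space of a product is not the product of the underlying spaces (the Gauss point of a polydisc maps to the Gauss point of the disc under both projections yet does not lie on the diagonal). The correct argument is that the diagonal factors through a Zariski-closed immersion, which is a monomorphism on topological spaces, so that the preimage collapses back to the image of the diagonal. Once this and the parallel computation for the intersection of two such preimages are in place, the rest of the proof is bookkeeping with Proposition \ref{prop:homotopyKanfiberproduct}, the base-change stability of homotopy Kan fibrations, and Remark \ref{rmk:Infremark}.
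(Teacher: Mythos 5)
Your strategy is the paper's own: identify the relevant fibre products of the $\operatorname{Inf}$-simplicial objects degreewise in $\mathsf{Pairs}/\mathsf{Germs}$ and then feed them into Proposition \ref{prop:homotopyKanfiberproduct} together with base-change stability of homotopy Kan fibrations; your two identities are exactly the Cartesian squares the paper writes down (the first with $Z=\operatorname{dSp}(K)$ gives (i), the second gives (ii)), and your extra care with the closed subsets — using that the diagonal factors through a Zariski-closed immersion, which is injective on points, rather than the false ``$|X\times_YX|=|X|\times_{|Y|}|X|$'' — supplies detail the paper leaves implicit. Your (iii) differs only in bookkeeping (you compose (i) for $f'$ with (ii), where the paper directly identifies $Y'\times_Y\operatorname{Inf}(X/Y)_n\simeq\operatorname{Inf}(X'/Y')_n$ and uses that $\operatorname{Inf}(X/Y)\to Y$ is always a homotopy Kan fibration); both are fine, modulo the routine naturality check that your chain of equivalences is the canonical comparison map.

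One concrete slip: in (iv) you assert that ``the first identity with the given $Z$'' exhibits $\operatorname{Inf}(X/Z)\to\operatorname{Inf}(Y/Z)$ as a base change of $\operatorname{Inf}(X)\to\operatorname{Inf}(Y)$. It does not: that identity computes $\operatorname{Inf}(X/Y)$ as $\operatorname{Inf}(X/Z)\times_{\operatorname{Inf}(Y/Z)}Y$, which is what you need for the \emph{second} half of (iv). The first half requires the separate degreewise identity
\begin{equation*}
\operatorname{Inf}(X/Z)\;\simeq\;\operatorname{Inf}(X)\times_{\operatorname{Inf}(Y)}\operatorname{Inf}(Y/Z),
\end{equation*}
i.e. $[(X^{\times_Z n+1},|\Delta_{n+1,k}X|)]\simeq[(X^{\times n+1},|\Delta_{n+1}X|)]\times_{[(Y^{\times n+1},|\Delta_{n+1}Y|)]}[(Y^{\times_Z n+1},|\Delta_{n+1,h}Y|)]$, which is what the paper invokes; it is true and provable by exactly your Step 1 technique (now using that $X^{\times_Zn+1}\to X^{\times n+1}$ is Zariski-closed since $Z$ is separated), so the fix is internal to your method, but as written the justification is mis-attributed. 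A smaller cosmetic point: the ``scheme-theoretic intersection'' in your second identity is, derived, $X\times_{Y^{\times n+1}}Y'$ rather than $X'$ on the nose; since only the underlying closed subset enters the $\mathsf{Pairs}$ fibre-product formula this is harmless, but you should phrase it topologically (via $\pi_0$ and topological invariance) rather than as an identification of subspaces.
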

\begin{proof}
(i): We regard $Y$ as a constant simplicial object, equipped with a morphism $Y \to \operatorname{Inf}(Y)$ induced by the diagonal morphisms
\begin{equation}
    Y \to (Y \subseteq Y^{n+1})^\dagger = \operatorname{Inf}(Y)_n.
\end{equation}
In particular, we calculate
\begin{equation}
    Y \times_{\operatorname{Inf}(Y)_n} \operatorname{Inf}(X)_n \simeq (X \subseteq X^{n+1/Y})^\dagger = \operatorname{Inf}(X/Y)_n.
\end{equation}
Hence, by Proposition \ref{prop:homotopyKanfiberproduct}, we see that $(X/Y)_{\mathrm{str}} \xrightarrow[]{\sim} X_{\mathrm{str}} \times_{Y_{\mathrm{str}}} Y$.

(ii): We calculate
\begin{equation}\label{eq:InfXprimecalculation}
    \operatorname{Inf}(X)_n \times_{\operatorname{Inf}(Y)_n} \operatorname{Inf}(Y^\prime)_n \simeq (X \times_Y Y^\prime \subseteq (X \times_Y Y^\prime)^{n+1})^\dagger \simeq \operatorname{Inf}(X^\prime)_n, 
\end{equation}
and so by Proposition \ref{prop:homotopyKanfiberproduct} again we conclude that $X^\prime_{\mathrm{str}} \xrightarrow[]{\sim} X_{\mathrm{str}} \times_{Y_{\mathrm{str}}} Y^\prime_{\mathrm{str}}$. 

(iii): By the calculation \eqref{eq:InfXprimecalculation}, and the fact that homotopy Kan fibrations are stable under base-change, we see that $\operatorname{Inf}(X^\prime) \to \operatorname{Inf}(Y^\prime)$ is a homotopy Kan fibration. We calculate
\begin{equation}
\begin{aligned}
    Y^\prime \times_Y \operatorname{Inf}(X/Y)_n &\simeq Y^\prime \times_Y (X \subseteq X^{n+1/Y})^\dagger \\ &\simeq (X^\prime \subseteq X^{\prime, n+1/Y^\prime})^\dagger \\ &= \operatorname{Inf}(X^\prime/Y^\prime)_n.  
\end{aligned}
\end{equation}
The morphism $\operatorname{Inf}(X/Y) \to Y$ is always a homotopy Kan fibration, by Lemma \ref{lem:groupoidhomotopyKan}. Therefore by Proposition \ref{prop:homotopyKanfiberproduct} we conclude that $(X^\prime/Y^\prime)_{\mathrm{str}} \xrightarrow[]{\sim} (X/Y)_{\mathrm{str}} \times_Y Y^\prime$. 

(iv): %For each $n \geqslant 0$ there is a Cartesian square
%\begin{equation}
% https://q.uiver.app/#q=WzAsNCxbMCwwLCJbKFhee1xcdGltZXNfWiBuKzF9LCB8XFxEZWx0YV97bisxLGt9WHwpXSJdLFsxLDAsIlsoWV57XFx0aW1lc19aIG4rMX0sIHxcXERlbHRhX3tuKzEsaH1ZfCldIl0sWzEsMSwiWyhZXntcXHRpbWVzIG4rMX0sIHxcXERlbHRhX3tuKzF9WXwpXSJdLFswLDEsIlsoWF57XFx0aW1lcyBuKzF9LCB8XFxEZWx0YV97bisxfVh8KV0iXSxbMCwxXSxbMSwyXSxbMCwzXSxbMywyXV0=
%\begin{tikzcd}
%	{[(X^{\times_Z n+1}, |\Delta_{n+1,k}X|)]} & {[(Y^{\times_Z n+1}, |\Delta_{n+1,h}Y|)]} \\
%	{[(X^{\times n+1}, |\Delta_{n+1}X|)]} & {[(Y^{\times n+1}, |\Delta_{n+1}Y|)]}
%	\arrow[from=1-1, to=1-2]
%	\arrow[from=1-1, to=2-1]
%	\arrow[from=1-2, to=2-2]
%	\arrow[from=2-1, to=2-2]
%\end{tikzcd}
%\end{equation}
We note that $\operatorname{Inf}(X/Z) \to \operatorname{Inf}(Y/Z)$ is the pullback of $\operatorname{Inf}(X) \to \operatorname{Inf}(Y)$ along $\operatorname{Inf}(Y/Z) \to \operatorname{Inf}(Y)$, and is therefore a homotopy Kan fibration. For each $n \geqslant 0$ the following square is also Cartesian:
\begin{equation}
    % https://q.uiver.app/#q=WzAsNCxbMCwwLCJbKFhee1xcdGltZXNfWSBuKzF9LCB8XFxEZWx0YV97bisxLGZ9WHwpXSJdLFsxLDAsIlsoWSx8WXwpXSJdLFsxLDEsIlsoWV57XFx0aW1lc19aIG4rMX0sIHxcXERlbHRhX3tuKzEsaH1ZfCldIl0sWzAsMSwiWyhYXntcXHRpbWVzX1ogbisxfSwgfFxcRGVsdGFfe24rMSxrfVh8KV0iXSxbMCwxXSxbMSwyXSxbMCwzXSxbMywyXV0=
\begin{tikzcd}
	{(X \subseteq X^{n+1/Y})^\dagger} & {Y} \\
	{(X \subseteq X^{n+1/Z})^\dagger} & {(Y \subseteq Y^{n+1/Z})^\dagger}
	\arrow[from=1-1, to=1-2]
	\arrow[from=1-1, to=2-1]
	\arrow[from=1-2, to=2-2]
	\arrow[from=2-1, to=2-2]
\end{tikzcd}
\end{equation}
so that $\operatorname{Inf}(X/Y) \xrightarrow[]{\sim} \operatorname{Inf}(X/Z) \times_{\operatorname{Inf}(Y/Z)} Y$. Therefore, by Proposition \ref{prop:homotopyKanfiberproduct} we conclude that $(X/Y)_{\mathrm{str}} \xrightarrow[]{\sim} (X/Z)_{\mathrm{str}} \times_{(Y/Z)_{\mathrm{str}}} Y$. 
\end{proof}
\begin{lem}\label{lem:pX/Yshriekable}
Let $f: X \to Y$ be a morphism in $\mathsf{dAfnd}$. The canonical morphism $X \to (X/Y)_{\mathrm{str}}$ belongs to the class $\mathrm{rep}$ of representable morphisms in $\mathsf{PStk}$. 
\end{lem}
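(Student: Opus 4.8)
The plan is to show that the canonical morphism $p: X \to (X/Y)_{dR}$ becomes representable after base-change along any map $T \to (X/Y)_{dR}$ from a representable object $T \in \mathsf{qcqsGerms}$, and then invoke the colimit description of $(X/Y)_{dR}$ to reduce everything to the simplicial levels $\operatorname{Inf}(X/Y)_n$, which are representable by construction. The key input is that $\operatorname{Inf}(X/Y)$ is a groupoid object in $\mathsf{qcqsGerms}$ (Remark \ref{rmk:Infremark}(i)) and that the morphism $\operatorname{Inf}(X/Y)_0 = X \to (X/Y)_{dR}$ is the colimit of the augmented simplicial diagram, so that $(X/Y)_{dR}$ is the realization of a groupoid whose $0$-th term is $X$.

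First I would recall that for a groupoid object $Z_\bullet$ in an $\infty$-topos-like setting (here $\mathsf{PStk} = \operatorname{Psh}(\mathsf{qcqsGerms}, \infty\mathsf{Grpd})$, which is an $\infty$-topos), the canonical map $Z_0 \to |Z_\bullet|$ is an effective epimorphism and the groupoid is effective: the augmented simplicial object $Z_\bullet \to |Z_\bullet|$ is its own Čech nerve, i.e. $Z_n \simeq Z_0 \times_{|Z_\bullet|} \cdots \times_{|Z_\bullet|} Z_0$ ($n+1$ factors). Applying this to $Z_\bullet = \operatorname{Inf}(X/Y)$, which by Remark \ref{rmk:Infremark}(i) is a groupoid object with $Z_0 = X$ and $Z_1 = [(X \times_Y X, |\Delta_{2,f} X|)]$, I get a pullback square
\begin{equation}
\begin{tikzcd}
	{[(X\times_Y X, |\Delta_{2,f}X|)]} & X \\
	X & {(X/Y)_{dR}}
	\arrow[from=1-1, to=1-2]
	\arrow[from=1-1, to=2-1]
	\arrow["p", from=1-2, to=2-2]
	\arrow["p"', from=2-1, to=2-2]
\end{tikzcd}
\end{equation}
exhibiting $X \times_{(X/Y)_{dR}} X \simeq \operatorname{Inf}(X/Y)_1$, which lies in $\mathsf{qcqsGerms}$. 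More generally the $(m+1)$-fold fiber power of $p$ is $\operatorname{Inf}(X/Y)_m \in \mathsf{qcqsGerms}$.

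Now I would argue representability directly. Let $T \in \mathsf{qcqsGerms}$ and let $g: T \to (X/Y)_{dR}$ be a morphism. Since $(X/Y)_{dR} = \varinjlim_{[n]\in\Delta^{\mathsf{op}}}\operatorname{Inf}(X/Y)_n$ is a colimit of representables over a sifted (indeed, the relevant diagram is a groupoid, hence the colimit is already "nice"), and since $p: X \to (X/Y)_{dR}$ is an effective epimorphism (being the structure map of an effective groupoid), the base-change $X \times_{(X/Y)_{dR}} T \to T$ is again an effective epimorphism. To see it is representable, I would use that $X \times_{(X/Y)_{dR}} T$ is computed as the realization of the Čech nerve of $X \to (X/Y)_{dR}$ base-changed along $g$: by descent (the Giraud axioms in $\mathsf{PStk}$), $X \times_{(X/Y)_{dR}} T$ is the colimit of the simplicial object $n \mapsto \operatorname{Inf}(X/Y)_n \times_{(X/Y)_{dR}} T = (X \times_{(X/Y)_{dR}} T)^{\times_T n+1}$; but more simply, the effective epimorphism $p$ pulls back to an effective epimorphism $X' := X\times_{(X/Y)_{dR}} T \to T$ whose fiber powers $X'^{\times_T m+1} \simeq \operatorname{Inf}(X/Y)_m \times_{(X/Y)_{dR}} T$ involve $T \in \mathsf{qcqsGerms}$ and $\operatorname{Inf}(X/Y)_m \in \mathsf{qcqsGerms}$. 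Since $g$ factors — after passing to a point, i.e. choosing a lift of $g$ through some $\operatorname{Inf}(X/Y)_n \to (X/Y)_{dR}$ (possible because the representables $T$ are qcqs hence "small" and the colimit defining $(X/Y)_{dR}$ is filtered-like along $\Delta^{\mathsf{op}}$; concretely one uses that $\operatorname{Map}(T, \varinjlim \operatorname{Inf}(X/Y)_n)$ is computed as a colimit and $T$, being a germ, is compact enough) — I reduce to the case $T \to \operatorname{Inf}(X/Y)_n$ for some $n$, where $X \times_{(X/Y)_{dR}} \operatorname{Inf}(X/Y)_n \simeq \operatorname{Inf}(X/Y)_{n+1}$ is representable, and hence $X \times_{(X/Y)_{dR}} T \simeq \operatorname{Inf}(X/Y)_{n+1} \times_{\operatorname{Inf}(X/Y)_n} T$ is representable since $\mathsf{qcqsGerms}$ is stable under fiber products in $\mathsf{Germs}$ and the Yoneda embedding $\mathsf{qcqsGerms} \hookrightarrow \mathsf{PStk}$ preserves them.

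\textbf{The main obstacle} I anticipate is the reduction step asserting that any map $T \to (X/Y)_{dR}$ from a representable $T$ factors (up to the appropriate homotopy-coherent data) through one of the simplicial levels $\operatorname{Inf}(X/Y)_n$ — or, more precisely, controlling the Čech-nerve computation of $X \times_{(X/Y)_{dR}} T$ well enough to see it lands in $\mathsf{qcqsGerms}$. Since $\Delta^{\mathsf{op}}$ is not filtered, $\operatorname{Map}(T,-)$ does not commute with the colimit defining $(X/Y)_{dR}$ on the nose; instead the correct statement is that $p$ is an effective epimorphism and its Čech nerve has representable terms, so that for \emph{any} $T \to (X/Y)_{dR}$ the pullback $X\times_{(X/Y)_{dR}} T$ receives an effective epimorphism and, by descent, is reconstructed from representable pieces over $T$ — but to conclude it is \emph{itself} representable (rather than merely a quotient of representables) one genuinely needs the groupoid structure: $X\times_{(X/Y)_{dR}} T \simeq X\times_{(X/Y)_{dR}} X \times_X (X \times_{(X/Y)_{dR}} T)$ won't immediately help unless $T$ already maps through $X$. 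The cleanest route is therefore: show $p$ is an effective epimorphism (from effectivity of the groupoid), note $p \times_{(X/Y)_{dR}} p \simeq \operatorname{Inf}(X/Y)_1$ is representable, and then apply the standard fact that a morphism $f$ in an $\infty$-topos with $f$ an effective epimorphism and $f$ "representable-diagonal" plus representable fiber square is representable — i.e. use that $\operatorname{rep}$ is local on the target for effective-epimorphism covers together with the representability of the Čech nerve, which is exactly the kind of descent packaged in \cite[Proposition A.5.16]{mann_p-adic_2022} and the closure properties of $\mathrm{rep}$. I expect the actual write-up to invoke precisely this: $X \to (X/Y)_{dR}$ is an effective epi whose Čech nerve is valued in representables, hence representable.
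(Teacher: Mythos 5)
Your overall strategy is the same as the paper's: use effectivity of the groupoid $\operatorname{Inf}(X/Y)$ in the presheaf $\infty$-topos $\mathsf{PStk}$ to identify $X \times_{(X/Y)_{dR}} X \simeq [(X\times_Y X,|\Delta_f X|)]$, lift an arbitrary test map $T \to (X/Y)_{dR}$ from a representable through $p\colon X \to (X/Y)_{dR}$, and conclude by closure of $\mathsf{qcqsGerms}$ under fiber products. But the one step that carries all the weight — why such a lift exists — is exactly where your argument has a gap. The reasons you offer do not work as stated: quasi-compactness of $T$ and the colimit being ``filtered-like along $\Delta^{\mathsf{op}}$'' are irrelevant ($\Delta^{\mathsf{op}}$ is sifted, not filtered, and no finiteness of $T$ is needed), and your fallback, that $\mathrm{rep}$ is ``local on the target for effective-epimorphism covers'' as a standard fact packaged in \cite[Proposition A.5.16]{mann_p-adic_2022}, is both uncited correctly (that proposition is about extending six-functor formalisms by Kan extension, not about representability) and, in general settings, false without precisely the input you are missing: it is equivalent to the lifting statement you are trying to avoid proving.

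The correct and very short reason, which the paper uses, is that representable objects of a presheaf $\infty$-category are projective: by Yoneda, $\operatorname{Map}_{\mathsf{PStk}}(T,-)$ is evaluation at $T$, and colimits of presheaves are computed objectwise, so $\operatorname{Map}(T,(X/Y)_{dR}) \simeq \varinjlim_{[n]\in\Delta^{\mathsf{op}}}\operatorname{Map}(T,\operatorname{Inf}(X/Y)_n)$; since $\pi_0$ of a geometric realization of spaces is a quotient of $\pi_0$ of the zeroth level, every map $T \to (X/Y)_{dR}$ lifts, up to homotopy, through $X = \operatorname{Inf}(X/Y)_0$. (In particular your parenthetical ``$\operatorname{Map}(T,-)$ does not commute with the colimit on the nose'' is the wrong diagnosis — it does commute; the only point is extracting a lift on $\pi_0$, and one should lift all the way to level $0$ rather than to some $\operatorname{Inf}(X/Y)_n$, though a lift to level $n$ composes with face maps down to $X$ anyway.) With the lift $T \to X$ in hand, the associativity of fiber products gives $T \times_{(X/Y)_{dR}} X \simeq T \times_X (X \times_{(X/Y)_{dR}} X) \simeq T \times_X [(X\times_Y X,|\Delta_f X|)]$, which is representable, completing the proof exactly as in the paper. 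So your proposal is the right skeleton, but the projectivity argument is the missing idea, not an optional refinement.
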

\begin{proof}
Let us take $Z \in \mathsf{dAff} \subseteq \mathsf{PStk}$ with a morphism $Z \to (X/Y)_{\mathrm{str}}$; we need to show that the pullback $Z \times_{(X/Y)_{\mathrm{str}}} X$ is representable. In any category of presheaves, all representable objects are projective. In particular the map $Z \to (X/Y)_{\mathrm{str}}$ has the right lifting property against effective epimorphisms, so there exists a lift $Z \to X$, up to homotopy. Using this lift, and the associativity of fiber products, we deduce that
\begin{equation}
    Z \times_{(X/Y)_{\mathrm{str}}} X \simeq  Z \times_X X \times_{(X/Y)_{\mathrm{str}}} X \simeq Z \times_X (X \subseteq X \times_Y X)^\dagger, 
\end{equation}
where we used that groupoid objects are effective. Since $\mathsf{dAff}$ is closed under fiber products, we deduce that $X \to (X/Y)_{\mathrm{str}}$ belongs to $\mathrm{rep}$. 
\end{proof}
\begin{defn}\label{defn:goodmorphism}
A morphism $f: X \to Y$ in $\mathsf{dAfnd}$ is called \emph{good} if: 
\begin{itemize}
    \item[$\star$] The morphism $\operatorname{Inf}(X) \to \operatorname{Inf}(Y)$ is a homotopy Kan fibration,
    \item[$\star$] The morphism $X \to (X/Y)_{\mathrm{str}}$ is of universal $!$-descent, with respect to the six-functor formalism on $\mathsf{PStk}$ (Theorem \ref{thm:6FPStk}). This condition makes sense by Lemma \ref{lem:pX/Yshriekable}. 
\end{itemize}
\end{defn}
\begin{lem}\label{lem:goodmorphismproperties}
\begin{enumerate}[(i)]
    \item The class of good morphisms in $\mathsf{dAfnd}$ is stable under base-change and composition. The functor $(-)_{\mathrm{str}} : \mathsf{dAfnd} \to \mathsf{PStk}$ preserves finite products, and pullbacks of edges in the class $\mathrm{good}$. 
    \item If $f: X \to Y$ is good then $f_{\mathrm{str}} : X_{\mathrm{str}} \to Y_{\mathrm{str}}$ belongs to the class $E$ of $!$-able morphisms in the six-functor formalism on $\mathsf{PStk}$ (Theorem \ref{thm:6FPStk}). 
\end{enumerate}
\end{lem}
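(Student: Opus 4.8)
Here is the plan.

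\smallskip
\textbf{Part (i).} This is mostly a formal consequence of Lemma \ref{lem:InfhomotopyKan} together with the fact that morphisms of universal $!$-descent and homotopy Kan fibrations are each stable under base-change and composition. For base-change of a good morphism $f:X\to Y$ along $g:Y'\to Y$, set $X':=X\times_Y Y'$ (again qcs). Since $\operatorname{Inf}(X)\to\operatorname{Inf}(Y)$ is a homotopy Kan fibration, Lemma \ref{lem:InfhomotopyKan}(iii) gives that $\operatorname{Inf}(X')\to\operatorname{Inf}(Y')$ is one and that $(X'/Y')_{dR}\simeq(X/Y)_{dR}\times_Y Y'$; under this identification $X'\to(X'/Y')_{dR}$ is the base-change of $X\to(X/Y)_{dR}$, hence of universal $!$-descent, so $f'$ is good. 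For composition of good $f:X\to Y$, $g:Y\to Z$, the morphism $\operatorname{Inf}(X)\to\operatorname{Inf}(Z)$ is a homotopy Kan fibration as a composite; by Lemma \ref{lem:InfhomotopyKan}(iv) we have $(X/Y)_{dR}\simeq(X/Z)_{dR}\times_{(Y/Z)_{dR}}Y$, so $X\to(X/Z)_{dR}$ factors as $X\to(X/Y)_{dR}\to(X/Z)_{dR}$, the first map of universal $!$-descent (as $f$ is good) and the second the base-change of $Y\to(Y/Z)_{dR}$ (of universal $!$-descent as $g$ is good); hence the composite is of universal $!$-descent and $gf$ is good. Finally, $(-)_{dR}$ preserves the empty product because $\operatorname{Inf}(\operatorname{dSp}(K))$ is constant on $\operatorname{dSp}(K)$, and it preserves binary products and pullbacks of good edges because, for $f$ good and $g:Z\to Y$ arbitrary, Lemma \ref{lem:InfhomotopyKan}(ii) gives $(X\times_Y Z)_{dR}\simeq X_{dR}\times_{Y_{dR}}Z_{dR}$ (the product case being $Y=\operatorname{dSp}(K)$, via Example \ref{ex:Infexample}(i)).

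\smallskip
\textbf{Part (ii), reduction.} Write $p:X\to(X/Y)_{dR}$ and $r:(X/Y)_{dR}\to Y$ for the canonical morphisms. The strategy is to show $r\in\widetilde{E}$ and then conclude using that $\widetilde{E}$ is local on the target and stable under base-change. Indeed, let $Z\in\mathsf{qcqsGerms}$ with a morphism $Z\to Y_{dR}$; since $Z$ is representable, hence projective, and $Y\to Y_{dR}$ is an effective epimorphism ($Y=\operatorname{Inf}(Y)_0$ mapping to the realization of the effective groupoid object $\operatorname{Inf}(Y)$), the morphism lifts to $Z\to Y$. By Lemma \ref{lem:InfhomotopyKan}(i), $(X/Y)_{dR}\simeq X_{dR}\times_{Y_{dR}}Y$, whence $X_{dR}\times_{Y_{dR}}Z\simeq(X/Y)_{dR}\times_Y Z$, and this maps to $Z$ as the base-change of $r$. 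Granting $r\in\widetilde{E}$, this base-change lies in $\widetilde{E}$, and since $\widetilde{E}$ is local on the target we get $f_{dR}\in\widetilde{E}$.

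\smallskip
\textbf{Part (ii), the key step.} To see $r\in\widetilde{E}$ I would verify the two hypotheses of ``local on the source'' (Definition \ref{defn:Estabilityproperties}(iv)). The auxiliary morphism is $p$: it lies in $\mathrm{rep}\subseteq\widetilde{E}$ by Lemma \ref{lem:pX/Yshriekable}, it is of universal $!$-descent since $f$ is good, and $r\circ p=f$ is representable (a morphism between representable objects of $\mathsf{PStk}$), hence in $\widetilde{E}$. The remaining point is that $r\in\delta\mathrm{rep}$, i.e. that $\Delta_r:(X/Y)_{dR}\to(X/Y)_{dR}\times_Y(X/Y)_{dR}$ is representable. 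Let $W$ be representable with a morphism to $(X/Y)_{dR}\times_Y(X/Y)_{dR}$, given by $a,b:W\to(X/Y)_{dR}$ with $ra=rb$; since $p$ is an effective epimorphism and $W$ is projective, $a$ and $b$ lift to $\tilde a,\tilde b:W\to X$, which satisfy $f\tilde a=rp\tilde a=ra=rb=f\tilde b$ and so assemble to a morphism $W\to X\times_Y X$. Using that $\operatorname{Inf}(X/Y)$ is an effective groupoid object of $\mathsf{PStk}$, so that $X\times_{(X/Y)_{dR}}X\simeq\operatorname{Inf}(X/Y)_1=[(X\times_Y X,|\Delta_f X|)]$, one computes
\[
W\times_{(X/Y)_{dR}\times_Y(X/Y)_{dR}}(X/Y)_{dR}\ \simeq\ W\times_{X\times_Y X}[(X\times_Y X,|\Delta_f X|)],
\]
which is a fibre product of representable objects over a representable object, hence representable because $\mathsf{qcqsGerms}$ is stable under fibre products. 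Thus $\Delta_r$ is representable, $r\in\delta\mathrm{rep}$, and the argument closes.

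\smallskip
\textbf{Main obstacle.} The subtle point is precisely the verification $r\in\delta\mathrm{rep}$. One cannot check representability of $\Delta_r$ after an effective-epimorphism base-change, since representability is not local on the target in $\mathsf{PStk}$; instead one must test directly against an arbitrary representable $W$, use projectivity of $W$ to lift into $X\times_Y X$, and invoke effectivity of the infinitesimal groupoid to identify the relevant fibre product with a qcqs germ. All the rest is bookkeeping with Lemma \ref{lem:InfhomotopyKan}, Lemma \ref{lem:pX/Yshriekable}, and the stability properties of universal $!$-descent and of $\widetilde{E}$.
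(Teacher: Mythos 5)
Your proposal is correct and follows the same overall strategy as the paper: part (i) is exactly the paper's appeal to Lemma \ref{lem:InfhomotopyKan} (the paper simply says ``all of these properties follow from Lemma \ref{lem:InfhomotopyKan}'', and your spelling-out, including the use of stability of universal $!$-descent under base-change and composition, is the intended argument), and part (ii) uses the same reduction: locality on the target plus projectivity of representables and Lemma \ref{lem:InfhomotopyKan}(i) reduce to showing $r:(X/Y)_{dR}\to Y\in\widetilde{E}$, which is then obtained from ``local on the source'' with the cover $p:X\to(X/Y)_{dR}$ and the factorization $f=r\circ p$. The one place where you genuinely add something is the verification that $r\in\delta\mathrm{rep}$: Definition \ref{defn:Estabilityproperties}(iv) requires the diagonal of the morphism being concluded (here $\Delta_r$) to be representable, whereas the paper's written proof only records that the diagonal of $p$ is $X\to[(X\times_YX,|\Delta_fX|)]$, hence representable. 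Your direct check of $\Delta_r$ --- testing against a representable $W$, lifting along the effective epimorphism into $X\times_YX$, and using effectivity of $\operatorname{Inf}(X/Y)$ to identify $X\times_{(X/Y)_{dR}}X\simeq[(X\times_YX,|\Delta_fX|)]$, so that the pullback becomes $W\times_{X\times_YX}[(X\times_YX,|\Delta_fX|)]$ --- is correct and addresses the hypothesis as literally stated, so it fills in a step the paper elides (or conflates with the diagonal of $p$). One cosmetic improvement: rather than lifting $a$ and $b$ separately and then ``assembling'' them (which in the $\infty$-categorical setting requires an extra compatibility of homotopies over $Y$), lift the single map $W\to(X/Y)_{dR}\times_Y(X/Y)_{dR}$ along the effective epimorphism $p\times_Yp$; this makes the factorization through $X\times_YX$ coherent by construction.
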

Before proving the Lemma, we make note of an immediate Corollary.
\begin{cor}\label{cor:dRcorrespondence}
The functor $(-)_{\mathrm{str}}$ induces a symmetric-monoidal functor
\begin{equation}
        (-)_{\mathrm{str}}: \operatorname{Corr}(\mathsf{dAfnd}, \mathrm{good}) \to \operatorname{Corr}(\mathsf{PStk}, \widetilde{E}). 
\end{equation}
where $\widetilde{E}$ is the class of edges in $\mathsf{PStk}$ coming from Theorem \ref{thm:6FPStk}. 
\end{cor}
\begin{proof}[Proof of Lemma \ref{lem:goodmorphismproperties}]
(i): All of these properties follow from Lemma \ref{lem:InfhomotopyKan}.

(ii): Since the class $\widetilde{E}$ is \emph{$*$-local on the target}, it suffices to check that $f_{\mathrm{str}} \in \widetilde{E}$ after pullback along a morphism $Z \to Y_{\mathrm{str}}$ from a representable object of $\mathsf{PStk}$. Again, since representable objects are projective, this lifts to a morphism $Z \to Y$, up to homotopy. Using this morphism and the associativity of fiber products we see that 
\begin{equation}
    Z \times_{Y_{\mathrm{str}}} X_{\mathrm{str}} \simeq Z \times_Y Y \times_{Y_{\mathrm{str}}} X_{\mathrm{str}} \simeq  Z \times_Y (X/Y)_{\mathrm{str}},
\end{equation}
where we used Lemma \ref{lem:InfhomotopyKan}(i). Therefore, since the class $\widetilde{E}$ is stable under base-change, it suffices to show that $(X/Y)_{\mathrm{str}} \to Y$ belongs to $\widetilde{E}$. The morphism $X \to Y$ factors as $X \to (X/Y)_{\mathrm{str}} \to Y$, where the first morphism is of universal $!$-descent by assumption. Therefore, since the class $\widetilde{E}$ is \emph{$!$-local on the source}, we deduce that $(X/Y)_{\mathrm{str}} \to Y \in \widetilde{E}$. 
\end{proof}
By Corollary \ref{cor:dRcorrespondence} we now have a symmetric-monoidal functor
\begin{equation}
 (-)_{\mathrm{str}}: \operatorname{Corr}(\mathsf{dAfnd}, \mathrm{good}) \to \operatorname{Corr}(\mathsf{PStk},\widetilde{E}). 
\end{equation}
By post-composing $(-)_{\mathrm{str}}$ with the six-functor formalism $\operatorname{QCoh}$ on $ (\mathsf{PStk},\widetilde{E})$, we obtain a six-functor formalism 
\begin{equation}\label{eq:CrysSixfunctors}
    \operatorname{Strat} := \operatorname{QCoh} \circ (-)_{\mathrm{str}} :  \operatorname{Corr}(\mathsf{dAfnd}, \mathrm{good}) \to \mathsf{CAlg}(\mathsf{Pr}^L_{\mathsf{st}}).
\end{equation}
We would like to extend this six-functor formalism to all objects of $\mathsf{dRig}$ and also to a much larger class than just the $\mathrm{good}$ morphisms. Unfortunately, the class of good morphisms is not closed under the formation of diagonals, so we cannot apply the extension formalism of \S\ref{subsec:sixf2} and we have to proceed in a more ad-hoc manner. 
\begin{defn}
We define $E_{\mathrm{str}}$ to be the class of morphisms $f: X \to Y$ in $\mathsf{dRig}$ which are representable in $\mathrm{good}$. That is, for any morphism $Y^\prime \to Y$ from an object of $\mathsf{dAfnd}$, the pullback $f^\prime : X^\prime \to Y^\prime$ is a morphism between objects of $\mathsf{dAfnd}$ which belongs to the class $\mathrm{good}$. 
\end{defn}
Therefore, by \cite[Proposition A.5.16]{mann_p-adic_2022} again, the six-functor formalism of \eqref{eq:CrysSixfunctors} extends to a six-functor formalism
\begin{equation}
    \operatorname{Strat} : \operatorname{Corr}(\mathsf{dRig}, E_{\mathrm{str}}) \to \mathsf{CAlg}(\mathsf{Pr}^L_{\mathsf{st}}),
\end{equation}
uniquely such that 
\begin{equation}
    \operatorname{Strat}^*(X) \xrightarrow[]{\sim} \underset{Y \in \mathsf{dAfnd}^\mathsf{op}_{/X}}{\operatorname{lim}}\operatorname{Strat}^*(Y),
\end{equation}
for all $X \in \mathsf{dRig}$. In particular $\operatorname{QCoh}^*$ is the left Kan extension of its restriction to $\mathsf{dAfnd}$. One could further iterate the extension principles of \cite[\S A.5]{mann_p-adic_2022}, although we do not do this here for the reason stated above. Let $f: X \to Y$ be a morphism in $\mathsf{dRig}$. We will denote the six operations of the six-functor formalism $\operatorname{Strat}$ by 
\begin{equation}
    (f^*_{\mathrm{str}}, f_{\mathrm{str},*}, f_{\mathrm{str},!}, f_{\mathrm{str}}^!, \widehat{\otimes}_{X_{\mathrm{str}}}, \underline{\operatorname{Hom}}_{X_{\mathrm{str}}}), 
\end{equation}
where, of course, the functors $f_{\mathrm{str},!}$ and $f_{\mathrm{str}}^!$ are only defined when $f \in E_{\mathrm{str}}$. 
\subsection{Descent and Kashiwara's equivalence}\label{subsec:DescentKashiwara}
Let $f: X \to Y$ be any morphism in $\mathsf{dAfnd}$. The equivalence 
\begin{equation}
    \operatorname{QCoh}((X/Y)_{\mathrm{str}}) \xrightarrow[]{\sim} \underset{[n] \in \Delta}{\operatorname{lim}} \operatorname{QCoh}^*((X \subseteq X^{n+1/Y})^\dagger)
\end{equation}
follows automatically from the fact that $\operatorname{QCoh}^*$ is a limit-preserving functor on $\mathsf{PStk}^\mathsf{op}$. Using this equivalence, it is quite easy to show the following.

\begin{lem}\label{lem:Crysdescent}
Let $X \in \mathsf{dAfnd}$. Let $\{U_i \to X\}_{i =1}^n$ be a finite cover of $X$ by affinoid subspaces.
\begin{enumerate}[(i)]
    \item Let $\mathcal{I}$ be the family of finite nonempty subsets of $\{1, \dots, n\}$ and for each $I \in \mathcal{I}$ set $U_I := \bigcap_{i \in I} U_i$. Then, the canonical morphism
    \begin{equation}
        \operatorname{Strat}^*(X) \to \underset{I \in \mathcal{I}}{\operatorname{lim}} \operatorname{Strat}^*(U_I) 
    \end{equation}
    is an equivalence.
    \item Set $Y := \coprod_{i=1}^n U_i  \to X$. Then, the canonical morphism
    \begin{equation}
        \operatorname{Strat}^*(X) \to \underset{[m] \in \Delta}{\operatorname{lim}}\operatorname{Strat}^*(Y^{m+1/X}) 
    \end{equation}
    is an equivalence.
\end{enumerate}
\end{lem}
\begin{proof}
(i): Let $t_I: U_I \to X$ be the inclusions. By using the presentation  
\begin{equation}
    \operatorname{Strat}(X) \xrightarrow[]{\sim} \underset{[m] \in \Delta}{\operatorname{lim}} \operatorname{QCoh}^*((X \subseteq X^{n+1})^\dagger), 
\end{equation}
it is sufficient to show that, for each $m \geqslant 0$, the canonical morphism
\begin{equation}
    \operatorname{QCoh}^*((X \subseteq X^{m+1})^\dagger) \to \underset{I \in \mathcal{I}}{\operatorname{lim}}  \operatorname{QCoh}^*((U_I \subseteq U_I^{m+1})^\dagger)
\end{equation}
is an equivalence. However, this follows from Lemma \ref{lem:pairsdescent}(i), since the diagonally embedded copy of $|X|$ in $X^{m+1}$ is contained in  $\bigcup_{i=1}^n U_i^{\times m+1}$.

(ii): This is quite similar to the proof of (i), using Lemma \ref{lem:pairsdescent}(ii) instead of Lemma \ref{lem:pairsdescent}(i).
\end{proof}
\begin{cor}
The prestack $\operatorname{Strat}^*: \mathsf{dRig} \to \mathsf{CAlg}(\mathsf{Pr}^L_{\mathsf{st}})$ is a sheaf in the analytic topology.
\end{cor}
\begin{proof}
Since $\operatorname{Strat}^*$ is right Kan extended from $\mathsf{dAfnd}^{\mathsf{op}}$, the combination of Lemma \ref{lem:Crysdescent} and \cite[Proposition A.3.11]{mann_p-adic_2022} gives the Corollary. 
\end{proof}
%Kashiwara's equivalence is a classical result in the theory of algebraic $\mathcal{D}$-modules (and of $\mathcal{D}$-modules on complex-analytic varieties). It says that for a Zariski-closed immersion $Z \hookrightarrow X$ between smooth algebraic varieties over the complex numbers, there is an equivalence of categories between $\mathcal{D}$-modules on $Z$ and $\mathcal{D}$-modules on $X$ which are set-theoretically supported along $Z$. We will prove a version of Kashiwara's equivalence in our setting, i.e., for stratifications on derived rigid spaces. 
Let $X \in \mathsf{dRig}$ and let $S \subseteq |X|$ be a closed subset of the underlying topological space. Let $j: U \to X$ be the inclusion of the open analytic subspace corresponding to to complement of $S$. We define 
\begin{equation}
    \Gamma_S \operatorname{Strat}(X) \subseteq \operatorname{Strat}(X)
\end{equation}
to be the full subcategory spanned by objects $M$ such that $j^*_{\mathrm{str}}M \simeq 0$.
\begin{prop}\label{prop:Kashiwara1}
Let $i: Z \to X$ be a Zariski-closed immersion in $\mathsf{dAfnd}$ which is induced by a morphism of algebras which is surjective on $\pi_0$. Assume that $i$ admits a retraction $r: X \to Z$. Then: 
\begin{enumerate}[(i)]
    \item There is a canonical equivalence
\begin{equation}
    (Z \subseteq X)^\dagger \simeq Z_{\mathrm{str}} \times_{X_{\mathrm{str}}} X.
\end{equation}
in $\mathsf{PStk}$.
\item In the Cartesian square 
\begin{equation}
% https://q.uiver.app/#q=WzAsNCxbMCwxLCJaX3tkUn0iXSxbMSwxLCJYX3tkUn0iXSxbMSwwLCJYIl0sWzAsMCwiWyhYLHxafCldIl0sWzAsMSwiaV97ZFJ9Il0sWzIsMSwicCJdLFszLDIsImleXFxwcmltZSJdLFszLDAsInBeXFxwcmltZSJdLFszLDEsIiIsMSx7InN0eWxlIjp7Im5hbWUiOiJjb3JuZXIifX1dXQ==
\begin{tikzcd}
	{(Z \subseteq X)^\dagger} & X \\
	{Z_{\mathrm{str}}} & {X_{\mathrm{str}}}
	\arrow["{\iota}", from=1-1, to=1-2]
	\arrow["{q}", from=1-1, to=2-1]
	\arrow["\lrcorner"{anchor=center, pos=0.125}, draw=none, from=1-1, to=2-2]
	\arrow["p", from=1-2, to=2-2]
	\arrow["{i_{\mathrm{str}}}", from=2-1, to=2-2]
\end{tikzcd}
\end{equation}
coming from (i), the Beck-Chevalley morphism 
\begin{equation}
    p^*i_{\mathrm{str},*} \to \iota_* q^*
\end{equation}
is an equivalence of functors from $\operatorname{Strat}(Z) = \operatorname{QCoh}(Z_{\mathrm{str}})$ to $\operatorname{QCoh}(X)$. 
\item The pair $(i_{\mathrm{str}}^*, i_{\mathrm{str},*})$ induces an equivalence $\operatorname{Strat}(Z) \simeq \Gamma_{|Z|}\operatorname{Strat}(X)$.
\end{enumerate}
\end{prop}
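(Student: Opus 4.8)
The plan is to prove (i) by a \emph{d\'ecalage} argument, deduce (ii) formally from (i), and then obtain (iii) from (i), (ii) and the local-cohomology formalism of Proposition~\ref{prop:localcohformalproperties}. Note first that $\operatorname{Inf}(i)\colon\operatorname{Inf}(Z)\to\operatorname{Inf}(X)$ is \emph{not} a homotopy Kan fibration in general: the diagonal only provides a one-sided section of the map $[(Z\times Z,|\Delta Z|)]\to[(Z\times X,|\Gamma_i Z|)]$, and the retraction provides the other half only on the nose over $|Z|$. So Lemma~\ref{lem:InfhomotopyKan} cannot be applied directly; the retraction must be used in a more hands-on way.

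\textbf{Part (i).} I would use Lemma~\ref{lem:decalage}. First one checks $\operatorname{Inf}(X)_\bullet$ is fibrant in the Kan--Quillen structure on $s\mathsf{PStk}$: its horn maps are equivalences for $n\ge 2$ by \cite[Proposition~6.1.2.6(3)]{HigherToposTheory}, and for $n=1$ the map $[(X\times X,|\Delta X|)]\to X$ is split by the diagonal, hence an effective epimorphism. Then Lemma~\ref{lem:decalage} identifies $Z_{dR}\times_{X_{dR}}X$ with $|P_\bullet|$ where $P_\bullet:=(\operatorname{Dec}_0\operatorname{Inf}(X)_\bullet)\times_{\operatorname{Inf}(X)_\bullet}\operatorname{Inf}(Z)_\bullet$. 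Computing the fibre products levelwise in $\mathsf{Pairs}$ (no derived subtleties arise because the relevant fibre product is taken along an identity map), one gets $P_n\simeq[(Z^{\times_K n+1}\times_K X,\,S_n)]$ with $S_n$ the image of the Zariski-closed immersion $(\Delta_{n+1,i},i_X)\colon Z\to Z^{\times_K n+1}\times_K X$. The augmentation $P_\bullet\to[(X,|Z|)]$ ``record only the free last coordinate'' is a split augmented simplicial object: the extra degeneracies $\sigma_n\colon(z_0,\dots,z_n;x)\mapsto(r(x),z_0,\dots,z_n;x)$ and $\sigma_{-1}\colon x\mapsto(r(x);x)$ are honest morphisms of $\mathsf{Germs}$ built from $r$ (on $S_n$ one has $x=i_X(z)$, so $r(x)=z$ and all compatibilities hold), and they satisfy the extra-degeneracy identities strictly. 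Hence $|P_\bullet|\simeq[(X,|Z|)]$, which together with the previous identification gives (i); $\iota$ and $q$ are the two projections of the resulting Cartesian square.

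\textbf{Part (ii).} Granting (i): since representable objects of $\mathsf{PStk}$ are projective (see the proof of Lemma~\ref{lem:pX/Yshriekable}), any $W\in\mathsf{qcqsGerms}$ over $X_{dR}$ lifts to a map $W\to X$, so $W\times_{X_{dR}}Z_{dR}\simeq W\times_X\big(X\times_{X_{dR}}Z_{dR}\big)\simeq W\times_X[(X,|Z|)]$, whose structure map to $W$ is a morphism of $\mathsf{qcqsGerms}$, hence representable and in $\widetilde{E}$. As $\widetilde{E}$ is local on the target (Definition~\ref{defn:Estabilityproperties}), $i_{dR}\in\widetilde{E}$; the same chart-wise argument, applied to the natural map $i_{dR,!}\to i_{dR,*}$ and using that every morphism of $\mathsf{qcqsGerms}$ has $f_!=f_*$ (Corollary~\ref{cor:sixfunctorsqcqsGerms}), gives $i_{dR,!}=i_{dR,*}$. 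Applying the automatic $!$-base-change (Remark~\ref{rmk:sixfunctorsbasic}(iii)) for $i_{dR}\in\widetilde{E}$ in the Cartesian square of (i) yields $p^*i_{dR,!}\simeq\iota_!q^*$; since $\iota$ is a morphism of $\mathsf{qcqsGerms}$ we have $\iota_!=\iota_*$, and together with $i_{dR,!}=i_{dR,*}$ this is exactly $p^*i_{dR,*}\simeq\iota_*q^*$.

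\textbf{Part (iii).} Let $j\colon U\hookrightarrow X$ be the Zariski-open complement and $j_{dR}\colon U_{dR}\to X_{dR}$. From (i), $U\times_{X_{dR}}Z_{dR}\simeq U\times_X[(X,|Z|)]=[(U,\varnothing)]=\varnothing$; since $p_U\colon U\to U_{dR}$ is an effective epimorphism and effective epimorphisms are stable under base change, $\big(U_{dR}\times_{X_{dR}}Z_{dR}\big)\times_{U_{dR}}U=\varnothing$ forces $U_{dR}\times_{X_{dR}}Z_{dR}=\varnothing$; now $!$-base-change for $i_{dR}\in\widetilde{E}$ along $j_{dR}$ gives $j_{dR}^*i_{dR,*}M\simeq 0$, so $i_{dR,*}$ lands in $\Gamma_{|Z|}\operatorname{Crys}(X)$ and $i_{dR}^*\dashv i_{dR,*}$ restricts to an adjunction $\Gamma_{|Z|}\operatorname{Crys}(X)\leftrightarrows\operatorname{Crys}(Z)$. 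For full faithfulness of $i_{dR,*}$: $p_Z^*$ is conservative, and using (ii) and base change along $i_X$ — here $[(X,|Z|)]\times_X Z\simeq Z$, so the base change $\iota'$ of $\iota$ is the identity, and $q\circ i'=p_Z r_X i_X=p_Z$ — one finds $p_Z^*i_{dR}^*i_{dR,*}M\simeq i_X^*p_X^*i_{dR,*}M\simeq i_X^*\iota_*q^*M\simeq(q i')^*M\simeq p_Z^*M$. Conversely, for $N\in\Gamma_{|Z|}\operatorname{Crys}(X)$, $p_X^*$ is conservative and, with $e:=i_X r_X$ (so $i_{dR}q=p_X e\iota$), (ii) gives $p_X^*i_{dR,*}i_{dR}^*N\simeq\iota_*\iota^*(e^*p_X^*N)=\operatorname{incl}_{|Z|}\operatorname{L}_{|Z|}(e^*p_X^*N)$; since $p_X\iota\simeq p_X e\iota$ (the two face maps of $\operatorname{Inf}(X)_1$ become equal in $X_{dR}$, and $(\operatorname{id},e)\iota$ lands in $[(X\times X,|\Delta X|)]$), this equals $\operatorname{incl}_{|Z|}\operatorname{L}_{|Z|}(p_X^*N)\simeq p_X^*N$, the last step because $j^*p_X^*N\simeq p_U^*j_{dR}^*N\simeq 0$, i.e.\ $p_X^*N\in\Gamma_{|Z|}\operatorname{QCoh}(X)$. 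Hence the counit is an equivalence and $(i_{dR}^*,i_{dR,*})$ is an adjoint equivalence.

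\textbf{Main obstacle.} The crux is part (i): one must carry out the d\'ecalage computation with care and, in particular, verify that the extra-degeneracy data $\{\sigma_n\}$ assembles into a coherent functor out of $\Delta_{-\infty}^{\mathsf{op}}$ — this is precisely the point where the \emph{global} retraction $r$ is used — and that the fibre products defining $P_n$ carry no hidden derived structure. Once (i) is in place, (ii) and (iii) are essentially formal consequences of the six-functor machinery and the local-cohomology formalism.
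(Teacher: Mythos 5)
Your part (i) is essentially the paper's own argument: d\'ecalage (Lemma~\ref{lem:decalage}) identifies $Z_{dR}\times_{X_{dR}}X$ with the realization of the simplicial germ $[(X\times Z^{\times\bullet+1},|\Gamma_{\bullet+1}Z|)]$, and your explicit extra degeneracies built from $r$ are exactly the splitting of the \v{C}ech nerve of $[(X\times Z,|\Gamma_1Z|)]\to[(X,|Z|)]$ induced by $(\operatorname{id},r)$. The genuine gap is in part (ii). You reduce the Beck--Chevalley statement to an identification $i_{dR,!}\simeq i_{dR,*}$, but (a) no natural transformation $i_{dR,!}\to i_{dR,*}$ has been constructed: in this formalism such a map is not automatic for a representable morphism of prestacks, and the paper only builds one for the specific morphism $p\colon X\to(X/Y)_{dR}$ in Lemma~\ref{lem:p*p_!isomorphism.}, using that its diagonal is a morphism of $\mathsf{qcqsGerms}$ with $\delta_!=\delta_*$; and (b) even granting such a map, your ``chart-wise'' verification is circular. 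Checking it is an equivalence after the conservative functor $p^*$ requires knowing how $i_{dR,*}$ commutes with $p^*$, i.e.\ exactly the base change $p^*i_{dR,*}\simeq\iota_*q^*$ that (ii) asserts; $!$-base change only gives $p^*i_{dR,!}\simeq\iota_!q^*\simeq\iota_*q^*$ and says nothing about $p^*i_{dR,*}$. The fact that every morphism of $\mathsf{qcqsGerms}$ has $f_!=f_*$ does not transport to $i_{dR}$ without precisely this compatibility.

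What is needed instead (and what the paper does) is a direct computation of $i_{dR,*}$ from the descent description: $\operatorname{QCoh}(X_{dR})$ and $\operatorname{QCoh}(Z_{dR})$ are limits over the \v{C}ech nerve of $X\to X_{dR}$ and its pullback, which by (i) is the system of germs $[(X^{\times n+1},|\Delta_{n+1}Z|)]\to[(X^{\times n+1},|\Delta_{n+1}X|)]$; the base-change results of \S\ref{sec:Germs} (Proposition~\ref{prop:Germsqcbasechange} and Lemmas~\ref{lem:germbasechange1}--\ref{lem:germbasechange3}) show that applying the levelwise pushforwards to a coCartesian section again yields a coCartesian section, which identifies the right adjoint $i_{dR,*}$ chart by chart and proves (ii). Your part (iii) inherits the same issue at the point where you invoke ``$!$-base-change for $i_{dR}$'' to get $j_{dR}^*i_{dR,*}M\simeq0$; this should instead be deduced from (ii) itself: after the conservative functor $p_U^*=r^*$ one has $r^*j_{dR}^*i_{dR,*}M\simeq j^*p^*i_{dR,*}M\simeq j^*\iota_*q^*M\simeq0$, since $\iota_*$ is $\operatorname{incl}_{|Z|}$. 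With (ii) established this way, the rest of your (iii) (full faithfulness checked after $p_Z^*$, the homotopy $p_X\iota\simeq p_Xe\iota$ coming from the fact that $(\operatorname{id},i r)$ carries the germ along $|Z|$ into the germ along the diagonal, and the identification of $\iota_*\iota^*$ with $\operatorname{incl}_{|Z|}\operatorname{L}_{|Z|}$) is sound and close to the paper's argument.
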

\begin{proof}
(i): In order to calculate $X \times_{X_{\mathrm{str}}} Z_{\mathrm{str}}$, we use d\'ecalage, c.f. Lemma \ref{lem:decalage}. We note that $(\operatorname{Dec}_0 \operatorname{Inf}(X)) \times_{\operatorname{Inf}(X)} \operatorname{Inf}(Z)$ is given by the simplicial object  $(Z \subseteq X \times Z^{\bullet+1})^\dagger$, where $Z$ is embedded via the morphism $(i, \Delta_{n+1}) : Z \to X \times Z^{\times n+1}$. We recognise this simplicial object as the \v{C}ech nerve of $(X \subseteq X \times Z)^\dagger \to (Z \subseteq X)^\dagger$. This is a split epimorphism: the splitting is induced by $(\operatorname{id},r): X  \to X \times Z$. Therefore, we conclude that the augmented simplicial object $(Z \subseteq X \times Z^{\bullet +1})^\dagger \to (Z \subseteq X)^\dagger$ is split, as it is the nerve of a split epimorphism. By Lemma \ref{lem:decalage}, this proves that $(Z \subseteq X)^\dagger \simeq Z_{\mathrm{str}} \times_{X_{\mathrm{str}}} X$.

(ii): The functor $\operatorname{QCoh}^*$, by its construction, satisfies descent along $X \to X_{\mathrm{str}}$. By (i), the pullback of each covering map $X^{n+1/{X_{\mathrm{str}}}} \simeq (X \subseteq X^{n+1})^\dagger \to X_{\mathrm{str}}$ along $Z_{\mathrm{str}} \to X_{\mathrm{str}}$ is given by $\iota_n: (Z \subseteq X^{n+1})^\dagger \to (X \subseteq X^{n+1})^\dagger$. Each pushforward $\iota_{n,*}$ is compatible with base-change and therefore, for each Cartesian section 
\begin{equation}
    (M_n)_{n \in \Delta} \in \underset{[n] \in \Delta}{\operatorname{lim}} \operatorname{QCoh}((Z\subseteq X^{n+1})^\dagger) \simeq \operatorname{QCoh}(Z_{\mathrm{str}})
\end{equation}
then 
\begin{equation}
    (\iota_{n,*} M_n)_{n \in \Delta} \in \underset{[n] \in \Delta}{\operatorname{lim}} \operatorname{QCoh}((X \subseteq X^{n+1})^\dagger) \simeq \operatorname{QCoh}(X_{\mathrm{str}})
\end{equation}
is also a Cartesian section. By the equivalence of categories implicit in descent, this implies that $p^* i_{\mathrm{str},*} \simeq \iota_* q^*$.

(iii): Let $j: U \to X$ be the inclusion of the analytic subspace corresponding to $|X| \setminus |Z| \subseteq |X|$. Consider the following diagram in $\mathsf{PStk}$, in which both squares are Cartesian (for the left square this is (i) and for the right square this is Example \ref{ex:Infexample}):
\begin{equation}\label{eq:Kashiwararecollement}
    % https://q.uiver.app/#q=WzAsNixbMCwxLCJaX3tkUn0iXSxbMSwxLCJYX3tkUn0iXSxbMiwxLCJVX3tkUn0iXSxbMSwwLCJYIl0sWzIsMCwiVSJdLFswLDAsIlsoWCxaKV0iXSxbMCwxLCJpX3tkUn0iXSxbMiwxLCJqX3tkUn0iLDJdLFszLDEsInAiLDJdLFs0LDMsImoiLDJdLFs1LDMsIlxcaW90YSJdLFs0LDIsInIiXSxbNCwxLCIiLDEseyJzdHlsZSI6eyJuYW1lIjoiY29ybmVyIn19XSxbNSwwLCJxIiwyXSxbNSwxLCIiLDAseyJzdHlsZSI6eyJuYW1lIjoiY29ybmVyIn19XV0=
\begin{tikzcd}
	{(Z \subseteq X)^\dagger} & X & U \\
	{Z_{\mathrm{str}}} & {X_{\mathrm{str}}} & {U_{\mathrm{str}}}
	\arrow["\iota", from=1-1, to=1-2]
	\arrow["q"', from=1-1, to=2-1]
	\arrow["\lrcorner"{anchor=center, pos=0.125}, draw=none, from=1-1, to=2-2]
	\arrow["p"', from=1-2, to=2-2]
	\arrow["j"', from=1-3, to=1-2]
	\arrow["\lrcorner"{anchor=center, pos=0.125, rotate=-90}, draw=none, from=1-3, to=2-2]
	\arrow["r", from=1-3, to=2-3]
	\arrow["{i_{\mathrm{str}}}", from=2-1, to=2-2]
	\arrow["{j_{\mathrm{str}}}"', from=2-3, to=2-2]
\end{tikzcd}
\end{equation}
We make two claims: (a) that the counit morphism $i^*_{\mathrm{str}} i_{\mathrm{str},*} \to \operatorname{id}$ is an equivalence, and (b) that $j^*_{\mathrm{str}}M \simeq 0$ if and only if the unit morphism $M \to i_{\mathrm{str},*}i_{\mathrm{str}}^* M$ is an equivalence. 

We note that each of the functors $q^*, p^*, r^*$ is conservative (each of the morphisms $p,q,r$ is of $*$-descent, because they are effective epimorphisms). In particular, it suffices to check that (a) is an equivalence after applying $q^*$. By commutativity of \eqref{eq:Kashiwararecollement}, and the base-change of part (ii), one has $q^* i_{\mathrm{str}}^* i_{\mathrm{str},*}  \simeq\iota^* \iota_* q^*$, and $\iota^*\iota_* \to \operatorname{id}$ is an equivalence, because $(Z \subseteq X)^\dagger \to X$ is a homotopy monomorphism.

For (b), we have the following chain of equivalences, for $M \in \operatorname{QCoh}(X_{\mathrm{str}})$:
\begin{equation*}
    \begin{aligned}
        j^*_{\mathrm{str}}M \simeq 0 &\iff  r^*j_{\mathrm{str}}^*M \simeq 0 && \text{by conservativity of }r^* \\
        &\iff j^* p^*M \simeq 0 && \text{by commutativity of \eqref{eq:Kashiwararecollement}}\\
        &\iff  \iota_* \iota^* p^* M \xrightarrow[]{\sim} p^*M && \text{by Corollary \ref{cor:recollementCor}} \\
        &\iff p^* i_{\mathrm{str},*} i_{\mathrm{str}}^* M \xrightarrow[]{\sim} p^*M && \text{by part (ii) and commutativity of \eqref{eq:Kashiwararecollement}} \\
        &\iff i_{\mathrm{str},*} i_{\mathrm{str}}^* M \xrightarrow[]{\sim} M && \text{by conservativity of }p^*.
    \end{aligned}
\end{equation*}
It then follows from the claims (a) and (b) that $(i_{\mathrm{str}}^*,i_{\mathrm{str},*})$ induces an equivalence $\operatorname{Strat}(Z) \simeq  \Gamma_{|Z|}\operatorname{Strat}(X)$. Indeed, for inclusions of coreflective subcategories, the essential image of the fully-faithful right adjoint is precisely those objects for which the unit morphism is an equivalence. 
\end{proof}
\begin{lem}\label{lem:CrysPairsdescent}
Let $X \in \mathsf{dRig}$ with $S \subseteq |X|$ a closed subset of the underlying topological space. Let open subsets $\{U_i\}_{i \in \mathscr{I}}$ of $|X|$ be given such that $S \subseteq \bigcup_{i \in \mathscr{I}}U_i$. Let $\mathcal{I}$ be the family of finite nonempty subsets of $\mathscr{I}$. Then, the natural morphism
\begin{equation}
    \Gamma_{S} \operatorname{Strat}(X) \to \underset{I \in \mathcal{I}}{\operatorname{lim}} \Gamma_{S_I}\operatorname{Strat}(U_I)
\end{equation}
induced by the collection of upper-star functors, is an equivalence.
\end{lem}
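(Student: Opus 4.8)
The plan is to reduce this to the already-established descent statement for $\operatorname{QCoh}$ on the category $\mathsf{Pairs}$, namely Lemma \ref{lem:pairsdescent}(i), by levelwise analysis of the simplicial presentation of $\operatorname{Crys}$. Recall that for any $(X,S)\in\mathsf{Pairs}$ with $X\in\mathsf{qcsdRig}$ one has, from the construction of $\operatorname{Crys}$ and the fact that $\operatorname{QCoh}^*$ is limit-preserving on $\mathsf{PStk}^{\mathsf{op}}$, a presentation
\begin{equation}
    \operatorname{Crys}(X)\simeq \varprojlim_{[m]\in\Delta}\operatorname{QCoh}^*\bigl(X^{\times m+1},|\Delta_{m+1}X|\bigr),
\end{equation}
and the full subcategory $\Gamma_S\operatorname{Crys}(X)$ is cut out by the vanishing of $j^*_{dR}$, where $j\colon U\hookrightarrow X$ is the complementary open. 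First I would observe that $\Gamma_S\operatorname{Crys}(X)$ is itself the limit over $[m]\in\Delta$ of the categories $\Gamma_{|\Delta_{m+1}X|\cap\pi^{-1}(S)}\operatorname{QCoh}(X^{\times m+1},|\Delta_{m+1}X|)$; more precisely, $j_{dR}$ pulls back along each covering map $[(X^{\times m+1},|\Delta_{m+1}X|)]\to X_{dR}$ to the Zariski-open immersion removing $\pi^{-1}(S)$, and since the $\operatorname{QCoh}^*$-pullbacks along these covers are jointly conservative (descent), the condition $j^*_{dR}M\simeq 0$ is equivalent to its levelwise analogue. Hence, as in the proof of Lemma \ref{lem:pairsdescent}(i) where a fiber of exact functors was identified levelwise, $\Gamma_S\operatorname{Crys}(X)$ is the limit of a cosimplicial diagram of $\Gamma$-categories.

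Having set this up, the strategy is then to exchange two limits. On the one hand, $\Gamma_S\operatorname{Crys}(X)\simeq\varprojlim_{[m]\in\Delta}\Gamma_{S^{(m)}}\operatorname{QCoh}(X^{\times m+1},|\Delta_{m+1}X|)$ where $S^{(m)}$ denotes the relevant closed subset of the germ $(X^{\times m+1},|\Delta_{m+1}X|)$. On the other hand, for each fixed $m$, the open sets $\{U_i^{\times m+1}\}_{i\in\mathscr I}$ cover $|\Delta_{m+1}X|$ (since $|\Delta_{m+1}X|$ maps homeomorphically to $|X|$ and $S\subseteq\bigcup_i U_i$ means $|\Delta_{m+1}X|\subseteq\bigcup_i U_i^{\times m+1}$), so Lemma \ref{lem:pairsdescent}(i) applies to give
\begin{equation}
    \Gamma_{S^{(m)}}\operatorname{QCoh}(X^{\times m+1},|\Delta_{m+1}X|)\simeq\varprojlim_{I\in\mathcal I}\Gamma_{S_I^{(m)}}\operatorname{QCoh}(U_I^{\times m+1},|\Delta_{m+1}U_I|).
\end{equation}
Combining, and using that limits commute with limits, one gets $\Gamma_S\operatorname{Crys}(X)\simeq\varprojlim_{I\in\mathcal I}\varprojlim_{[m]\in\Delta}\Gamma_{S_I^{(m)}}\operatorname{QCoh}(U_I^{\times m+1},|\Delta_{m+1}U_I|)\simeq\varprojlim_{I\in\mathcal I}\Gamma_{S_I}\operatorname{Crys}(U_I)$, which is the claim. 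I would also need to check that under these identifications the comparison functor agrees with the one induced by the upper-star functors $\widehat{(-)^*}$, which is a matter of tracing through the construction of $\operatorname{Crys}^*$ as a Kan-extended/limit-preserving functor and the naturality of the presentation above.

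The main obstacle I anticipate is the bookkeeping in the first step: verifying carefully that $\Gamma_S\operatorname{Crys}(X)$ really is the levelwise limit of the $\Gamma$-subcategories, i.e., that the subcategory defined by vanishing of $j^*_{dR}$ on $\operatorname{Crys}(X)=\varprojlim_m\operatorname{QCoh}(\dots)$ coincides with the limit of the subcategories defined by vanishing of the levelwise restriction functors. This requires knowing that $j_{dR}$ pulls back correctly along the simplicial covers — which follows from Example \ref{ex:Infexample} and the compatibility of $(-)_{dR}$ with the relevant fiber products (Lemma \ref{lem:InfhomotopyKan}) — and that the inclusion of a limit of full subcategories into a limit of ambient categories is again the expected full subcategory, which is formal but needs the conservativity of the cover-pullbacks to phrase the condition levelwise. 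Everything after that is a double-limit interchange together with a direct invocation of Lemma \ref{lem:pairsdescent}(i), so it should be routine.
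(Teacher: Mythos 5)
Your reduction hinges on the presentation $\operatorname{Crys}(X)\simeq\varprojlim_{[m]\in\Delta}\operatorname{QCoh}(X^{\times m+1},|\Delta_{m+1}X|)$, and at the end you also need the analogous presentation of each $\operatorname{Crys}(U_I)$ to identify $\varprojlim_{[m]}\Gamma_{S_I^{(m)}}\operatorname{QCoh}(U_I^{\times m+1},|\Delta_{m+1}U_I|)$ with $\Gamma_{S_I}\operatorname{Crys}(U_I)$. These presentations exist only for objects of $\mathsf{qcsdRig}$ (the infinitesimal groupoid is defined only there), so your argument, as you yourself flag in the first line, proves the statement only when $X$ and all the $U_I$ are qcs. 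The lemma is stated for an arbitrary $(X,S)\in\mathsf{Pairs}$ and arbitrary open subsets $U_i$, and that generality is genuinely used: in Steps 2 and 3 of the proof of Kashiwara's equivalence it is invoked for $X$ an analytic subspace of an affinoid, resp.\ arbitrary, which need not be quasi-compact; for such $X$ the category $\operatorname{Crys}(X)$ is only given by right Kan extension from $\mathsf{qcsdRig}$ and no simplicial presentation is available. So the proposal leaves the cases that matter unproved. Even within the qcs case there are two points you would have to settle: (a) the levelwise invocation of Lemma \ref{lem:pairsdescent}(i) is for the pair $(X^{\times m+1},\Delta_{m+1}(S))$, which is not known to satisfy condition (P) of Definition \ref{defn:Pairs} ($S$ is an arbitrary closed set, not the complement of a Zariski-open, and (P) is only established in the paper for Zariski-open complements and the situations of Proposition \ref{prop:localcohincreasing}); this is harmless only because the proof of Lemma \ref{lem:pairsdescent}(i) never uses (P), but that needs to be said; (b) your identification of the $j^*_{dR}$-vanishing condition with its levelwise analogue treats $U_{dR}$ and the germs of $U^{\times m+1}$ as objects of the formalism, yet $U=|X|\setminus S$ is typically not quasi-compact, so these are not objects of $\mathsf{qcqsGerms}$; one must instead check vanishing after restriction to quasi-compact opens of $U$, which is extra work you have not done.

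All of this machinery is avoidable, and the paper's proof shows how: it simply reruns the proof of Lemma \ref{lem:pairsdescent} with $\operatorname{Crys}$ in place of $\operatorname{QCoh}$, replacing $\operatorname{QCoh}$-descent by the fact that $\operatorname{Crys}^*$ is a sheaf in the analytic topology (Lemma \ref{lem:Crysdescent}). Concretely: adjoin $U_0:=|X|\setminus S$ to the covering family, write $\operatorname{Crys}(X)\simeq\varprojlim_{I'}\operatorname{Crys}(U_{I'})$ over the enlarged family, recognize each $\Gamma_{S_{I'}}\operatorname{Crys}(U_{I'})$ as the fiber of the restriction functor to $U_{I'}\cap U_0$, commute limits, and discard the terms containing $0$, which vanish. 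This argument never leaves the level of $\operatorname{Crys}$, needs no infinitesimal-groupoid presentation, and therefore works for arbitrary $(X,S)\in\mathsf{Pairs}$ and arbitrary opens. If you want to salvage your route, you would need to add a Kan-extension/bootstrapping step to pass from the qcs case to the general one; the direct route makes that unnecessary.
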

\begin{proof}
This follows quite straightforwardly from the fact that $\operatorname{Strat}^*$ is a sheaf in the analytic topology, c.f. Lemma \ref{lem:Crysdescent}. 
\end{proof}
\begin{defn}
Let $X \in \mathsf{dRig}$. A Zariski-closed immersion $Z \to X$ is called \emph{stratifying} if it locally admits a retraction. That is, there exists affinoid subspaces $\{U_i\}_{i \in \mathscr{I}}$ of $X$ such that $|Z| \subseteq \bigcup_{i \in \mathscr{I}} U_i$, and for all $i \in \mathscr{I}$, the morphism $Z \times_X U_i \to U_i$ admits a retraction. 
\end{defn}
\begin{rmk}
\begin{enumerate}[(i)]
    \item I chose the name \emph{stratifying} because a similar kind of closed immersions appears in the definition of the stratifying site in algebraic geometry \cite[\S 4.2]{Grothendieck_Crystals_68}.
    \item If $i: Z \to X$ is a closed immersion between smooth classical affinoid rigid spaces, then $i$ is stratifying. This follows from a result of Kiehl \cite[Theorem 1.19]{kiehl_rham_1967}, see also \cite[Proposition 2.11]{Lutkebohmertmaximum}.
\end{enumerate}
\end{rmk}
\begin{thm}[Kashiwara's equivalence]\label{thm:Kashiwara}
Let $i: Z \to X$ be a stratifying Zariski-closed immersion in $\mathsf{dRig}$. Then, the pair $(i_{\mathrm{str}}^*, i_{\mathrm{str},*})$ induces an equivalence 
\begin{equation}
\operatorname{Strat}(Z) \simeq \Gamma_{|Z|}\operatorname{Strat}(X).
\end{equation}
\end{thm}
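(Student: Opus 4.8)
The plan is to reduce the global statement to the local situation already treated in Proposition \ref{prop:Kashiwara1}, using the descent results we have assembled. First I would fix a stratifying Zariski-closed immersion $i: Z \to X$ and choose a family of affinoid subspaces $\{U_i\}_{i \in \mathscr{I}}$ of $X$ with $|Z| \subseteq \bigcup_{i \in \mathscr{I}} U_i$ such that each $Z \times_X U_i \to U_i$ admits a retraction, as in the definition. Write $Z_i := Z \times_X U_i$ and, for $I \in \mathcal{I}$ (finite nonempty subsets of $\mathscr{I}$), $U_I := \bigcap_{i \in I} U_i$ and $Z_I := Z \times_X U_I$; note $Z_I \to U_I$ still admits a retraction (restrict the retraction over any $U_i$ with $i \in I$), so Proposition \ref{prop:Kashiwara1}(iii) applies over each $U_I$ and gives a compatible system of equivalences $(i_I)_{dR}^* : \operatorname{Crys}(Z_I) \simeq \Gamma_{|Z_I|}\operatorname{Crys}(U_I)$, where $i_I : Z_I \to U_I$ is the base-changed immersion.

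Next I would assemble these over $I$. On the left, since $\{Z_i \to Z\}$ is a finite (or small) cover of $Z$ by quasi-compact analytic subspaces — here one uses that $Z$ is covered by the $Z_i$, which follows because $|Z| \subseteq \bigcup U_i$ and $Z_i = Z \times_X U_i$ — Lemma \ref{lem:Crysdescent}(i)(a), or its evident extension to arbitrary small covers via Lemma \ref{lem:Crysdescent}(ii), gives $\operatorname{Crys}(Z) \xrightarrow{\sim} \varprojlim_{I \in \mathcal{I}} \operatorname{Crys}(Z_I)$. On the right, Lemma \ref{lem:CrysPairsdescent}, applied to the object $(X, |Z|) \in \mathsf{Pairs}$ and the cover $\{U_i\}_{i \in \mathscr{I}}$ of a neighbourhood of $|Z|$, gives $\Gamma_{|Z|}\operatorname{Crys}(X) \xrightarrow{\sim} \varprojlim_{I \in \mathcal{I}} \Gamma_{|Z_I|}\operatorname{Crys}(U_I)$, since $|Z_I| = |Z| \cap |U_I|$. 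Both limits are taken over the same index category $\mathcal{I}$, and the local equivalences of Proposition \ref{prop:Kashiwara1}(iii) are compatible with the restriction functors (which are themselves upper-star functors $t_I^*$ and their analogues on germs, intertwined with $(i_I)_{dR}^*$ by base-change, using that the pushforwards $i_{I,dR,*}$ satisfy base-change along the inclusions $U_{I'} \hookrightarrow U_I$, cf. the base-change statements in \S\ref{sec:Germs} and Proposition \ref{prop:Kashiwara1}(ii)). Hence the local equivalences glue to an equivalence of the limits, which is exactly the asserted global equivalence induced by $(i_{dR}^*, i_{dR,*})$.

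The one point requiring care — and the main obstacle — is checking the compatibility of the local equivalences with the transition maps of the two cofiltered systems, i.e., that for $I \subseteq I'$ the square relating $(i_I)_{dR}^*$, $(i_{I'})_{dR}^*$, the restriction $\operatorname{Crys}(Z_I) \to \operatorname{Crys}(Z_{I'})$, and the restriction $\Gamma_{|Z_I|}\operatorname{Crys}(U_I) \to \Gamma_{|Z_{I'}|}\operatorname{Crys}(U_{I'})$ commutes up to coherent homotopy. This follows from the base-change property in Proposition \ref{prop:Kashiwara1}(ii) together with the compatibility of $i_{dR,*}$ with the upper-star restriction functors, but to make it fully rigorous one wants these as a morphism of diagrams $\mathcal{I} \to \operatorname{Fun}(\Delta^1, \mathsf{Cat}_\infty)$ rather than a levelwise collection of equivalences; the cleanest way is to observe that both sides are restrictions of functors defined on all of $\mathsf{Pairs}$ (resp. $\mathsf{dRig}$) and that the natural transformation $i_{dR,*}$ lives at that level, so its restriction along the cover is automatically coherent. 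I would also remark that one could alternatively phrase everything in terms of the prestacks $\operatorname{Crys}^*$ and $\Gamma_{(-)}\operatorname{Crys}$, both of which are sheaves in the analytic topology by Lemma \ref{lem:Crysdescent} and Lemma \ref{lem:CrysPairsdescent}, and $(i_{dR}^*, i_{dR,*})$ a morphism of such sheaves which is an equivalence on a covering family, hence an equivalence.
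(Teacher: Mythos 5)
Your overall strategy -- glue the local statement of Proposition \ref{prop:Kashiwara1}(iii) along the cover provided by the stratifying condition, using Lemma \ref{lem:Crysdescent} for the left-hand side and Lemma \ref{lem:CrysPairsdescent} for the right-hand side, with coherence of the levelwise equivalences coming from the fact that $i_{dR,*}$ and its base-change compatibilities live at the level of the whole categories -- is exactly the strategy of the paper's proof. However, your one-step reduction has a genuine gap: you apply Proposition \ref{prop:Kashiwara1}(iii) directly to $i_I : Z_I \to U_I$, but that proposition is stated, and proved (via the identification $[(U_I,|Z_I|)] \simeq (Z_I)_{dR} \times_{(U_I)_{dR}} U_I$ and the surrounding de Rham stack machinery), only for Zariski-closed immersions in $\mathsf{qcsdRig}$. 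Since $X$ is an arbitrary object of $\mathsf{dRig}$, the finite intersections $U_I = \bigcap_{i \in I} U_i$ of affinoid subspaces need not be quasi-compact; they are merely analytic subspaces of affinoid spaces. This is precisely why the paper's proof is a two-stage bootstrap: Step 1 treats the affinoid case with a retraction (Proposition \ref{prop:Kashiwara1}), Step 2 extends it to analytic subspaces of affinoids by a second layer of descent over intersections taken inside the ambient affinoid (where they are genuinely affinoid), and only then does Step 3 run your gluing argument, invoking Step 2 -- not Proposition \ref{prop:Kashiwara1} -- on each $U_I$. As written, your argument skips the bridge that makes the local input available over $U_I$.

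A second, smaller problem is your justification that $Z_I \to U_I$ admits a retraction by ``restricting the retraction over any $U_i$ with $i \in I$'': for $U_I \subsetneq U_i$ the restriction $r_i|_{U_I}$ lands in $Z_i$ but need not land in $Z_I = Z_i \cap U_I$ (a retraction has no reason to preserve an arbitrary analytic subspace; already a skewed subdomain of a bidisc together with the linear projection onto a coordinate axis gives a counterexample). The paper asserts the existence of these retractions without your faulty argument, and the assertion can be repaired: since by Lemma \ref{lem:CrysPairsdescent} (or the germ formalism of \S\ref{sec:Germs}) only a neighbourhood of $|Z_I|$ matters, one may replace $U_I$ by the open neighbourhood $W_I := U_I \cap r_i^{-1}(Z_I)$ of $Z_I$, on which $r_i$ does restrict to a retraction onto $Z_I = Z \cap W_I$, and use $\Gamma_{|Z_I|}\operatorname{Crys}(U_I) \simeq \Gamma_{|Z_I|}\operatorname{Crys}(W_I)$. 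With that repair, and with the intermediate extension of the local result to (possibly non-quasi-compact) analytic subspaces of affinoids as in the paper's Step 2, your argument becomes the paper's proof; without them, the two steps flagged above do not go through as stated.
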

\begin{proof}
We proceed in steps.

\textit{Step 1}: If $X \in \mathsf{dRig}$ is an affinoid and $i: Z \to X$ admits a retraction, this follows from Proposition \ref{prop:Kashiwara1}. 

\textit{Step 2}: Now suppose that $X$ is an analytic subspace of an affinoid space $X^\prime$ and $i: Z \to X$ is a Zariski-closed immersion which admits a retraction. Choose a covering $\{U_i \to X\}_{i \in \mathscr{I}}$ of $X$ by affinoid subspaces of $X^\prime$. Let $\mathcal{I}$ be the family of finite nonempty subsets of $\mathscr{I}$ and for each $I \in \mathcal{I}$ set $U_I := \bigcup_{i \in I} U_i$. Each $U_I$ is an affinoid and each $Z_I := Z \times_X U_I \to U_I$ admits a retraction. We have a commutative square 
\begin{equation}
% https://q.uiver.app/#q=WzAsNCxbMCwxLCJcXG9wZXJhdG9ybmFtZXtDcnlzfShaKSJdLFsxLDEsIlxcdmFycHJvamxpbV97SSBcXGluIFxcbWF0aGNhbHtJfX1cXG9wZXJhdG9ybmFtZXtDcnlzfShaX0kpIl0sWzAsMCwiXFxHYW1tYV9aIFxcb3BlcmF0b3JuYW1le0NyeXN9KFgpIl0sWzEsMCwiXFx2YXJwcm9qbGltX3tJIFxcaW4gXFxtYXRoY2Fse0l9fVxcR2FtbWFfe1pfSX1cXG9wZXJhdG9ybmFtZXtDcnlzfShVX0kpIl0sWzAsMV0sWzIsM10sWzIsMF0sWzMsMV1d
\begin{tikzcd}[cramped]
	{\Gamma_{|Z|} \operatorname{Strat}(X)} & {\underset{I \in \mathcal{I}}{\operatorname{lim}}\Gamma_{|Z_I|}\operatorname{Strat}(U_I)} \\
	{\operatorname{Strat}(Z)} & {\underset{I \in \mathcal{I}}{\operatorname{lim}}\operatorname{Strat}(Z_I)}
	\arrow[from=1-1, to=1-2]
	\arrow[from=1-1, to=2-1]
	\arrow[from=1-2, to=2-2]
	\arrow[from=2-1, to=2-2]
\end{tikzcd}
\end{equation}
in which the horizontal arrows are equivalences by Lemma \ref{lem:Crysdescent} and Lemma \ref{lem:CrysPairsdescent}, and the right vertical arrow is an equivalence by Step 1. Therefore, the left vertical arrow is an equivalence. 

\textit{Step 3}: Now $X$ is arbitrary. Choose affinoid subspaces $\{U_i\}_{i \in \mathcal{I}}$ of $X$ such that $|Z| \subseteq \bigcup_{i \in \mathscr{I}} U_i$ and each $Z_I := Z \times_X U_i \to U_I$ admits a retraction. Each $U_I$ is an analytic subspace of an affinoid space, and each $Z_I \to U_I$ admits a retraction. Therefore, we may argue as in Step 2, using the result of Step 2, to conclude the proof.
\end{proof}
\subsection{The monad of differential operators and the comonad of jets}\label{subsec:DmodformulasnStuff}
\begin{defn}
Let $f: X \to Y$ be a morphism  in $\mathsf{dAfnd}$ and let $p_{X/Y}: X \to (X/Y)_{\mathrm{str}}$ be the canonical morphism. 
\begin{enumerate}[(i)]
    \item We define the \emph{comonad of jets} of $f$ to be the comonad 
    \begin{equation}
        \mathcal{J}^\infty_{X/Y} := p_{X/Y}^* p_{X/Y,*}
    \end{equation}
    acting on $\operatorname{QCoh}(X)$. We abbreviate $\mathcal{J}^\infty_{X/*}$ to $\mathcal{J}^\infty_X$. 
    \item We define the \emph{monad of differential operators} of $f$ to be the monad 
    \begin{equation}
        \mathcal{D}^\infty_{X/Y} := p_{X/Y}^! p_{X/Y,!}
    \end{equation}
    acting on $\operatorname{QCoh}(X)$. We abbreviate $\mathcal{D}^\infty_{X/*}$ to $\mathcal{D}^\infty_X$.
\end{enumerate}
\end{defn}
\begin{lem}\label{lem:p*p_!isomorphism.}
Let $f: X \to Y$ be a morphism in $\mathsf{dAfnd}$. Let $p_{X/Y}: X \to (X/Y)_{\mathrm{str}}$ be the canonical morphism. Then, there is a canonical equivalence $p_{X/Y,!} \xrightarrow[]{\sim} p_{X/Y,*}$. 
\end{lem}
\begin{proof}
This is an immediate consequence of Lemma \ref{lem:pX/Yshriekable}, because the morphism $p_{X/Y}$ is even representable in $\mathsf{PStk}$, and all such morphisms satisfy $p_{X/Y,!} \simeq p_{X/Y,*}$, c.f. Theorem \ref{thm:6FPStk}. 
\end{proof}
\begin{rmk}\label{rmk:JandDendofunctors}
\begin{enumerate}[(i)]
    \item Let $f: X =\operatorname{dSp}(A) \to \operatorname{dSp}(B) = Y $ be a morphism between affinoids. Due to Lemma \ref{lem:p*p_!isomorphism.} and base-change, the underlying endofunctors of $\mathcal{J}^\infty_{X/Y}$ and $\mathcal{D}^\infty_{X/Y}$ can be described as 
    \begin{equation}
    \begin{aligned}
        \mathcal{J}^\infty_{X/Y} &\simeq \tilde{\pi}_{1,*} \tilde{\pi}_2^*  \simeq  (A \widehat{\otimes}_B^\mathbf{L} A)^\dagger_{\Delta} \otimes^\mathbf{L}_A - \\  \mathcal{D}^\infty_{X/Y} &\simeq \tilde{\pi}_{2,!} \tilde{\pi}_1^! \simeq R \underline{\operatorname{Hom}}_A((A \widehat{\otimes}_B^\mathbf{L} A)^\dagger_{\Delta},-).
    \end{aligned}
    \end{equation}
    Here $\tilde{\pi}_1, \tilde{\pi}_2 : (X \subseteq X \times_Y X)^\dagger \to X$ are the two projections.
    \item As a formal consequence of Lemma \ref{lem:p*p_!isomorphism.}, we see that the monad $\mathcal{D}^\infty_{X/Y}$ is right adjoint to the comonad $\mathcal{J}^\infty_{X/Y}$. In particular, their Kleisli categories are equivalent. Morphisms in either Kleisli category can be interpreted as infinite-order differential operators. 
\end{enumerate}
\end{rmk}
\begin{lem}\label{lem:XYderhammoandicity}
Let $f: X \to Y$ be a morphism in $\mathsf{dAfnd}$ and let $p_{X/Y}: X \to (X/Y)_{\mathrm{str}}$ be the canonical morphism. 
\begin{enumerate}[(i)]
    \item The adjunction $p^*_{X/Y} \dashv p_{X/Y,*}$ is comonadic. That is, the comparison functor induced by $p^*_{X/Y}$ gives an equivalence of categories 
    \begin{equation}\label{eq:Jetequivalence}
        \operatorname{QCoh}((X/Y)_{\mathrm{str}}) \simeq \operatorname{Comod}_{\mathcal{J}^\infty_{X/Y}}\operatorname{QCoh}(X).
    \end{equation}
    \item Assume that $X \to (X/Y)_{\mathrm{str}}$ is of $!$-descent. Then the adjunction $p_{X/Y,!} \dashv p_{X/Y}^!$ is monadic. That is, the comparison functor induced by $p^!_{X/Y}$ gives an equivalence of categories
    \begin{equation}\label{eq:Dmoduleequivalence}
        \operatorname{QCoh}((X/Y)_{\mathrm{str}}) \simeq \operatorname{Mod}_{\mathcal{D}^\infty_{X/Y}}\operatorname{QCoh}(X). 
    \end{equation}
    \item In the situation of (ii), the implicit equivalence of categories \begin{equation}
     \operatorname{Mod}_{\mathcal{D}^\infty_{X/Y}} \operatorname{QCoh}(X) \simeq \operatorname{Comod}_{\mathcal{J}^\infty_{X/Y}}\operatorname{QCoh}(X)
    \end{equation}
    can be described as follows: The functor from left to right is given as 
    \begin{equation}
       \underset{[n] \in \Delta^{\mathsf{op}}}{\operatorname{colim}}\mathcal{J}^\infty_{X/Y} (\mathcal{D}^\infty_{X/Y})^n,
    \end{equation} 
    and the functor from right to left is given as \begin{equation}
    \underset{[n] \in \Delta}{\operatorname{lim}} \mathcal{D}^\infty_{X/Y} (\mathcal{J}^\infty_{X/Y})^n.
    \end{equation} 
\end{enumerate}
\end{lem}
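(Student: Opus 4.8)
\textbf{Proof proposal for Lemma \ref{lem:XYderhammoandicity}.}

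The plan is to prove parts (i) and (ii) directly via the Barr--Beck--Lurie theorem, and then part (iii) by chasing the explicit form of the comparison functors together with the equivalence $p_{X/Y,!}\xrightarrow{\sim}p_{X/Y,*}$ from Lemma \ref{lem:p*p_!isomorphism.}. Throughout I write $p:=p_{X/Y}$, $\mathcal{J}:=\mathcal{J}^\infty_{X/Y}$ and $\mathcal{D}:=\mathcal{D}^\infty_{X/Y}$ to reduce clutter. For (i), the key point is that $p^*$ is comonadic: by construction $\operatorname{QCoh}^*$ satisfies descent along $X\to(X/Y)_{dR}$ (indeed $(X/Y)_{dR}$ is, by definition, the geometric realization of the \v{C}ech nerve $\operatorname{Inf}(X/Y)$, so that $\operatorname{QCoh}((X/Y)_{dR})\simeq\varprojlim_{[n]\in\Delta}\operatorname{QCoh}(\operatorname{Inf}(X/Y)_n)$ by the limit-preservation of $\operatorname{QCoh}^*$ on $\mathsf{PStk}^{\mathsf{op}}$, Remark after \eqref{eq:QCohleftKan}). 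Since $p$ is a representable morphism (Lemma \ref{lem:pX/Yshriekable}) that is of universal $*$-descent -- this being exactly the statement that $\operatorname{QCoh}^*(X/Y)_{dR}\to\varprojlim_{[m]\in\Delta}\operatorname{QCoh}^*(X^{\times_{(X/Y)_{dR}}(m+1)})$ is an equivalence, which holds by the definition of $(X/Y)_{dR}$ as a geometric realization -- the dual form of \cite[Corollary 4.7.5.3]{HigherAlgebra} (Lurie--Beck--Chevalley) applies: $p^*$ preserves the relevant totalizations, is conservative, and the Beck--Chevalley conditions follow from base-change. Hence $p^*\dashv p_*$ is comonadic, giving \eqref{eq:Jetequivalence}.

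For (ii), the argument is formally dual. The hypothesis that $X\to(X/Y)_{dR}$ is of $!$-descent says precisely that $Q^!((X/Y)_{dR})\to\varprojlim_{[m]\in\Delta}Q^!(X^{\times_{(X/Y)_{dR}}(m+1)})$ is an equivalence; combined with base-change for $!$-pushforward (Remark \ref{rmk:sixfunctorsbasic}(iii)) and the conservativity of $p^!$ that follows from $!$-descent, the hypotheses of \cite[Corollary 4.7.5.3]{HigherAlgebra} are met for the adjunction $p_!\dashv p^!$. Therefore $p_!\dashv p^!$ is monadic and the comparison functor gives \eqref{eq:Dmoduleequivalence}. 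I would remark here that $!$-descent is automatic whenever the relevant covers of $(X/Y)_{dR}$ are of universal $!$-descent, and the point of \S\ref{subsec:WhenisUNIVSHRIEKDESCENT?} is to verify this in concrete cases; but for the statement of the Lemma we simply assume it.

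For (iii), both sides are identified with $\operatorname{QCoh}((X/Y)_{dR})$ via (i) and (ii), so the asserted equivalence is the composite $\operatorname{Mod}_{\mathcal{D}}\operatorname{QCoh}(X)\simeq\operatorname{QCoh}((X/Y)_{dR})\simeq\operatorname{Comod}_{\mathcal{J}}\operatorname{QCoh}(X)$. To get the stated formulas I would trace through the standard description of the Barr--Beck comparison functors: given a $\mathcal{D}$-module $M$, its image in $\operatorname{QCoh}((X/Y)_{dR})$ is computed by the monadic bar resolution, i.e. as the geometric realization $\varinjlim_{[n]\in\Delta^{\mathsf{op}}}p_!\,\mathcal{D}^n M$ with $p_!$ viewed as landing in $\operatorname{QCoh}((X/Y)_{dR})$; pulling this back to a $\mathcal{J}$-comodule by applying $p^*$ and using the equivalence $p_!\simeq p_*$ of Lemma \ref{lem:p*p_!isomorphism.} (so that $p^*p_!\simeq p^*p_*=\mathcal{J}$) gives the functor $M\mapsto\varinjlim_{[n]\in\Delta^{\mathsf{op}}}\mathcal{J}\,\mathcal{D}^n M$. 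Dually, the comonadic cobar resolution of a $\mathcal{J}$-comodule $N$ gives $\varprojlim_{[n]\in\Delta}p_*\,\mathcal{J}^n N$ in $\operatorname{QCoh}((X/Y)_{dR})$, and applying $p^!$ together with $p_*\simeq p_!$ (so $p^!p_*\simeq p^!p_!=\mathcal{D}$) produces $N\mapsto\varprojlim_{[n]\in\Delta}\mathcal{D}\,\mathcal{J}^n N$. The one genuine subtlety -- and the step I expect to require the most care -- is checking that, under the identification $p_!\simeq p_*$, the comonad structure on $\mathcal{J}=p^*p_*$ and the monad structure on $\mathcal{D}=p^!p_!$ interact with the two bar constructions in the compatible way needed for these colimit/limit formulas to be the inverse equivalences; this is where one must be careful that the coherence data (not just the underlying endofunctors, as in Remark \ref{rmk:JandDendofunctors}) match up, and I would handle it by invoking the naturality of the equivalence $p_!\simeq p_*$ and the fact that it is one of comonad/monad-compatible functors, so that all the simplicial/cosimplicial diagrams in question are obtained from one another by applying this single natural equivalence levelwise.
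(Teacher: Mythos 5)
Your part (iii) is essentially the paper's own argument: the paper composes the two comparison equivalences, computes the quasi-inverses via the explicit bar/cobar formulas of Lemma \ref{lem:BarrBeckExplicit}, commutes $p^*$ past the colimit (resp.\ $p^!$ past the limit), and uses $p_{X/Y,!}\simeq p_{X/Y,*}$ from Lemma \ref{lem:p*p_!isomorphism.}, exactly as you do. Your closing worry about matching the full comonad/monad coherence data is not needed for the statement as formulated, since only the underlying endofunctors $\mathcal{J}^\infty_{X/Y}$ and $\mathcal{D}^\infty_{X/Y}$ enter the displayed formulas.

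In (i) and (ii), however, there is a genuine gap. Conservativity of $p^*$ (resp.\ $p^!$) together with base-change is not enough to run the Barr--Beck--Lurie theorem: one must also verify that $p^*$ preserves totalizations of $p^*$-split cosimplicial objects (resp.\ that $p^!$ preserves geometric realizations of $p^!$-split simplicial objects). You assert that ``$p^*$ preserves the relevant totalizations'' and that ``the hypotheses of \cite[Corollary 4.7.5.3]{HigherAlgebra} are met,'' but this split-object condition is precisely the non-formal step, and it is exactly where the $*$-descent (resp.\ the assumed $!$-descent) is used: given a $p^!$-split simplicial object $M_\bullet$, one applies $f_{n+1}^!$ along the whole \v{C}ech nerve, notes that the resulting realizations assemble into a Cartesian section, interchanges the two colimits, and only then invokes the descent equivalence to identify $p^!$ of the realization with the realization of $p^!M_\bullet$. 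This is the content of the paper's Lemma \ref{lem:descentmonadicity}, to which its proof of (i)--(ii) simply defers (part (i) being quoted from Camargo). Relatedly, \cite[Corollary 4.7.5.3]{HigherAlgebra} is pointed the wrong way for your purpose: in this paper it is the tool for deducing descent from monadicity-type hypotheses (as in Corollary \ref{cor:finiteuniv!descent2}), whereas here descent is the given datum and (co)monadicity is the conclusion; the correct tool is the Barr--Beck--Lurie theorem itself, or, if you want to package the split-object verification inside Lurie's adjointability machinery, you must check adjointability of the entire (co)simplicial diagram of \v{C}ech levels, not just the single base-change square you invoke. With that verification supplied (as in Lemma \ref{lem:descentmonadicity}), your argument goes through and coincides with the paper's.
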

\begin{proof}
(i): The functor $\operatorname{QCoh}^*$ satifies descent along $X \to (X/Y)_{\mathrm{str}}$. Therefore, the claim follows from Lemma \ref{lem:descentmonadicity}(i). 

(ii): This follows from Lemma \ref{lem:descentmonadicity}(ii).

(iii): By using the explicit equivalence of categories implicit in the Barr-Beck-Lurie theorem, c.f. Lemma \ref{lem:BarrBeckExplicit}, we see that the functor from left to right is given by 
\begin{equation}
    p_{X/Y}^* \underset{[n]\in \Delta^\mathsf{op}}{\operatorname{colim}} p_{X/Y,!}(p_{X/Y}^! p_{X/Y,!})^n \simeq \underset{[n]\in \Delta^\mathsf{op}}{\operatorname{colim}} \mathcal{J}^\infty_{X/Y} (\mathcal{D}^\infty_{X/Y})^n,
\end{equation}
where we used that $p_{X/Y}^*$ commutes with colimits, and that $p_{X/Y,!} \simeq p_{X/Y,*}$, by Lemma \ref{lem:p*p_!isomorphism.}. Similarly, we see that the functor from right to left is given by 
\begin{equation}
    p_{X/Y}^! \underset{[n] \in \Delta}{\operatorname{lim}} p_{X/Y,*}(p_{X/Y}^*p_{X/Y,*})^n \simeq \underset{[n] \in \Delta}{\operatorname{lim}} \mathcal{D}^\infty_{X/Y} (\mathcal{J}^\infty_{X/Y})^n,
\end{equation}
where we used that $p_{X/Y}^!$ commutes with limits and that $p_{X/Y,!} \simeq p_{X/Y,*}$ again.
\end{proof}
Under certain circumstances\footnote{That is, the functors appear to exist in greater generality than the nice formulas for them do.}, we can use Lemma \ref{lem:XYderhammoandicity} to give formulas for the six-operations, in terms of modules over the monad $\mathcal{D}^\infty_X$ or comodules over the comonad $\mathcal{J}^\infty_X$. 
\begin{thm}[Formulas for the six operations]\label{thm:formulasforsix}
\begin{enumerate}[(I)]
    \item Let $f: X \to Y$ be any morphism in $\mathsf{dAfnd}$. Let $M \in \operatorname{Comod}_{\mathcal{J}^\infty_X}\operatorname{QCoh}(Y)$. The upper-star pullback 
    \begin{equation}
        f^*M 
    \end{equation}
    is naturally an object of $\operatorname{Comod}_{\mathcal{J}^\infty_X}\operatorname{QCoh}(X)$. Under the equivalence of categories \eqref{eq:Jetequivalence}, this operation is identified with $f_{\mathrm{str}}^*: \operatorname{Strat}(Y) \to \operatorname{Strat}(X)$.
    \item Let $f: X \to Y$ be any morphism in $\mathsf{dAfnd}$ such that $Y \to Y_{\mathrm{str}}$ is of $!$-descent, and let $M \in \operatorname{Comod}_{\mathcal{J}^\infty_X}\operatorname{QCoh}(X)$. Then, the object 
    \begin{equation}
        \underset{[n] \in \Delta}{\operatorname{lim}}\mathcal{D}^\infty_Y f_* (\mathcal{J}^\infty_X)^n M
    \end{equation}
    is naturally an object of $\operatorname{Mod}_{\mathcal{D}^\infty_X}\operatorname{QCoh}(X)$. Under the equivalences of categories \eqref{eq:Jetequivalence} and \eqref{eq:Dmoduleequivalence}, this operation is identified with $f_{\mathrm{str},*}: \operatorname{Strat}(X) \to \operatorname{Strat}(Y)$. 
    \item Let $f: X \to Y$ be any morphism between objects of $\mathsf{dAfnd}$ which belongs to the class of good morphisms. Assume that $X \to X_{\mathrm{str}}$ is of $!$-descent, and let $M \in \operatorname{Mod}_{\mathcal{D}^\infty_X} \operatorname{QCoh}(X)$. Then, the object 
    \begin{equation}
        \underset{[n] \in \Delta^\mathsf{op}}{\operatorname{colim}} \mathcal{J}^\infty_Y f_! (\mathcal{D}^\infty_X)^nM 
    \end{equation}
    is naturally an object of $\operatorname{Comod}_{\mathcal{J}^\infty_Y} \operatorname{QCoh}(Y)$. Under the equivalences of categories \eqref{eq:Dmoduleequivalence} and  \eqref{eq:Jetequivalence}, this operation is identified with $f_{\mathrm{str},!}: \operatorname{Strat}(X) \to \operatorname{Strat}(Y)$. 
    \item Let $f: X \to Y$ be any morphism between objects of $\mathsf{dAfnd}$ which belongs to the class of good morphisms. Assume that $X \to X_{\mathrm{str}}$ and $Y \to Y_{\mathrm{str}}$ are of $!$-descent and let $M \in \operatorname{Mod}_{\mathcal{D}^\infty_Y} \operatorname{QCoh}(Y)$. The upper-shriek pullback 
    \begin{equation}
        f^!M
    \end{equation}
    is naturally an object of $\operatorname{Mod}_{\mathcal{D}^\infty_X} \operatorname{QCoh}(X)$. Under the equivalence of categories \eqref{eq:Dmoduleequivalence}, this operation is identified with $f_{\mathrm{str}}^!: \operatorname{Strat}(Y) \to \operatorname{Strat}(X)$. 
    \item Let $X \in \mathsf{dAfnd}$ and let $M, N \in \operatorname{Comod}_{\mathcal{J}^\infty_X} \operatorname{QCoh}(X)$. The tensor product 
    \begin{equation}
        M \widehat{\otimes}_X N
    \end{equation}
    is naturally an object of $\operatorname{Comod}_{\mathcal{J}^\infty_X} \operatorname{QCoh}(X)$. Under the equivalence of categories \eqref{eq:Jetequivalence}, this operation is identified with the tensor product $\widehat{\otimes}_{X_{\mathrm{str}}}$ on $\operatorname{Strat}(X)$. 
    \item Let $X \in \mathsf{dAfnd}$ and assume that $X \to X_{\mathrm{str}}$ is of $!$-descent. Let $M, N \in \operatorname{Mod}_{\mathcal{D}^\infty_X}\operatorname{QCoh}(X)$. Then, the object
    \begin{equation}
        \underset{[n] \in \Delta}{\operatorname{lim}} \mathcal{D}^\infty_X \underline{\operatorname{Hom}}_X((\mathcal{D}^\infty_X)^n M, N)
    \end{equation}
    is naturally an object of $\operatorname{Mod}_{\mathcal{D}^\infty_X}\operatorname{QCoh}(X)$. Under the equivalence of categories \eqref{eq:Dmoduleequivalence}, this operation is identified with the internal Hom bifunctor on $\operatorname{Strat}(X)$. 
\end{enumerate}
\end{thm}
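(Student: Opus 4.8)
\textbf{Proof strategy for Theorem \ref{thm:formulasforsix}.} All six statements follow the same template: one uses the Barr--Beck--Lurie equivalences \eqref{eq:Jetequivalence} and \eqref{eq:Dmoduleequivalence}, together with the explicit description of the comparison functors from Lemma \ref{lem:BarrBeckExplicit}, to transport the already-constructed six operations on $\operatorname{Crys}$ through the comonadic/monadic presentations. The plan is therefore: (1) fix $f: X \to Y$, write $p_X: X \to X_{dR}$ and $p_Y: Y \to Y_{dR}$ for the canonical morphisms, and record the commuting square $p_Y \circ f = f_{dR} \circ p_X$ (and, when needed, the relative version with $p_{X/Y}$, using Lemma \ref{lem:InfhomotopyKan}(i) to identify $(X/Y)_{dR} \simeq X_{dR} \times_{Y_{dR}} Y$ when $f$ is good); (2) for each of $f^*_{dR}, f_{dR,*}, f_{dR,!}, f_{dR}^!, \widehat{\otimes}_{X_{dR}}, \underline{\operatorname{Hom}}_{X_{dR}}$, apply $p_X^*$ (resp.\ $p_X^!$) and use base-change along the above square together with $p_{X/Y,!} \simeq p_{X/Y,*}$ (Lemma \ref{lem:p*p_!isomorphism.}) to rewrite the composite as the claimed colimit/limit of iterates of $\mathcal{J}^\infty$ and $\mathcal{D}^\infty$; (3) check that the resulting endofunctor of $\operatorname{QCoh}(X)$ (resp.\ $\operatorname{QCoh}(Y)$) carries the relevant (co)module structure, which is automatic because it is obtained by transport of structure through an equivalence of categories.

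In more detail: for (I), $f^*_{dR}$ corresponds under \eqref{eq:Jetequivalence} to the functor induced on comodules by $p_X^* f_{dR}^* \simeq f^* p_Y^*$; since $p_X^* f_{dR}^* $ is the composite of the comparison functor $\operatorname{Crys}(Y) \to \operatorname{Comod}_{\mathcal{J}^\infty_Y}\operatorname{QCoh}(Y)$ with $f^*$ (using $\mathcal{J}^\infty_X \simeq f^* \mathcal{J}^\infty_Y f_*$ by base-change along $p_Y f = f_{dR} p_X$ and the pullback description of $\mathcal{J}$ in Remark \ref{rmk:JandDendofunctors}(i)), one reads off that the underlying object is simply $f^*M$ with its induced $\mathcal{J}^\infty_X$-coaction. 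For (II), one uses Lemma \ref{lem:BarrBeckExplicit}: the functor $\operatorname{Crys}(X) \simeq \operatorname{Comod}_{\mathcal{J}^\infty_X}\operatorname{QCoh}(X)$ sends a crystal to $\varprojlim_{[n]} p_{X,*}(p_X^* p_{X,*})^n$ of its pullback; composing with $f_{dR,*}$ and using base-change $p_Y^* f_{dR,*} \simeq f_* p_X^*$ (valid since $f_{dR}$ is representable hence $f_{dR,*}$ satisfies base-change after pulling back along $p_Y$, by Lemma \ref{lem:qcqsGermsproperties}(i) applied after descent) together with $p_{Y,!}\simeq p_{Y,*}$ gives exactly $\varprojlim_{[n]} \mathcal{D}^\infty_Y f_* (\mathcal{J}^\infty_X)^n M$ after applying the inverse comparison $\operatorname{Comod}_{\mathcal{J}^\infty_X} \to \operatorname{Crys}(X)$ on the source and $\operatorname{Crys}(Y) \to \operatorname{Mod}_{\mathcal{D}^\infty_Y}$ on the target. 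Statements (III) and (IV) are formally dual, replacing $*$ by $!$ throughout and invoking the hypothesis of $!$-descent so that \eqref{eq:Dmoduleequivalence} applies and $f_{dR,!}$ is defined (here one needs $f$ good, so that $f_{dR} \in \widetilde{E}$ by Lemma \ref{lem:goodmorphismproperties}(ii)); the colimit $\varinjlim_{[n]}\mathcal{J}^\infty_Y f_!(\mathcal{D}^\infty_X)^n M$ in (III) comes from the explicit comonadic-comparison formula in Lemma \ref{lem:XYderhammoandicity}(iii) composed with $f_{dR,!}$ and base-change $p_Y^! f_{dR,!} \simeq f_! p_X^!$. Statements (V) and (VI) are the monoidal ones: for (V) one uses that $p_X^*$ is symmetric-monoidal, so $p_X^*(M \widehat{\otimes}_{X_{dR}} N) \simeq p_X^* M \widehat{\otimes}_X p_X^* N$, and that the comonad $\mathcal{J}^\infty_X$ is a Hopf-type comonad (it is $p_X^* p_{X,*}$ with $p_X^*$ monoidal), so the comodule tensor product is computed on underlying objects by $\widehat{\otimes}_X$; for (VI) one passes to the right adjoint and reproduces the $\varprojlim_{[n]} \mathcal{D}^\infty_X \underline{\operatorname{Hom}}_X((\mathcal{D}^\infty_X)^n M, N)$ formula from the adjoint-to-projection-formula identity \eqref{eq:adjointtoprojectionformula} applied to $p_X$ together with Lemma \ref{lem:XYderhammoandicity}(iii).

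\textbf{Main obstacle.} The genuinely delicate point is \emph{bookkeeping the base-change isomorphisms and the explicit Barr--Beck--Lurie comparison} so that all the simplicial (co)bar constructions line up on the nose --- in particular verifying that $p_Y^* f_{dR,*}\simeq f_* p_X^*$ and $p_Y^! f_{dR,!}\simeq f_! p_X^!$ hold, which requires knowing that $f_{dR}$ (or, in the relative statements, the morphism $(X/Y)_{dR}\to Y_{dR}$) behaves well with respect to base-change along the covers $p_X, p_Y$. This is where the hypotheses ``$f$ good'' and ``$p_X$, $p_Y$ of $!$-descent'' are really used, and where one must be careful that $f_{dR,*}$, unlike $f_{dR,!}$, does \emph{not} satisfy unrestricted base-change --- but it does after pulling back along the representable covers $p_Y$, which is all that is needed. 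Once these identifications are in place, the formulas themselves drop out of Lemma \ref{lem:BarrBeckExplicit} and Lemma \ref{lem:XYderhammoandicity}(iii) essentially by inspection; the remaining verifications (that the displayed limits/colimits indeed land in the stated (co)module categories) are formal consequences of transport of structure and require no further computation.
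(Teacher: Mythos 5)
Your overall strategy (transport the six operations on $\operatorname{Crys}$ through the Barr--Beck--Lurie comparisons using Lemma \ref{lem:BarrBeckExplicit} and Lemma \ref{lem:XYderhammoandicity}(iii)) is the same as the paper's, and your treatment of (I), (V), (VI) is essentially right. But the step you yourself single out as the crux --- the ``base-change'' identities $p_Y^* f_{dR,*} \simeq f_* p_X^*$ and $p_Y^! f_{dR,!} \simeq f_! p_X^!$ --- is a genuine gap. First, your justification fails: $f_{dR}: X_{dR} \to Y_{dR}$ is \emph{not} representable in general (only $p_X$ is, by Lemma \ref{lem:pX/Yshriekable}; goodness of $f$ gives $f_{dR} \in \widetilde{E}$, not $f_{dR} \in \mathrm{rep}$), and the square \eqref{eq:XDRsquare} is commutative but \emph{not} Cartesian; its Cartesian replacement along $p_Y$ has $(X/Y)_{dR}$, not $X$, in the corner (Lemma \ref{lem:InfhomotopyKan}(i)), so any legitimate base-change would produce pushforward along $(X/Y)_{dR} \to Y$, not $f_* p_X^*$. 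Second, the identities are simply false in general: take $Y = \operatorname{dSp}(K)$, so $p_Y = \operatorname{id}$; then $p_Y^* f_{dR,*} \simeq f_* p_X^*$ would assert that the pushforward of a crystal to the point agrees with the coherent pushforward of its underlying quasi-coherent sheaf, i.e.\ ``de Rham cohomology $=$ coherent cohomology'', which fails already for the unit crystal on a disk. (Relatedly, your parenthetical $\mathcal{J}^\infty_X \simeq f^*\mathcal{J}^\infty_Y f_*$ in (I) is also false --- for $Y = \operatorname{dSp}(K)$ it would say $\mathcal{J}^\infty_X \simeq f^*f_*$, confusing the germ-localized jets with the full tensor product --- though it is not needed for (I).) There is also a sign of conflating the two comparisons: in (II) the target comparison $\operatorname{Crys}(Y) \simeq \operatorname{Mod}_{\mathcal{D}^\infty_Y}$ is induced by $p_Y^!$, not $p_Y^*$, while in (III) the target $\operatorname{Comod}_{\mathcal{J}^\infty_Y}$ uses $p_Y^*$, not $p_Y^!$.

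The repair is simpler than what you attempted, and it is what the paper does: no base-change along \eqref{eq:XDRsquare} is used anywhere. One only needs the compatibility of the operations with \emph{composition} along the commutative square, namely $p_X^* f_{dR}^* \simeq f^* p_Y^*$, $f_{dR,*}\, p_{X,*} \simeq p_{Y,*}\, f_*$, $f_{dR,!}\, p_{X,!} \simeq p_{Y,!}\, f_!$ and $p_X^! f_{dR}^! \simeq f^! p_Y^!$ (all of these are just functoriality of the four operations, valid because $p_X, p_Y, f$ lie in $\mathrm{rep}$ and $f_{dR} \in \widetilde{E}$ when $f$ is good), together with $p_{Y,!} \simeq p_{Y,*}$ and $p_{X,!} \simeq p_{X,*}$ from Lemma \ref{lem:p*p_!isomorphism.}. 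For instance in (II), after pulling the limit out of $p_Y^! f_{dR,*}$ one rewrites $f_{dR,*} p_{X,*} \simeq p_{Y,*} f_*$ and then $p_Y^! p_{Y,*} \simeq p_Y^! p_{Y,!} = \mathcal{D}^\infty_Y$, which is exactly how the factor $\mathcal{D}^\infty_Y$ in $\varprojlim_n \mathcal{D}^\infty_Y f_* (\mathcal{J}^\infty_X)^n$ arises; dually in (III) one uses $f_{dR,!} p_{X,!} \simeq p_{Y,!} f_!$ and $p_Y^* p_{Y,!} \simeq \mathcal{J}^\infty_Y$. With your claimed identity $p_Y^* f_{dR,*} \simeq f_* p_X^*$ the computation would not even produce the stated formula (one would obtain $\varprojlim_n f_* (\mathcal{J}^\infty_X)^{n+1} M$ with no $\mathcal{D}^\infty_Y$ in front), so this is not a cosmetic issue but a step that must be replaced.
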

\begin{proof}
All of these statements will be proven using Lemma \ref{lem:BarrBeckExplicit} and the commutative (but not Cartesian) square  
\begin{equation}\label{eq:XDRsquare}
    % https://q.uiver.app/#q=WzAsNCxbMCwwLCJYIl0sWzEsMCwiWSJdLFsxLDEsIllfe2RSfSJdLFswLDEsIlhfe2RSfSJdLFswLDEsImYiXSxbMSwyLCJwX1kiXSxbMCwzLCJwX1giXSxbMywyLCJmX3tkUn0iLDJdXQ==
\begin{tikzcd}[cramped]
	X & Y \\
	{X_{\mathrm{str}}} & {Y_{\mathrm{str}}}
	\arrow["f", from=1-1, to=1-2]
	\arrow["{p_X}", from=1-1, to=2-1]
	\arrow["{p_Y}", from=1-2, to=2-2]
	\arrow["{f_{\mathrm{str}}}"', from=2-1, to=2-2]
\end{tikzcd}
\end{equation}
in $\mathsf{PStk}$. We will also freely use the equivalences \eqref{eq:Jetequivalence} and \eqref{eq:Dmoduleequivalence}, as well as the equivalences $p_{X,!} \simeq p_{X,*}$ and $p_{Y,!} \simeq p_{Y,*}$ coming from Lemma \ref{lem:p*p_!isomorphism.}. 

(I): By using Lemma \ref{lem:BarrBeckExplicit} and Lemma \ref{lem:XYderhammoandicity} we know that the required functor from $\operatorname{Comod}_{\mathcal{J}^\infty_Y} \operatorname{QCoh}(Y)$ to $\operatorname{Comod}_{\mathcal{J}^\infty_X} \operatorname{QCoh}(X)$ is given by the formula 
\begin{equation}
    p_X^* f_{\mathrm{str}}^* \underset{[n] \in \Delta}{\operatorname{lim}} p_{Y,*}(p_{Y}^* p_{Y,*})^n 
\end{equation}
By using commutativity of the square \eqref{eq:XDRsquare} and Lemma \ref{lem:BarrBeckExplicit}, we have
\begin{equation}
     p_X^* f_{\mathrm{str}}^* \underset{[n] \in \Delta}{\operatorname{lim}} p_{Y,*}(p_{Y}^* p_{Y,*})^n \simeq f^* p_Y^* \underset{[n] \in \Delta}{\operatorname{lim}} p_{Y,*}(p_{Y}^*p_{Y,*})^n  \simeq f^*. 
\end{equation}

(II): By using Lemma \ref{lem:BarrBeckExplicit} and Lemma \ref{lem:XYderhammoandicity}, we know that the required functor from $\operatorname{Comod}_{\mathcal{J}^\infty_X} \operatorname{QCoh}(X)$ to $\operatorname{Mod}_{\mathcal{D}^\infty_Y}\operatorname{QCoh}(Y)$ is given by the formula
\begin{equation}
    p_Y^! f_{\mathrm{str},*} \underset{[n] \in \Delta}{\operatorname{lim}} p_{X,*}(p_{X}^* p_{X,*})^n.
\end{equation}
Using that $p_Y^!$ and $f_{\mathrm{str},*}$ commute with limits, the commutativity of the diagram \eqref{eq:XDRsquare}, and Lemma \ref{lem:p*p_!isomorphism.} we have 
\begin{equation}
\begin{aligned}
    p_Y^! f_{\mathrm{str},*} \underset{[n] \in \Delta}{\operatorname{lim}} p_{X,*}(p_{X}^* p_{X,*})^n &\simeq \underset{[n] \in \Delta}{\operatorname{lim}}  p_Y^! f_{\mathrm{str},*} p_{X,*}(p_{X}^* p_{X,*})^n  \\
    &\simeq \underset{[n] \in \Delta}{\operatorname{lim}}  p_Y^! p_{Y,*} f_* (p_{X}^* p_{X,*})^n \\
    &\simeq \underset{[n] \in \Delta}{\operatorname{lim}} \mathcal{D}^\infty_Y f_* (\mathcal{J}^\infty_X)^n.
\end{aligned} 
\end{equation}

(III): By using By using Lemma \ref{lem:BarrBeckExplicit} and Lemma \ref{lem:XYderhammoandicity}, we know that the required functor from $\operatorname{Mod}_{\mathcal{D}^\infty_X} \operatorname{QCoh}(X)$ to $\operatorname{Comod}_{\mathcal{J}^\infty_Y} \operatorname{QCoh}(Y)$ is given by the formula
\begin{equation}
    p_Y^* f_{\mathrm{str},!} \underset{[n] \in \Delta^\mathsf{op}}{\operatorname{colim}} p_{X,!}(p_X^! p_{X,!})^n
\end{equation}
Using that $p_Y^*$ and $f_{\mathrm{str},!}$ commute with colimits, the commutativity of the diagram \eqref{eq:XDRsquare}, and Lemma \ref{lem:p*p_!isomorphism.} we have
\begin{equation}
    \begin{aligned}
        p_Y^* f_{\mathrm{str},!} \underset{[n] \in \Delta^\mathsf{op}}{\operatorname{colim}} p_{X,!}(p_X^! p_{X,!})^n &\simeq  \underset{[n] \in \Delta^\mathsf{op}}{\operatorname{colim}} p_Y^* f_{\mathrm{str},!} p_{X,!}(p_X^! p_{X,!})^n \\
        &\simeq  \underset{[n] \in \Delta^\mathsf{op}}{\operatorname{colim}} p_Y^* p_{Y,!} f_! (p_X^! p_{X,!})^n \\
        &\simeq  \underset{[n] \in \Delta^\mathsf{op}}{\operatorname{colim}} \mathcal{J}^\infty_Y f_! (\mathcal{D}^\infty_X)^n.
    \end{aligned}
\end{equation}
(IV): By using Lemma \ref{lem:BarrBeckExplicit} and Lemma \ref{lem:XYderhammoandicity} we know that the required functor from $\operatorname{Mod}_{\mathcal{D}^\infty_Y} \operatorname{QCoh}(Y)$  to $\operatorname{Mod}_{\mathcal{D}^\infty_X} \operatorname{QCoh}(X)$ is given by the formula 
\begin{equation}
    p_X^! f_{\mathrm{str}}^! \underset{[n] \in \Delta^\mathsf{op}}{\operatorname{colim}} p_{Y,!}(p_Y^! p_{Y,!})^n.
\end{equation}
By using the commutativity of the square \eqref{eq:XDRsquare} and Lemma \ref{lem:BarrBeckExplicit}, we have 
\begin{equation}
    p_X^! f_{\mathrm{str}}^! \underset{[n] \in \Delta^\mathsf{op}}{\operatorname{colim}} p_{Y,!} (p_Y^! p_{Y,!})^n \simeq f^! p_Y^!  \underset{[n] \in \Delta^\mathsf{op}}{\operatorname{colim}} p_{Y,!} (p_Y^! p_{Y,!})^n \simeq f^!. 
\end{equation}

(V): Let $M, N \in \operatorname{Comod}_{\mathcal{J}^\infty_X}\operatorname{QCoh}(X)$. By transport of structure using Lemma \ref{lem:BarrBeckExplicit} and Lemma \ref{lem:XYderhammoandicity}, the tensor product on $\operatorname{Strat}(X)$ translates to the bifunctor sending $M, N$ to 
\begin{equation}
    p^*_X\Big(\big(\underset{[n] \in \Delta}{\operatorname{lim}} p_{X,*}(p_X^!p_{X,*})^n M\big) \widehat{\otimes}_{X_{\mathrm{str}}} \big( \underset{[m] \in \Delta}{\operatorname{lim}} p_{X,*}(p_X^!p_{X,*})^m N\big)\Big).
\end{equation}
Using that $p_X^*$ is symmetric-monoidal, and Lemma \ref{lem:BarrBeckExplicit}, we have equivalences 
\begin{equation}
\begin{aligned}
    & p^*_X\Big(\big(\underset{[n] \in \Delta}{\operatorname{lim}} p_{X,*}(p_X^!p_{X,*})^n M\big) \widehat{\otimes}_{X_{\mathrm{str}}} \big( \underset{[m] \in \Delta}{\operatorname{lim}} p_{X,*}(p_X^!p_{X,*})^m N\big)\Big) \\
    &\simeq \big(p^*_X\underset{[n] \in \Delta}{\operatorname{lim}} p_{X,*}(p_X^!p_{X,*})^n M\big) \widehat{\otimes}_{X} \big(p^*_X \underset{[m] \in \Delta}{\operatorname{lim}} p_{X,*}(p_X^!p_{X,*})^m N\big) \\
    &\simeq M \widehat{\otimes}_X N.
\end{aligned}
\end{equation}

(VI): Let $M, N \in \operatorname{Mod}_{\mathcal{D}^\infty_X}\operatorname{QCoh}(X)$. By transport of structure using Lemma \ref{lem:BarrBeckExplicit} and Lemma \ref{lem:XYderhammoandicity}, the internal Hom bifunctor on $\operatorname{Strat}(X)$ translates to the bifunctor sending $M, N$ to 
\begin{equation}
     p^!_X \underline{\operatorname{Hom}}_{X_{\mathrm{str}}}\Big(\underset{[n] \in \Delta^{\mathsf{op}}}{\operatorname{colim}} p_{X,*}(p_{X}^!p_{X,*})^n M , \underset{[m] \in \Delta^{\mathsf{op}}}{\operatorname{colim}} p_{X,*}(p_{X}^!p_{X,*})^m N\Big).
\end{equation}
We have the following chain of equivalences: 
\begin{equation}
     \begin{aligned}
      & p^!_X \underline{\operatorname{Hom}}_{X_{\mathrm{str}}}\Big(\underset{[n] \in \Delta^{\mathsf{op}}}{\operatorname{colim}} p_{X,*}(p_{X}^!p_{X,*})^n M , \underset{[m] \in \Delta^{\mathsf{op}}}{\operatorname{colim}} p_{X,*}(p_{X}^!p_{X,*})^m N\Big) \\
       &\simeq p^!_X \underset{[n] \in \Delta}{\operatorname{lim}} \underline{\operatorname{Hom}}_{X_{\mathrm{str}}}\Big( p_{X,*}(p_{X}^!p_{X,*})^n M , \underset{[m] \in \Delta^{\mathsf{op}}}{\operatorname{colim}} p_{X,*}(p_{X}^!p_{X,*})^m N\Big) \\
       &\simeq p^!_X \underset{[n] \in \Delta}{\operatorname{lim}}p_{X,*}\underline{\operatorname{Hom}}_{X_{\mathrm{str}}}( p_{X,*}(p_{X}^!p_{X,*})^n M , p_{X}^!\underset{[m] \in \Delta^{\mathsf{op}}}{\operatorname{colim}} p_{X,*}(p_{X}^!p_{X,*})^m N) \\
       &\simeq p^!_X \underset{[n] \in \Delta}{\operatorname{lim}}p_{X,*}\underline{\operatorname{Hom}}_{X_{\mathrm{str}}}( p_{X,*}(p_{X}^!p_{X,*})^n M , N)\\
       &\simeq \underset{[n] \in \Delta}{\operatorname{lim}} p^!_X p_{X,*}\underline{\operatorname{Hom}}_{X_{\mathrm{str}}}(p_{X,*}(p_{X}^!p_{X,*})^n M , N) \\
       &\simeq \underset{[n] \in \Delta}{\operatorname{lim}} \mathcal{D}_{X}^\infty \underline{\operatorname{Hom}}_X((\mathcal{D}_{X}^\infty)^n M, N),
    \end{aligned}
\end{equation}
where, in the third line we used the formula \eqref{eq:adjointtoprojectionformula}, in the fourth line we used Lemma \ref{lem:BarrBeckExplicit}, and in the fifth line we used that $p_X^!$ commutes with limits. 
\end{proof}
\subsection{The germ of the zero-section in $TX$}\label{sec:germzero}
\begin{prop}\label{prop:Adiagonal}
Let $X= \operatorname{Sp}(A)$ be a classical affinoid rigid space equipped with an \'etale morphism $X \to \mathbf{D}^r_K$. Then the algebra morphism
\begin{equation}
    \underset{m}{\operatorname{colim}} A \widehat{\otimes} K \langle dx/p^m \rangle \to \underset{ U \supset \Delta X}{\operatorname{colim}} (A \widehat{\otimes}_K A)_U,
\end{equation}
determined by $a \otimes 1 \mapsto a \otimes 1$ and $dx_i \mapsto 1 \otimes x_i - x_i \otimes 1$, is an isomorphism of complete bornological $K$-algebras. Here colimit on the left runs through the system of affinoid open neighbourhoods $U \supset \Delta X$ and $(-)_U$ denotes the corresponding affinoid localization. 
\end{prop}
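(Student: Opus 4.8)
The plan is to realise the right-hand colimit as the ring of germs of analytic functions on a tubular neighbourhood of the diagonal, and to uniformise that neighbourhood by choosing étale coordinates. First I would fix coordinates: since $X$ is smooth with $\Omega^1_{X/K}$ free on $dx_1,\dots,dx_n$, choosing $\varrho\in|K^\times|$ with $\|x_i\|_{\mathrm{sup}}\le\varrho$ gives a morphism $\phi=(x_1,\dots,x_n):X\to\mathbb{D}:=\mathbb{D}^n_K(\varrho)$, which is étale: the map $\phi^*\Omega^1_{\mathbb{D}/K}\to\Omega^1_{X/K}$, $dT_i\mapsto dx_i$, is an isomorphism, so $\phi$ is unramified, and it is flat by equidimensionality of source and target, so étale (this is the standard fact that a smooth affinoid with free differentials admits étale coordinates). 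Base-changing $\phi$ along $\operatorname{pr}_2:X\times_K\mathbb{D}\to\mathbb{D}$ shows $\operatorname{id}_X\times\phi:X\times_KX\to X\times_K\mathbb{D}$ is étale, and the composite $\Delta X\hookrightarrow X\times_KX\xrightarrow{\operatorname{id}_X\times\phi}X\times_K\mathbb{D}$ is the graph $\Gamma_\phi:X\hookrightarrow X\times_K\mathbb{D}$, a closed immersion that is an isomorphism onto its image. Thus $\Delta X\cong X$ provides a section of $\operatorname{id}_X\times\phi$ over the closed subspace $\Gamma_\phi(X)$.

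Next I would invoke the lifting property of étale morphisms in rigid geometry: this section extends to a section $s:W'\to(\operatorname{id}_X\times\phi)^{-1}(W')$ over some open neighbourhood $W'$ of $\Gamma_\phi(X)$ in $X\times_K\mathbb{D}$. Being a section of a separated étale morphism, $s$ is a clopen immersion, so $W:=s(W')$ is an open neighbourhood of $\Delta X$ in $X\times_KX$ with $(\operatorname{id}_X\times\phi)|_W:W\xrightarrow{\sim}W'$; moreover, for $W'$ small this clopen sheet through $\Delta X$ is disjoint from the other sheets of $(\operatorname{id}_X\times\phi)^{-1}(\Gamma_\phi(X))=X\times_{\mathbb{D}}X$, so $W$ meets $V(\tau_1,\dots,\tau_n)$ only in $\Delta X$, where $\tau_i:=1\otimes x_i-x_i\otimes1$. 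On functions, the isomorphism $W\cong W'$ sends $a\otimes1\mapsto a\otimes1$ and $T_i\mapsto1\otimes x_i$. Composing with the automorphism $(y,T)\mapsto(y,T-\phi(y))$ of $X\times_K\mathbb{D}$ (well-defined since $|T_i-x_i|\le\varrho$), which carries $\Gamma_\phi(X)$ onto $X\times\{0\}$, I obtain an isomorphism of $W$, compatible with the projections to $X$, onto an open neighbourhood $W''$ of $X\times\{0\}$ in $X\times_K\mathbb{D}$, under which the coordinate $dx_i$ of $\mathbb{D}$ pulls back precisely to $\tau_i$.

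It then remains to pass to the colimit. The affinoid tubes $U_m:=W\cap\{|\tau_i|\le p^{-m}\;\forall i\}$, which correspond under $W\cong W''$ to $X\times_K\mathbb{D}^n(p^{-m})=\operatorname{Sp}\!\big(A\widehat\otimes_KK\langle dx/p^m\rangle\big)$, are cofinal among affinoid neighbourhoods of $\Delta X$: given any affinoid neighbourhood $U$, the set $U\cap W$ corresponds to a neighbourhood of $X\times\{0\}$ in $W''$, and since $X\times_K\mathbb{D}$ is quasi-compact with $X\times\{0\}=\bigcap_mX\times_K\mathbb{D}^n(p^{-m})$, some $X\times_K\mathbb{D}^n(p^{-m})$ lies inside it, i.e.\ $U_m\subseteq U$. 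Hence
\[
  \varinjlim_{U\supset\Delta X}(A\widehat\otimes_KA)_U\;\simeq\;\varinjlim_m(A\widehat\otimes_KA)_{U_m}\;\simeq\;\varinjlim_m\mathcal{O}\big(X\times_K\mathbb{D}^n(p^{-m})\big)\;=\;\varinjlim_m A\widehat\otimes_KK\langle dx/p^m\rangle,
\]
and tracing the coordinates through the chain identifies this with the morphism in the statement ($a\otimes1\mapsto a\otimes1$, $dx_i\mapsto\tau_i$). Each stage is an isomorphism of affinoid ($K$-Banach) algebras and the colimits are formed in $\mathsf{CBorn}_K$, so the result is an isomorphism of complete bornological $A$-modules.

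The main obstacle is the construction in the second paragraph: correctly upgrading the "infinitesimal" section along $\Delta X$ to an honest analytic section over a neighbourhood via the étale lifting property, and checking that the resulting sheet $W$ genuinely separates $\Delta X$ from the other components of $X\times_{\mathbb{D}}X$ — everything else (the étaleness of $\phi$, the coordinate change, and the cofinality of the neighbourhood systems) is routine, if a little fiddly.
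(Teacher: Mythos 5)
Your overall strategy is genuinely different from the paper's: you uniformize a neighbourhood of $\Delta X$ via the étale map $\operatorname{id}_X\times\phi\colon X\times_KX\to X\times_K\mathbb{D}$ and then identify the tubes $U_m$ with $X\times_K\mathbb{D}^n(p^{-m})$, which you show are cofinal among affinoid neighbourhoods of the diagonal; the paper instead writes down an explicit inverse by the divided-power Taylor expansion $g\mapsto\sum_\alpha\tfrac{1}{\alpha!}\mu(\partial_y^\alpha g)\otimes(dx)^\alpha$, checks boundedness by norm estimates, and proves injectivity using Krull's intersection theorem, the identity theorem on connected normal affinoids, and the quasi-regularity of the diagonal deduced from Kiehl's tubular neighbourhood theorem (Proposition \ref{prop:kiehltubular2}, Corollary \ref{cor:smoothregularclosed}). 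If your route were completed it would even prove something stronger than the Proposition, namely that $\Delta X$ has a neighbourhood basis of tubes isomorphic over $X$ to $X\times_K\mathbb{D}^n(p^{-m})$; the cofinality argument at the end (an admissible open of an affinoid containing a Zariski-closed subset contains a tube, by quasi-compactness) and the coordinate shift are fine, and the étaleness of $\phi$ is correct, though the flatness should be justified by miracle flatness (finite fibres, $X$ smooth of dimension $n$, target regular of dimension $n$) rather than by ``equidimensionality'' alone.

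The genuine gap is precisely the step you ``invoke'': extending the tautological section $\Delta$ of $\operatorname{id}_X\times\phi$ from the Zariski-closed subspace $\Gamma_\phi(X)$ to a section over an admissible open neighbourhood $W'$. There is no formal ``lifting property of étale morphisms'' that does this: the analogous statement over a Zariski-open neighbourhood of a closed subscheme in algebraic geometry is false (already $u^2=1+t$ with the section $u=1$ over $\{t=0\}\subset\mathbb{A}^1$ shows one needs henselization, i.e.\ analytically a small tube), so the rigid statement is a real theorem about overconvergence along $\Gamma_\phi(X)$ — essentially henselianity of $\varinjlim_{V\supset\Gamma_\phi(X)}\mathcal{O}(V)$ along the ideal of the graph — and you give neither a proof nor a reference. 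A correct treatment would, for instance, use the rigid inverse-function theorem at each classical point of $\Gamma_\phi(X)$ (where the section forces an equality of residue fields, so the étale map is a local isomorphism), the fact that two sections of a separated étale map which agree on $\Gamma_\phi(X)$ agree on a Zariski-clopen subset (pullback of the clopen diagonal), and then a quasi-compactness/admissibility argument in the spirit of \cite[Lemma 2.1]{KisinLocal99} to glue finitely many local sections into one over an admissible neighbourhood; alternatively one could argue via henselianity of the overconvergent germ algebra. As written, this central step is a black box, so the proposal does not yet constitute a proof. (A small further remark: the disjointness of $W$ from the other sheets of $X\times_{\mathbb{D}}X$ needs no smallness hypothesis — any point of $W\cap V(\tau_1,\dots,\tau_n)$ is $s(z)$ with $z\in\Gamma_\phi(X)$, hence already lies on $\Delta X$.)
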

\begin{proof}
Let\footnote{I would like to thank Finn Wiersig for a helpful discussion about the proof of this Proposition.} us denote the canonical morphism by
\begin{equation}
    \varphi: \underset{m}{\operatorname{colim}} A \widehat{\otimes} K \langle dx/p^m \rangle \to \underset{ U \supset \Delta X}{\operatorname{colim}} (A \widehat{\otimes}_K A)_U,
\end{equation}
in other words, for each $f = \sum_{\alpha \in \mathbf{N}^r}f_\alpha \otimes (dx)^\alpha$, one has 
\begin{equation}
    \varphi(f) = \sum_{\alpha \in \mathbf{N}^r} (f_\alpha \otimes 1)(1 \otimes x_i - x_i \otimes 1)^\alpha.
\end{equation}
It is not so hard to show that $\varphi$ is bounded. In order to prove the Proposition we will construct a bounded inverse $\psi$ to $\varphi$. Let us temporarily write $x_i := x_i \otimes 1$ and $y_i := 1 \otimes x_i$. Let $\partial_{x_i}$, (resp. $\partial_{y_i}$), be the derivations on $A \widehat{\otimes}_K A$ with $\partial_{x_i}(x_j) = \delta_{ij} = \partial_{y_i}(y_j)$ and $\partial_{x_i}(y_j)= 0 = \partial_{y_i}(x_j)$ for all $i,j$. The multiplication map $\mu : A \widehat{\otimes}_K A \to A$ induces a morphism $\mu : \operatorname{colim}_{ U \supset \Delta X} (A \widehat{\otimes}_K A)_U \to A$. Given $g \in \operatorname{colim}_{ U \supset \Delta X}(A \widehat{\otimes}_K A)_U$ we set
\begin{equation}
    \psi(g) := \sum_{\alpha \in \mathbf{N}^r} \frac{1}{\alpha!} \mu(\partial^\alpha_y g) \otimes (dx)^\alpha.  
\end{equation}
Now we will proceed in steps. 

\emph{Step 1: The morphism $\psi$ is well-defined and bounded.} With notations as above, fix an open affinoid $U \supseteq \Delta X$. The derivations $\partial_{y_i}$ restrict to well-defined bounded operators from $(A \otimes_K A)_U$ to itself. In particular, there exists constants $C_i$ such that $\|\partial_{y_i} g\|_U \leqslant C_i \|g\|_U$ for all $g \in (A \otimes_K A)_U$; here $\|\cdot \|_U$ denotes the residue norm on $(A \widehat{\otimes}_K A)_U$. By the well-known fact that $|1/\alpha!| \leqslant c^\alpha$ for some $c >0$, and by boundedness of $\mu$, we deduce that there exists constants $M$ and $K>0$ such that $\big\| \frac{1}{\alpha!} \mu(\partial^\alpha_y) g \| \leqslant M K^\alpha \| g \|_U$; here $\|\cdot \| $ denotes the residue norm on $A$. Hence if $|p^N| < K^{-1}$ then $\psi$ restricts to a bounded map $(A \otimes_K A)_U \to A \widehat{\otimes}_K K \langle dx_i/p^N \rangle_i$. This shows that $\psi$ is well-defined and the restriction of $\psi$ to $(A \widehat{\otimes}_K A)_U$ is bounded. Since $U$ was arbitrary, $\psi$ is bounded.  

\emph{Step 2: The composite $\psi \circ \varphi = \operatorname{id}$.} 
We note that the derivations $\partial_{y_i}$ are bounded and $A$-linear, for the first copy of $A$. Hence, if $g = \varphi(f) = \sum_{\alpha \in \mathbf{N}^r} (f_\alpha \otimes 1)(y-x)^\alpha$, then $\frac{1}{\alpha!} \mu(\partial^\alpha_yg) = f_\alpha$ so that $\psi \circ \varphi = \operatorname{id}$.

\emph{Step 3: $\psi$ is injective}. 
We first note that we can replace the system of all affinoid neighbourhoods $\{U \supseteq \Delta X\}$ of $\Delta X$ in $X \times X$,  with the system of all affinoid neighbourhoods $U^\prime$ of $\Delta X$ with the following property: each connected component of $\Delta X$ is contained in a unique connected component of $U^\prime$. We now consider the system of algebras defining the colimit
\begin{equation}\label{eq:connectedsystem}
    \underset{U^\prime \supseteq \Delta X}{\operatorname{colim}} (A \widehat{\otimes}_K A)_{U^\prime},
\end{equation}
where $U^\prime$ runs over the affinoid neighbourhoods as above. Recalling that $X$ is smooth, one has in particular that each $(A \widehat{\otimes}_K A)_{U^\prime}$ is normal. Therefore, it is a product (indexed by the connected components of $U^\prime$) of integral domains. It is also Noetherian. Also, by normality, all the transition morphisms in the system \eqref{eq:connectedsystem} are injective. We may use these facts implictly in the following.

Let $I \subseteq A \widehat{\otimes}_K A$ be the ideal defining the diagonal. Then Krull's intersection theorem for Noetherian integral domains (applied separately in each connected component of $U^\prime$) implies that the canonical morphism 
\begin{equation}
    (A \widehat{\otimes}_K A)_{U^\prime} \to \underset{k}{\operatorname{lim}}(A \widehat{\otimes}_K A)_{U^\prime}/I^k  \cong  \underset{k}{\operatorname{lim}}(A \widehat{\otimes}_K A)/I^k 
\end{equation}
is injective. Since the transition morphisms in the system \eqref{eq:connectedsystem} are injective we obtain a canonical injective morphism
\begin{equation}
    \underset{U^\prime \supseteq \Delta X}{\operatorname{colim}} (A \widehat{\otimes}_K A)_{U^\prime} \hookrightarrow \underset{k}{\operatorname{lim}}(A \widehat{\otimes}_K A)/I^k .
\end{equation}
By flatness of $A$ with respect to $\widehat{\otimes}_K$, we see that the morphisms in the system 
\begin{equation}
    \{ A \widehat{\otimes}_K  K\langle dx/p^m \rangle \}_{m \geqslant 0}.
\end{equation} 
are injective. Applying Krull's intersection theorem in a similar manner to above, we obtain a canonical injective morphism 
\begin{equation}
    \underset{k}{\operatorname{colim}} A \widehat{\otimes}_K K\langle dx/p^m \rangle \hookrightarrow \underset{k}{\operatorname{lim}}(A \otimes K[dx])/(dx)^k.
\end{equation}
These morphisms fit into a commutative square:
\begin{equation}\label{eqref:square}
\begin{tikzcd}[cramped]
	{\underset{U^\prime \supseteq \Delta X}{\operatorname{colim}} (A \widehat{\otimes}_K A)} & {\underset{m}{\operatorname{colim}} (A \widehat{\otimes}_K K\langle dx/p^m\rangle)} \\
	{\underset{k}{\operatorname{lim}}(A \widehat{\otimes}_K A)/I^k} & { \underset{k}{\operatorname{lim}}A \widehat{\otimes}_K K[ dx]/(dx)^k}
	\arrow["\psi", from=1-1, to=1-2]
	\arrow[hook, from=1-1, to=2-1]
	\arrow[hook, from=1-2, to=2-2]
	\arrow[from=2-1, to=2-2]
\end{tikzcd}
\end{equation}
We claim that
\begin{equation}
    \underset{k}{\operatorname{lim}} \varphi_k : \underset{k}{\operatorname{lim}} A \widehat{\otimes}_K K[ dx]/(dx)^k  \to \underset{k}{\operatorname{lim}} (A \widehat{\otimes}_K A)/I^k
\end{equation}
is an isomorphism, where $\varphi_k: A \widehat{\otimes}_K K[ dx]/(dx)^k  \to (A \widehat{\otimes}_K A)/I^k$ is induced by $\varphi$. That will be enough to prove Step 3 because the bottom arrow in \eqref{eqref:square} is inverse to this morphism.

Because $X$ is smooth, the immersion $\Delta: X \to X \times X$ is regular. We recall also that $
\Omega^1_{X/K} \cong I/I^2$ by sending $dx_i \mapsto y_i - x_i$. Therefore, we obtain for each  $k \geqslant 0$ an isomorphism $\operatorname{Sym}^k(\Omega^1_{X/K}) \to I^k/I^{k+1}$ sending $(dx)^\alpha \mapsto (y-x)^\alpha$. By passing to the graded, this implies that each $\varphi_k$ is an isomorphism. 

\emph{Step 4: Completing the proof}. By Step 2, $\psi$ is strict epimorphism of complete bornological spaces. It is also injective by Step 3. Therefore, $\psi$ is an isomorphism.
\end{proof}
Motivated by this result we define another groupoid object as follows. In the ``algebraic" setting, I found that reading \cite[\S4]{VandenBerghHochschild} was very useful. This discussion is also quite similar to \cite[Example 6.1.8]{Camargo_deRham}. Let $X$ be a classical affinoid rigid space equipped with an \'etale morphism $X \to \mathbf{D}^r_K$. Then we can consider the \emph{germ of the zero section} as an object of $\mathsf{dAff}$:
\begin{equation}
    (X \subseteq TX)^\dagger = \operatorname{dSp}( A\widehat{\otimes}_K K \langle dx/p^\infty \rangle).
\end{equation}
There is a augmentation $\varepsilon: A\widehat{\otimes}_K K \langle dx/p^\infty \rangle \to A$ and two algebra morphisms $\sigma, \tau: A \to A\widehat{\otimes}_K K \langle dx/p^\infty \rangle$. One has 
\begin{equation}
    \sigma:= \operatorname{id} \otimes 1 : A \to A \widehat{\otimes}_KK \langle dx/p^\infty\rangle,
\end{equation}
which gives the \emph{left} $A$-module structure on $A \widehat{\otimes}_KK \langle dx/p^\infty\rangle$. The algebra morphism $\tau$ sends a function to its Taylor series:
\begin{equation}
    \tau(a) := \sum_{\alpha \in \mathbf{N}^r} \frac{1}{\alpha!} \partial^\alpha (a)\otimes (dx)^\alpha.  
\end{equation}
This gives the \emph{right} $A$-module structure on $A \widehat{\otimes}_KK \langle dx/p^\infty\rangle$. There is an algebra morphism
\begin{equation}\label{eq:Psimorphism}
    \psi: A \widehat{\otimes} K \langle dx/p^\infty\rangle \to (A \widehat{\otimes}_K K \langle dx/p^\infty\rangle) \widehat{\otimes}_A (A \widehat{\otimes}_K K \langle dx/p^\infty\rangle)
\end{equation}
determined by $\psi(a\otimes 1) = (a \otimes 1) \otimes (1 \otimes 1)$ and 
\begin{equation}
    \psi(1 \otimes dx_i) = (1\otimes dx_i) \otimes (1 \otimes 1) + (1 \otimes 1) \otimes (1 \otimes dx_i). 
\end{equation}
It is important to remember that the tensor product \eqref{eq:Psimorphism} is taken with respect to the right $A$-module structure (via $\tau$) on the first factor, and the left  $A$-module structure on the second factor. Using the morphisms $\varepsilon, \sigma, \tau, \psi$ we obtain a groupoid object
\begin{equation}
    % https://q.uiver.app/#q=WzAsNCxbMywwLCJYIl0sWzIsMCwiKFggXFxzdWJzZXRlcSBUWCleXFxkYWdnZXIiXSxbMSwwLCIoWCBcXHN1YnNldGVxIFRYKV5cXGRhZ2dlciBcXHRpbWVzX1ggKFggXFxzdWJzZXRlcSBUWCleXFxkYWdnZXIiXSxbMCwwLCJcXGNkb3RzIl0sWzEsMCwiIiwwLHsib2Zmc2V0IjoxfV0sWzEsMCwiIiwyLHsib2Zmc2V0IjotMX1dLFsyLDFdLFsyLDEsIiIsMCx7Im9mZnNldCI6LTJ9XSxbMiwxLCIiLDAseyJvZmZzZXQiOjJ9XSxbMywyLCIiLDAseyJvZmZzZXQiOjF9XSxbMywyLCIiLDAseyJvZmZzZXQiOi0xfV0sWzMsMiwiIiwwLHsib2Zmc2V0IjozfV0sWzMsMiwiIiwwLHsib2Zmc2V0IjotM31dXQ==
\begin{tikzcd}
	\cdots & {(X \subseteq TX)^\dagger \times_X (X \subseteq TX)^\dagger} & {(X \subseteq TX)^\dagger} & X
	\arrow[shift right, from=1-1, to=1-2]
	\arrow[shift left, from=1-1, to=1-2]
	\arrow[shift right=3, from=1-1, to=1-2]
	\arrow[shift left=3, from=1-1, to=1-2]
	\arrow[from=1-2, to=1-3]
	\arrow[shift left=2, from=1-2, to=1-3]
	\arrow[shift right=2, from=1-2, to=1-3]
	\arrow[shift right, from=1-3, to=1-4]
	\arrow[shift left, from=1-3, to=1-4]
\end{tikzcd}
\end{equation}
where we suppressed the degeneracy maps and we emphasise that the fiber product is taken with respect to $\sigma$ and $\tau$. Let us call this groupoid object $\operatorname{exp}(\mathcal{T}_X)$. Now Proposition \ref{prop:Adiagonal} can be rephrased in the following way.
\begin{thm}\label{thm:ExpTX}
With notations as above. There is an equivalence 
\begin{equation}
    \operatorname{exp}(\mathcal{T}_X) \simeq \operatorname{Inf}(X)
\end{equation}
of simplicial objects in $\mathsf{dAff}$.
\end{thm}
\begin{rmk}
Using the isomorphism of Proposition \ref{prop:Adiagonal} the morphisms $\epsilon, \sigma, \tau$ and $\psi$ may be described in the following (and more symmetric) way: $\epsilon$ is induced by the multiplication $A \widehat{\otimes}_K A \to A$, $\sigma, \tau$ are induced by $\operatorname{id} \otimes 1$ and $1 \otimes \operatorname{id} : A \to A \widehat{\otimes}_K A$, respectively, and $\psi$ is induced by the morphism $A \widehat{\otimes}_K A \to (A \widehat{\otimes}_K A) \widehat{\otimes}_A (A \widehat{\otimes}_K A)$ which sends $a \otimes a^\prime \mapsto (a \otimes 1) \otimes (1 \otimes a^\prime)$. We see that this is really the same as \cite[\S 16.8]{EGAIV.4}, but we just replaced the \emph{formal neighbourhood of the diagonal} with the \emph{germ of the diagonal}. 
\end{rmk}
Continuing in the above setup, let derivations $\partial_i$ be dual to $dx_i$. We recall that Ardakov--Wadsley's ring $\wideparen{\mathcal{D}}_X(X)$ can be explicitly written as ``rapidly decreasing series in the variable $\partial$":
\begin{equation}
    \wideparen{\mathcal{D}}_X(X) = \left\{\sum_{\alpha \in \mathbf{N}^r} f_\alpha \partial^\alpha : \|f_\alpha \|r^{\alpha} \xrightarrow[]{\alpha \to \infty} 0  \text{ for all } r>0\right\},
\end{equation}
with the expected multiplication for differential operators. For the sake of brevity let us write $J := A \widehat{\otimes}_K K \langle dx/p^\infty\rangle$ and $U := \wideparen{\mathcal{D}}_X(X)$. We may identify $J$ with the left $A$-linear (bornological) dual of $U$ via the pairing $(\partial^\beta, (dx)^\alpha) := \alpha!\delta_{\alpha \beta}$. Using this identification one may define two\footnote{The action ${}^2\nabla$ is not used in this article. I only mention it to emphasise that $J$ has lots of extra structure. The actions ${}^1\nabla, {}^2\nabla$ can be explained in the following way. We regard $J$ as the left $A$-linear dual of $U := \wideparen{\mathcal{D}}_X(X)$: $$
        J = \underline{\operatorname{Hom}}_A(U,A).$$
    Now $U$ is of course a $U$-$U$-bimodule. We can view ${}^2\nabla$ as the na\"ive action by derivations on $J$ which comes from the right $U$-module structure on $U$. On the other hand, ${}^1\nabla$ is the action by derivations on $J$ which comes from the left $U$-module structures on $U$ and $A$ and Oda's rule \cite[Proposition 1.2.9(iii)]{hotta_d-modules_2008}.} commuting actions of $T:= \mathcal{T}_X(X)$ on $J$ by derivations:
\begin{equation}
\begin{aligned}
    {}^1\nabla_\theta(j)(D) := \theta(j(D)) - j(\theta D), && {}^2\nabla_\theta(j)(D) := j(D\theta).
\end{aligned}
\end{equation}
for $j \in J, D \in U, \theta \in T$. Using the action ${}^1\nabla$ of $T$ by derivations we obtain the \emph{de Rham complex} of $J$ which is augmented via $\tau: A \to J$:
\begin{equation}\label{eq:AugmenteddeRham}
    A \to \Omega^\bullet_{A/K} \widehat{\otimes}_{A, \sigma} J.
\end{equation}
\begin{prop}\label{prop:DualSpencer}
The augmented de Rham complex \eqref{eq:AugmenteddeRham} is strictly exact.
\end{prop}
\begin{proof}
The exactness follows from the ``algebraic" version of this statement (see for instance \cite[Proposition 4.2.4]{VandenBerghHochschild}), using that $A \otimes_K K[dx] \to J$ is flat. The strictness can then be obtained by applying an appropriate version of the closed-graph theorem \cite[\S 2]{WaelbroeckBounded}. 
\end{proof}
\begin{rmk}
One may recognise the morphism $J  \to \Omega^1_{A/K} \widehat{\otimes}_{A, \sigma} J$ in the complex \eqref{eq:AugmenteddeRham} as being given by Euler--Lagrange operators. 
\end{rmk}
Now using this finite resolution we obtain the following. 
\begin{thm}\label{thm:StratMonadicity}
Let $X = \operatorname{Sp}(A)$ be a classical smooth affinoid rigid space equipped with an \'etale morphism $X \to \mathbf{D}^r_K$. Let $p: X \to X_{\mathrm{str}}$ be the canonical morphism. Then:
\begin{enumerate}[(i)]
    \item $p_* 1_X \in \operatorname{QCoh}(X_{\mathrm{str}}) = \operatorname{Strat}(X)$ is descendable in the sense of Mathew \cite[\S 3.3]{MathewGalois}.
    \item The morphism $p: X \to X_{\mathrm{str}}$ is of universal $!$-descent. In particular, there is an equivalence of categories $\operatorname{Strat}(X) \simeq \operatorname{Mod}_{\mathcal{D}^\infty_X}(\operatorname{QCoh}(X))$, where the latter is the category of modules over the monad $\mathcal{D}^\infty_X$. 
\end{enumerate}
\end{thm}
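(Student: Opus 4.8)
The whole statement will be deduced from the dual Spencer resolution (Proposition~\ref{prop:DualSpencer}), the comonadic description of crystals (Lemma~\ref{lem:XYderhammoandicity}(i)), the identity $p_!\simeq p_*$ (Lemma~\ref{lem:p*p_!isomorphism.}), and Mathew's theory of descendable algebras \cite{MathewGalois}. A classical affinoid has only finitely many connected components, and all the relevant constructions ($\operatorname{Crys}$, the monads $\mathcal{D}^\infty$, descendability, and $!$-descent) are compatible with finite disjoint unions (cf. Corollary~\ref{cor:cryscoproduct}, Corollary~\ref{cor:dRig!}), so we may assume $X$ connected; then Proposition~\ref{prop:DualSpencer} applies. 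Write $p=p_X\colon X\to X_{dR}$. For (i): by Lemma~\ref{lem:XYderhammoandicity}(i), $p^*$ is a symmetric-monoidal equivalence $\operatorname{Crys}(X)=\operatorname{QCoh}(X_{dR})\simeq\operatorname{Comod}_{\mathcal{J}^\infty_X}(\operatorname{QCoh}(X))$ carrying the unit $1_{X_{dR}}$ to the unit comodule $1_X$ and $p_*1_X$ to the cofree comodule $\mathcal{J}^\infty_X1_X$ (more generally, the pushforward of a finite free $A$-module to a finite direct sum of copies of $\mathcal{J}^\infty_X1_X$). Transporting the bounded resolution of Proposition~\ref{prop:DualSpencer} through this equivalence exhibits $1_{X_{dR}}$ as the totalization of a bounded complex whose terms are finite direct sums of $p_*1_X$; a bounded totalization is a finite iterated fibre, so $1_{X_{dR}}$ lies in the thick subcategory of $\operatorname{QCoh}(X_{dR})$ generated by $p_*1_X$, hence in the thick $\otimes$-ideal it generates, which is then all of $\operatorname{QCoh}(X_{dR})$ since it contains the unit. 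Thus $p_*1_X$ is descendable in the sense of \cite[\S 3.3]{MathewGalois}, proving (i).

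For (ii), I would first make (i) stable under base change. By Lemma~\ref{lem:pX/Yshriekable} the morphism $p$ is representable, so $\operatorname{QCoh}$ satisfies base change along $p$ and all its pullbacks; for any $g\colon Z\to X_{dR}$ from a representable object, base change identifies $g^*p_*1_X$ with $p'_*1_{X'}$ where $p'\colon X'=Z\times_{X_{dR}}X\to Z$, and the symmetric-monoidal functor $g^*$ carries a thick-$\otimes$-ideal witness for descendability of $p_*1_X$ to one for $p'_*1_{X'}$; so $p'_*1_{X'}$ is again descendable. It therefore suffices to prove that descendability of the algebra $p_*1_X\simeq p_!1_X$ (Lemma~\ref{lem:p*p_!isomorphism.}) forces $p$ to be of $!$-descent; universal $!$-descent then follows from the base-change remark, and the asserted equivalence $\operatorname{Crys}(X)\simeq\operatorname{Mod}_{\mathcal{D}^\infty_X}(\operatorname{QCoh}(X))$ is then exactly Lemma~\ref{lem:XYderhammoandicity}(ii). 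To get $!$-descent I would run the Lurie--Beck--Chevalley argument of Corollary~\ref{cor:finiteuniv!descent2} for the adjunction $p_!\dashv p^!$ (monad $\mathcal{D}^\infty_X=p^!p_!$): the split-realization condition and the base-change condition are handled as there, and the only remaining point is conservativity of $p^!$.

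That last point is where the genuine (if modest) work lies, and it is the main obstacle: one cannot simply transport $\ast$-descent (which is automatic, since $\operatorname{QCoh}^*$ is limit-preserving on $\mathsf{PStk}^{\mathsf{op}}$), because the $!$- and $\ast$-pullbacks along the face maps of the \v{C}ech nerve of $p$ need not agree, and the categorical K\"unneth formula that would directly identify the $!$-tower with the Amitsur tower of $p_*1_X$ is unavailable in this setting. I would deduce conservativity of $p^!$ from descendability directly: by the identity adjoint to the projection formula (Remark~\ref{rmk:sixfunctorsbasic}(ii)) one has $p_*p^!(-)\simeq\underline{\operatorname{Hom}}_{X_{dR}}(p_!1_X,-)=\underline{\operatorname{Hom}}_{X_{dR}}(p_*1_X,-)$; if $p^!M\simeq0$ then $\underline{\operatorname{Hom}}_{X_{dR}}(p_*1_X,M)\simeq0$, but $\{N:\underline{\operatorname{Hom}}_{X_{dR}}(N,M)\simeq0\}$ is a thick $\otimes$-ideal containing $p_*1_X$, hence equal to $\operatorname{QCoh}(X_{dR})$ by descendability, hence contains $1_{X_{dR}}$, so $M\simeq\underline{\operatorname{Hom}}_{X_{dR}}(1_{X_{dR}},M)\simeq0$. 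Everything else in the argument is either Proposition~\ref{prop:DualSpencer} or a formal consequence of results already established.
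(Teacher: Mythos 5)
Your part (i) is essentially the paper's own argument: pass through the comonadic equivalence $\operatorname{Crys}(X)\simeq\operatorname{Comod}_{\mathcal{J}^\infty_X}(\operatorname{QCoh}(X))$ of Lemma~\ref{lem:XYderhammoandicity}(i) and transport the dual Spencer resolution of Proposition~\ref{prop:DualSpencer}; this is correct (your preliminary reduction to connected $X$ is redundant, since connectedness is already in the hypotheses and is needed for Proposition~\ref{prop:DualSpencer} anyway). For part (ii) you diverge from the paper: the paper simply observes that $1_X$ is $p$-proper (via $p_!\simeq p_*$, Lemma~\ref{lem:p*p_!isomorphism.}) and invokes \cite[Proposition 6.19]{ScholzeSixFunctors}, which says that properness of the unit plus descendability of $p_*1_X$ gives universal $!$-descent, whereas you attempt to reprove this criterion directly via \cite[Corollary 4.7.5.3]{HigherAlgebra}. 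Within that attempt, your base-change/universality reduction and your conservativity argument for $p^!$ (using $p_*p^!\simeq\underline{\operatorname{Hom}}_{X_{dR}}(p_*1_X,-)$ from Remark~\ref{rmk:sixfunctorsbasic}(ii) together with the thick $\otimes$-ideal generated by $p_*1_X$) are correct, and indeed these are ingredients of Scholze's proof.

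The genuine gap is the remaining hypothesis of the Barr--Beck--Lurie/Beck--Chevalley criterion, namely that $p^!$ preserves geometric realizations of $p^!$-split simplicial objects. You assert this is ``handled as there'', pointing to Corollary~\ref{cor:finiteuniv!descent2}, but that argument is specific to a finite cover: it uses that the skeletal Ind-objects of $t_{I,*}t_I^!M_\bullet$ are constant, that a \emph{finite} colimit of constant Ind-objects is constant, and crucially Lemma~\ref{lem:finiteuniv!descent}, i.e.\ $\varinjlim_I t_{I,*}t_I^!\simeq\operatorname{id}$. For $p\colon X\to X_{dR}$ there is no finite poset of intersections, and the analogue of Lemma~\ref{lem:finiteuniv!descent} is exactly the statement you are trying to prove, so the citation carries no weight and you have mislocated ``the only remaining point''. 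The step is true, but it needs its own argument from descendability, e.g.\ in the style of \cite[Proposition 3.10]{MathewGalois}: if $M_\bullet$ is $p^!$-split, then $p_*p^!M_\bullet\simeq\underline{\operatorname{Hom}}_{X_{dR}}(p_*1_X,M_\bullet)$ is split, so its skeletal Ind-object is constant; the full subcategory of those $N$ for which $(\operatorname{sk}_k\underline{\operatorname{Hom}}_{X_{dR}}(N,M_\bullet))_{k\ge 0}$ is a constant Ind-object is a thick $\otimes$-ideal containing $p_*1_X$, hence by (i) contains $1_{X_{dR}}$; therefore $(\operatorname{sk}_kM_\bullet)_{k\ge 0}$ is constant and its realization is preserved by any exact functor, in particular by $p^!$. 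Alternatively, simply quote \cite[Proposition 6.19]{ScholzeSixFunctors} as the paper does, with $p$-properness of $1_X$ supplied by Lemma~\ref{lem:p*p_!isomorphism.}; either way the asserted equivalence $\operatorname{Crys}(X)\simeq\operatorname{Mod}_{\mathcal{D}^\infty_X}(\operatorname{QCoh}(X))$ then follows from Lemma~\ref{lem:XYderhammoandicity}(ii), as you say.
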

\begin{proof}
Due to Theorem \ref{thm:ExpTX} we may consider $X/\operatorname{exp}(\mathcal{T}_X) := \operatorname{colim}_{[n] \in \Delta^\mathsf{op}} \operatorname{exp}(\mathcal{T}_X)_n$ with its canonical morphism $q: X \to X/\operatorname{exp}(\mathcal{T}_X)$ instead of $p: X \to X_{\mathrm{str}}$.

(i): Since the morphism $q: X \to X/\operatorname{exp}(\mathcal{T}_X)$ is of universal $*$-descent (as it is an effective epimorphism), there is an equivalence $\operatorname{QCoh}(X/\operatorname{exp}(\mathcal{T}_X)) \simeq \operatorname{Comod}_{q^*q_*} (\operatorname{QCoh}(X))$ induced by $q^*$. Therefore, to show that $1_{X/\operatorname{exp}(\mathcal{T})
}$ belongs to the thick subcategory generated by $q_*1_X$, is the same as showing that $1_X$ belongs to the thick subcategory of $\operatorname{Comod}_{q^*q_*}(\operatorname{QCoh}(X))$ generated by $q^*q_*1_X = J$. By the Dold-Kan correspondence it is sufficient to give a bounded finite-free resolution of $1_X$ as a $J$-comodule\footnote{Here the $A$-$A$ bimodule object $J$ is viewed as a coalgebra under \emph{convolution}.}. But that is exactly Proposition \ref{prop:DualSpencer}. 

(ii): By Lemma \ref{lem:p*p_!isomorphism.}, we know that $q_! \simeq q_*$.  By (i) above, $q_*1_X \in \operatorname{QCoh}(X/\operatorname{exp}(\mathcal{T}_X))$ is descendable. Therefore one may argue in an essentially identical way to \cite[Proposition 6.19]{ScholzeSixFunctors} to deduce that $q$ is of universal $!$-descent. 
\end{proof}
\subsection{A relation to work of Ardakov--Wadsley}\label{subsec:AWrelation}
In this section we will compute the action of the monad $\mathcal{D}^\infty_X $ on the unit object $1_X \in \operatorname{QCoh}(X)$ when $X= \operatorname{dSp}(A)$ is a smooth classical affinoid equipped with an \'etale morphism $X \to \mathbf{D}^r_K$, and show that it agrees with the infinite-order differential operators defined by Ardakov--Wadsley \cite{Dcap1}. The purpose of this subsection is more-or-less to convince the reader that ``we got the right answer". A more substantial investigation, including a possible relation to coadmissible $\wideparen{\mathcal{D}}$-modules, appears in our subsequent work \cite{soor_relation_2025}. 
\begin{prop}
Let $X = \operatorname{dSp}(A)$ be a smooth classical affinoid rigid space equipped with an \'etale morphism $X \to \mathbf{D}^r_K$. Then, there is a isomorphism $\mathcal{D}^\infty_X 1_X \simeq \wideparen{\mathcal{D}}_X(X)$, where the latter is defined as in \cite{Dcap1} and viewed as an object of $\operatorname{QCoh}(X)$ concentrated in degree $0$. 
\end{prop}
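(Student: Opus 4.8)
The plan is to compute $\mathcal{D}^\infty_X 1_X$ from the base-change description of the monad of differential operators in Remark \ref{rmk:JandDendofunctors}(i), to feed into it the uniformization of the analytic germ of the diagonal from Proposition \ref{prop:Adiagonal}, and then to recognize the resulting complete bornological $A$-module as the Ardakov--Wadsley algebra via its Fréchet--Stein presentation in \cite{Dcap1}. Throughout I would fix $X = \operatorname{Sp}(A)$ together with $x_1,\dots,x_n \in A$ such that $dx_1,\dots,dx_n$ trivialize $\Omega^1_{X/K}$, and the dual basis $\partial_1,\dots,\partial_n$ of the (free) tangent module $\mathcal{T}_X(X)$.

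First, by Lemma \ref{lem:p*p_!isomorphism.} one has $p_! \simeq p_*$, so $\mathcal{D}^\infty_X 1_X \simeq p^! p_* 1_X$, and Remark \ref{rmk:JandDendofunctors}(i) gives
\[
\mathcal{D}^\infty_X 1_X \;\simeq\; \pi_{2,*}\,\Gamma_{|\Delta X|}\,\pi_1^!\,1_X ,
\]
the functors $\pi_1^!$ and $\Gamma_{|\Delta X|}$ being computed on germs of $X \times X$ along the diagonal. Since $X$ is smooth of dimension $n$ with $\det\Omega^1_{X/K}$ free, the projection $\pi_1$ is cohomologically smooth with relative dualizing complex trivial up to a shift by $n$ (deduced by base change along $X \to \operatorname{dSp}(K)$ from the corresponding statement for the classical smooth affinoid $X$; if a ready reference for the analytic case is awkward, one argues instead on each affinoid tube $V_m$ of Example \ref{ex:zariskiopen}, where the relevant projection is visibly smooth with free relative cotangent sheaf, and passes to the limit). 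By Corollary \ref{cor:smoothregularclosed} the diagonal is a regular immersion of codimension $n$ into the smooth space $X \times X$, so local cohomology along it shifts complexes in the complementary degree; the two shifts cancel, and $\mathcal{D}^\infty_X 1_X$ is concentrated in degree $0$, as it must be. Using the sequential-limit formulas for $\Gamma_S$ of \S\ref{sec:localcoh} together with Proposition \ref{prop:Adiagonal}, which identifies the analytic germ of the diagonal with $\varinjlim_m A\,\widehat{\otimes}_K K\langle dx/p^m\rangle$, the outcome of this computation should be the identification
\[
\mathcal{D}^\infty_X 1_X \;\simeq\; \varprojlim_m \underline{\operatorname{Hom}}_X\!\big(A\,\widehat{\otimes}_K K\langle dx/p^m\rangle,\, 1_X\big),
\]
the strong $A$-linear dual of the module of analytic jets, equipped with its natural limit bornology and with the algebra structure transported from the monad $\mathcal{D}^\infty_X$ (this is also the conceptual reason for the formula: by Remark \ref{rmk:JandDendofunctors}(ii) the monad $\mathcal{D}^\infty_X$ is right adjoint to the comonad $\mathcal{J}^\infty_X$, so its value on the unit is the dual of $\mathcal{J}^\infty_X 1_X$). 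That the answer is bounded, i.e. concentrated in degree $0$ with the expected size, can be cross-checked against the dual Spencer resolution of Proposition \ref{prop:DualSpencer}.

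It then remains to match this with $\wideparen{\mathcal{D}}_X(X)$: unwinding the duality, $\underline{\operatorname{Hom}}_X(A\,\widehat{\otimes}_K K\langle dx/p^m\rangle, 1_X)$ is the algebra of formal series $\sum_{\alpha \in \mathbf{N}^n} c_\alpha\,\partial^\alpha$ with $c_\alpha \in A$ satisfying the $p$-adic growth condition dual to convergence on the tube of radius $p^{-m}$ around $\Delta X$, which is exactly the completed enveloping algebra $\widehat{U(\pi^{m'}\mathcal{L})}_K$ of \cite{Dcap1} for the free Lie lattice $\mathcal{L} = \bigoplus_i \mathcal{A}\partial_i$ over an affine formal model $\mathcal{A}$ of $A$, after a cofinal reindexing $m' \sim m\cdot v(\pi)$; taking the limit, and using that the multiplication induced by the monad structure is composition of differential operators, yields $\mathcal{D}^\infty_X 1_X \simeq \varprojlim_{m'}\widehat{U(\pi^{m'}\mathcal{L})}_K = \wideparen{\mathcal{D}}_X(X)$. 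The main obstacle I expect is precisely the middle step: pinning down the local-cohomology object $\Gamma_{|\Delta X|}\pi_1^!\,1_X$ — which a priori is only guaranteed to exist because the relevant morphisms are qcqs, hence $!$-able — as the concrete strong dual above, and checking that the transition maps of the resulting pro-system, together with the induced bornology, genuinely coincide with those of the Ardakov--Wadsley tower, rather than agreeing only after forgetting the bornology.
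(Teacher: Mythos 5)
There is a genuine gap, and it sits exactly where you lean on cohomological smoothness of $\pi_1$. In this six-functor formalism every qcqs morphism satisfies $f_! = f_*$ (Proposition \ref{prop:basic6functor}), so $\pi_1^!$ is the right adjoint to the \emph{plain} pushforward $\pi_{1,*}$; on affinoids it is $M \mapsto R\underline{\operatorname{Hom}}_A(A\widehat{\otimes}^\mathbf{L}_K A, M)$, an enormous $\ell^\infty$-type module, and in no sense $\pi_1^*(-)$ twisted by a line bundle and shifted by $n$. (Already for $X = \operatorname{Sp}(K\langle x\rangle)$ the right adjoint to $\pi_{1,*}$ applied to the unit is $\underline{\operatorname{Hom}}_K(K\langle y\rangle, K\langle x\rangle)$.) The assertion that the projection is ``cohomologically smooth with relative dualizing complex trivial up to a shift by $n$'', and the ensuing cancellation against the codimension shift of $\Gamma_{|\Delta X|}$, belongs to formalisms in which $f_!$ is a genuinely compactly supported pushforward (or to ind-coherent, Serre-duality settings); it is not available here and is false as stated, so your argument for degree-$0$ concentration collapses. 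The fallback of arguing on each tube $V_m$ and passing to the limit does not repair this, since the problem is the nature of $(-)^!$ in this formalism, not the smoothness of the spaces involved.

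Your displayed identification $\mathcal{D}^\infty_X 1_X \simeq \varprojlim_m \underline{\operatorname{Hom}}_X(A\widehat{\otimes}_K K\langle dx/p^m\rangle, 1_X)$ is nevertheless the correct intermediate statement, and the correct route to it is the one you mention only in passing: write $\Gamma_{|\Delta X|} \simeq \varprojlim_m k_{m,*}k_m^!$ (Proposition \ref{prop:localcohincreasing}(ii)(b)) for the tubes $V_m$ of Example \ref{ex:zariskiopen}, observe that $\pi_{2,*}k_{m,*}k_m^!\pi_1^! 1_X \simeq R\underline{\operatorname{Hom}}_A\big((A\widehat{\otimes}_K A)_{V_m}, A\big)$ because all morphisms involved are affinoid, and then feed in Proposition \ref{prop:Adiagonal}. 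The remaining work, which you flag but do not carry out, is precisely where both the degree-$0$ concentration and the comparison with Ardakov--Wadsley actually come from: each $A\widehat{\otimes}_K K\langle dx/p^m\rangle$ is compact projective over $A$, so the Homs are underived; tensor--Hom adjunction and $c_0$--$\ell^\infty$ duality identify $\underline{\operatorname{Hom}}_A(A\widehat{\otimes}_K K\langle dx/p^m\rangle, A)$ with $A\widehat{\otimes}_K K\langle p^m\partial\rangle$; and the derived limit $R\varprojlim_m A\widehat{\otimes}_K K\langle p^m\partial\rangle$ is shown to be strict and concentrated in degree $0$ by the Mittag--Leffler theorem of \cite[Theorem 3.43]{bode_six_2021}, after which \cite[Lemma 3.4]{ardakov_bounded_2018} identifies it with $\wideparen{\mathcal{D}}_X(X)$. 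Without this block, and in particular without controlling $R\varprojlim$, your proposal asserts rather than proves the match with the Ardakov--Wadsley tower.
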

\begin{proof}
In this proof, we will no longer suppress the fact that various functors were really derived. That is to say, we will reintroduce $\mathbf{L}$'s and $R$'s to our notation\footnote{In particular, the absence of $\mathbf{L}$'s and $R$'s also becomes meaningful.}. Let $q: X \to \operatorname{X}/\operatorname{exp}(\mathcal{T}_X)$ be the canonical morphism. Thanks to Theorem \ref{thm:ExpTX} we know that there is an equivalence $\mathcal{D}^\infty_X1_X \simeq q^!q_!1_X$. Now by base-change one has 
\begin{equation}
    q^!q_! 1_X \simeq R\underline{\operatorname{Hom}}_A(A \widehat{\otimes}_KK \langle dx/p^\infty \rangle , A). 
\end{equation}
Now one has 
\begin{equation}
    R\underline{\operatorname{Hom}}_A(A \widehat{\otimes}_KK \langle dx/p^\infty \rangle , A) \simeq R\underset{m}{\operatorname{lim}} \underline{\operatorname{Hom}}_A(A \widehat{\otimes}_KK \langle dx/p^m \rangle , A).
\end{equation}
Using the pairing $(dx^\alpha, \partial^\beta) = \alpha! \delta_{\alpha \beta}$ and cofinality this is further equivalent to 
\begin{equation}
    R \underset{m}{\operatorname{lim}} A \widehat{\otimes}_K K \langle p^m \partial \rangle \simeq \underset{m}{\operatorname{lim}} A \widehat{\otimes}_K K \langle p^m \partial \rangle,
\end{equation}
where we used the Mittag--Leffler result of \cite[Theorem 5.26]{bode_six_2021}. Hence the result follows from the explicit description of \cite[Lemma 3.4]{ardakov_bounded_2018}, which says that $\operatorname{lim}_m A \widehat{\otimes}_K K \langle p^m \partial \rangle \simeq \wideparen{\mathcal{D}}_X(X)$. 
\end{proof}

\appendix
\section{Monads and descent}\label{sec:monads}

I assume that the reader is familiar with the notion of a monad in ordinary category theory, and a \emph{module} over it\footnote{In this article we prefer to use the terminology of \emph{modules over monads} rather than \emph{algebras over monads}.}. In short, given a an ordinary category $\mathscr{C}$, the category $\operatorname{End}(\mathscr{C})$ of endofunctors is naturally a monoidal category via composition of functors. Monoids in $\operatorname{End}(\mathscr{C})$ are known as \emph{monads}. To give such an object is the same as giving an endofunctor $T$ together with \emph{multiplication} and \emph{unit} transformations $\mu : T^2 \to T$ and $\eta : \operatorname{id} \to T$ satisfying diagrams expressing strict associativity and unitality.  Evaluation on objects gives an action $\operatorname{End}(\mathscr{C}) \times \mathscr{C} \to \mathscr{C}$ of this monoidal category on $\mathscr{C}$. The notion of a module over an algebra object makes sense in this generality.\footnote{That is, whenever one has a monoidal category $\mathscr{D}$ acting on a category $\mathscr{C}$, one can consider, for any algebra object $T \in \mathscr{D}$, the category $\operatorname{Mod}_T\mathscr{C}$ of module objects in $\mathscr{C}$ over $T$. In particular, it is not necessary for the ``module" and the ``algebra" to belong to the same category.} In particular, given any monad $T \in \mathscr{C}$, we obtain the category $\operatorname{Mod}_T \mathscr{C}$ of modules over the monad. To give such an object is the same as giving an object $M \in \mathscr{C}$ together with an action morphism $TM \to M$ satisfying a diagram expressing the module axiom. The definition of a comonad, and a comodule over a comonad, is obtained by formally dualizing these definitions.

An abundant source of (co)monads in ordinary category theory, is from adjunctions. Given an adjunction $F: \mathscr{D} \leftrightarrows \mathscr{C}: G$, the endofunctor $FG$ on $\mathscr{C}$ naturally acquires the structure of a monad with unit $\eta: \operatorname{id} \to FG$ acquired from the unit transformation and multiplication $\mu: FGFG \to FG$ acquired from the counit transformation $\varepsilon: GF \to \operatorname{id}$. The functor $G$ canonically factors through the forgetful functor as $\mathscr{C} \xrightarrow{K} \operatorname{Mod}_{FG} \mathscr{D} \to \mathscr{D}$. The functor $K$ is called the \emph{comparison functor}, and the classical theorem of Barr-Beck gives neccesary and sufficient conditions for $K$ to be an equivalence, in which case we say $F \dashv G$ is \emph{monadic}. That is, $G$ should be conservative and preserve certain kinds of colimits, called $G$-split coequalizers.\footnote{Of course, the Barr-Beck theorem also has a formal dual for comonadic adjunctions.}

The comonadicity of adjunctions is related to descent theory via the classical B\'enabou-Roubaud theorem, which roughly says that, under a hypothesis related to base-change, (which is sometimes called the Beck-Chevalley condition), descent is ``the same" as the comonadicity of the push-pull adjunction.

In this subsection, I will attempt, in analogy with this story, to describe how modules over a \emph{homotopy-coherent monad} are constructed in higher category theory. I will also record some results about the relation between monads and descent theory. 
\subsection{What are modules over a homotopy-coherent monad?}\label{subsec:whataremodules}
I am including this section for completeness, but I cannot pretend to give a better introduction than the notes of Lukas Brantner \cite{BrantnerDeformationTCC}, which were enormously helpful for me when learning this. We recall that $\infty$-categories are built out of the category $s\mathsf{Set}$ simplicial sets. More precisely, they are the fibrant objects in the Joyal model structure on $s\mathsf{Set}$ \cite[\S 2.2.5]{HigherToposTheory}. By using the Cartesian-closed structure\footnote{That is, given $X,Y \in s\mathsf{Set}$ one may define $Y^X \in s\mathsf{Set}$ by $(Y^X)_n:= \operatorname{Map}_{s \mathsf{Set}} ( \Delta^n \times X, Y)$, and there is an adjunction $(-) \times X \dashv (-)^X$.} on $s\mathsf{Set}$, one can, to each $\infty$-category $\mathscr{C}$, define another $\infty$-category $
\operatorname{End}(\mathscr{C}) := \mathscr{C}^\mathscr{C}.$
By again using the Cartesian-closed structure on $s\mathsf{Set}$ one obtains a diagram $N(\Delta^{\mathsf{op}}) \to s\mathsf{Set}$ sending $[n] \mapsto \operatorname{End}(\mathscr{C})^n$,\footnote{So that $[0] \mapsto [0] \in s\mathsf{Set}$.}, c.f. \cite[Definition 2.46]{BrantnerDeformationTCC}. By the straightening/unstraightening equivalence\footnote{\emph{Straightening/unstraightening} is the common name given to the \emph{Grothendieck construction} for $(\infty,1)$ categories \cite[\S 3.2]{HigherToposTheory}. It asserts that there is a Quillen equivalence between the category of simplicial functors, and coCartesian fibrations.} this can be equivalently viewed as a coCartesian fibration\footnote{A coCartesian fibration is the higher categorical counterpart of a Grothendieck op-fibration, see \cite[Definition 2.4.2.1]{HigherToposTheory}.} $\operatorname{End}(\mathscr{C})^{\otimes} \to N(\Delta^{\mathsf{op}})$. Here $\operatorname{End}(\mathscr{C})^{\otimes}$ is defined by unstraightening applied to the previous construction. This fibration turns out to be a \emph{monoidal $\infty$-category}, i.e., it is an inner fibration\footnote{A morphism of simplicial sets is called an \emph{inner fibration} if it has the left lifting property against inclusions of \emph{inner} horns.} of simplicial sets satisfying the Segal property.\footnote{For details we direct the reader to \cite[Ch. 2]{HigherAlgebra} and \cite[Ch. 1]{LurieNonCommutativeAlgebra}.} When referring to this monoidal $\infty$-category we may suppress the implicit fibration or even also the $\otimes$.
\begin{defn}\cite[Definition 1.1.14, Definition 3.1.3]{LurieNonCommutativeAlgebra}
A \emph{homotopy-coherent monad}, which we will often just call a \emph{monad}, is defined to be an algebra object in the monoidal $\infty$-category $\operatorname{End}(\mathscr{C})^{\otimes}$. This means that it is a section of the fibration $\operatorname{End}(\mathscr{C})^{\otimes} \to N(\Delta^{\mathsf{op}})$ sending the \emph{inert morphisms}\footnote{See \cite[Lecture 2, Definition 2.48]{BrantnerDeformationTCC}.} in $N(\Delta^{\mathsf{op}})$ to coCartesian edges.   
\end{defn}
Informally, a homotopy-coherent monad is an endofunctor $T \in \operatorname{End}(\mathscr{C})$ equipped with morphisms $\mu: T^2 \to T$ and $\eta : \operatorname{id} \to T$ which are associative and unital up to coherent homotopy.

In higher-category theory, the action of the monoidal $\infty$-category $\operatorname{End}(\mathscr{C})^\otimes$ on $\mathscr{C}$ is captured via the definition of a \emph{left-tensored $\infty$-category} \cite[Definition 2.1.1]{LurieNonCommutativeAlgebra}. In this situation this means the following, c.f. \cite[Proposition 3.1.2]{LurieNonCommutativeAlgebra}. We will construct an $\infty$-category $\mathscr{C}^\otimes$ equipped with a coCartesian fibration $\mathscr{C}^\otimes \to N(\Delta^{\mathsf{op}})$ and a
fibration\footnote{In the Joyal model structure.} $\mathscr{C}^\otimes \to \operatorname{End}(\mathscr{C})^\otimes$ over $N(\Delta^{\mathsf{op}})$ such that, for every $n \geqslant 0$ the inclusion $\{n\} \to [n]$ induces an equivalence $\mathscr{C}_{[n]}^\otimes \xrightarrow[]{\sim} \operatorname{End}(\mathscr{C})^\otimes_{[n]} \times \mathscr{C}$; here $(-)_{[n]}$ denotes the fiber over $[n] \in \Delta^\mathsf{op}$. In order to define $\mathscr{C}^\otimes$ we note that the Cartesian-closed structure on $s\mathsf{Set}$ gives a diagram $N(\Delta^\mathsf{op}) \to s\mathsf{Set}$ sending $[n] \mapsto \operatorname{End}(\mathscr{C})^n \times \mathscr{C}$. By using the morphism $\mathscr{C} \to [0]$ to the terminal object and unstraightening  we obtain $\infty$-category $\mathscr{C}^\otimes$ equipped with the desired fibration $\mathscr{C}^\otimes \to \operatorname{End}(\mathscr{C})^\otimes$ over $N(\Delta^\mathsf{op})$. 
\begin{defn}\cite[Definition 2.1.4, Definition 3.1.3]{LurieNonCommutativeAlgebra}
Fix a homotopy-coherent monad $T$, viewed as a section $T: N(\Delta^\mathsf{op}) \to \operatorname{End}(\mathscr{C})^\otimes$. A \emph{module over $T$} is a section $M: N(\Delta^\mathsf{op}) \to \mathscr{C}^\otimes$ such that:
\begin{itemize}
    \item[$\star$] The composite $N(\Delta^\mathsf{op}) \to \mathscr{C}^\otimes \to \operatorname{End}(\mathscr{C})^\otimes $ is equivalent to $T$, 
    \item[$\star$] $M$ sends edges corresponding to \emph{convex morphisms}\footnote{See \cite[Definition 1.1.7]{LurieNonCommutativeAlgebra}, and also \cite[Lecture 2, Definition 2.54]{BrantnerDeformationTCC}.} $\alpha: [m] \to [n]$ in $\Delta$ such that $\alpha(m) = n$, to coCartesian edges for the fibration $\mathscr{C}^\otimes \to N(\Delta^\mathsf{op})$. 
\end{itemize}
\end{defn}
Informally \cite[Remark 3.1.4]{LurieNonCommutativeAlgebra} a module $M$ over a homotopy-coherent monad is an object $M \in \mathscr{C}$ equipped with a morphism $TM \to M$ which satisfies a version of the module axiom up to coherent homotopy. 

We would like to produce homotopy-coherent monads from adjunctions. The usual definition of an \emph{adjunction} in higher category theory uses not much data. That is, an adjunction consists of a pair of functors $F: \mathscr{D} \leftrightarrows \mathscr{C} : G$  of $\infty$-categories together with a unit $\eta: \operatorname{id} \to FG$, a counit $\varepsilon: FG \to \operatorname{id}$ and a 2-simplex expressing the zig-zag identity for the composite $F \to FGF \to F$, c.f. \cite[\href{https://kerodon.net/tag/02EJ}{Tag 02EJ}]{kerodon}. \emph{A priori}, it is not clear how to produce enough coherences in order to give the endofunctor $FG$ the structure of a homotopy-coherent monad. This motivates the definition of the $\infty$-category of \emph{adjunction data} \cite[\S 3.2]{LurieNonCommutativeAlgebra}. The $\infty$-category $\operatorname{ADat}(\mathscr{D},\mathscr{C})$ consists of certain sections of a certain fibration over a version of $\Delta^\mathsf{op}$ labelled by two colours; we direct the reader to \cite[Definition 3.2.6]{LurieNonCommutativeAlgebra} and \cite[Lecture 3]{BrantnerDeformationTCC} for details. This category keeps track of all the possible coherences implicit in an adjunction, so that every object of $\operatorname{ADat}(\mathscr{D},\mathscr{C})$ gives rise to a homotopy-coherent monad on $\mathscr{C}$, c.f. \cite[Remark 3.2.7]{LurieNonCommutativeAlgebra}. There is a functor $\operatorname{ADat}(\mathscr{D},\mathscr{C}) \to \operatorname{Fun}^L(\mathscr{D},\mathscr{C})$ from the $\infty$-category of adjunction data to the category of left-adjoint functors, in the previous sense. By Lurie's Theorem on the existence of adjunction data \cite[Theorem 3.2.10]{LurieNonCommutativeAlgebra}, this is a \emph{trivial Kan fibration}. That is, given any left-adjoint functor $F: \mathscr{D} \to \mathscr{C}$ one can, (up to contractible choice), choose a right adjoint $G$ such that the endofunctor $FG$ acquires the structure of a homotopy-coherent monad. Such a functor $G$ can then be factored canonically through the forgetful functor as $\mathscr{C} \xrightarrow[]{K} \operatorname{Mod}_{FG}\mathscr{D} \to \mathscr{D}$, c.f. \cite[\S 4.7]{HigherAlgebra}. Let us call the functor $K$ the \emph{comparison functor}.
\begin{thm}[Barr--Beck--Lurie] \cite[\S 4.7]{HigherAlgebra}
With notations as above. Assume that $G$ is conservative and that $G$ \emph{preserves geometric realizations of $G$-split simplicial objects}\footnote{See \cite[\S 4.7]{HigherAlgebra}.}. Then the comparison functor $K$ is an equivalence.
\end{thm}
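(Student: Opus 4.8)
The statement to prove is the Barr--Beck--Lurie theorem itself, so strictly speaking one would cite \cite[\S 4.7]{HigherAlgebra}. But let me describe the structure of a proof one could carry out in this setting, assuming the machinery of adjunction data already set up above. The plan is to realise the comparison functor $K : \mathcal{C} \to \operatorname{Mod}_{FG}\mathcal{D}$ as the right adjoint in an adjunction $L \dashv K$, construct $L$ explicitly as a relative geometric realization (the bar construction), and then verify that the unit and counit of $L \dashv K$ are equivalences by a careful analysis of split simplicial objects.

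First I would construct the left adjoint $L$ to $K$. Every module $M \in \operatorname{Mod}_{FG}\mathcal{D}$ has an underlying object together with an action $FG M \to M$; from this one builds the \emph{bar resolution}, a simplicial object $\operatorname{Bar}_\bullet(F, FG, M)$ in $\mathcal{C}$ with $\operatorname{Bar}_n = F (GF)^n G M$ (schematically), whose face maps use the counit $\varepsilon : FG \to \operatorname{id}_{\mathcal{C}}$, the multiplication of the monad, and the action map. One sets $L(M) := |\operatorname{Bar}_\bullet(F, FG, M)|$, the geometric realization in $\mathcal{C}$; this requires $\mathcal{C}$ to admit geometric realizations of such simplicial objects, which is part of the ambient hypotheses when this theorem is applied. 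One then checks $L \dashv K$ formally, using that $GL$ computes the geometric realization of a $G$-split simplicial object (the splitting comes from the extra degeneracies on the bar construction after applying $G$) and that $G$ preserves it by hypothesis.

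The key steps, in order: (1) produce the simplicial bar object and the functor $L$ using the coherences packaged in the adjunction datum (this is where the homotopy-coherent bookkeeping lives); (2) establish the adjunction $L \dashv K$ by exhibiting unit and counit, using that $G \circ L$ agrees with the geometric realization of a $G$-split simplicial object; (3) show the counit $L K \to \operatorname{id}_{\mathcal{C}}$ is an equivalence --- for $X \in \mathcal{C}$, $KX$ is $GX$ with its canonical $FG$-action, and $\operatorname{Bar}_\bullet(F, FG, GX)$ is precisely the canonical $G$-split simplicial resolution of $X$ (the \v{C}ech-like resolution via $F G$), whose realization recovers $X$ because $G$ is conservative and preserves this realization; (4) show the unit $\operatorname{id} \to K L$ is an equivalence using conservativity of the forgetful functor $\operatorname{Mod}_{FG}\mathcal{D} \to \mathcal{D}$ together with the fact that $G K L \simeq GL$ again computes a split realization. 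The main obstacle --- and the reason the full proof in \cite{HigherAlgebra} is long --- is step (1) together with the verification in steps (3)--(4) that the relevant simplicial objects are genuinely $G$-split in the $\infty$-categorical sense: one must produce not just a splitting but a coherent system of extra degeneracies, i.e.\ an augmented simplicial object admitting a splitting as an object of $\operatorname{Fun}(\Delta_{-\infty}^{\mathsf{op}}, \mathcal{D})$, and then invoke \cite[Corollary 4.7.2.13 / Lemma 4.7.3.x]{HigherAlgebra} (split simplicial objects are absolute colimit diagrams) to conclude the realizations behave as required. Since for the applications in this paper (Lemma \ref{lem:XYderhammoandicity}, Lemma \ref{lem:descentmonadicity}) we only ever invoke this theorem as a black box, I would simply cite \cite[\S 4.7]{HigherAlgebra} and not reproduce the argument.
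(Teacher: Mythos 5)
The paper offers no proof of this statement: it is quoted verbatim from Lurie with the citation to \cite[\S 4.7]{HigherAlgebra}, which is exactly what you end up doing, and your sketch of the standard argument (construct the left adjoint $L$ of $K$ as a bar-construction realization, check unit and counit using that $G$-split augmented simplicial objects have absolute colimits, and use conservativity of $G$) is the correct outline of Lurie's proof. The only slip is notational and hedged by your ``schematically'': for a module $M$ over the monad the bar object should be $F(GF)^{n}M$ rather than $F(GF)^{n}GM$ (the latter is the shape of the resolution of an object $X\in\mathcal{C}$ used in the counit step), but since you defer to the citation as the paper does, this does not affect the outcome.
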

The equivalence of categories in the Barr--Beck--Lurie theorem can be made more explicit in the following way:
\begin{lem}\label{lem:BarrBeckExplicit}
\begin{enumerate}[(i)]
    \item Let $F: \mathscr{C} \leftrightarrows \mathscr{D}: G$ be a comonadic adjunction of $\infty$-categories. The quasi-inverse to the comparison equivalence $\mathscr{C} \xrightarrow[]{\sim} \operatorname{Comod}_{FG}\mathscr{D}$, is given on objects by the formula 
    \begin{equation}
        M \mapsto \underset{[n] \in \Delta}{\operatorname{lim}} G (FG)^n M.
    \end{equation}
    \item Let $G: \mathscr{D} \leftrightarrows \mathscr{C} : F $ be a monadic adjunction of $\infty$-categories. The quasi-inverse to the comparison equivalence $\mathscr{C} \simeq \operatorname{Mod}_{FG}\mathscr{D}$, is given on objects by the formula 
    \begin{equation}
        M \mapsto \underset{[n]\in \Delta^\mathsf{op}}{\operatorname{colim}} G (FG)^n M.
    \end{equation}
\end{enumerate}
\end{lem}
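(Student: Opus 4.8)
The plan is to prove the two parts dually, carrying out the monadic case (ii) in detail and obtaining (i) by formal dualization. Write $T := FG$ for the monad acting on $\mathcal{D}$, and let $\operatorname{Free}: \mathcal{D} \rightleftarrows \operatorname{Mod}_T\mathcal{D} : \operatorname{Forget}$ be the tautological free/forgetful adjunction, so that $\operatorname{Forget}\circ\operatorname{Free} \simeq T$ as monads. By the construction of the comparison functor \cite[\S 4.7.3]{HigherAlgebra}, the equivalence $K:\mathcal{C} \xrightarrow{\sim} \operatorname{Mod}_T\mathcal{D}$ is compatible with the two adjunctions, in the sense that $\operatorname{Forget}\circ K \simeq F$. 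Since $F$ is right adjoint to $G$ and $\operatorname{Forget}$ is right adjoint to $\operatorname{Free}$, and $K$ (being an equivalence) is right adjoint to $K^{-1}$, uniqueness of left adjoints applied to $\operatorname{Forget}\circ K \simeq F$ yields a canonical equivalence $K^{-1}\circ\operatorname{Free} \simeq G$. Moreover $K^{-1}$, being an equivalence, preserves all colimits. These two facts are the formal skeleton of the argument.

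The key geometric input is the \emph{bar resolution} of a $T$-module. For $M \in \operatorname{Mod}_T\mathcal{D}$ I would consider the simplicial object $\operatorname{Bar}_\bullet(\operatorname{Free},T,M)$ whose object of $n$-simplices is $\operatorname{Free}(T^n\operatorname{Forget}M)$, with faces given by the module action and the multiplication of $T$, and degeneracies given by the unit $\eta:\operatorname{id}\to T$. The claim to establish is that the canonical augmentation $\big|\operatorname{Bar}_\bullet(\operatorname{Free},T,M)\big| \xrightarrow{\sim} M$ is an equivalence in $\operatorname{Mod}_T\mathcal{D}$. To see this, one applies the conservative functor $\operatorname{Forget}$: the resulting augmented simplicial object of $\mathcal{D}$ acquires extra degeneracies coming from $\eta$, hence is split; a split augmented simplicial object is an absolute colimit diagram, and is therefore preserved by any functor, so $\operatorname{Forget}$ carries the augmentation to a colimit diagram, and conservativity of $\operatorname{Forget}$ promotes this back to the asserted equivalence in $\operatorname{Mod}_T\mathcal{D}$. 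This is precisely the canonical resolution underlying the Barr--Beck--Lurie theorem.

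Finally I would combine the two ingredients: applying the colimit-preserving equivalence $K^{-1}$ to the bar resolution, and using $K^{-1}\operatorname{Free}\simeq G$ together with $\operatorname{Forget}M = M$ as an object of $\mathcal{D}$ and $T^n = (FG)^n$, one computes
\[
K^{-1}(M) \;\simeq\; \varinjlim_{[n]\in\Delta^{\mathsf{op}}} K^{-1}\operatorname{Free}\big(T^n M\big) \;\simeq\; \varinjlim_{[n]\in\Delta^{\mathsf{op}}} G(FG)^n M,
\]
which is the formula asserted in (ii). For (i) I would run the dual argument: with the comonad $FG$ on $\mathcal{D}$ and the cofree/forgetful adjunction $\operatorname{Cofree}\dashv\operatorname{Forget}$ one has $\operatorname{Forget}\circ K\simeq F$ and hence $K^{-1}\operatorname{Cofree}\simeq G$ by passing to right adjoints; the cobar totalization $M \xrightarrow{\sim} \operatorname{Tot}\big(\operatorname{Cobar}^\bullet(\operatorname{Cofree},FG,M)\big)$ holds by the dual extra-codegeneracy argument, and applying the limit-preserving equivalence $K^{-1}$ gives $K^{-1}(M)\simeq\varprojlim_{[n]\in\Delta} G(FG)^n M$. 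The hard part is not conceptual but bookkeeping: verifying rigorously that the (augmented) bar object becomes split after forgetting and that splitness makes it an absolute colimit diagram in the $\infty$-categorical sense, and checking that the simplicial versus cosimplicial indexing and Lurie's normalization of $\operatorname{Forget}\circ K \simeq F$ are matched so that the directions $\Delta^{\mathsf{op}}$ and $\Delta$ come out correctly in the two cases.
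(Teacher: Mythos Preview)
Your proposal is correct and follows essentially the same approach as the paper: both prove (ii) and dualize for (i), both identify the quasi-inverse via $K^{-1}\circ\operatorname{Free}\simeq G$ (obtained by passing to left adjoints in $\operatorname{Forget}\circ K\simeq F$), and both apply this colimit-preserving functor to the bar resolution $M\simeq\varinjlim_{[n]\in\Delta^{\mathsf{op}}}\operatorname{Free}(T^n M)$. Your version is slightly more explicit about why the bar resolution holds (the extra-degeneracy/splitting argument), which the paper simply asserts.
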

\begin{proof}
    We will prove (ii), the proof of (i) is similar. We examine the equivalence of categories implicit in the proof of the Barr-Beck-Lurie theorem. Let us temporarily write $T$ for the monad $FG$. The Barr-Beck-Lurie equivalence arises from an adjunction
    \begin{equation}
        R : \mathscr{C} \leftrightarrows \operatorname{Mod}_T (\mathscr{D}) : L 
    \end{equation}
    in which $R$ satisfies 
    \begin{equation}\label{eq:forgetR}
       \operatorname{Forget}_T \circ R  \simeq F.  
    \end{equation}
    We would like to calculate the value of $L$ on objects. Every $M \in \operatorname{Mod}_T(\mathscr{D})$ is the colimit of simplicial object
    \begin{equation}
    M \simeq \underset{[n]\in \Delta^\mathsf{op}}{\operatorname{colim}} T^{n+1} M    
    \end{equation} 
    in $\operatorname{Mod}_T(\mathscr{D})$. Since $L$ is a left adjoint, it commutes with colimits, so it suffices to know the value of $L$ on free $T$-modules. However, by passing to left adjoints in \eqref{eq:forgetR}, we know that 
    \begin{equation}
        L \circ \operatorname{Free}_T \simeq G, 
    \end{equation}
    so $LM \simeq  \operatorname{colim}_{[n] \in \Delta^{\mathsf{op}}} G T^{n} M = \operatorname{colim}_{[n] \in \Delta^{\mathsf{op}}} G (FG)^{n} M$. This proves (ii). 
\end{proof}

\bibliography{references}
\bibliographystyle{alpha}
\Addresses

\end{document}